\documentclass[a4paper,leqno,11pt]{amsart}

\usepackage{dynkin-diagrams}
\input Dynkin.sty

\allowdisplaybreaks[4]

\usepackage[top=1.5in,bottom=1.3in,left=1.3in,right=1.3in,marginparwidth=1.5cm]{geometry}

\usepackage{amsrefs}
\usepackage{mathrsfs}
\usepackage{wasysym}
\usepackage{enumerate}
\usepackage[shortlabels]{enumitem}

\usepackage[new]{old-arrows}
\usepackage{times}
\usepackage{tensor}
\usepackage[all]{xy}

\usepackage{amsmath, amssymb, amsfonts, latexsym, mdwlist, amsthm}
\usepackage{mathrsfs,mathtools}
\usepackage{subfig}
\usepackage{graphicx}
\usepackage{longtable}

\usepackage{mathtools}

\usepackage[colorlinks=true, citecolor=blue, urlcolor=blue, linkcolor=blue
]{hyperref}

\usepackage{multirow}
\usepackage{diagbox}
\usepackage{arydshln}
\usepackage{bbold}
\usepackage{fix-cm} 

%
%
%
%

\usepackage[english]{babel}
\usepackage[utf8x]{inputenc}
\usepackage{enumerate}
\sloppy

\usepackage{tikz}
\usepackage{tikz-cd}
\usepackage{xfrac}    
\usepackage{faktor}

\graphicspath{{./pictures/}}

\DeclareRobustCommand\longtwoheadrightarrow
     {\relbar\joinrel\twoheadrightarrow}




\newtheorem{thm}{Theorem}[section]
\newtheorem{cor}[thm]{Corollary}
\newtheorem{lem}[thm]{Lemma}
\newtheorem{prop}[thm]{Proposition}

\newtheorem{rem}[thm]{Remark}
\newtheorem{defn}[thm]{Definition}

\newtheorem{mainthm}{Theorem}

\newtheorem{mainconj}{Conjecture}

\newcommand{\mc}{\mathcal}
\newcommand{\mf}{\mathfrak}

\newcommand{\C}{\mathbb{C}}
\newcommand{\OO}{\mathbb{O}}
\newcommand{\U}{U}
\newcommand{\Z}{\mathbb{Z}}

\newcommand{\W}{\mathcal{W}}

\newcommand{\la}{\langle}
\newcommand{\ra}{\rangle}

\newcommand{\lam}{\lambda}
\newcommand{\g}{\mf{g}}
\newcommand{\affg}{\widehat{\mf{g}}}
\newcommand{\affh}{\widehat{\mf{h}}}
\newcommand{\extg}{\widetilde{\mf{g}}}
\newcommand{\exth}{\widetilde{\mf{h}}}
\newcommand{\affn}{\widehat{\mf{n}}}

\newcommand{\h}{\mf{h}}
\newcommand{\n}{\mf{n}}
\newcommand{\gl}{\mf{gl}}
\newcommand{\sll}{\mf{sl}}
\newcommand{\spp}{\mf{sp}}
\newcommand{\sing}{\text{\rm sing}}

\newcommand{\cmmnt}[1]{}

\newcommand{\weyl}{\mathrm{W}}

\DeclareMathOperator{\Span}{span}

\def \<{\langle}
\def \>{\rangle }

\usepackage{hyperref} 
\hypersetup{
	colorlinks=true,
	linkcolor=blue,
	urlcolor=red,
	pdftitle={Fusion rules},
}

\setcounter{secnumdepth}{2}
\setcounter{tocdepth}{1}

\title
[On some simple orbifold affine VOAs at non-admissible level]
{On some simple orbifold affine VOAs at non-admissible level 
arising from rank one 4D SCFTs}

\author{Tomoyuki Arakawa}
\address[Tomoyuki Arakawa]{School of Mathematics and Statistics, Ningbo University, Ningbo 315211, China}
\address
{Research Institute for Mathematical Sciences, Kyoto University, Kyoto 606-8502, Japan}
\email{arakawa@kurims.kyoto-u.ac.jp}

\author{Xuanzhong Dai}
\address[Xuanzhong Dai]{Research Institute for Mathematical Sciences, Kyoto University, Kyoto 606-8502, Japan}
\email{xzdai@kurims.kyoto-u.ac.jp}

\author{Justine Fasquel}
\address[Justine Fasquel]{School of Mathematics and Statistics, University of Melbourne, Parkville, 3010, Australia}
\email{justine.fasquel@unimelb.edu.au}

\author{Bohan Li}
\address[Bohan Li]{Yau Mathematics Sciences Center, Tsinghua University,  Beijing, 100084,China}
\email{libh19@mails.tsinghua.edu.cn}

\author{Anne Moreau}
\address[Anne Moreau]{Universit\'{e} Paris-Saclay, CNRS, Laboratoire de Math\'{e}matiques d'Orsay, 
Rue Michel Magat, B\^{a}t. 307, 
91405 Orsay, France}
\email{anne.moreau@universite-paris-saclay.fr}

\begin{document}
\maketitle

\begin{abstract}
We study the representations  
of some simple 
affine vertex algebras at non-admissible level arising from rank one 4D SCFTs. 
In particular, we classify the irreducible highest weight modules of 
$L_{-2}(G_2)$ and $L_{-2}(B_3)$. 
It is known by the works of Adamovi\'{c} and Per\v{s}e that these 
vertex algebras 
can be conformally embedded into $L_{-2}(D_4)$. 
We also compute the associated variety of $L_{-2}(G_2)$, 
and show that it is the orbifold of the associated variety of $L_{-2}(D_4)$
by the symmetric group of degree 3 which is the Dynkin diagram 
automorphism group of $D_4$. 
This provides a new interesting example of associated variety  
satisfying 
a number of conjectures in the context of orbifold vertex algebras.

%
\end{abstract}
		
		
\section{Introduction} 
\label{Sec:Intro}
We consider in this article the simple affine vertex algebras 
$L_{-2}(G_2)$ and $L_{-2}(B_3)$ 
that appear as orbifold vertex algebras of the 
simple affine vertex algebra $L_{-2}(D_4)$. 
We are interested 
in their representations in the category $\mc{O}$; 
those of $L_{-2}(D_4)$ were previously studied in \cite{AraMor1}. 
We also compute the associated variety of $L_{-2}(G_2)$; 
that of $L_{-2}(B_3)$ and of $L_{-2}(D_4)$ were described in \cite{AraMor1,AraMor3}.  

\medskip  


We now provide more detail on the results and the motivations. 
\medskip 

The symmetric group $\mathfrak{S}_3$ acts as Dynkin 
diagram automorphisms on $D_4=\mf{so}_8(\C)$. 
It is well known that the subalgebra of $\mathfrak{S}_3$-invariants 
has type $G_2$ while the 
subalgebra of $\langle \sigma \rangle$-invariants, 
with $\sigma$ a two-order element in $\mathfrak{S}_3$,  
has type $B_3$. 
So we get the following embeddings of Lie algebras:
\begin{align}
\label{eq:embed}
G_2  \stackrel{\iota_2}{\longhookrightarrow} B_3 
\stackrel{\iota_3}{\longhookrightarrow}  D_4. 
\end{align}

For an arbitrary simple Lie algebra $\g$,  
consider the corresponding extended affine Kac--Moody Lie algebra 
$$\extg = \g\otimes \C[t,t^{-1}] \oplus \C K \oplus \C D$$  
with usual Lie bracket, 
see Section \ref{Sec:Pre}. 
Let $V^k(\g)$ be 
the universal affine vertex algebra  
associated  with $\g$ at level $k$, and $L_k(\g)$ its simple quotient. 
It will be always assumed in the article that $k \not=-h_\g^\vee$ is not critical, 
where $h_\g^\vee$ is the dual Coxeter number of $\g$. 
Thus $V^k(\g)$ is conformal with conformal 
grading given by the semisimple element $L_0=-D$. 

The inclusions \eqref{eq:embed}  
induce embeddings 
for the corresponding universal affine vertex algebras 
at any level:  
$$V^k(G_2) \longhookrightarrow V^k(B_3) \longhookrightarrow V^k(D_4).$$ 
Remarkably,
the following conformal embeddings for the 
simple quotients 
at negative integer level $k=-2$ 
were established by Adamovi\'{c} and Per\v{s}e in \cite{AP}:
\begin{align}\label{eq:simples}
L_{-2}(G_2) \longhookrightarrow L_{-2}(B_3) 
\longhookrightarrow L_{-2}(D_4).
\end{align}
Here a conformal vertex algebra $U$ 
is said to be {\em conformally
embedded} into a conformal vertex algebra 
$V$ if $U$ can be realized as a vertex subalgebra of $V$ with the same
conformal vector.  
Moreover, Adamovi\'{c} and Per\v{s}e 
proved that $L_{-2}(D_4)$ is a finite extension of $L_{-2}(B_3)$ 
and that 
$L_{-2}(B_3)$ is a finite extension of $L_{-2}(G_2)$.  
The group $\mf{S_3}$ naturally acts on $V^k(D_4)$ 
and, 
according to \cite{AP}, 
we also have $L_{-2}(G_2) = L_{-2}(D_4)^{\mf{S}_3}$, 
where for $V$ a vertex algebra and $G$ a finite 
subgroup of its automorphism group, $V^G$ 
denotes the fixed point vertex subalgebra. 
Similarly, $L_{-2}(B_3) = L_{-2}(D_4)^{\langle \sigma \rangle}$. 
Thus $L_{-2}(G_2)$ and $L_{-2}(B_3)$ 
are both orbifold vertex subalgebras of $L_{-2}(D_4)$.

Recall that to an arbitrary vertex algebra $V$ one attaches, in a functorial manner, 
a certain affine Poisson variety $X_V$ referred to as the 
{\em associated variety}~\cite{Ara12}, see Section \ref{Sub:assoc}. 
The simple affine vertex algebras $L_{-2}(B_3)$  and 
$L_{-2}(D_4)$ are known to be {\em quasi-lisse}. 
In this context, it means that the associated variety is contained in the nilpotent cone 
of the corresponding Lie algebra.  
They provided   
one of the first examples of quasi-lisse simple affine vertex algebras 
at non-admissible levels. 
Both are part of a larger family of such examples. 
First, $L_{-2}(D_4)$ is part of the family $L_k(\g)$ 
where $\g$ belongs to the Deligne exceptional series, 
$$A_1 \subset A_2 \subset G_2 \subset D_4 \subset F_4 \subset 
E_6 \subset E_7 \subset E_8,$$
and $k=-h_\g^\vee/6-1$, with $h_\g^\vee$ the dual Coxeter number. 
For such $L_k(\g)$, it is known that the associated variety is the closure of the minimal 
nilpotent orbit \cite{AraMor1}. 
In the case where $\g$ is simply-laced, the above vertex algebras 
$L_{k}(\g)$ 
come from four-dimensional $\mathcal{N}=2$ superconformal field theories 
in physics (see Section \ref{Sub:physics}). 
For the moment, the problem of whether the vertex algebras $L_{-5/3}(G_2)$ 
and $L_{-5/2}(F_4)$ of this series have a physical meaning is not solved.  
As for $L_{-2}(B_{3})$, it is part of the family 
$L_{-2}(B_r)$, $r\geqslant 3$, 
for which the associated variety has been described 
in \cite{AraMor3}. 

Recently, it was noticed by Li, Xie and Yan \cite{LXY} that the vertex algebras 
$L_{k}(\g)$ for $\g$ belonging to the series, 
$$A_1 \subset A_2 \subset G_2\subset B_3 \subset D_4 \subset F_4 \subset 
E_6 \subset E_7 \subset E_8,$$
and  $k=-h/6-1$, where $h$ is the Coxeter number, 
also come from four-dimensional $\mathcal{N}=2$ superconformal field theories 
in the context of the Argyres--Douglas theory 
(see Section~\ref{Sub:physics}).  
In particular, it is natural to focus on the following two series 
$$L_{-2}(G_2) \hookrightarrow  L_{-2}(B_3) \hookrightarrow  L_{-2}(D_4) 
\quad  \text{and}\quad  
L_{-3}(F_4) \hookrightarrow  L_{-3}(E_6),$$
which involve non simply-laced cases. 
Note that the second embedding is also conformal by \cite{AP}. 
The associated variety and the representations in the category $\mc{O}$ of 
$L_{-3}(E_6)$ have been determined in \cite{AraMor1}. 

As explained below, our technics are based on one hand on 
the explicit computation of a singular vector fir $G_2$ 
and on the other hand on the OPEs of the subregular 
$W$-algebra associated with $G_2$. 
Unfortunately, so far, our methods do not apply for $L_{-3}(F_4)$.
However, it was conjectured in  \cite{LXY} that $X_{L_{-3}(F_4)}=\overline{\mathbb{O}_{22}}$,
the unique nilpotent orbit of dimension 22 in~$F_4$.

\subsection{Main results}

To describe our result about the associated variety,  
note that the embedding $\iota_2 \colon G_2 \hookrightarrow D_4$ 
induces a projection map $D_4^* \twoheadrightarrow G_2^*$. 
Hence we get a linear map 
$$\pi_2 \colon D_4 \longtwoheadrightarrow G_2,$$  
identifying $D_4$ and $G_2$ with 
their duals through their respective Killing forms. 
Denoting by $\OO_\text{sreg}$ the subregular nilpotent orbit 
in $G_2$, by $\OO_\text{min}$ the minimal nilpotent 
orbit in $D_4$ and by 
$\overline{\OO}_\text{sreg}$, $\overline{\OO}_\text{min}$ 
their Zariski closures, we have by \cite{LS},  
\begin{align}
\label{eq:D4_G2}
\overline{\OO}_\text{sreg} = \pi_2 (\overline{\OO}_\text{min}). 
\end{align} 
Note that 
${\OO_\text{sreg}}$ 
and ${\OO_\text{min}}$ have both  dimension 10. 
Furthermore,  by 
\cite{AraMor1}, $\overline{\OO}_\text{min}$ 
is precisely the associated variety of the vertex algebra $L_{-2}(D_4)$. 

The following result was conjectured in \cite[Conjecture 4.5]{Fasquel-OPE},
and agrees with the physical expectation \cite[Table 4]{LXY}, 
see also Section \ref{Sub:physics} below. 

\begin{mainthm}
\label{Th:mainA}
The associated variety of $L_{-2}(G_2)$ is 
$\overline{\OO}_{\text{\em sreg}}.$ 
\end{mainthm}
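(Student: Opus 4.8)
The plan is to prove the two inclusions $X_{L_{-2}(G_2)} \subseteq \overline{\OO}_{\mathrm{sreg}}$ and $X_{L_{-2}(G_2)} \supseteq \overline{\OO}_{\mathrm{sreg}}$ separately, using on one side the explicit singular vector for $G_2$ at level $-2$ and on the other side the known associated variety of $L_{-2}(D_4)$ together with the orbifold structure. For the upper bound, recall that $X_{L_{-2}(G_2)}$ is a $G_2$-invariant, conic, Poisson subvariety of $X_{V^{-2}(G_2)} = G_2$ (identifying $\g$ with $\g^*$ via the Killing form). The maximal proper graded ideal of $V^{-2}(G_2)$ is generated by singular vectors; I would take the explicit singular vector in $V^{-2}(G_2)$ — whose existence and form is the computational input advertised in the introduction — and compute its image $\bar p$ in $R_{V^{-2}(G_2)} = \C[G_2]$ under the Zhu $C_2$-map. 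Since $L_{-2}(G_2)$ is a quotient of $V^{-2}(G_2)$ by an ideal containing this singular vector, $X_{L_{-2}(G_2)}$ is contained in the zero locus of the ideal generated by $\bar p$ and its $G_2$-translates (equivalently, the ideal in $\C[G_2]$ generated by the $\ad$-orbit of $\bar p$). I would then show this zero locus is exactly $\overline{\OO}_{\mathrm{sreg}}$, e.g. by checking it is a $G_2$-stable closed conic subset not containing any element outside $\overline{\OO}_{\mathrm{sreg}}$; since $\dim \overline{\OO}_{\mathrm{sreg}} = 10$ and the only nilpotent orbit closures in $G_2$ of dimension $\leq 10$ strictly containing it would be the subregular itself, the main point is to rule out the regular orbit and rule out any non-nilpotent component — the latter follows because $L_{-2}(G_2)$ is known to be quasi-lisse (it is an orbifold of the quasi-lisse $L_{-2}(D_4)$), so $X_{L_{-2}(G_2)} \subseteq \mathcal{N}(G_2)$, hence we only need that $\bar p$ does not vanish on a regular nilpotent element.

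For the lower bound, I would exploit the conformal embedding $L_{-2}(G_2) \hookrightarrow L_{-2}(D_4)$ and the fact, due to Adamovi\'{c}--Per\v{s}e, that $L_{-2}(D_4)$ is a finite (hence in particular finitely generated) module over $L_{-2}(G_2)$, with $L_{-2}(G_2) = L_{-2}(D_4)^{\mf S_3}$. By the functoriality and finiteness properties of the associated variety under such extensions, the inclusion of vertex algebras induces a dominant, finite morphism $X_{L_{-2}(D_4)} \to X_{L_{-2}(G_2)}$ which, at the level of the $C_2$-algebras, is compatible with the linear projection $\pi_2 \colon D_4 \twoheadrightarrow G_2$ dual to $\iota_2$. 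Concretely, $\pi_2$ restricted to $X_{L_{-2}(D_4)} = \overline{\OO}_{\mathrm{min}}$ has image contained in $X_{L_{-2}(G_2)}$, and by \eqref{eq:D4_G2} this image is precisely $\overline{\OO}_{\mathrm{sreg}}$. This gives $\overline{\OO}_{\mathrm{sreg}} = \pi_2(\overline{\OO}_{\mathrm{min}}) \subseteq X_{L_{-2}(G_2)}$, and combined with the upper bound yields equality. An alternative, cleaner route for this direction is a dimension/finiteness count: finiteness of $L_{-2}(D_4)$ over $L_{-2}(G_2)$ forces $\dim X_{L_{-2}(G_2)} = \dim X_{L_{-2}(D_4)} = 10$, and since $X_{L_{-2}(G_2)}$ is closed, conic, $G_2$-stable and contained in $\overline{\OO}_{\mathrm{sreg}}$ (from the upper bound), irreducibility of $\overline{\OO}_{\mathrm{sreg}}$ forces equality.

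The main obstacle I anticipate is the upper bound: one must actually produce the singular vector of $V^{-2}(G_2)$ explicitly — this is a nontrivial but finite computation in the PBW basis of $V^{-2}(G_2)$ — and then identify the variety cut out in $\C[G_2]$ by the $G_2$-orbit of its symbol. Verifying that this variety equals $\overline{\OO}_{\mathrm{sreg}}$ (and in particular that it does not contain a regular nilpotent element) is the delicate step; here I would use the quasi-lisse property to reduce to a statement about nilpotent orbits, and then either a direct evaluation of the symbol on a representative of the regular orbit, or a comparison of Poisson structures / a known classification of $G_2$-stable closed subvarieties of $\mathcal{N}(G_2)$, to conclude. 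The representation-theoretic input — the classification of irreducible highest weight $L_{-2}(G_2)$-modules promised in the abstract — can also be brought to bear: the associated variety must contain the union of the associated varieties of all simple objects in $\mathcal{O}$, which should already fill out $\overline{\OO}_{\mathrm{sreg}}$, giving an independent confirmation of the lower bound.
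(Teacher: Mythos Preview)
Your proposal has a genuine circularity problem. Both key inputs you invoke are \emph{conjectures} in this paper, not theorems: that an orbifold of a quasi-lisse vertex algebra by a finite solvable group is again quasi-lisse is precisely Conjecture~\ref{Conj:orbifold}, and that a finite extension $V\subset W$ induces a dominant morphism $X_W\to X_V$ is Conjecture~\ref{Conj:fin_ext}. The paper explicitly presents Theorem~\ref{Th:mainA} as new \emph{evidence} for these conjectures, so using them to prove the theorem is circular. Your dimension-count alternative for the lower bound suffers from the same defect: deducing $\dim X_{L_{-2}(G_2)}=\dim X_{L_{-2}(D_4)}$ from module-finiteness of $L_{-2}(D_4)$ over $L_{-2}(G_2)$ again presupposes that $R_{L_{-2}(D_4)}$ is finite over $R_{L_{-2}(G_2)}$, which is not known in general.

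The paper's route is entirely different and avoids these conjectures. Quasi-lisseness (Proposition~\ref{Pro:quasi-lisse}) is proved directly: one takes seven explicit weight-zero elements in the $G_2$-orbit of the symbol $v_{\sing}''$, applies the Chevalley projection, and checks that the only common zero in $\h$ is $0$. Both the lower bound and the exclusion of the regular orbit come from the Drinfeld--Sokolov reduction at the subregular nilpotent $f_{\mathrm{sreg}}$: using the explicit OPEs of the generators of $\W^{-2}(G_2,f_{\mathrm{sreg}})$ from \cite{Fasquel-OPE}, one shows (Theorem~\ref{th:WG2}) that $H^0_{DS,f_{\mathrm{sreg}}}(L_{-2}(G_2))\cong\C$, whence $X_{L_{-2}(G_2)}\cap\mathscr{S}_{f_{\mathrm{sreg}}}=\{f_{\mathrm{sreg}}\}$. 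This single equality both forces $\overline{\OO}_{\mathrm{sreg}}\subset X_{L_{-2}(G_2)}$ and rules out the regular orbit (whose Slodowy-slice intersection would be two-dimensional). Your proposal never mentions $\W$-algebras; without them, you have no independent mechanism for either the lower bound or for separating the subregular from the regular orbit.
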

Likewise, the embedding $\iota_3 \colon B_3 \hookrightarrow D_4$ 
induces a linear projection map 
$$\pi_3 \colon D_4 \longtwoheadrightarrow B_3.$$ 
The associated variety of $L_{-2}(B_3)$ 
was obtained in \cite{AraMor3}:  
\begin{align}
\label{eq:AV_B3}
X_{L_{-2}(B_3)}=\overline{\OO}_{\text{short}}
= \pi_3 (\overline{\OO}_{\text{min}}),
\end{align}
where $\OO_{\text{short}}$ is the unique short nilpotent orbit  
in $B_3$ and  $\overline{\OO}_{\text{short}}$ is its Zariski closure. 
Here, a nilpotent element $f$ of a simple Lie algebra $\g$ is called {\em short} if for 
$(e, f, h)$ an $\mf{sl}_2$-triple,
$$\g = \g_{-1}\oplus \g_0 \oplus \g_1,$$
where $\g_j = \{x \in \g \colon  [h, x] = 2 j x\}$. 
In $B_3=\mf{so}_7(\C)$, 
the partition corresponding  to $\OO_{\text{short}}$ is~$(3,1^4)$ 
and $\OO_{\text{short}}$ has dimension $10$, too. 
Similarly to the computations of $X_{L_{-2}(D_4)}$ 
and $X_{L_{-2}(B_3)}$, the proof of Theorem \ref{Th:mainA} 
is based on the analysis of singular vectors and the 
theory of $\W$-algebras. 
Here, obtaining a singular vector is much 
harder, and the novelty 
is the use of the explicit OPE's between the 
generators of the subregular 
$\W$-algebra in $G_2$ computed 
in \cite{Fasquel-OPE}. 
It was observed in \cite[Corollary 4.2]{Fasquel-OPE} 
that $\W_{-2}(G_2,f_{\text{sreg}}) \cong\C$   
which prompted   to conjecture 
$X_{L_{-2}(G_2)}=\overline{\OO}_{\text{sreg}}$. 

Our next results give a complete classification of 
the simple highest-weight $L_{-2}(\g)$-modules and 
the simple ordinary 
$L_{-2}(\g)$-modules for $\g=G_2$ and $\g=B_3$. 
Here, a module is called {\em 
ordinary} if $L_0$ acts
semisimply on it with finite-dimensional graded components 
and a grading bounded from below. 
Let us denote by $L_{\g}(k,\mu)$ the irreducible 
highest-weight modules of $\extg$ at level $k$
with highest-weight $\mu+k \Lambda_0$, 
where $\mu$ is in the dual of the Cartan subalgebra of 
$\g$ and $\Lambda_0$ is the dual of the central element $K$ 
in the dual of the Cartan of~$\extg$. 

\begin{mainthm}
\label{Th:mainB}
The set
$ \{L_{G_2}(-2,\mu_{i}) \colon i=1,\ldots,20\},$
where the $\mu_i$'s are given by Proposition~\ref{Pro:mu_i-G2}, 
provides the complete list of irreducible 
$L_{-2}(G_2)$-modules from the category $\mathcal{O}$.  
Among them, 
$L_{G_{2}}(-2,0)$, $L_{G_{2}}(-2,\varpi_{1})$ and $L_{G_{2}}(-2,\varpi_{2})$ 
are precisely the irreducible ordinary modules of $L_{-2}(G_{2})$.
\end{mainthm}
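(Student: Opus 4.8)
The plan is to reduce the classification of simple highest-weight $L_{-2}(G_2)$-modules to a finite combinatorial problem by using the maximal ideal of $V^{-2}(G_2)$, and then to use the conformal embedding $L_{-2}(G_2)\hookrightarrow L_{-2}(D_4)$ together with the already-known representation theory of $L_{-2}(D_4)$ from \cite{AraMor1} as a cross-check and as a tool to detect the ordinary modules. First I would recall that a simple $\extg$-module $L_{G_2}(-2,\mu)$ in category $\mathcal{O}$ descends to an $L_{-2}(G_2)$-module if and only if it is annihilated by the image of the maximal proper submodule $N$ of $V^{-2}(G_2)$; since $N$ is generated by a singular vector (the one alluded to in the introduction, to be produced explicitly), this condition becomes the vanishing of finitely many polynomial "Zhu-type" equations in $\mu$. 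Concretely, I would pass to Zhu's algebra $A(L_{-2}(G_2))$, which is a quotient of $\operatorname{U}(G_2)$ by a two-sided ideal generated by the image of the singular vector; the simple highest-weight modules then correspond to those $\mu$ for which the corresponding central/highest-weight character kills this ideal. Evaluating the image of the singular vector in $A(V^{-2}(G_2))=\operatorname{U}(G_2)$ on a highest-weight vector of weight $\mu$ yields a system of polynomial equations in the two coordinates of $\mu$ (in the basis of fundamental weights $\varpi_1,\varpi_2$), and solving it should give exactly the $20$ weights $\mu_i$ listed in Proposition~\ref{Pro:mu_i-G2}.

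The key steps, in order, are: (1) write down the explicit singular vector in $V^{-2}(G_2)$ of the appropriate conformal weight and verify it is singular (this is where the OPE computations for the subregular $\W$-algebra of $G_2$ from \cite{Fasquel-OPE} enter, or a direct computation in low weight); (2) project this vector to $A(V^{-2}(G_2))\cong \operatorname{U}(G_2)$ and compute its image as an explicit element of $\operatorname{U}(G_2)$; (3) act on a highest-weight vector $v_\mu$ and extract the finitely many polynomial constraints on $\mu$ — one must take the $G_2$-submodule generated by the singular vector and impose that all of its "highest-weight components" vanish, not merely the top one; (4) solve this polynomial system and check that the solution set is precisely $\{\mu_1,\dots,\mu_{20}\}$; (5) for the converse, show each $L_{G_2}(-2,\mu_i)$ genuinely is an $L_{-2}(G_2)$-module (again the annihilation condition, now verified to hold), so that the list is complete. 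For the ordinary-module statement, I would use that ordinary means $L_0=-D$ acts semisimply with finite-dimensional graded pieces bounded below; since the conformal weight of $L_{G_2}(-2,\mu)$ is $\tfrac{(\mu,\mu+2\overline\rho)}{2(k+h^\vee_{G_2})}$ with $k+h^\vee_{G_2}=-2+4=2$, ordinariness forces $\mu$ to be a dominant integral weight of $G_2$ (so that the top graded component is finite-dimensional), and among the $20$ weights $\mu_i$ only $0,\varpi_1,\varpi_2$ are dominant integral; conversely these three do give ordinary modules, which can be confirmed via the finite extension $L_{-2}(G_2)\subset L_{-2}(D_4)$ by decomposing the known ordinary $L_{-2}(D_4)$-modules as $G_2$-modules.

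The main obstacle I expect is step (1)–(2): producing the singular vector of $V^{-2}(G_2)$ explicitly and computing its image in Zhu's algebra. Unlike the simply-laced $D_4$ case, there is no small obvious candidate, and the naive ansatz in a given conformal weight involves a large space of $G_2$-invariants in the relevant tensor/PBW-degree component of $V^{-2}(G_2)$; pinning down the coefficients and then carrying the vector through the Zhu map (which requires normal-ordering and the $*$-product relations) is the computational heart of the argument. The introduction signals precisely this — "obtaining a singular vector is much harder" — and the resolution is to leverage the explicit subregular $\W$-algebra OPEs of \cite{Fasquel-OPE} rather than a brute-force free-field or direct computation. A secondary subtlety is step (3): one must be careful that imposing vanishing of the single top component of the singular vector is not sufficient; the full $\operatorname{U}(G_2)$-submodule it generates must annihilate $v_\mu$, which typically produces several independent polynomial equations whose common zero locus is the finite set we want. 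Once the singular vector and its Zhu image are in hand, steps (3)–(5) and the ordinary-module classification are essentially bookkeeping with $G_2$ weight combinatorics.
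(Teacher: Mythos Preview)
Your outline matches the paper's strategy in broad strokes, but there is one genuine gap: you assert without justification that the maximal proper submodule $N$ of $V^{-2}(G_2)$ is generated by a single singular vector. This is not automatic --- the maximal ideal could in principle require subsingular vectors as additional generators --- and it is exactly what the paper proves separately as Theorem~\ref{theorem1.6}. The argument there is a bootstrap that \emph{uses} the classification you outline in steps (1)--(4): having listed the twenty simple highest-weight $\widetilde{V}_{-2}(G_2)$-modules, one notes that any singular vector of $\widetilde{V}_{-2}(G_2)=V^{-2}(G_2)/\langle v_{\sing}\rangle$ has dominant integral $G_2$-weight (the graded pieces are finite-dimensional $G_2$-modules) and generates a simple subquotient appearing on the list, so its weight lies in $\{0,\varpi_1,\varpi_2\}$. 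The corresponding conformal weights are $0,1,2$, all strictly below $6$, and the search in Theorem~\ref{Th:singular_G2} already shows there are no singular vectors in $V^{-2}(G_2)$ of conformal weight $\leqslant 5$. Hence no subsingular vector exists, $\widetilde{V}_{-2}(G_2)$ is simple, and only then does your step (5) go through. Without this, verifying that $\langle v_{\sing}\rangle$ annihilates each $L_{G_2}(-2,\mu_i)$ does not show that $N$ does.

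Two smaller corrections. First, the subregular $\W$-algebra OPEs from \cite{Fasquel-OPE} are \emph{not} used to find the singular vector; the paper locates $v_{\sing}$ by a direct search over $G_2$-weights with integral conformal dimension (Table~\ref{table:1}), ruling out conformal weights $1,2,4,5$ and landing on weight $6$ with $G_2$-weight $4\varpi_1$ (the explicit vector has $385$ terms, Appendix~\ref{App:singular_G2}). The $\W$-algebra OPEs enter only in Section~\ref{Sec:G2_assoc}, for the associated variety. Second, the conformal embedding into $L_{-2}(D_4)$ plays no role in the proof of Theorem~\ref{Th:mainB}; the ordinary modules are identified directly as those $\mu_i$ that are dominant integral, since this is equivalent to the top component $L_{G_2}(\mu_i)$ being finite-dimensional.
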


{Exploiting our singular vector in $V^{-2}(G_2)$ 
and the notion of subsingular vectors (see Definition \ref{Def:subsingular}) 
in $V^{-2}(B_3)$, we succeed in describing 
the maximal ideal of $V^{-2}(B_3)$. 
This leads us to the following classification result.}

\begin{mainthm}
\label{Th:mainC}
The set
$
\{L_{B_{3}}(-2,\mu_{i})\colon i=1,\ldots,13\},
$
where the $\mu_i$'s are given by Proposition~\ref{prop2.5},
provides the complete list of irreducible 
$L_{-2}(B_3)$-modules from the category $\mathcal{O}$. 
Among them, $L_{B_{3}}(-2,0)$ and $L_{B_{3}}(-2,\varpi_{1})$ 
are precisely the irreducible ordinary modules for 
$L_{-2}(B_3)$. 
\end{mainthm}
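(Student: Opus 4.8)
The plan is to reduce the classification to a description of the maximal ideal
$N_{B_3}:=\ker\bigl(V^{-2}(B_3)\twoheadrightarrow L_{-2}(B_3)\bigr)$. The irreducible objects of category $\mathcal O$ over $V^{-2}(B_3)$ are exactly the highest-weight modules $L_{B_3}(-2,\mu)$, $\mu\in\h^*$, and such a module descends to an $L_{-2}(B_3)$-module precisely when $N_{B_3}$ acts trivially on it, equivalently when a fixed finite set of generators of $N_{B_3}$ annihilates it. By the standard argument this translates into a finite system of polynomial equations in the coordinates $\mu=(m_1,m_2,m_3)$ of $\mu$ in the basis of fundamental weights, and conversely every solution yields such a module. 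Thus the classification splits into (i) exhibiting generators of $N_{B_3}$ and (ii) solving the resulting system.

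For (i) we follow the strategy announced before the statement. The explicit singular vector $v_{G_2}\in V^{-2}(G_2)$ used in the proof of Theorem \ref{Th:mainB} is transported into $V^{-2}(B_3)$ through the vertex algebra embedding $V^{-2}(G_2)\hookrightarrow V^{-2}(B_3)$. Since the image of $V^{-2}(G_2)$ in $L_{-2}(B_3)$ is the vertex subalgebra generated by the $G_2$-currents, which by \cite{AP} equals $L_{-2}(G_2)=V^{-2}(G_2)/N_{G_2}$, the ideal $N_{G_2}$ maps into $N_{B_3}$; in particular $v_{G_2}\in N_{B_3}$. However $v_{G_2}$ is in general no longer annihilated by the full positive nilradical $\affn_{B_3}^{+}$, and in any case the ideal $I_0:=\langle v_{G_2}\rangle$ it generates is strictly smaller than $N_{B_3}$. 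To fill the gap we pass to $V^{-2}(B_3)/I_0$ and search for subsingular vectors of $V^{-2}(B_3)$ relative to $I_0$ in the sense of Definition \ref{Def:subsingular}, i.e.\ vectors whose image in $V^{-2}(B_3)/I_0$ is genuinely singular; a finite list of these, together with $v_{G_2}$, is then shown to generate $N_{B_3}$. The ``nothing more is needed'' part is obtained by identifying the associated variety of $V^{-2}(B_3)$ modulo the ideal generated by our candidates, showing it is contained in $\overline{\OO}_{\text{short}}=X_{L_{-2}(B_3)}$ by \eqref{eq:AV_B3} and \cite{AraMor3}, and then matching low-degree characters to upgrade the inclusion of ideals to an equality.

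For (ii) one substitutes a generic highest-weight vector, extracts from each generator of $N_{B_3}$ the vanishing of its relevant modes, and solves the resulting polynomial equations in $(m_1,m_2,m_3)$; organising the elimination by the weights of the generators yields exactly the $13$ weights $\mu_1,\dots,\mu_{13}$ of Proposition \ref{prop2.5}, and the sufficiency direction of (i) shows that each $L_{B_3}(-2,\mu_i)$ is indeed an $L_{-2}(B_3)$-module. Finally, $L_{B_3}(-2,\mu)$ is \emph{ordinary} if and only if its top space, the simple $B_3$-module of highest weight $\mu$, is finite-dimensional, that is, if and only if $\mu$ is a dominant integral weight; scanning the list shows that the only such weights among the $\mu_i$ are $0$ and $\varpi_1$, which gives the last assertion. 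As a consistency check, the same count is predicted by the realisation $L_{-2}(B_3)=L_{-2}(D_4)^{\langle\sigma\rangle}$ together with the module category of $L_{-2}(D_4)$ from \cite{AraMor1}.

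The main obstacle is step (i), and within it the explicit determination of the subsingular vectors together with the proof that they generate $N_{B_3}$. The vector $v_{G_2}$ sits in fairly low conformal weight, but the additional generators of $N_{B_3}$ live higher up, where the relevant graded subspaces of $V^{-2}(B_3)$ are large, so both producing the candidates and verifying that $\affn_{B_3}^{+}$ kills them modulo $I_0$ are heavy computations best organised with a computer algebra system. Moreover the generation claim is not formal: it rests on the earlier, $\W$-algebra-based computation of $X_{L_{-2}(B_3)}$ in \cite{AraMor3}. Once $N_{B_3}$ is in hand, step (ii) is a lengthy but routine elimination, and the characterisation of the ordinary modules is immediate.
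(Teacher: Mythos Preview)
Your overall strategy (describe the maximal ideal, translate to polynomial equations via the Zhu algebra, solve) is right, but the way you propose to pin down $N_{B_3}$ differs from the paper in a way that leaves a real gap.

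The paper does \emph{not} start from the ideal $I_0=\langle v_{G_2}\rangle$ and then hunt for subsingular vectors above it. Instead it starts from the \emph{native} $B_3$ singular vector $v_{\sing}^{B_3}$ of conformal weight~$2$ (already known from \cite{AraMor2}) and works in the quotient $\mathcal V_{-2}(B_3)=V^{-2}(B_3)/\langle v_{\sing}^{B_3}\rangle$. The image $w=\hat\iota_2(v_{\sing}^{G_2})$ of the $G_2$ singular vector is \emph{not} used as a starting generator; rather its $4\varpi_1$-weight component $w_{4\varpi_1}$ (extracted because $\hat\iota_2$ does not preserve $\h_{B_3}$-weight) \emph{is itself} the subsingular vector relative to $\langle v_{\sing}^{B_3}\rangle$. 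So the roles of ``first generator'' and ``subsingular vector'' are reversed compared to your plan, and the subsingular vector sits at the same conformal weight~$6$, not higher.

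The crucial step you are missing is the proof that these two vectors generate $N_{B_3}$. The paper does \emph{not} argue via associated varieties or character matching. It invokes the structural result of \cite{AKMPP} (Lemma~\ref{lem2.2} here): $\mathcal V_{-2}(B_3)$ has a \emph{unique} proper ideal, isomorphic to $L_{B_3}(-2,-6\Lambda_0+4\Lambda_1)$. Hence any nonzero element of $N_{B_3}\setminus\langle v_{\sing}^{B_3}\rangle$ of $\h_{B_3}$-weight $4\varpi_1$ and conformal weight~$6$ must, modulo $\langle v_{\sing}^{B_3}\rangle$, be a nonzero multiple of the highest-weight vector of that ideal, and therefore together with $v_{\sing}^{B_3}$ generates $N_{B_3}$ (Lemma~\ref{lem2.3}). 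The nonvanishing of $w_{4\varpi_1}$ modulo $\langle v_{\sing}^{B_3}\rangle$ is then checked a posteriori: the polynomial system from $\{v_{\sing}^{B_3},w_{4\varpi_1}\}$ has strictly fewer solutions than that from $v_{\sing}^{B_3}$ alone. Your proposed argument---``associated variety contained in $\overline{\OO}_{\text{short}}$ plus low-degree character matching''---would not establish maximality: equality of associated varieties is far weaker than equality of ideals (it only sees the reduced scheme), and ``matching low-degree characters'' is not a proof without an a priori bound on where a residual singular vector could sit, which you do not have.

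Once $N_{B_3}=\langle v_{\sing}^{B_3},\,w_{4\varpi_1}\rangle$ is established, your steps (ii) and the ordinary-module count are exactly as in the paper.
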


{
	The classifications of relaxed modules with finite-dimensional weight spaces for $L_{-2}(G_{2})$ and $L_{-2}(B_3)$ are deduced from these classifications using an algorithm presented in \cite{KR22} (see Corollaries \ref{cor:relaxedG2} and \ref{cor:relaxedB3}).
	Relaxed modules are a type of generalized highest-weight modules where highest-weight vectors no longer needs to be annihilated by the positive root vectors of the horizontal subalgebra \cite{FST98}. 
	They are strongly believed to play an important role in the representation theory of affine vertex algebras at non-rational levels, see for instance \cite{AM,Gab,FRR,KRW1,Ridout1,RSW} and references therein.
	
	The algorithm in \cite{KR22} provides a convenient way to construct the simple weight modules over the Zhu algebra $A(L_{k}(\g))$, and thus the irreducible $\Z_{\geqslant 0}$-graded $L_k(\g)$-modules, using the classification of highest-weight modules.
	Relaxed $L_{-2}(D_4)$-modules were previously classified in \cite{KR22} using this algorithm based on the classification of $L_{-2}(D_4)$-modules in the category $\mathcal{O}$ that appeared first in \cite{Per}.
	These modules are also relaxed $L_{-2}(G_{2})$-modules by restriction but as $L_{-2}(G_{2})$-modules, the weight spaces are  infinite-dimensional most of the time.
}

We also establish the following result.
\begin{mainthm}
\label{Th:mainD}
We have the following decomposition 
\begin{align*}
L_{D_4}(-2,-2\varpi_1)=L_{B_3}(-2,-2\varpi_1) \oplus L_{B_3}(-2, -3\varpi_1)
\end{align*}
as $ L_{-2}(B_3) $-modules.
\end{mainthm}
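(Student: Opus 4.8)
The plan is to combine the orbifold realization $L_{-2}(B_3)=L_{-2}(D_4)^{\langle\sigma\rangle}$ with an explicit analysis of highest-weight vectors in the $\mf{so}_8$-top space of $L_{D_4}(-2,-2\varpi_1)$. First I would record conformal weights: normalizing the invariant form so that long roots have squared length $2$, the minimal $L_0$-eigenvalue of $L_{\g}(k,\lambda)$ is $\tfrac{(\lambda,\lambda+2\rho)}{2(k+h_\g^\vee)}$, and this equals $-1$ for each of the triples $(\g,k,\lambda)=(D_4,-2,-2\varpi_1)$, $(B_3,-2,-2\varpi_1)$ and $(B_3,-2,-3\varpi_1)$ (the numerators being $-8$, $-6$, $-6$ and the denominators $8$, $6$, $6$). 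Thus all three modules involved share minimal conformal weight $-1$; consequently, in any decomposition of $L_{D_4}(-2,-2\varpi_1)$ into $L_{-2}(B_3)$-modules its conformal-weight-$(-1)$ subspace — which is exactly the top space, the simple $\mf{so}_8$-module $L_{\mf{so}_8}(-2\varpi_1)$ — must be the direct sum of the top spaces of those summands whose conformal weight equals $-1$.

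Next I would exhibit two $\widehat{B_3}$-highest-weight vectors inside $L_{\mf{so}_8}(-2\varpi_1)$. One is the original highest-weight vector $v$, whose $\mf{so}_7$-weight is the restriction $-2\varpi_1$ of its $\mf{so}_8$-weight. For the second I would use $\mf{so}_7=\mf{so}_8^{\langle\sigma\rangle}$ and put $u=a\cdot v$, where $a$ is the $\sigma$-odd combination of the two $\mf{so}_8$-root vectors whose roots restrict to the short $\mf{so}_7$-root attached to $\varpi_1$ (concretely $a=f_{\epsilon_1-\epsilon_4}-f_{\epsilon_1+\epsilon_4}$ in the $\epsilon$-basis). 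Then $u$ has $\mf{so}_7$-weight $-3\varpi_1$, and one checks that it is annihilated by all positive $\mf{so}_7$-root vectors: against the short simple root the two contributions cancel because $\langle-2\varpi_1,\beta^\vee\rangle=-2$ for both of the relevant $\mf{so}_8$-roots $\beta$, while against the other simple roots the would-be contributions vanish because they involve $f_\alpha v$ for a simple root $\alpha\ne\alpha_1$ of $D_4$ — such vectors are $0$ in $L_{\mf{so}_8}(-2\varpi_1)$ — together with the $\sigma$-invariance of the structure constants. One also checks $u\ne 0$: each $f_{\epsilon_1\pm\epsilon_4}v$ is nonzero in $L_{\mf{so}_8}(-2\varpi_1)$, since $e_{\epsilon_1\pm\epsilon_4}f_{\epsilon_1\pm\epsilon_4}v=-2v$, and the two terms have distinct $\mf{so}_8$-weights. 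As $v$ and $u$ sit at the minimal conformal weight, they are $\widehat{B_3}$-highest-weight vectors of weights $-2\varpi_1-2\Lambda_0$ and $-3\varpi_1-2\Lambda_0$; hence $L_{B_3}(-2,-2\varpi_1)$ and $L_{B_3}(-2,-3\varpi_1)$ occur as subquotients of the restriction of $L_{D_4}(-2,-2\varpi_1)$ to $L_{-2}(B_3)$.

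To turn this into a direct-sum decomposition I would invoke the Adamovi\'{c}--Per\v{s}e decomposition $L_{-2}(D_4)=L_{-2}(B_3)\oplus L_{B_3}(-2,\varpi_1)$ of \cite{AP}, which realizes $L_{-2}(D_4)$ as the $\Z/2$-graded simple-current extension of $L_{-2}(B_3)$ attached to $\sigma$. Since $\sigma$ fixes $\varpi_1$, the module $L_{D_4}(-2,-2\varpi_1)$ is $\sigma$-stable, so by the standard theory of such extensions its restriction to $L_{-2}(B_3)$ is a direct sum of at most two simple modules, each in category $\mc{O}$ and hence appearing in the list of Theorem \ref{Th:mainC}. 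By the preceding paragraph there are at least two non-isomorphic constituents, hence exactly two; by the first paragraph both have conformal weight $-1$, with top spaces the simple $\mf{so}_7$-modules of highest weights $-2\varpi_1$ and $-3\varpi_1$. Since a simple $L_{-2}(B_3)$-module in category $\mc{O}$ is determined by its highest weight, the two constituents are $L_{B_3}(-2,-2\varpi_1)$ and $L_{B_3}(-2,-3\varpi_1)$, which is the assertion.

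The main obstacle is the middle step: proving that $u$ is at once nonzero and $\mf{so}_7$-primitive inside the infinite-dimensional simple module $L_{\mf{so}_8}(-2\varpi_1)$, which calls for sufficient control over which PBW monomials applied to the highest-weight vector vanish there. A secondary point is the clean execution of the last step: one either relies on the structure theory of $\Z/2$-graded simple-current extensions (which needs only that $L_{B_3}(-2,\varpi_1)$ is a simple current, automatic because the extension is a vertex algebra), or, keeping within the techniques of this paper, one shows directly that $U(\widehat{B_3})v+U(\widehat{B_3})u$ already exhausts $L_{D_4}(-2,-2\varpi_1)$ by comparing characters with those of $L_{B_3}(-2,-2\varpi_1)$ and $L_{B_3}(-2,-3\varpi_1)$ determined in the proof of Theorem \ref{Th:mainC}.
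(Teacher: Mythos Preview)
Your route is genuinely different from the paper's. The paper does not analyse the top space of $L_{D_4}(-2,-2\varpi_1)$ directly; instead it applies a spectral-flow automorphism (the translation by $\varpi_1^\vee$ in the extended affine Weyl group of $\widehat{D_4}$) to both sides of the known Adamovi\'{c}--Per\v{s}e decomposition $L_{-2}(D_4)=L_{-2}(B_3)\oplus L_{B_3}(-2,\varpi_1)$, checks that this automorphism preserves $\widehat{B_3}\subset\widehat{D_4}$, and then only needs to identify the resulting highest-weight vectors (Lemma~\ref{Lem:flows}). Because it transports a decomposition already proved in \cite{AP}, no new semisimplicity argument is required at all. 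Your approach, by contrast, produces the two $\widehat{B_3}$-singular vectors by hand but then must argue that the restriction is semisimple with exactly two summands; the appeal to ``standard theory of simple-current extensions'' is not on firm ground here, since those structure theorems generally assume rationality or $C_2$-cofiniteness, neither of which holds for $L_{-2}(B_3)$. The gap is repairable---decomposing the $\sigma$-stable module into $\pm1$-eigenspaces for the diagram involution and using simplicity over $L_{-2}(D_4)$ shows each eigenspace is zero or simple over $L_{-2}(B_3)=L_{-2}(D_4)^{\langle\sigma\rangle}$---but you should spell this out rather than invoke general theory. (The character-comparison alternative you float is not actually available: Theorem~\ref{Th:mainC} classifies the simples but does not compute their characters.) A smaller imprecision: your stated reason for $e_{\beta_3}u=0$ via ``$\langle-2\varpi_1,\beta^\vee\rangle=-2$'' is really the check against the non-simple short root $\epsilon_1$, not against the simple root $\beta_3$; the correct verification for $\beta_3$ is that the commutators $[e_{\gamma_3},f_{\epsilon_1-\epsilon_4}]$ and $[e_{\gamma_4},f_{\epsilon_1+\epsilon_4}]$ coincide by $\sigma$-equivariance of the Chevalley structure constants.
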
 
Note that $L_{D_4}(-2,-2\varpi_1)$ is not an ordinary module since its $L_0$-eigenspaces are not finite-dimensional.
Since $L_{-2}(B_3) 
\hookrightarrow L_{-2}(D_4)$ is a conformal embedding 
and since both $L_{B_3}(-2,-2\varpi_1) $ and $L_{B_3}(-2, -3\varpi_1)$ have the same conformal dimension $-1$ (see the formula \eqref{eq:conf_dim}), 
Theorem \ref{Th:mainD} in particular implies the non-trivial decomposition 
$$L_{D_4}(-2\varpi_1)=L_{B_3}(-2\varpi_1) \oplus  L_{B_3}(-3\varpi_1)$$
of an infinite-dimensional representation of the finite-dimensional Lie algebra $B_3$,
where 
 $L_{\g}(\mu)$ denotes the irreducible 
highest-weight modules of $\g$ 
with highest-weight $\mu$.
The authors do not know whether this has been known in the literature.

\subsection{Motivations from physics}
\label{Sub:physics}
The vertex algebras
$L_{-2}(G_2) $,
$ L_{-2}(B_3) $
and
$ L_{-2}(D_4)$ 
are neither rational nor lisse. 
Therefore, they are not related in any sense 
with rational conformal field theories in two dimensions.
However,
they are remarkably related with superconformal field theories
in
{\em four} dimensions, via the 4D/2D correspondence discovered in \cite{BeeLemLie15}.

In more detail, for any four-dimensional $\mathcal{N}=2$ superconformal field theory (SCFT), 
there is a subsector which can be described by a two-dimensional 
vertex operator algebra (VOA).  
The 
normalized 
character of the corresponding VOA reproduces the special limit of the 
superconformal index, called the {\em Schur index}. 
On the one hand, four-dimensional SCFTs lead to some interesting 
conjectures for large classes of VOAs. 
For example, it is expected  \cite{BR}
that the Higgs branch of such a 4D theory is the associated variety $X_{V}$ of 
the corresponding VOA $V$.  
{This in particular implies all the VOAs coming from 4D theories are quasi-lisse.}
On the other hand, the representation theory of the  
VOA produces new physical observables of the 4D SCFT, 
such as the ordinary Schur index and the Schur index in the presence of boundary conditions, 
line defects and surface defects. 

One of the major advances in the last ten years is that one 
can engineer a large class of new 4D SCFTs by geometric methods. 

The classification of $\mathcal{N}=2$ rank one SCFTs have been studied based 
on the analysis of their Coulomb branch geometries and all possible deformations of
planar special K{\"a}hler singularities, 
labeled by their Kodaira type which are consistent 
with the low energy Dirac quantization condition \cite{ALLM1,ALLM2}.  
{Here the rank of a SCFT refers to the dimension of the Coulomb branch.}
One particularly interesting class of 
theories in this framework is the class of {Argyres--Douglas  theories} \cite{DG,DX} 
which cannot be studied like perturbating quantum field theory, 
since they are strongly coupled interacting 4D SCFT which have no known Lagrangian description in general. 

In \cite{LXY}, the authors found a universal formula for the rank of the theory so that a complete search is possible. 
	They listed all rank one, rank two, rank three Argyres--Douglas theories based on this formula 
	and found the corresponding VOA and the 
	associated Higgs branch for these theories. 
	{This classification gives some very interesting rank one SCFTs 
	such that the Higgs branches are not given by 
	one-instanton moduli spaces on $\mathbb{R}^4$ for a flavor symmetry group $G$}.  
{Those SCFTs with Higgs branches given by one-instanton moduli spaces for $G$ instantons are more easier to be understood than the general cases. They can be realized as the low-energy limit of the worldvolume theory on a single D3 brane probing a singularity in F-theory $7$ brane with gauge group $G$.}
	All these rank one Argyres--Douglas theories coincide with \cite{ALLM2} but arise from entirely different constructions. 	
	For exemple, the simple affine vertex algebras $L_{-2}(G_{2})$ and $L_{-2}(B_{3})$ 
	appear as the vertex operator algebras corresponding to
	rank one Argyres--Douglas theories 
	in four dimension with flavour symmetry $G_{2}$ and $B_{3}$.  
However, not all quasi-lisse VOAs have corresponding 4D counterparts.
So far, from the classification results of the 4D theory, we are no able to find the 4D theory such that the corresponding VOA is $L_{-1}(G_{2})$, $L_{-1}(B_{3})$ or $L_{-2}(B_{n})$,
	$n \geqslant 4$. 
	
	One expects \cite{SXY} that the representations of these vertex algebras are closely connected with 
	the Coulomb branch of the  circle compactified corresponding 4D theory;
	for $L_{-2}(D_4)$, the corresponding Coulomb branch is 
	related to \cite{GMN}
	the moduli space of the ${\rm SL}_2$-Higgs bundles on the sphere with four punctures. 
	However, for the other two theories it seems there are no precise description 
	of the corresponding Coulomb branches at the moment.
	
	In this context, 
	studying the representation theory of 
	these simple affine vertex algebras 
	becomes very important. 
	For example, {the decomposition in 
	Theorem \ref{Th:mainD} 
	suggests the decomposability of generalized Schur index.} 

\subsection{Connections with mathematical conjectures}
First of all, 
$-2$ is a not an admissible level for $G_2$. 
Therefore, Theorem \ref{Th:mainA} provides 
a new example of a vertex algebra whose associated variety 
has a finite number of symplectic leaves outside the admissible levels. 
Indeed, in the setting of affine vertex algebras associated with $\g$, 
this condition is equivalent to that of being contained in the nilpotent cone of $\g$ 
(see for instance \cite[Proposition 12.1]{AraMor-book}), 
and the symplectic leaves are nothing but the coadjoint orbits of 
$\g^*$, identified with the adjoint orbits of $\g$ through the Killing form. 

Vertex algebras whose associated variety 
has a finite number of symplectic leaves are 
referred to as {\em quasi-lisse vertex algebras} \cite{AK}, 
the lisse ones corresponding to the case where the associated variety 
has dimension zero. 
The following was conjectured in \cite{AraMor3}. 

\begin{mainconj}
\label{Conj:irr}
If $V$ is a simple quasi-lisse conformal vertex algebra, 
then $X_V$ is irreducible.
\end{mainconj}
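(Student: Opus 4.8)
\medskip

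Since the statement is a conjecture, what follows is a proposed line of attack. Write $X_V=\operatorname{Spec}R_V$ with $R_V=V/C_2(V)$ Zhu's $C_2$-Poisson algebra. As $V$ is conformal, the conformal grading makes $R_V$ a $\Z_{\geq 0}$-graded commutative Poisson algebra with bracket of degree $-1$, so $X_V$ is a conical Poisson variety. Decompose $X_V=X_1\cup\dots\cup X_r$ into irreducible components. The opening move uses the standard fact that in characteristic zero every minimal prime of a Noetherian Poisson algebra is a Poisson ideal; hence each $X_i$ is itself a conical Poisson variety, and quasi-lisseness passes to each $X_i$. An irreducible Poisson variety covered by finitely many locally closed symplectic leaves is the closure of its unique open dense leaf, so $X_V=\overline{\OO}_1\cup\dots\cup\overline{\OO}_r$ for pairwise distinct symplectic leaves $\OO_i$. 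The conjecture is thus equivalent to $r=1$, i.e. to the existence of a single symplectic leaf whose closure exhausts $X_V$.

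Pure Poisson geometry cannot force this — a union of coordinate lines through the origin is a conical Poisson variety with finitely many leaves — so the simplicity of $V$ must be brought in. The elementary observation is that any ideal $I$ of the vertex algebra $V$ descends to a Poisson ideal $\overline I$ of $R_V$ (because $\overline{a_{(0)}x}\in\overline I$ for $a\in V$, $x\in I$); so if each minimal prime $\mathfrak{p}_i$ of $R_V$ came from an ideal of $V$, then $r\geq 2$ would contradict simplicity. The plan is to make this lifting rigorous by passing to the singular support $SS(V)=\operatorname{Spec}(\operatorname{gr}_F V)$ attached to Li's canonical decreasing filtration: $\operatorname{gr}_F V$ is a vertex Poisson algebra carrying the derivation $\partial$ induced by $T$, it receives a surjection from the arc algebra $\C[J_\infty X_V]$, and its degree-zero piece $R_V$ yields a projection $SS(V)\twoheadrightarrow X_V$ admitting a section. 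Since derivations preserve minimal primes in characteristic zero (Seidenberg), every minimal prime of $\operatorname{gr}_F V$ is $\partial$-stable and Poisson, so one would aim to show that for $V$ simple there is only one of them — morally, that $SS(V)$ is a single chiral symplectic core in the sense of Arakawa and Moreau — whence $SS(V)$, and therefore $X_V$ via the surjection, would be irreducible.

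The main obstacle is exactly this last step. The correspondence between ideals of $V$ and $\partial$-stable Poisson ideals of $\operatorname{gr}_F V$ (or Poisson ideals of $R_V$) is only guaranteed in one direction, since taking the associated graded is not exact on the relevant lattice of subspaces; thus a minimal prime downstairs need not be the symbol ideal of any ideal of $V$, and simplicity of $V$ does not by itself rule out extra components. Moreover $\operatorname{gr}_F V$ is typically not Noetherian, which adds a technical wrinkle even to the language of minimal primes there. Overcoming this appears to require either a deformation argument showing that a nontrivial splitting of $SS(V)$ or of $X_V$ would force a nontrivial self-decomposition of $V$, contradicting simplicity, or genuinely new analytic input, such as exploiting that the normalized character of a quasi-lisse $V$ satisfies a modular linear differential equation \cite{AK} to constrain dimensions and hence the leaf stratification. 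Absent such an argument the conjecture can only be checked case by case, as it is by Theorem~\ref{Th:mainA}: $X_{L_{-2}(G_2)}=\overline{\OO}_{\mathrm{sreg}}$ is the closure of a single nilpotent orbit, hence irreducible.
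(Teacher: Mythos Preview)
The statement is labeled and treated in the paper as a \emph{conjecture} (Conjecture~\ref{Conj:irr}, recalled from \cite{AraMor3}); the paper offers no proof and only cites Theorem~\ref{Th:mainA} as a new example where it holds. You correctly recognize this and present a research outline rather than a proof, explicitly flagging the decisive gap: minimal primes of $R_V$ (or of $\operatorname{gr}_F V$) are Poisson and $\partial$-stable, but there is no known mechanism to lift them to ideals of $V$, so simplicity of $V$ cannot yet be brought to bear. Since the paper contains no argument to compare against, there is nothing further to contrast; your sketch is a reasonable articulation of why the conjecture is open, not a proof of it.
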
 
 
Theorem \ref{Th:mainA} thus gives a new example 
where Conjecture~\ref{Conj:irr} holds. 

Our result is also interesting in the context of orbifold vertex algebras. 
By \eqref{eq:D4_G2}, 
$\overline{\OO}_{\text{sreg}}$ is 
the orbifold of $\overline{\OO}_{\text{min}}$ 
by the symmetric group $\mf{S_3}$, the group of 
Dynkin diagram automorphisms 
of $D_4$. 
{As mentioned before, according to \cite{AP}, 
we have $L_{-2}(G_2) = L_{-2}(D_4)^{\mf{S}_3}$.}
Hence, Theorem \ref{Th:mainA} 
can be reformulated as follows: 
$$X_{V^{\mf{S}_3}} = X_{V} / {\mf{S}_3},$$
for $V:=L_{-2}(D_4)$. 
In general, {there is no reason 
for an arbitrary vertex algebra $V$ acted by a finite group $G$ 
that $X_{V^G}= X_V /G.$} 
The following was proved by Miyamoto in \cite{Miyamoto} though: 
if $V$ is a lisse simple conformal vertex 
algebra and $G$ is a finite solvable subgroup of the automorphism group of $V$, 
then $V^G$ is also lisse. 
Next conjecture is natural to expect, as suggested 
by Dra\v{z}en Adamovi\'{c}.

\begin{mainconj}
\label{Conj:orbifold}
Let $V=\bigoplus_{n \geqslant 0} V_n$ 
be a simple positively graded quasi-lisse vertex 
algebra such that $V_0\cong\C$ and $G$ a finite solvable automorphism group of $V$, 
then $V^G$ is also quasi-lisse. 
\end{mainconj}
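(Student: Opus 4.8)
We assume, as is standard in this context, that $V$ is strongly finitely generated, so that Zhu's $C_2$-algebra $R_V=V/C_2(V)$ is a finitely generated commutative Poisson algebra and $X_V$ is its associated reduced affine scheme. The plan is to induct on $|G|$. Since $G$ is solvable it has a normal subgroup $N$ with $G/N\cong\Z/p\Z$ for some prime $p$, and $V^G=(V^N)^{G/N}$. By the Dong--Li--Mason theory of orbifolds of simple vertex algebras, $V^N$ is again simple; it is positively graded, and $(V^N)_0=V_0^N=\C$ because a vertex algebra automorphism fixes the vacuum vector, which spans $V_0$. By the inductive hypothesis $V^N$ is quasi-lisse, so it suffices to prove the statement when $G=\langle g\rangle\cong\Z/p\Z$ is cyclic of prime order.

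Fix such a $G$. The inclusion $V^G\hookrightarrow V$ induces a Poisson homomorphism $R_{V^G}\to R_V$; averaging over $G$ (in characteristic zero) shows that its image is exactly the invariant subalgebra $R_V^G$, so it factors as $R_{V^G}\twoheadrightarrow R_V^G\hookrightarrow R_V$. By Noether's finiteness theorem for invariants of a finite group, $R_V$ is a finite module over $R_V^G$. Passing to reduced spectra, we obtain a finite surjection $\varphi\colon X_V\twoheadrightarrow X_V/G$ (the reduced spectrum of $R_V^G$) together with a closed immersion $j\colon X_V/G\hookrightarrow X_{V^G}$, and the composite $j\circ\varphi$ is the morphism of associated varieties induced by $V^G\hookrightarrow V$. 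Now $X_V$ has finitely many symplectic leaves because $V$ is quasi-lisse; since $G$ acts by Poisson automorphisms and therefore permutes these leaves, the $G$-equivariant refinement of the symplectic stratification of $X_V$ is still finite, and pushing it forward through the finite quotient $\varphi$ produces a finite symplectic stratification of $X_V/G$. Hence $X_V/G$ is quasi-lisse, and the conjecture follows \emph{as soon as one knows that $j$ is an isomorphism}, that is, that $X_{V^G}=X_V/G$.

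\textbf{The main obstacle} is precisely this last identity. It is equivalent to the assertion that the kernel of $R_{V^G}\to R_V$, namely $(C_2(V)\cap V^G)/C_2(V^G)$, is a nil ideal of $R_{V^G}$; as the paper emphasises, this can fail for general pairs $(V,G)$, since the functor $V\mapsto R_V$ does not commute with taking $G$-invariants. One must therefore use the present hypotheses---simplicity, positivity of the grading with $V_0=\C$, solvability, and, above all, quasi-lisseness. A promising route is to exploit the $\Z/p\Z$-grading $V=\bigoplus_{j\in\Z/p\Z}V^{[j]}$ with $V^{[0]}=V^G$, in which each $V^{[j]}$ is a simple $V^G$-module (by Dong--Li--Mason) making $V$ a $\Z/p\Z$-simple-current extension: one would analyse how the simple $V^G$-modules, and hence the closed subvarieties of $X_{V^G}$, are built from the $g^a$-twisted simple $V$-modules for $a=0,\dots,p-1$, whose associated varieties are contained in $X_V$, and then carry out a careful comparison of the corresponding $C_2$-algebras with $R_{V^G}$. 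This is where a genuinely geometric input is needed---one replacing the modular-invariance and twisted-module finiteness arguments that suffice in Miyamoto's lisse case. In every known example, including $L_{-2}(D_4)$ with $G=\mf{S}_3$ treated in this paper, one does have $X_{V^G}=X_V/G$, so establishing this equality in the stated generality would prove the conjecture in the stronger form $X_{V^G}\cong X_V/G$.
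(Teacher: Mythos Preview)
The statement you are addressing is presented in the paper as \emph{Conjecture~B}, not as a theorem: the authors do not prove it and offer no argument beyond remarking that Theorem~\ref{Th:mainA} and the equalities \eqref{eq:AV_B3} support it. So there is no proof in the paper to compare your proposal against; what you have written is an outline of a possible strategy toward an open problem, together with an honest identification of the gap.

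Your reduction to the cyclic prime case via solvability, and the factorisation $R_{V^G}\twoheadrightarrow R_V^G\hookrightarrow R_V$ obtained by averaging, are correct and standard. You are also right that the entire difficulty is concentrated in showing that the kernel $(C_2(V)\cap V^G)/C_2(V^G)$ is a nil ideal, equivalently that $X_{V^G}=X_V/G$; the paper says explicitly that ``in general, there is no reason \ldots\ that $X_{V^G}=X_V/G$'', and this is exactly why the statement remains conjectural. Your suggested route via the simple-current decomposition and twisted modules is plausible but, as you note, would require a genuine new geometric input to replace Miyamoto's finiteness arguments in the lisse case.

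One additional point deserves care: your claim that pushing the symplectic stratification of $X_V$ through the finite quotient $\varphi$ yields a finite symplectic stratification of $X_V/G$ is not as automatic as you suggest. Images of leaves under $\varphi$ need not be symplectic leaves of the quotient Poisson variety (fixed loci and branching can create smaller-dimensional strata), so even granting $X_{V^G}=X_V/G$ you would still need an argument that $X_V/G$ has finitely many symplectic leaves as a Poisson variety in its own right. This is believed to hold, but it is a separate lemma you should state and justify rather than assert.
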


Theorem \ref{Th:mainA} supports Conjecture \ref{Conj:orbifold}, 
and also the equalities \eqref{eq:AV_B3}. 
Indeed, by \cite{AP}, we also 
have $L_{-2}(B_3) = L_{-2}(D_4)^{\langle \sigma \rangle}$ where 
$\sigma$ is an order two element the group of 
Dynkin diagram automorphisms 
of $D_4$. 

Then, our result gives new evidences for the following conjecture stated in \cite{AvEM}. 

\begin{mainconj}
\label{Conj:fin_ext}
If $W$ is a finite extension of the vertex algebra $V$ then the corresponding
morphism of Poisson algebraic varieties 
$\pi \colon  X_W \to X_V$ is a dominant morphism. 
\end{mainconj}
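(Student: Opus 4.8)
We propose the following approach.

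\medskip

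\emph{Step 1: reduction to Zhu's $C_2$-algebras.} For a vertex algebra $U$ write $R_U=U/C_2(U)$ for Zhu's $C_2$-algebra, so that $X_U=\operatorname{Spec} R_U$, and for a $U$-module $M$ write $R_M=M/C_2(M)$; recall that $R_M$ is naturally a Poisson module over $R_U$, with $\bar a\cdot\bar m=\overline{a_{(-1)}m}$. The inclusion $V\hookrightarrow W$ induces a Poisson algebra homomorphism $\phi\colon R_V\to R_W$, and $\pi$ is the corresponding morphism $\operatorname{Spec} R_W\to\operatorname{Spec} R_V$. The first point is that $R_W$ is a finitely generated $R_V$-module: if $w_1,\dots,w_k$ generate $W$ over $V$, then modulo $C_2(W)$ every vector of $W$ is an $R_V$-combination of $\bar w_1,\dots,\bar w_k$, since any product $a^1_{(-n_1)}\cdots a^r_{(-n_r)}w_i$ with some $n_j\geqslant 2$ already lies in $C_2(W)$, while those with all $n_j=1$ are congruent to $\bar a^1\cdots\bar a^r\cdot\bar w_i$ modulo $C_2(W)$ (one uses the inclusions $C_2(W)_{(-1)}W\subseteq C_2(W)$ and $V_{(-1)}C_2(W)\subseteq C_2(W)$, both immediate from the Borcherds identity). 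Hence $\pi$ is a \emph{finite} morphism; it is therefore closed, and its image is the closed set $V(\ker\phi)\subseteq X_V$. Consequently $\pi$ is dominant if and only if $V(\ker\phi)=X_V$, i.e. if and only if $\ker\phi$ lies in the nilradical of $R_V$. One checks moreover that $\ker\phi=\operatorname{Ann}_{R_V}(R_W)=$ the image of $C_2(W)\cap V$ in $R_V$, so the statement is equivalent to $\operatorname{Supp}_{R_V}(R_W)=X_V$.

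\medskip

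\emph{Step 2: the semisimple case (which already covers the vertex algebras of this paper).} Suppose $W$ is a semisimple $V$-module; by the conformal embeddings of \cite{AP}, $L_{-2}(D_4)$ is a finite direct sum of simple $L_{-2}(G_2)$- and of simple $L_{-2}(B_3)$-modules, so this applies with $W=L_{-2}(D_4)$ and $V=L_{-2}(G_2)$ or $L_{-2}(B_3)$. Then the $V$-submodule $V\subseteq W$ is a direct summand, so there is a $V$-module projection $\operatorname{pr}\colon W\twoheadrightarrow V$ with $\operatorname{pr}(a_{(n)}w)=a_{(n)}\operatorname{pr}(w)$ and $\operatorname{pr}|_V=\mathrm{id}$. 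If $v\in V\cap C_2(W)$, write $v=\sum_i (a^i)_{(-n_i)}w_i$ with $a^i\in V$, $w_i\in W$, $n_i\geqslant 2$; applying $\operatorname{pr}$ gives $v=\sum_i (a^i)_{(-n_i)}\operatorname{pr}(w_i)\in C_2(V)$. Thus $C_2(W)\cap V=C_2(V)$, so $\phi$ is injective; being also finite, $\pi$ is surjective by the lying-over theorem, in particular dominant.

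\medskip

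\emph{Step 3: the general case --- the main obstacle.} Removing the semisimplicity hypothesis is the hard part, and presumably the reason the statement is only conjectured in \cite{AvEM}. For a non-split finite extension the projection $\operatorname{pr}$ need not exist, and finiteness alone gives no reason why $C_2(W)\cap V$ cannot be strictly larger than $\sqrt{C_2(V)}\cap V$. The natural first attempt is a dévissage along a finite filtration of $W$ by $V$-submodules with semisimple subquotients (or subquotients over the simple quotient of $V$); the difficulty is that $M\mapsto R_M$ is only right exact, so short exact sequences of $V$-modules do not control $C_2$ from both sides. A more robust substitute is to pass to Li's canonical filtration: for a suitable compatible filtration $\operatorname{gr} W$ is a finitely generated $\operatorname{gr} V$-module, the induced map $\operatorname{Spec}\operatorname{gr} W\to\operatorname{Spec}\operatorname{gr} V$ is finite, and the constant-arc sections give split surjections onto $X_W$ and $X_V$; a diagram chase then reduces dominance of $\pi$ to nilpotency of the kernel of $\operatorname{gr} V\to\operatorname{gr} W$, which one would try to control via the vertex-Poisson structure of $\operatorname{gr} V$ and the arc-space surjection $\C[J_\infty X_V]\twoheadrightarrow\operatorname{gr} V$. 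Carrying either line through for arbitrary non-split finite extensions is exactly where the real work lies; a complete proof would likely require an extra classification input or a new structural fact about finite vertex-algebra extensions.
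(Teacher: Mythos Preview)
The statement is a \emph{conjecture}; the paper does not prove it. What the paper does is verify it in the two specific cases $V=L_{-2}(G_2)$ or $L_{-2}(B_3)$, $W=L_{-2}(D_4)$, and it does so not by any abstract argument but by computing all three associated varieties explicitly (Theorem~\ref{Th:mainA}, \eqref{eq:AV_B3}, and \cite{AraMor1}) and observing that $\pi_2,\pi_3$ restrict to surjections $\overline{\OO}_{\min}\twoheadrightarrow\overline{\OO}_{\text{sreg}}$, $\overline{\OO}_{\min}\twoheadrightarrow\overline{\OO}_{\text{short}}$ by \cite{LS}.

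Your Step~1 is a reasonable reformulation, and Step~3 is an honest diagnosis of why the general case is open. Step~2, however, has a genuine gap. When you write ``$v=\sum_i (a^i)_{(-n_i)}w_i$ with $a^i\in V$'', you are tacitly working with the $V$-module $C_2$, namely $\mathrm{span}\{a_{(-2)}w:a\in V,\,w\in W\}$. But $X_W$ is defined using the vertex-algebra $C_2(W)=\mathrm{span}\{w_{(-2)}w':w,w'\in W\}$, which is in general strictly larger. Your projection argument does correctly show that the $V$-module $C_2$ of $W$ meets $V$ only in $C_2(V)$; what it does \emph{not} control are the cross terms $m_{(-2)}m'$ with $m,m'$ in the complement $M$. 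Such a product lands in $W=V\oplus M$, and its $V$-component $\operatorname{pr}_V(m_{(-2)}m')$ is governed by an intertwining operator of type $\binom{V}{M\,M}$; there is no a~priori reason this component lies in $C_2(V)$. So even in the semisimple case your argument does not establish injectivity of $\phi\colon R_V\to R_W$ (equivalently $C_2^{\mathrm{VA}}(W)\cap V=C_2(V)$), and hence does not prove dominance. This gap is exactly why the statement remains conjectural even for the ``easy'' split extensions, and why the paper resorts to direct computation of the varieties rather than an abstract argument.
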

As mentioned above, by \cite{AP}, $L_{-2}(D_4)$ 
is a finite extension of both $L_{-2}(G_2)$ 
and $L_{-2}(B_3)$ and the restriction of 
$\pi_2$ (resp.~$\pi_3$) 
to $\overline{\OO}_{\text{min}}$ 
is precisely the corresponding morphism 
between $X_{L_{-2}(D_4)}$ and $X_{L_{-2}(G_2)}$ 
(resp.~$X_{L_{-2}(B_3)}$). 
Since 
$$\dim \overline{\OO}_{\text{min}} 
= \dim \overline{\OO}_{\text{sreg}}
=\dim \overline{\OO}_{\text{short}} =10,$$
Theorem \ref{Th:mainA} and the equalities \eqref{eq:AV_B3} furnish  
new examples where Conjecture \ref{Conj:fin_ext} holds. 
Most examples so far occurred between 
simple affine vertex algebras and $\W$-algebras 
at admissible levels \cite{AvEM}. 

Finally, in the course of the proof 
of Theorem \ref{Th:mainA}, 
it will be proved that 
\begin{align}
\label{eq:H0}
H^{0}_{DS,f_{\text{sreg}}}(L_{-2}(G_2)) = \W_{-2}(G_2,f_{\text{sreg}}) 
\cong \C,
\end{align} 
where $H^0_{DS,f}(-)$ denotes the Drinfeld--Sokolov reduction 
with respect to the nilpotent element $f$ of $\g$, 
$\W_k(\g,f)$ is the simple quotient of the universal $\W$-algebra 
$\W^k(\g,f):=H^0_{DS,f}(V^k(\g))$ 
associated with $\g$ and $f$, 
and $f_{\text{sreg}}$ is an element of the subregular 
nilpotent orbit of $G_2$, see~\S\ref{Sub:W-algebras} and 
Section~\ref{Sec:G2_assoc}. 
The next conjecture ({\cite{KacRoaWak03,KacWak08})  
was proved for many cases, but mainly 
for $k$ an admissible level. 

\begin{mainconj}
\label{Conj:H0}
$H^0_{DS,f}(L_k(\g))$
is either zero or isomorphic to $\W_k(\g, f)$.
\end{mainconj}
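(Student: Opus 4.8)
The plan is to reduce the conjecture to a statement about the image of the maximal ideal under Drinfeld--Sokolov reduction, and then to control that image by a combination of exactness, associated-variety comparison, and character arguments. Write the defining short exact sequence of $V^k(\g)$-modules
\begin{align*}
0 \longrightarrow N_k \longrightarrow V^k(\g) \longrightarrow L_k(\g) \longrightarrow 0,
\end{align*}
where $N_k$ is the maximal proper ideal of $V^k(\g)$. The functor $H^0_{DS,f}(-)$ is exact on the category $\mc{O}$, and the higher cohomologies $H^i_{DS,f}(-)$, $i\neq 0$, vanish there (Arakawa). Applying it yields
\begin{align*}
0 \longrightarrow H^0_{DS,f}(N_k) \longrightarrow \W^k(\g,f) \longrightarrow H^0_{DS,f}(L_k(\g)) \longrightarrow 0,
\end{align*}
so that $H^0_{DS,f}(L_k(\g)) \cong \W^k(\g,f)/H^0_{DS,f}(N_k)$ is a quotient vertex algebra of $\W^k(\g,f)$ and $H^0_{DS,f}(N_k)$ is an ideal. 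Since $\W_k(\g,f)$ is by definition the unique simple quotient of $\W^k(\g,f)$, the conjecture becomes equivalent to the assertion that $H^0_{DS,f}(L_k(\g))$ is either zero or simple; that is, that $H^0_{DS,f}(N_k)$ is either all of $\W^k(\g,f)$ or exactly its maximal ideal.

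First I would settle the zero/nonzero dichotomy using associated varieties. By the compatibility of $H^0_{DS,f}(-)$ with the associated variety, one has $X_{H^0_{DS,f}(L_k(\g))} = X_{L_k(\g)} \cap \mathcal{S}_f$, where $\mathcal{S}_f$ is the Slodowy slice at $f$. Since $X_{L_k(\g)}$ is a closed, conic, $G$-invariant subvariety of $\g$, this intersection is nonempty precisely when $\overline{\OO}_f \subseteq X_{L_k(\g)}$, i.e. when $f \in X_{L_k(\g)}$. Thus $H^0_{DS,f}(L_k(\g)) = 0$ exactly when $f$ does not lie in the associated variety, which accounts for the ``zero'' alternative and, in the complementary case, fixes the support $X_{L_k(\g)} \cap \mathcal{S}_f$ that the remaining (conjecturally simple) quotient must carry.

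The substantive step is to prove simplicity when $H^0_{DS,f}(L_k(\g)) \neq 0$. The approach I would follow is the character/resolution method that succeeds at admissible levels: resolve $L_k(\g)$ by a (two-sided) BGG-type complex of generalized Verma modules, apply the exact functor $H^0_{DS,f}(-)$ term by term, and compute the Euler--Poincar\'e character of $H^0_{DS,f}(L_k(\g))$ from the free-field characters of the reduced Vermas. One then matches this character with that of $\W_k(\g,f)$; since $H^0_{DS,f}(L_k(\g))$ already surjects onto $\W_k(\g,f)$, equality of characters forces the surjection to be an isomorphism. The special case recorded in \eqref{eq:H0}, namely $H^0_{DS,f_{\text{sreg}}}(L_{-2}(G_2)) \cong \C \cong \W_{-2}(G_2,f_{\text{sreg}})$, is exactly this mechanism carried out for the subregular reduction, where the explicit singular vector of $V^{-2}(G_2)$ determines $H^0_{DS,f}(N_k)$.

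The hard part, and the reason the conjecture remains open beyond admissible $k$, lies precisely in this last step. For non-admissible $k$ there is in general no known BGG-type resolution of $L_k(\g)$, and the character of the simple $\W$-algebra $\W_k(\g,f)$ is not available a priori, so the character-matching argument has no input. An alternative would be to control the ideal $H^0_{DS,f}(N_k)$ directly by reducing the singular vectors that generate $N_k$, but these are known explicitly only in sporadic cases (as for $G_2$ at level $-2$ here), and the reduction complex makes their images intractable for general $\g$, $k$, and $f$. The general conjecture therefore appears to require either new uniform character formulas for simple affine and $\W$-vertex algebras at arbitrary non-critical level, or a structural argument showing directly that $H^0_{DS,f}(-)$ carries the simple object $L_k(\g)$ to a simple or zero object without passing through explicit resolutions.
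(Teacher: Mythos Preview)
The statement you were given is Conjecture~D in the paper, not a theorem: the paper explicitly presents it as open and attributes it to Kac--Roan--Wakimoto and Kac--Wakimoto. There is therefore no ``paper's own proof'' to compare against. Your write-up correctly recognizes this and is really an informed discussion of why the conjecture is hard rather than a proof attempt; the reduction to simplicity of the quotient $H^0_{DS,f}(L_k(\g))$ of $\W^k(\g,f)$, the associated-variety criterion for vanishing, and the obstruction (absence of BGG-type resolutions and character formulas at non-admissible levels) are all accurately stated.

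What the paper does prove is the single new instance $\g=G_2$, $k=-2$, $f=f_{\text{sreg}}$ (Theorem~\ref{th:WG2} and \eqref{eq:H0}). Its method is not the character/resolution approach you sketch but a direct computation: starting from the explicit conformal-weight-six singular vector $v_{\sing}$ of $V^{-2}(G_2)$, the paper tracks its image (and that of $e_{-\alpha_1}(0)v_{\sing}$) in $\W^{-2}(G_2,f_{\text{sreg}})$ via the $C_2$-algebra and $(S(\g)/J_\chi)^M$, identifies these images as specific combinations of the strong generators $L,G^\pm$, and then uses the explicit OPEs computed in \cite{Fasquel-OPE} to check that the ideal they generate is exactly the maximal ideal (indeed, that $G^-$ alone generates it). This is a hands-on verification tailored to the case at hand, not an instance of the general character-matching mechanism; the latter is unavailable here precisely because $-2$ is non-admissible for $G_2$.
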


The identities \eqref{eq:H0} 
give a new case where 
Conjecture \ref{Conj:H0} holds 
for a non-admissible level.

\subsection{Organization of the paper}
The rest of the article is organized as follows. 
Section~\ref{Sec:Pre} 
regroups a few preliminary results on 
the Zhu algebra 
and Zhu's correspondence, 
associated varieties and $\W$-algebras. 
We fix in this section the main notation 
of the article. 
In Section~\ref{Sec:G2_rep}, 
we study the representations in the category $\mc{O}$ 
of the simple affine $L_{-2}(G_2)$. 
This is based on the computation of a singular vector. 
The computation of the associated variety 
of $L_{-2}(G_2)$ is achieved in Section~\ref{Sec:G2_assoc}. 
Section~\ref{Sec:B3_rep} is about the 
representations of $L_{-2}(B_3)$. 
We first study the representations in 
the category $\mc{O}$ exploiting the results about $G_2$. 
Furthermore, we study 
non-ordinary modules using spectral 
flows from ordinary modules of $L_{-2}(B_3)$. 
There are two appendices: 
Appendix~\ref{App:singular_G2} gives the explicit 
formulas of a singular vector in $V^{-2}(G_2)$ 
and of its image in the Zhu algebra. 
Appendix~\ref{App:pol_B3} describes 
useful polynomials in the symmetric algebra of $B_3$ 
related to subsingular vectors in $V^{-2}(B_3)$.

\medskip

\noindent
Throughout the article, all Lie algebras are defined over $\C$ 
and all topological terms refer to the Zariski
topology.

\subsection*{Acknowledgements}
{First of all, we thank the anonymous referees for their helpful comments 
and suggestions.} 
T.A.~is partially supported by JSPS  KAKENHI Grant Numbers 21H04993 and 19KK0065.
X.D.~is supported by JSPS KAKENHI Grant Numbers 21H04993 and 23K19008.
J.F.~is supported by a University of Melbourne Establishment Grant. 
A.M.~is partially supported by the European Research Council (ERC) 
under the European Union's Horizon 2020 research innovation programme 
(ERC-2020-SyG-854361-HyperK). 
X.D. wishes to thank Bailin Song for useful discussions and suggestions, and thank Thomas Creutzig, Shashank Kanade, Jinwei Yang and Yongchang Zhu for valuable comments.
B.L. wishes to express his gratitude to his supervisor Wenbin Yan for long term support and encouragement. He especially thanks Yiwen Pan for useful discussions and many suggestions. He also acknowledges RIMS, Kyoto University for hospitality at an early stage of this work. 
{J.F. thanks David Ridout for interesting discussions on the construction of relaxed modules.}

\section{Preliminaries}
\label{Sec:Pre}
Let $\g$ be a simple Lie algebra with Killing form $\kappa_\g$ 
as in the introduction, 
and let 
$\extg=\g[t,t^{-1}] \oplus \C K\oplus \C D$ 
be the extended affine Kac-Moody Lie algebra
associated with $\g$ and the inner product 
$$(-|-)=\dfrac{1}{2h_\g^\vee} \times \kappa_\g,$$ 
with the commutation relations
\begin{align*}
 [x{(m)},y{(n})]=[x,y]{(m+n)}+ m(x|y)\delta_{m+n,0}K,\quad
 [D,x{(m)}]=m x{(m)},\quad
 [K,\extg]=0,
\end{align*}
for $m,n\in\Z$ and $x,y \in \g$, where $x{(m)}=x\otimes t^m$. 

Let $\affg= [\extg,\extg]  =\g[t,t^{-1}] \oplus \C K$. 
Fix  a triangular decomposition $\g= \n_+ \oplus \h \oplus \n_-$
so that 
$$\extg = \affn_- \oplus \exth \oplus \affn_+ \quad \text{and}
\quad  \affg = \affn_- \oplus \affh \oplus \affn_+$$
are triangular decompositions  
for $\extg$ and $\affg$, respectively, 
with $\affn_- = \n_-+ t^{-1} \g[t^{-1}]$, 
$\affn_+ = \n_+ + t \g[t]$,  
$\exth = \h \oplus \C K \oplus \C D$ 
and $\affh = \h \oplus \C K$. 
The Cartan subalgebra $\exth$ 
is equipped with a bilinear form extending 
that on $\h$ given by 
$$(K|D) =1,\quad (\h | \C K \oplus \C D) = (K|K) = (D|D) =0.$$
We write $\Lambda_0$ and $\delta$ for the elements 
of $\exth^*$ orthogonal to $\h^*$ 
and dual to $K$ and $D$, respectively. 
Let $\Delta$ be the root system of $(\g,\h)$ 
with basis $\Pi=\{\alpha_1,\ldots,\alpha_\ell\}$, 
and denote by 
$\theta$ the highest positive root. 
We write $\varpi_1,\ldots,\varpi_\ell$ for 
the fundamental weights of $\g$ with respect 
to $\alpha_1,\ldots,\alpha_\ell$, 
and $\Lambda_0,\Lambda_1,\ldots,\Lambda_\ell$ for 
those of $\extg$. 

For $k\in \C$, set
\begin{align*}
 V^k(\g)=U(\extg)\otimes_{U(\g[t] \oplus  \C K \oplus \C D)}\C_k,
\end{align*}
where 
$\C_k$ is the one-dimensional representation of
$\g [t]\oplus \C K \oplus \C D$ on which $\g [t]\oplus \C D$ acts trivially 
and $K$ acts as multiplication by $k$.
The space $V^k(\g)$ is naturally a vertex algebra, 
called the {\em universal affine vertex algebra associated with
$\g $ at level $k$}. 
By the PBW theorem, we have $V^k(\g ) \cong U(\g [t^{-1}]t^{-1})$ as $\C$-vector spaces. 

The vertex algebra $V^k(\g )$ is graded by $D$:
\begin{align*}
V^k(\g )=\bigoplus_{d\in \Z_{\geqslant 0}}V^k(\g )_d,\quad 
V^k(\g )_d=\{a\in V^k(\g )\colon { D a = - da}\}. 
\end{align*} 
This grading gives a conformal structure provided that $k$ 
is not critical, that is, $k \not=-h_\g^\vee$. 
A~$V^k(\g )$-module is the same as a smooth $\extg$-module of level
$k$, where a $\extg$-module $M$ is called smooth if 
$x{(n)}m=0$ for $n$ sufficiently large for all $x\in \g$, $m\in M$.

\subsection{Singular vectors and highest-weight modules}

For each $\alpha\in\Delta$, fix a nonzero root vector $e_\alpha$. 
Recall that a vector $v \in V^{k}(\g)$ 
is called {\em singular} if $e_{\alpha}{(0)} v=0$ for all $\alpha \in \Pi$ 
and $e_{-\theta}{(1)} v=0$. 
In other words, $v$ is a singular vector if $v$ is singular for $\extg$ with 
respect to $\affn_+$.  
If $v$ is singular for $V^k(\g)$, denote by
$\langle v \rangle $ the ideal in $V^{k}(\g)$ generated by $v$, 
that is, $ \langle v \rangle  = U(\extg) v$. 
We set 
\begin{align}
\label{eq:tildeV}
\widetilde{V}_{k}\left( \g\right)=V^{k}(\g)/ \langle v \rangle, 
\end{align}
the associated quotient vertex algebra. 

Let $L_k(\g )$ be the unique simple graded quotient of $V^k(\g)$. 
As a $\extg$-module, $L_k(\g)$ is isomorphic to the irreducible highest-weight
representation of $\extg$ with highest-weight $k\Lambda_0$. 
If $N_k$ denotes the unique maximal ideal of $V^k(\g)$, then 
\begin{align*}
 L_k(\g)=V^k(\g)/N_k, 
\end{align*} 
and $L_k(\g)$ is a quotient of $\widetilde{V}_{k}\left( \g\right)$. 
We will also make use of the notion of {\em subsingular vector}. 

\begin{defn}
\label{Def:subsingular} 
A vector $v_{\text{\em sub}} \in N_k$ is 
{\em subsingular} if there exists 
a proper submodule $N'_k$ 
of $N_k$ such that the following conditions hold:
\begin{align*}
v_{\text{\em sub}} \notin N'_k, 
\quad e_\alpha(0) v_{\text{\em sub}} \in N'_k \quad \text{for all}
\; \alpha \in \Pi, 
\quad 
e_{-\theta}(1)  v_{\text{\em sub}}  \in N'_k.
\end{align*}
\end{defn}
Note that the image of a subsingular vector in 
$V^k(\g)/N'_k$ is a singular vector 
of $V^k(\g)/N'_k$. 

For $\lam \in \h^*$, we denote by $L_\g(\lam )$ 
the irreducible highest-weight representation of $\g$ 
with highest-weight $\lam $.  
Similarly, for $\tilde{\lam } \in \exth^*$ we denote by $L_{\extg}(\tilde{\lam })$  
the irreducible highest-weight representation of $\extg$. 
In the case where $\tilde{\lam } = \lam  + k \Lambda_0$, 
we shall sometimes write $L_\g(k,\lam)$ instead of $L_{\extg}(\tilde{\lam})$. 
In this way, we have 
$$L_k(\g) = L_{\extg}(k \Lambda_0) = L_{\g}(k,0).$$

A finitely generated module $M$ over a conformal vertex algebra 
$V$ is called {\em ordinary} 
if $L_0$ acts semisimply, $M_d$ being finite-dimensional 
for all $d$, 
where 
$$M_d = \{m \in M\colon L_0 m =dm\},$$
and the conformal weights of $M$ 
are bounded from below, i.e. there exists $d_0$ so that $M_d=0$ 
for $d\leqslant d_0$. 
Call the {\em conformal dimension} 
of a simple ordinary $V$-module $M$ 
the minimum conformal weight of $M$. 
More generally, a $V$-module $M$ 
is said to be of {\em positive energy} if it 
is $\Z_{\geqslant 0}$-graded, 
$M=\bigoplus\limits_{d \in \Z_{\geqslant 0}} 
M_{d_0+d}$, with $M_{d_0}\not=0$,  
such that  
$a{(n)}M_k \subset M_{k-n}$, 
where for $a \in V$ 
of conformal weight $\Delta$ 
we write $a(z)=\sum\limits_{n\in\Z}a(n)z^{-n-\Delta}$. 

The highest-weight $\extg$-module $L_\g(k,\lam)$,  
regarded as a 
$V^k(\g)$-module,  
has conformal dimension 
\begin{align}
\label{eq:conf_dim}
h_{L(\lam)} =\dfrac{(\lam|\lam+2\rho)}{2(k+h_\g^\vee)},
\end{align} 
where $\rho$ is the half--sum of positive roots. 

\subsection{Zhu's algebra and the characteristic variety}
For a positively $\Z$-graded vertex algebra $V=\bigoplus_{d}V_d$, 
let $A(V)$ be  the Zhu algebra of $V$, 
\begin{align*}
A(V)=V/V\circ V,
\end{align*}
where $V\circ V$ is the $\C$-span of the vectors
\begin{align*}
a\circ b:=\sum_{i\geqslant 0}\begin{pmatrix}
			 \Delta\\ i
			\end{pmatrix}a_{(i-2)}b
\end{align*}
for $a\in V_{\Delta}$, $\Delta\in \Z_{\geqslant 0}$, $b \in V$,
and
$V\rightarrow  ({\rm End} V)[\![z,z^{-1}]\!]$,  $a\mapsto \sum_{n\in \Z}a_{(n)}z^{-n-1}$,
denotes the state-field correspondence. 
The space $A(V)$ is a unital associative algebra
with respect to the multiplication defined by
\begin{align*}
 a * b:=\sum_{i\geqslant 0}\begin{pmatrix}
			 \Delta\\ i
			\end{pmatrix}a_{(i-1)}b
\end{align*}
for $a\in V_{\Delta}$, $\Delta\in \Z_{\geqslant 0}$, $b \in V$. 
Denote by $[a]$ the image of $a \in V$ in $A(V)$.

Let $M=\bigoplus\limits_{d \in \Z_{\geqslant 0}}M_{d_0+d}$, with $M_{d_0}\ne 0$,
be a positive energy representation of $V$. 
Then $A(V)$ naturally acts on its top weight space
$M_{\text{top}}:=M_{d_0}$, and
the correspondence $M\mapsto M_{top}$ defines a bijection between
isomorphism 
classes of
simple positive energy representations of $V$ and simple 
$A(V)$-modules \cite{Z}. 

The Zhu algebra
 $A(V^k(\g))$ is naturally isomorphic to 
the universal enveloping algebra $U(\g)$ \cite{FZ}, 
where the isomorphism 
$F \colon A(V^{k}(\g)) \to U(\g)$ is given by 
\begin{align} 
\label{eq:Zhu} 
F ([a_{1}{(-n_1-1)} \ldots a_m{(-n_m-1)} 
\mathbf{1}]) = (-1)^{n_1+ \cdots + n_m} a_m \ldots a_1,
\end{align}
for $a_1, \ldots ,a_m \in \g$ 
and $n_1, \ldots , n_m \in \mathbb{Z}_{\geqslant 0}$.

We have an exact sequence
$$A(N_k)\rightarrow U(\g)\rightarrow A(L_k(\g))\rightarrow 0$$ since the functor $A(-)$ is
right exact, and thus
$A(L_k(\g))$ is the quotient of $U(\g)$ by the image
$J_k$ of the maximal ideal $N_k$ in $A(V^k(\g))=U(\g)$:
\begin{align*}
 A(L_k(\g))=U(\g)/J_k. 
\end{align*} 
In particular, if $v$ is a singular vector, 
$$A(\widetilde{V}_{k}(\g)) \cong U(\g)/ \langle v'  \rangle ,$$
where $\langle v'  \rangle $ is the two-sided ideal in $U(\g)$ generated by the vector 
$$v':= F([v]).$$
The top degree component 
of $L_{\extg} (\lambda)$ is 
 $L_{\g}(\bar \lambda)$,
where $\bar \lambda$ is the 
restriction of $\lambda$ to $\mf{h}$.
Hence,
by Zhu's correspondence, 
a level $k$ representation 
$L_{\extg}(\lambda)$, that is $\lambda(K)=k$, 
is an $L_k(\g)$-module if and only if
$J_k L_{\g}(\bar\lambda)=0$.

Set $U(\g)^\h := \{u \in U(\g) \colon [h,u]=0 \text{ for all } h \in \h \}$ and 
let 
\begin{align}
\label{eq:Harish-Chandra}
\Upsilon \colon U(\g)^\h \to U(\h)
\end{align}
be the {\em Harish--Chandra projection map} which is the 
restriction of the projection map $U(\g)=U(\h) \oplus (\n_- U(\g) + U(\g)\n_+) \to U(\h)$ 
to $U(\g)^\h$. 
It is known that $\Upsilon$ is an algebra homomorphism. 
For a two-sided ideal $I$ of $U(\g)$, the {\em characteristic variety of $I$} 
is defined as \cite{Jos}:
$$\mathscr{X}(I) = \{\lambda \in \h^* \colon p(\lam)=0 \text{ for all } p\in \Upsilon(I^\h)\},$$
where $I^\h =I\cap U(\g)^\h$.  
}
Identifying $\g^*$ with $\g$ through $(-|-)$, and thus $\h^*$ with $\h$, 
we view $\mathscr{X}(I)$ as a subset of $\h$. 	
{Then using \cite{Jos} (see also \cite[Lemma 2.1]{Ara}), it is easy to see that 
for $\lambda\in\h^{*}$, $\lambda\in\mathscr{X}(I)$ if and only if 
$I L_\g(\lambda)=0$.} 
In other words, the characteristic variety $\mathscr{X}(I)$ classifies the simple
$U(\g)/I$-modules in category $\mc{O}^\g$, 
where $\mc{O}^\g$ is the BGG category $\mc{O}$ of $\g$. 

According to \cite{A-PhD, AM, Ara}, we have the following result. 
\begin{prop} 
\label{Class-affine-quotient}
Let $v \in V^k(\g)$ 
be a singular vector, $\widetilde{V}_{k}( \g) = V^k(\g)/\langle v\rangle $ 
as in \eqref{eq:tildeV}, 
$v':=F([v])$ 
the corresponding image in $U(\g)$ 
and $R$ the $U(\g )$-submodule
of $U(\g )$ generated by the vector~$v'$. 
The following statements are equivalent:
\begin{enumerate}[{\rm (i)}]
\item $L_\g(\mu)$ is an $A( \widetilde{V}_{k}( \g) ) $-module,
\item $RL_\g(\mu)=0$,
\item $R^{\h}v_{\mu}=0$, where $R^{\h}:=R \cap \U(\g)^\h$,
\end{enumerate}
where $v_\mu$ is a highest-weight vector of $L_\g(\mu)$. 
\end{prop}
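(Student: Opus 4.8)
The plan is to prove the equivalence of the three statements by a cycle of implications, exploiting the right-exactness of the Zhu functor $A(-)$ together with the identification $A(V^k(\g))\cong U(\g)$ via $F$ from \eqref{eq:Zhu}, and the characterization of characteristic varieties recalled just above the proposition.

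\begin{proof}
First I would observe that, since $\langle v\rangle = U(\extg)v$ is the ideal generated by a singular vector, applying the right-exact functor $A(-)$ to the exact sequence $\langle v\rangle \to V^k(\g)\to \widetilde V_k(\g)\to 0$ and using $A(V^k(\g))\cong U(\g)$ gives $A(\widetilde V_k(\g))\cong U(\g)/\langle v'\rangle$, where $\langle v'\rangle$ is the two-sided ideal of $U(\g)$ generated by $v'=F([v])$; this is already recorded in the text. Now I claim that the two-sided ideal $\langle v'\rangle$ and the $U(\g)$-submodule $R=U(\g)v'$ have the same characteristic variety and, more importantly, that $R L_\g(\mu)=0$ if and only if $\langle v'\rangle L_\g(\mu)=0$. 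Indeed, $R\subseteq \langle v'\rangle$ gives one direction; conversely, if $RL_\g(\mu)=U(\g)v' L_\g(\mu)=0$ then $v' L_\g(\mu)=0$ (take the identity in $U(\g)$), hence $U(\g)v' U(\g) L_\g(\mu)\subseteq U(\g)v' L_\g(\mu)=0$ since $U(\g)L_\g(\mu)=L_\g(\mu)$; so $\langle v'\rangle L_\g(\mu)=0$. This yields the equivalence of (i) and (ii): by Zhu's correspondence, $L_\g(\mu)$ (as top space of $L_{\extg}(\mu+k\Lambda_0)$) is an $A(\widetilde V_k(\g))$-module precisely when the ideal defining $A(\widetilde V_k(\g))$, namely $\langle v'\rangle$, annihilates it, which by the above is equivalent to $RL_\g(\mu)=0$.

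For the equivalence of (ii) and (iii), I would use the $\h$-weight decomposition. Write $v_\mu$ for a highest-weight vector of $L_\g(\mu)$, so that $L_\g(\mu)=U(\n_-)v_\mu$. Since $R$ is an $\h$-stable subspace of $U(\g)$ (as $v'$ is a weight vector — being the Zhu image of a singular vector, it lies in the zero weight space $U(\g)^\h$ after the appropriate bracketing, but in any case $R$ decomposes into weight spaces), we have $R L_\g(\mu)=0$ if and only if $R_\nu\,(U(\n_-)v_\mu)_\xi=0$ for all weights. The key reduction is that $RL_\g(\mu)=0$ if and only if $R^\h v_\mu=0$: if $RL_\g(\mu)=0$ then in particular $R^\h v_\mu=0$; conversely, given $R^\h v_\mu=0$, any $u\in R$ can be used against $v_\mu$, and $u v_\mu$ lies in $L_\g(\mu)$; if $u v_\mu\neq 0$ then since $L_\g(\mu)$ is simple, $U(\g)u v_\mu=L_\g(\mu)\ni v_\mu$, so some element of $U(\g)u$ sends $v_\mu$ to a nonzero multiple of $v_\mu$, and projecting onto the $\mu$-weight space (equivalently, using the Harish--Chandra-type projection $\Upsilon$ restricted to $U(\g)^\h$) produces a nonzero element of $R^\h$ (note $R$ is $U(\g)$-stable, hence $U(\g)u\subseteq R$) acting nontrivially on $v_\mu$, contradicting $R^\h v_\mu=0$. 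Hence $R L_\g(\mu)=0\iff R v_\mu=0\iff R^\h v_\mu=0$, where the last equivalence also uses that $v_\mu$ spans the $\mu$-weight space of $L_\g(\mu)$ so only the $\h$-invariant part of $R$ can map $v_\mu$ back into weight $\mu$, while the non-invariant part automatically kills $v_\mu$ only after we know the full module is killed — so the careful statement is really $Rv_\mu=0\iff R^\h v_\mu=0$ combined with the simplicity argument above.

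I expect the main obstacle to be the careful bookkeeping in the step $R v_\mu = 0 \iff R^\h v_\mu = 0$: one must be precise about why it suffices to test the $\h$-invariant part $R^\h$ against the highest-weight vector, rather than all of $R$ against all of $L_\g(\mu)$. The clean way is to invoke the characteristic-variety formalism of Joseph recalled in the excerpt: $\langle v'\rangle L_\g(\mu)=0$ iff $\mu\in\mathscr X(\langle v'\rangle)$ iff $p(\mu)=0$ for all $p\in\Upsilon(\langle v'\rangle^\h)$, and then show $\Upsilon(\langle v'\rangle^\h)$ and $\Upsilon(R^\h)$ cut out the same subvariety of $\h^*$ because $R$ generates $\langle v'\rangle$ as a two-sided ideal and $\Upsilon$ is an algebra homomorphism on $U(\g)^\h$. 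Concretely, $\langle v'\rangle^\h = (U(\g)v'U(\g))^\h$ and one checks $\Upsilon$ of this equals the ideal of $U(\h)$ generated by $\Upsilon(R^\h)$, so the vanishing conditions coincide. This, together with the elementary observation that $R^\h v_\mu = 0$ is literally the condition "$p(\mu)=0$ for all $p\in\Upsilon(R^\h)$" (since $U(\h)$ acts on $v_\mu$ through the character $\mu$), closes the loop and gives (iii) $\iff$ (ii) $\iff$ (i). All three arguments are short once the Zhu-functor identification and Joseph's result are granted, both of which are available from the preliminaries.
\end{proof}
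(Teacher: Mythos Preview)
The paper does not give its own proof of this proposition; it attributes the result to \cite{A-PhD, AM, Ara}. So there is no in-paper argument to compare against, and I evaluate your proof on its own merits.

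Your argument for (i)$\iff$(ii) is fine. The gap is in (iii)$\Rightarrow$(ii). You write ``note $R$ is $U(\g)$-stable, hence $U(\g)u\subseteq R$'', but in this paper $R$ is the submodule generated by $v'$ under the \emph{adjoint} action (this is explicit later: for $G_2$ the authors identify $R$ with the finite-dimensional module $L_{G_2}(4\varpi_1)$, and for $B_3$ with $L_{B_3}(2\varpi_3)$). Thus $R$ is closed under $r\mapsto [x,r]$, not under left multiplication, and $U(\g)u$ is \emph{not} contained in $R$; your contradiction does not go through. Your fallback via characteristic varieties has the same defect: the assertion that $\Upsilon(\langle v'\rangle^\h)$ equals the ideal generated by $\Upsilon(R^\h)$ is exactly what needs to be shown, and ``$\Upsilon$ is an algebra homomorphism'' does not establish it, since $\langle v'\rangle^\h$ is not generated inside $U(\g)^\h$ by $R^\h$ in any obvious way.

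The standard fix (as in the cited references) is a descending induction on weight. First, the set $M=\{m\in L_\g(\mu): Rm=0\}$ is a $\g$-submodule because $r(xm)=x(rm)-[x,r]m$ and $[x,r]\in R$; so it suffices to show $Rv_\mu=0$. Now argue by induction on the height of $-\nu$ that $R_\nu v_\mu=0$ for all weights $\nu$ of $R$. If $\nu\nleq 0$ then $\mu+\nu$ is not a weight of $L_\g(\mu)$, so $R_\nu v_\mu=0$; the case $\nu=0$ is the hypothesis $R^\h v_\mu=0$. For $\nu<0$ and any $e\in\n_+$, one has $e(r v_\mu)=[e,r]v_\mu$ with $[e,r]\in R_{\nu+\alpha}$, which vanishes by the inductive hypothesis. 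Hence $r v_\mu$ is a singular vector of weight $\mu+\nu<\mu$ in the simple module $L_\g(\mu)$, so $r v_\mu=0$. This closes the gap. (Incidentally, your parenthetical that $v'$ lies in $U(\g)^\h$ is not correct: $v'$ has the same $\h$-weight as the singular vector $v$, e.g.\ $4\varpi_1$ in the $G_2$ case.)
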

		
In the notation of the Proposition~\ref{Class-affine-quotient}, 
given $r \in R^{\h}$, there exists a unique polynomial
$p_{r}\in \Upsilon(R^{\h})$ such that $rv_{\mu}=p_{r}(\mu)v_{\mu} 
$. 
Define the polynomial set of $\h$ by 
\begin{align}
\label{eq:P_0}
\mathscr{P}_{v} =\{ p_{r} \colon r \in R^{\h} \}.
\end{align} 
If $v$ is a subsingular vector, one can define similarly 
$\mathscr{P}_{v}$ using 
the $U(\g )$-submodule
of $U(\g )$ generated by the vector~$v':=F([v])$. 

As a consequence of Proposition \ref{Class-affine-quotient}, 
we obtain: 

\begin{cor} 
\label{zhuclass} 
Let $v \in V^k(\g)$ 
be a singular vector and {$\widetilde{V}_{k}( \g) = V^k(\g)/  \langle v\rangle $. }
There is a one-to-one correspondence between 
the irreducible $A(\widetilde{V}_{k}( \g))$-modules
in the category $\mathcal{O}^\g$ 
and 
the weights $\mu \in {\h}^{*}$ such that
$p(\mu)=0$ for all $p \in \mathscr{P}_{v}$.
\end{cor}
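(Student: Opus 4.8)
\textbf{Proof plan for Corollary \ref{zhuclass}.}
The plan is to derive the statement directly from Proposition \ref{Class-affine-quotient} together with the discussion of Zhu's correspondence recalled above. Recall that, by Zhu's theorem \cite{Z}, the assignment $M \mapsto M_{\mathrm{top}}$ is a bijection between isomorphism classes of simple positive-energy $\widetilde{V}_k(\g)$-modules and isomorphism classes of simple $A(\widetilde{V}_k(\g))$-modules. Moreover, since $A(V^k(\g)) \cong U(\g)$ via \eqref{eq:Zhu} and $A(-)$ is right exact, we have $A(\widetilde{V}_k(\g)) \cong U(\g)/\langle v' \rangle$ with $v' = F([v])$, so a simple $A(\widetilde{V}_k(\g))$-module is the same thing as a simple $U(\g)/\langle v'\rangle$-module, i.e.\ a simple $U(\g)$-module killed by the two-sided ideal generated by $v'$.

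First I would restrict attention to modules lying in $\mathcal{O}^\g$: by definition these are exactly the modules of the form $L_\g(\mu)$ for $\mu \in \h^*$, and distinct $\mu$ give non-isomorphic modules. Thus the irreducible $A(\widetilde{V}_k(\g))$-modules in $\mathcal{O}^\g$ are parametrized by the set of $\mu \in \h^*$ such that $R\, L_\g(\mu) = 0$, where $R = U(\g)\, v'$ is the submodule of $U(\g)$ generated by $v'$ (note $R\,L_\g(\mu)=0$ is equivalent to $\langle v'\rangle L_\g(\mu)=0$, since $L_\g(\mu)$ is a left $U(\g)$-module and $\langle v'\rangle = U(\g)\,v'\,U(\g)$ annihilates $L_\g(\mu)$ iff $U(\g)\,v'$ does). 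By the equivalence (i)$\Leftrightarrow$(ii)$\Leftrightarrow$(iii) of Proposition \ref{Class-affine-quotient}, this holds if and only if $R^\h v_\mu = 0$, where $v_\mu$ is a highest-weight vector of $L_\g(\mu)$ and $R^\h = R \cap U(\g)^\h$.

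Next I would translate the condition $R^\h v_\mu = 0$ into the vanishing of the polynomials $\mathscr{P}_v$. For each $r \in R^\h$, since $r$ commutes with $\h$ and $v_\mu$ is a weight vector of weight $\mu$, the vector $r v_\mu$ is again of weight $\mu$; writing $U(\g) = U(\h) \oplus (\n_- U(\g) + U(\g)\n_+)$ and using that $v_\mu$ is a highest-weight vector annihilated by $\n_+$, one gets $r v_\mu = \Upsilon(r)(\mu)\, v_\mu = p_r(\mu)\, v_\mu$ with $p_r = \Upsilon(r) \in \Upsilon(R^\h)$ the Harish--Chandra image. Hence $R^\h v_\mu = 0$ if and only if $p_r(\mu) = 0$ for all $r \in R^\h$, i.e.\ if and only if $p(\mu) = 0$ for all $p \in \mathscr{P}_v$, as defined in \eqref{eq:P_0}. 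Combining the three steps — Zhu's bijection, Proposition \ref{Class-affine-quotient}, and the Harish--Chandra translation — yields the desired one-to-one correspondence. The argument is essentially a bookkeeping exercise chaining together results already established; the only point requiring a little care is the last reduction, namely checking that $R v_\mu = 0$ is genuinely equivalent to $R^\h v_\mu = 0$ (this is where the weight-space decomposition and the $\h$-invariance of the relevant part of $R$ enter), but this is exactly the content of Proposition \ref{Class-affine-quotient}(ii)$\Leftrightarrow$(iii), so there is no real obstacle.
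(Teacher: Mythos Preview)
Your proposal is correct and follows essentially the same approach as the paper, which simply records the corollary as a direct consequence of Proposition~\ref{Class-affine-quotient}. The only superfluous step is the opening paragraph invoking Zhu's correspondence: since the corollary is stated directly in terms of $A(\widetilde{V}_k(\g))$-modules in $\mathcal{O}^\g$, you can start immediately from the identification $A(\widetilde{V}_k(\g))\cong U(\g)/\langle v'\rangle$ and the fact that simple objects in $\mathcal{O}^\g$ are the $L_\g(\mu)$, then apply Proposition~\ref{Class-affine-quotient} and the Harish--Chandra translation exactly as you do.
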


Define the left-adjoint action on
$U(\g )$ by 
\begin{align}
\label{eq:left-adj}
x_L f=[x,f] \text{ for }x \in
\g \text{ and }f \in U(\g ).
\end{align} 
This action extends to $\U(\g)$ and we still denote it by $x_L f$ 
for $x\in U(\g)$ and $f \in U(\g)$. 

\subsection{Associated variety}
\label{Sub:assoc}
As in the introduction, 
let $X_V$ be the {\em associated variety} \cite{Ara12} 
of a vertex algebra $V$, 
that is the reduced scheme associated with the {\em Zhu $C_2$-algebra} of $V$ 
$$R_V := V/C_2(V),$$
with $C_2(V) ={\rm span}_\C\{a_{(-2)}b \colon a,b \in V\}$. 
In the case that $V$ is a quotient of $V^k(\g)$,
$V/C_2(V)=V/\g [t^{-1}]t^{-2} V$ 
and we have a surjective Poisson algebra homomorphism
\begin{align}
\label{eq:Sym}
 \C[\g^*]=S(\g)\longtwoheadrightarrow R_V=V/\g [t^{-1}]t^{-2}V,\quad
x\mapsto \overline{x(-1)}+\g [t^{-1}]t^{-2}V,
\end{align}
where $\overline{x{(-1)}}$ denotes the image of $x{(-1)}$ in the quotient 
$R_V$. 
Then $X_V$ is just the zero locus of the kernel of the above map in
$\g^*$. 
It is a $G$-invariant and conic subvariety 
of $\g^*$, with $G$ the adjoint group of~$\g$. 
As for the characteristic variety, 
identifying $\g^*$ with $\g$ through $(-|-)$, 
we view it as a subset of $\g$. 

For $V=V^k(\g)$, we get 
\begin{align*}
R_{V^{k}(\g)} \cong S(\g)
\end{align*}
under the algebra isomorphism \eqref{eq:Sym}. 
For $v \in V^{k}(\g)$, denote by $v''$ the image of $\overline{v}$ 
in $S(\g)$ by the above isomorphism. 
If $v$ is a singular vector of $V^k(\g)$, then 
\begin{align*}
R_{\widetilde{V}_{k}(\g)} \cong S(\g) / I_{M},
\end{align*}
where $M$ is the $\g$-module generated by $v''$ under the adjoint action, 
and $I_M$ is the ideal of $S(\g)$ generated by $M$.

It will be also useful to consider the {\em Chevalley projection map} 
\begin{align}
\label{eq:Chevalley}
\Psi \colon S(\g)^{\h}\rightarrow S(\h), 
\end{align} 
where $S(\g)^{\mathfrak{h}}= 
\{x\in S(\g) \colon [h,x]=0 \,\text{ for all } \, h \in\mathfrak{h}\}$. 
{This is the restriction to $S(\g)^{\h}$ of the 
projection map from $S(\g)$ 
to $S(\h)$ with respect to the decomposition 
$S(\g) = S(\h) \oplus \left(\n_- S(\g) + S(\g) \n_+\right)$.}

\subsection{Affine $\W$-algebras}
\label{Sub:W-algebras}
For a nilpotent element  $f$ of $\g$,
let $\W^k(\g,f)$ be the universal 
$\W$-algebra associated with
$(\g,f)$ at level $k$,
defined by the generalized quantized Drinfeld--Sokolov reduction
\cite{FF90,KacRoaWak03}:
\begin{align*}
\W^k(\g,f)=H^0_{DS,f} (V^k(\g)), 
\end{align*}
where $H^0_{DS,f}(M)$ is the corresponding BRST
cohomology with coefficient in a $\extg$-module $M$.
We have a natural Poisson algebra 
isomorphism
$R_{\W^k(\g,f)}\cong \C[\mathscr{S}_{f}]$, 
where $\mathscr{S}_{f} =f+\g^{e}$, with $\g^{e}=\{x\in\g\colon [x,e]=0\}$,  
is the Slodowy slice associated with an $\mf{sl}_2$-triple $(e,h,f)$ \cite{DeSole-Kac,Ara15}.
It follows that
\begin{align*}
X_{\W^k(\g,f)} \cong \mathscr{S}_{f}.
\end{align*}
Let $\W_k(\g,f)$ be the unique simple quotient of $\W^k(\g,f)$.
Then $X_{\W_k(\g,f)}$
 is a $\C^*$-invariant closed Poisson subvariety of $\mathscr{S}_f$.
Let $\mc{O}_k$ be the category $\mc{O}$ of
$\extg$ at level $k$.
We have a functor
\begin{align*}
\mc{O}_k \rightarrow  \W^k(\g,f)\text{-Mod} ,\quad M\mapsto
H^0_{DS,f}(M),
\end{align*}
where 
$\W^k(\g,f)\text{-Mod}$ denotes the category
of $\W^k(\g,f)$-modules. 
According to \cite{Ara15}, 
for any quotient $V$ of $V^k(\g)$, 
 $X_{H^0_{DS,f}(V)}$ is isomorphic, as a Poisson variety, to the 
intersection
 $X_{V}\cap \mathscr{S}_f$. 
 In particular, 
$H^0_{DS,f}(V) \ne 0$ if and only if
$\overline{G.f}\subset X_V$  
and $H^0_{DS,f}(V)$ is lisse if 
$X_V = \overline{G.f}$.

\section{On the representations of $L_{-2}(G_2)$}
\label{Sec:G2_rep}
In this section, $\g$ is the simple exceptional Lie algebra of type $G_2$ 
with simple roots $\alpha_1,\alpha_2$ 
and Dynkin diagram 
$$\begin{Dynkin}
\Dbloc{\Dcirc\Deast\Ddoubleeast\Dtext{t}{\alpha_1}}
\Dleftarrow\Dbloc{\Dcirc\Dwest\Ddoublewest\Dtext{t}{\alpha_2}}
\end{Dynkin}$$
In particular, $\alpha_1$ is the simple short root. 
One can fix the root vectors so that 
\begin{align*}
&[e_{\alpha_1},e_{\alpha_2}]=e_{\alpha_1+\alpha_2}, 
\quad  [e_{\alpha_1},e_{\alpha_1+\alpha_2}]=2e_{2\alpha_1+\alpha_2}, &\\
& [e_{\alpha_1},e_{2\alpha_1+\alpha_2}]=3e_{3\alpha_1+\alpha_2},
\quad  [e_{\alpha_2},e_{3\alpha_1+\alpha_2}]=e_{3\alpha_1+2\alpha_2}.&
\end{align*} 
All other commutation relations can be obtained by using the Jacobi identity. 
It will be convenient to number the other positive roots as follows:
$$\alpha_3=\alpha_1+\alpha_2,\quad  
\alpha_4 = 2\alpha_1+\alpha_2, \quad  
\alpha_5 = 3\alpha_1 +\alpha_2,\quad  
\alpha_6=3\alpha_1+2\alpha_2=\theta.
$$
Denote by $\varpi_1=2\alpha_1+\alpha_2,\varpi_2=3\alpha_1+2\alpha_2$ the fundamental weights of $G_2$ 
with respect to $\alpha_1,\alpha_2$, and by 
$\{h_1=\alpha_1^\vee,h_2=\alpha_2^\vee\}$ 
a basis of the Cartan subalgebra. 
The Weyl group $\weyl_{G_2}$ of $G_2$ is the dihedral group of order 12 generated by the Weyl reflections $s_{i}\in\weyl_{G_2}$ ($i=1,\dots,6$).

\begin{thm} 
\label{Th:singular_G2}
There is a singular vector $v_{\sing}$ of $V^{-2}(G_2)$ 
of weight $-2\Lambda_{0}+4\varpi_{1}-6\delta$. 
In particular, $v_{\sing}$ has conformal weight six  
and there is no singular vector of conformal weight strictly smaller than six.
\end{thm}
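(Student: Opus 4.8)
The plan is to produce the singular vector explicitly by a direct but organized computation inside $V^{-2}(G_2)$, and then to rule out singular vectors of lower conformal weight by a dimension count in each graded piece. First I would fix, as in the statement, the root vectors $e_{\alpha_i}$, $f_{\alpha_i}$, and Cartan elements $h_1,h_2$, together with the normalization $(-|-)=\tfrac{1}{2h^\vee}\kappa_{G_2}$ (so $h^\vee_{G_2}=4$ and $k=-2$). Recall that $V^{-2}(G_2)\cong U(\g[t^{-1}]t^{-1})$ as a vector space, graded by $-D$, and that a vector $v$ of weight $-2\Lambda_0+\mu-d\delta$ is singular precisely when $e_{\alpha_i}(0)v=0$ for $i=1,2$ and $e_{-\theta}(1)v=0$. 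Since we are told the target weight is $-2\Lambda_0+4\varpi_1-6\delta$, with $\varpi_1=2\alpha_1+\alpha_2$, the vector $v_{\sing}$ must lie in the weight space of $\exth$-weight $8\alpha_1+4\alpha_2$ in $V^{-2}(G_2)_6$. I would enumerate a PBW basis of this weight space (monomials in the $x(-n)$, $x\in\g$, $n\geq 1$, with total $D$-degree $6$ and total $\h$-weight $8\alpha_1+4\alpha_2$), write the unknown vector as a general linear combination with undetermined coefficients, and impose the singularity equations. The explicit formula for $v_{\sing}$ and for its image in the Zhu algebra is, per the paper's own roadmap, deferred to Appendix~\ref{App:singular_G2}; here I would simply assert its existence and uniqueness up to scalar, having reduced the verification to checking that the displayed combination is annihilated by $e_{\alpha_1}(0)$, $e_{\alpha_2}(0)$, and $e_{-\theta}(1)$, which is a finite (if lengthy) linear-algebra computation.

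For the non-existence of a singular vector of conformal weight strictly less than six, the key observation is that any singular vector $v$ of weight $-2\Lambda_0+\mu-d\delta$ with $v\notin\C\mathbf{1}$ gives a proper submodule of $V^{-2}(G_2)$, hence maps to zero in $L_{-2}(G_2)$; in particular $\mu$ must be a dominant integral weight of $G_2$ (since the $\extg$-submodule generated by $v$ is a highest-weight module and its top piece $L_\g(\mu)$ must be finite-dimensional for the module to have the $D$-grading bounded below in the required way — more precisely, $\mu+k\Lambda_0$ must be such that the Verma module $M(\mu+k\Lambda_0)$ has a singular vector, forcing $\mu$ dominant integral and the Kac--Kazhdan equations to hold at level $-2$). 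So I would list all candidate weights $\mu$ dominant integral with $(\mu|\mu+2\rho)/(2(k+h^\vee))$ equal to a positive integer $d\leq 5$; for $k=-2$, $h^\vee=4$, this denominator is $4$, so $d=(\mu|\mu+2\rho)/4$, and one checks which dominant integral $\mu$ give $d\in\{1,2,3,4,5\}$. For each such pair $(\mu,d)$ I would compute the dimension of the corresponding $\extn_+$-singular subspace of $V^{-2}(G_2)_d$ of $\h$-weight $\mu$, i.e. the multiplicity of $L_\g(\mu)$ among $\extn_+$-highest-weight vectors in degree $d$, and show it is zero — equivalently, that the maximal submodule $N_{-2}$ of $V^{-2}(G_2)$ is zero in degrees $\leq 5$. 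This is a character computation: one compares the graded character of $V^{-2}(G_2)$ in low degrees with that of $L_{-2}(G_2)$, or more directly counts, degree by degree, the solutions of the linear system $e_{\alpha_i}(0)v=0$, $e_{-\theta}(1)v=0$ restricted to each relevant weight space.

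The main obstacle, and the bulk of the work, is the explicit construction in the first step: the weight space of $V^{-2}(G_2)_6$ of $\h$-weight $8\alpha_1+4\alpha_2$ is of substantial dimension, so the linear system defining $v_{\sing}$ is large, and the coefficients in the action of $e_{-\theta}(1)$ involve the structure constants of $G_2$ (which are not all $\pm1$) together with the bilinear form values $(x|y)$ entering the affine bracket. I expect this to require machine computation; the output is recorded in Appendix~\ref{App:singular_G2}. The lower-degree non-existence, by contrast, is comparatively light: the list of candidate dominant weights $\mu$ with small $d$ is short, and for each the relevant weight spaces in degrees $1$ through $5$ are small enough that one can exhibit by hand that no $\extn_+$-singular vector exists there. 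One subtlety to be careful about: a priori a maximal-ideal generator need not itself be a singular vector if $N_{-2}$ is not generated in a single degree, but since $V^{-2}(G_2)/N_{-2}=L_{-2}(G_2)$ and $N_{-2}$ is the maximal proper graded ideal, the lowest nonzero graded piece of $N_{-2}$ consists of genuine singular vectors of $V^{-2}(G_2)$, so showing $(N_{-2})_d=0$ for $d\leq 5$ is exactly what is needed.
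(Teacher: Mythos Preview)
Your proposal is correct and takes essentially the same approach as the paper: enumerate dominant integral $G_2$-weights $\mu$ whose conformal dimension $(\mu|\mu+2\rho)/4$ is a positive integer $d\le 6$, verify by direct linear algebra that the singularity equations $e_{\alpha_1}(0)v=e_{\alpha_2}(0)v=e_{-\theta}(1)v=0$ have no nonzero solution in the relevant weight spaces for $d\le 5$, and exhibit the explicit (machine-computed) solution at $d=6$, $\mu=4\varpi_1$ recorded in the appendix; the paper also mentions the character/Kazhdan--Lusztig alternative you allude to. One small correction: your justification that $\mu$ must be dominant integral via Kac--Kazhdan is off---the clean reason is simply that each graded piece $V^{-2}(G_2)_d$ is a finite-dimensional $G_2$-module, so any $\n_+$-highest-weight vector in it has dominant integral weight.
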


Due to its complexity, 
we leave the explicit form of such a singular 
vector to Appendix~\ref{App:singular_G2}.

\begin{proof}
The affine space $\{\lam + k\Lambda_{0} \colon \lam \in \h^{*}\}$ 
is identified with an affine subvariety of $\exth^*$ 
by the correspondence 
$$
\lambda+k\Lambda_{0} \longmapsto \lambda+k \Lambda_{0}-h_{L(\lambda)}\delta, 
$$
where $h_{L(\lambda)}$ is the conformal dimension given by 
\eqref{eq:conf_dim} and  
$h_{G_2}^\vee=4$. 

The strategy in order to find a singular 
vector is the following.  
We search for a $G_2$-weight of a potential singular vector 
$v$ that makes the conformal dimension an integer. 
Then we test the conditions of a singular vector,  
$$e_{\alpha_1}(0)v =0,\quad e_{\alpha_2}(0)v=0,
\quad e_{-\theta}(1)v=0,$$
with $\theta = 3\alpha_1+2\alpha_2$ 
from smaller to larger conformal dimensions. 
For any $\lambda= a_1 \varpi_{1}+a_2 \varpi_{2}$, we have 
$$
{h_{L(\lambda)}=\frac{a_1^2}{6}+\frac{a_1 a_2}{2}+\frac{5 a_1}{6}+\frac{a_2^2}{2}+\frac{3 a_2}{2}.}
$$
We list the integer solutions for the conformal dimension from $2$ to $6$ in Table \ref{table:1}.
\begin{table}[htb]   
	\begin{center}   
		\begin{tabular}{|c|c|}   
			\hline   conformal dimension & weight  \\   
			\hline   0 & 0\\
			\hline   1 & $\varpi_1$\\
			\hline   2 & $\varpi_{2}$ \\ 
			\hline   3 & no solution \\  
			\hline   4 & 3$\varpi_{1}$    \\ 
			\hline   5 & 2$\varpi_{2}$ 	  \\
			\hline   6 & 4$\varpi_{1}$     \\
			\hline   
		\end{tabular} \\[0.25em]
		\caption{The integer solutions for the conformal dimension}  
	\label{table:1}  
	\end{center}   
\end{table}	
We observe that there is no corresponding solution 
for a singular vector with conformal weight $1,2,4,5$. 
However, there is indeed a singular vector with conformal weight 6. 
Our candidate, that we denote by $v_\sing$, 
is described in Appendix~\ref{App:singular_G2} and has $G_2$-weight 
$4\varpi_1$. 		
Then it is straightforward 
to check that 
$$e_{\alpha_1}(0) v_{\sing}=0,\quad  
e_{\alpha_2}(0)v_{\sing}, 
\quad e_{-\theta}(1)v_{\sing}=0.$$
Alternatively, we can compare the first few terms of 
the character formulas of 
$V^{-2}(G_2)$ and $L_{-2}(G_2)$ by using Kazhdan--Lusztig 
polynomials to determine the existence 
of a singular vector with conformal weight 6.\footnote{This was 
suggested to one of the authors by Yiwen Pan in a private communication.} 
\end{proof}

Keep the notation relative to $v_\sing$ as in Section \ref{Sec:Pre}: 
$[v_\sing]$ denotes its image in the Zhu algebra of $V^{-2}(G_2)$, 
$v'_\sing$ the corresponding element of $U(G_2)$ via the 
isomorphism~\eqref{eq:Zhu}, $\overline{v}_\sing$ the image of $v_\sing$ 
in the Zhu $C_2$-algebra and $v''_\sing$ the 
corresponding element of $S(G_2)$ through the isomorphism 
\eqref{eq:Sym}.

Let also $\langle v_{\sing}\rangle  $ be the submodule of $V^{-2}(G_{2})$ 
generated by $v_{\sing}$, and 
$\widetilde{V}_{-2}(G_{2})=V^{-2}(G_{2})/\langle v_{\sing}\rangle  $ 
the associated quotient vertex algebra. 
Then the Zhu algebra $A(\widetilde{V}_{-2}(G_{2}))$ is isomorphic to 
$U(G_{2})/\langle v'_{\sing}\rangle $, where $\langle v'_{\sing}\rangle  $ is 
the two-sided ideal in $U(G_{2})$ generated by the vector $v'_{\sing}$. 
 The explicit form of  $v'_{\sing}$ can be found in Appendix~\ref{App:singular_G2} 
 as well.

\begin{lem}
\label{lm:weightzero_G2}
The zero-weight subspace 
$L_{G_2}(4\varpi_{1})^{\h}$ 
has dimension $8$, 
spanned by the linearly independent vectors below
\begin{align*}
		&v_{1}=(e_{-\alpha_4}^4)_{L}v'_{\sing}, \quad v_{2}=(e_{-\alpha_3}e_{-\alpha_4}^2e_{-\alpha_5})_{L}v'_{\sing}, \quad v_{3}=(e_{-\alpha_3}^2e_{-\alpha_5}^2)_{L}v'_{\sing}\\
		&v_{4}=(e_{-\alpha_2}e_{-\alpha_4}e_{-\alpha_5}^2)_{L}v'_{\sing}, \quad v_{5}=(e_{-\alpha_1}e_{-\alpha_4}^2e_{-\theta})_{L}v'_{\sing}\\
		&v_{6}=(e_{-\alpha_1}e_{-\alpha_3}e_{-\alpha_5}e_{-\theta})_{L}v'_{\sing}, \quad v_{7}=(e_{-\alpha_1}^2 e_{-\theta}^2)_{L}v'_{\sing}, \quad v_{8}=(e_{-\alpha_1}e_{-\alpha_3}e_{-\alpha_4}^3)_Lv'_{\sing}
		\end{align*}
	\end{lem}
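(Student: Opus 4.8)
The plan is to transfer the computation to the finite-dimensional $G_2$-module $L_{G_2}(4\varpi_1)$ and then to exhibit $v_1,\dots,v_8$ as a basis of its zero-weight space. First I would observe that $v'_{\sing}\ne 0$ (by the explicit formula of Appendix~\ref{App:singular_G2}) and that it is a highest-weight vector of weight $4\varpi_1$ for the left-adjoint action \eqref{eq:left-adj} of $G_2$ on $U(G_2)$: since $v_{\sing}$ is singular, $e_{\alpha_i}(0)v_{\sing}=0$ for $i=1,2$, which under the Zhu isomorphism \eqref{eq:Zhu} translates into $[e_{\alpha_i},v'_{\sing}]=0$, and as $F$ preserves $\h$-weights, $v'_{\sing}$ has weight $4\varpi_1$. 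Since $U(G_2)$ is locally finite under the adjoint action, the submodule $R$ it generates is finite-dimensional, hence, being a finite-dimensional highest-weight module, irreducible; thus $R\cong L_{G_2}(4\varpi_1)$, with $v'_{\sing}$ playing the role of the highest-weight vector and $(e_{-\gamma_1}\cdots e_{-\gamma_r})_L\,v'_{\sing}$ that of the monomial $e_{-\gamma_1}\cdots e_{-\gamma_r}\,v_{4\varpi_1}$ in $U(\n_-)\,v_{4\varpi_1}$. Each of $v_1,\dots,v_8$ is of this shape with $\gamma_1+\cdots+\gamma_r=8\alpha_1+4\alpha_2=4\varpi_1$, so all eight lie in $L_{G_2}(4\varpi_1)^{\h}=R^{\h}$; it then remains to prove that $\dim L_{G_2}(4\varpi_1)^{\h}=8$ and that $v_1,\dots,v_8$ are linearly independent.

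For the dimension I would run a standard weight-multiplicity computation. One convenient route: viewing $L_{G_2}(\varpi_1)$ as the standard $7$-dimensional module inside $\mf{so}_7$, the harmonic decomposition gives $\mathrm{Sym}^4\bigl(L_{G_2}(\varpi_1)\bigr)\cong L_{G_2}(4\varpi_1)\oplus L_{G_2}(2\varpi_1)\oplus\C$ as $G_2$-modules (the two nontrivial summands stay irreducible under $G_2$, by comparison of the dimensions $182$ and $27$); counting the size-$4$ multisets of weights of $L_{G_2}(\varpi_1)$ summing to $0$ gives zero-weight multiplicity $12$ for the left-hand side, whence $\dim L_{G_2}(4\varpi_1)^{\h}=12-3-1=8$. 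Freudenthal's recursion, or a direct character computation, yields the same value.

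Granting this, the lemma reduces to the nonvanishing of an $8\times 8$ determinant, and linear independence then forces $\{v_1,\dots,v_8\}$ to be a basis. The cleanest check is that the Gram matrix of the contravariant (Shapovalov) form of $L_{G_2}(4\varpi_1)$ on the eight monomials $e_{-\gamma_1}\cdots e_{-\gamma_r}\,v_{4\varpi_1}$ is nonsingular — a computation internal to $L_{G_2}(4\varpi_1)$ (or its Verma cover) using the explicit commutation relations fixed above, which does not even require the precise shape of $v'_{\sing}$. Equivalently, and in practice more directly, one applies the operators $(e_{-\gamma_1}\cdots e_{-\gamma_r})_L$ to the explicit $v'_{\sing}$ of Appendix~\ref{App:singular_G2}, reduces the eight results to PBW normal form in $U(G_2)$, and checks that the resulting coefficient matrix has rank $8$; this latter route additionally yields the explicit elements of $U(G_2)$ needed afterward to determine the polynomial set $\mathscr{P}_{v_{\sing}}$ (cf.~Corollary~\ref{zhuclass}). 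The only genuine difficulty is computational: expanding the iterated commutators attached to the degree-six vector $v_{\sing}$ is unwieldy and is best carried out with computer algebra; the conceptual content is entirely in the identification $R\cong L_{G_2}(4\varpi_1)$.
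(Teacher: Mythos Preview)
The paper states this lemma without proof, treating it as a computational fact to be verified directly (presumably by computer). Your proposal is correct and in fact supplies more than the paper does: the identification $R\cong L_{G_2}(4\varpi_1)$ via local finiteness of the adjoint action and the irreducibility of finite-dimensional highest-weight modules is the right conceptual setup, and your dimension count via the harmonic decomposition $\mathrm{Sym}^4(\C^7)\cong L_{G_2}(4\varpi_1)\oplus L_{G_2}(2\varpi_1)\oplus\C$ (with zero-weight multiplicities $12=8+3+1$) is a clean way to get $\dim L_{G_2}(4\varpi_1)^{\h}=8$ without invoking Freudenthal.

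The only place to be careful is the phrasing around linear independence. You correctly note that checking the Shapovalov Gram matrix is nonsingular is equivalent to linear independence of the eight monomials in $L_{G_2}(4\varpi_1)$, and that this is a computation intrinsic to the module, independent of the explicit $v'_{\sing}$. That is fine; just be aware that the paper's subsequent lemma only shows that the Harish-Chandra projections $p_1,\dots,p_8$ span a $7$-dimensional space, so the linear independence of the $v_i$ themselves is not recovered from that step and really does require a separate check of the kind you describe. Your second route (applying the operators to the explicit $v'_{\sing}$ and checking the rank of the resulting PBW coefficient matrix) is almost certainly what the authors did in practice, and is what one would implement.
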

  
\begin{lem}
Let $p_{i}\in U(\h)$ be the image of $v_{i}$ for $i=1,\dots,8$ 
by the Harish--Chandra projection~\eqref{eq:Harish-Chandra}. 
Then, the polynomial set $\{p_{1},\dots,p_{7}\}$ 
forms a basis for $\mathscr{P}_{v_{\text{\em sing}}}$, 
where 
{\footnotesize
			\begin{align*}
				p_{1}(h)=&-24 \left(2 h_1+3 h_2+3\right) (8 h_1^5+60 h_2 h_1^4+36 h_1^4+178 h_2^2 h_1^3+212 h_2 h_1^3+24 h_1^3+261
				h_2^3 h_1^2\\
				&+438 h_2^2 h_1^2+48 h_2 h_1^2-44 h_1^2+189 h_2^4 h_1+369 h_2^3 h_1-28 h_2^2 h_1-152 h_2 h_1-24
				h_1+54 h_2^5\\
				&+108 h_2^4-42 h_2^3-96 h_2^2-24 h_2)\\
				p_{2}(h)=&\; 2 (32 h_1^6+294 h_2 h_1^5+192 h_1^5+1081 h_2^2 h_1^4+1398 h_2 h_1^4+312 h_1^4+2070 h_2^3 h_1^3+3928 h_2^2
				h_1^3\\
				&+1542 h_2 h_1^3-32 h_1^3+2223 h_2^4 h_1^2+5346 h_2^3 h_1^2+2557 h_2^2 h_1^2-630 h_2 h_1^2-360 h_1^2+1296 
				h_2^5 h_1\\
				&+3582 h_2^4 h_1+1656 h_2^3 h_1-1554 h_2^2 h_1-1164 h_2 h_1-144 h_1+324 h_2^6+972 h_2^5+396 h_2^4\\
				&-828h_2^3-720 h_2^2-144 h_2)\\
				p_{3}(h)=&\; -4(16 h_1^6+161 h_2 h_1^5+96 h_1^5+634 h_2^2 h_1^4+763 h_2 h_1^4+156 h_1^4+1254 h_2^3 h_1^3+2260 h_2^2
				h_1^3\\
				&+865 h_2 h_1^3-16 h_1^3  +1332 h_2^4 h_1^2+3123 h_2^3 h_1^2+1558 h_2^2 h_1^2-271 h_2 h_1^2-180 h_1^2+729
				h_2^5 h_1\\
				&+2016 h_2^4 h_1+1041 h_2^3 h_1-708 h_2^2 h_1-582 h_2 h_1-72 h_1+162 h_2^6+486 h_2^5+198 h_2^4\\
				&-414h_2^3-360 h_2^2-72 h_2)\\
				p_{4}(h)= &\; 6 h_1 h_2 \left(h_1+h_2+1\right) \left(h_1+2 h_2+2\right) \left(h_1+3 h_2+3\right) \left(2 h_1+3 h_2\right)\\
				p_{5}(h) = & \; 2 h_1 \left(h_1+2 h_2+2\right)(32 h_1^4+218 h_2 h_1^3+128 h_1^3+555 h_2^2 h_1^2+614 h_2 h_1^2+56 h_1^2+612
				h_2^3 h_1\\
				&+882 h_2^2 h_1-10 h_2 h_1 -144 h_1+243 h_2^4+360 h_2^3-249 h_2^2-390 h_2-72)\\ 
				p_{6}(h)=&-2 h_1 \left(h_1+2 h_2+2\right)(16 h_1^4+109 h_2 h_1^3+64 h_1^3+264 h_2^2 h_1^2+289 h_2 h_1^2+28 h_1^2+279
				h_2^3 h_1\\
				&+396 h_2^2 h_1-17 h_2 h_1-72 h_1+108 h_2^4+153 h_2^3-132 h_2^2-189 h_2-36)\\
				p_{7}(h)=&-4(h_1-1) h_1(h_1+2 h_2+2)(h_1+2 h_2+3)(16 h_1^2+63 h_2 h_1+32 h_1+63
				h_2^2+63 h_2+12)\\
				p_{8}(h)=&\; 6(64 h_1^6+534 h_2 h_1^5+384 h_1^5+1829 h_2^2 h_1^4+2556 h_2 h_1^4+624 h_1^4+3168 h_2^3 h_1^3+6210 h_2^2 h_1^3\\
				&+2594 h_2
				h_1^3-64 h_1^3+2727 h_2^4 h_1^2+6426 h_2^3 h_1^2+2863 h_2^2 h_1^2-1412 h_2 h_1^2-720 h_1^2+918 h_2^5 h_1\\
				&+2322 h_2^4 h_1+402 h_2^3
				h_1-2538 h_2^2 h_1-1824 h_2 h_1-288 h_1)
\end{align*}}
\end{lem}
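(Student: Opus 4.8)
The plan is to reduce the statement to an explicit (computer-assisted) calculation together with one elementary linear-algebra check. Recall from \eqref{eq:P_0} and the discussion following Proposition~\ref{Class-affine-quotient} that $\mathscr{P}_{v_{\sing}}=\Upsilon(R^{\h})$, where $R=U(G_2)_L\,v'_{\sing}$ is the $G_2$-submodule of $U(G_2)$ generated, under the adjoint action, by $v'_{\sing}=F([v_{\sing}])$, and $R^{\h}=R\cap U(G_2)^{\h}$. Under the isomorphism \eqref{eq:Zhu} the singular-vector relations $e_{\alpha_i}(0)v_{\sing}=0$ ($i=1,2$) translate into $[e_{\alpha_i},v'_{\sing}]=0$, so $v'_{\sing}$ is an adjoint highest-weight vector of weight $4\varpi_1$ and $R=U(\n_-)_L\,v'_{\sing}$ is a highest-weight $G_2$-module of that highest weight; as $R$ is also semisimple (a submodule of the locally finite adjoint $G_2$-module $U(G_2)$, which is the increasing union of the finite-dimensional $\ad$-stable subspaces spanned by PBW monomials of bounded degree), it is irreducible, $R\cong L_{G_2}(4\varpi_1)$. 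Hence, by Lemma~\ref{lm:weightzero_G2}, $\{v_1,\dots,v_8\}$ is a basis of $R^{\h}$ and $\mathscr{P}_{v_{\sing}}=\Span_{\C}\{p_1,\dots,p_8\}$ with $p_i=\Upsilon(v_i)$. It remains to compute the $p_i$ and to check that $p_1,\dots,p_7$ are linearly independent while $p_8$ lies in their span.

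To compute the $p_i$ I would start from the explicit formula for $v'_{\sing}\in U(G_2)$ recorded in Appendix~\ref{App:singular_G2}, apply to it the left-adjoint action of the degree-four monomials in negative root vectors occurring in Lemma~\ref{lm:weightzero_G2} --- so $v_1=\ad_{e_{-\alpha_4}}^{4}v'_{\sing}$, $v_2=(e_{-\alpha_3}e_{-\alpha_4}^2e_{-\alpha_5})_L v'_{\sing}$, and similarly for $v_3,\dots,v_8$ --- and then apply the Harish--Chandra projection \eqref{eq:Harish-Chandra}: concretely, reorder each resulting weight-zero element of $U(G_2)$ into the PBW basis $U(\n_-)\otimes U(\h)\otimes U(\n_+)$ and retain its $U(\h)$-component, read off as a polynomial in the variables $h_1=\alpha_1^\vee$, $h_2=\alpha_2^\vee$. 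This produces the eight polynomials displayed in the statement.

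The linear-algebra step is then straightforward. From the displayed formulas one reads the coefficient vectors of $p_1,\dots,p_8$ in a fixed monomial basis of the space of polynomials of degree $\leqslant 6$ in $h_1,h_2$ and verifies that the resulting matrix has rank $7$, with $p_1,\dots,p_7$ already spanning its row space --- equivalently, that the unique (up to scalar) linear relation among the $p_i$ has a nonzero coefficient on $p_8$. Linear independence of $p_1,\dots,p_7$ can moreover be seen quickly from distinguishing features of the factorisations: $p_7$ is the only one divisible by $h_1-1$ and $p_4$ the only one divisible by $h_2$, which isolates those two, after which only a modest sub-determinant involving $p_1,p_2,p_3,p_5,p_6$ needs to be evaluated. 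Together with the first paragraph, this shows that $\{p_1,\dots,p_7\}$ is a basis of $\mathscr{P}_{v_{\sing}}$.

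The main obstacle is purely computational: one has to assemble the conformal-weight-six singular vector, transport it through \eqref{eq:Zhu} to $U(G_2)$, and then carry out the iterated adjoint actions in the $G_2$ structure constants together with the PBW reorderings, the intermediate and final expressions being bulky; this is best done with a computer algebra system and should be cross-checked. A convenient check is that every $p_i$ must vanish at the highest weights $0$, $\varpi_1$ and $\varpi_2$ of the three ordinary modules of $L_{-2}(G_2)$ (Theorem~\ref{Th:mainB}) --- equivalently, at $(h_1,h_2)=(0,0),(1,0),(0,1)$ --- since those modules are in particular $A(\widetilde{V}_{-2}(G_2))$-modules; a further structural check is that each $p_i$ should factor into linear forms of Kac--Kazhdan type, as is visible in the factors $2h_1+3h_2$, $h_1+2h_2+2$, $h_1+3h_2+3$ and $h_1+2h_2+3$ appearing above.
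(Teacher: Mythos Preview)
Your proposal is correct and follows essentially the same approach as the paper: identify $R\cong L_{G_2}(4\varpi_1)$ so that $R^{\h}$ has the basis $v_1,\dots,v_8$ of Lemma~\ref{lm:weightzero_G2}, compute each $p_i=\Upsilon(v_i)$ by PBW reordering (the paper's ``direct calculations''), and then verify by linear algebra that $\{p_1,\dots,p_7\}$ is independent while $p_8$ is redundant. Two small caveats about your closing heuristics: invoking Theorem~\ref{Th:mainB} as a sanity check is circular since that theorem rests on the present lemma, and the assertion that each $p_i$ ``should factor into linear forms'' is too strong---only $p_4$ and $p_7$ factor completely, the others carry irreducible higher-degree factors.
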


\begin{proof}
According to Lemma \ref{lm:weightzero_G2}, 
we have $\text{dim}\,L_{G_2}(4\varpi_{1})^{\h}=8$. 
Furthermore, one obtains by direct calculations that 
$v_{i}\equiv p_{i}(h) \mod \n_{-}U(G_{2})+U(G_{2})\n_{+}$ 
for $i=1,\ldots,8$. 
It is easily checked that $\{p_1,\ldots,p_8\}$ 
is linearly dependent, 
whereas $\{p_{1},\ldots,p_{7}\}$ 
form a linearly independent set.
\end{proof}

Corollary \ref{zhuclass} implies that the highest-weights $\lambda$ of irreducible 
$A(\widetilde{V}_{-2}(G_{2}))$-modules from the category $\mathcal{O}$ are given 
by the solutions of the polynomial equations:
\begin{align*}
\lambda(p_{i}(h))=0, \quad i=1,2,\ldots,7.
\end{align*}

\begin{prop}
\label{Pro:mu_i-G2}
The complete list of irreducible $A(\widetilde{V}_{-2}(G_{2}))$-modules in the category 
$\mathcal{O}$ is given by the  set 
$\{L_{G_2}(\mu_{i}) \colon i=1,2,\ldots,20\},$
where the $\mu_i$'s are given by Table \ref{Tab:mu_G2}. 

\medskip

\begin{table}[htb] 
\begin{center}
\begin{tabular}{|l|l||l|l|}
\hline
$\mu_{1}$ & $0$ & $\mu_{11}$ & $-\frac{1}{3}\varpi_{2}$\\[0.1em]
$\mu_{2}$ & $\varpi_{1}$ & $\mu_{12}$ & $-\frac{2}{3}\varpi_{2}$\\[0.1em]
$\mu_{3}$ & $\varpi_{2}$ & $\mu_{13}$ & $-\frac{3}{2}\varpi_{1}+\frac{1}{2}\varpi_{2}$\\[0.1em]
$\mu_{4}$ & $-2\varpi_{1}$ & $\mu_{14}$ & $-\frac{1}{2}\varpi_{1}-\frac{1}{2}\varpi_{2}$\\[0.1em]
$\mu_{5}$ & $-3\varpi_{1}$ & $\mu_{15}$ & $\varpi_{1}-\frac{3}{2}\varpi_{2}$\\[0.1em]
$\mu_{6}$ & $-\varpi_{2}$ & $\mu_{16}$ & $\varpi_{1}-\frac{4}{3}\varpi_{2}$\\[0.1em]
$\mu_{7}$ & $-2\varpi_{2}$ & $\mu_{17}$ & $\varpi_{1}-\frac{2}{3}\varpi_{2}$\\[0.1em]
$\mu_{8}$ & $\varpi_{1}-2\varpi_{2}$ & $\mu_{18}$ & $2\varpi_{1}-\frac{5}{3}\varpi_{2}$\\[0.1em]
$\mu_{9}$ & $-\frac{1}{2}\varpi_{1}$ & $\mu_{19}$ & $2\varpi_{1}-\frac{4}{3}\varpi_{2}$\\[0.1em]
$\mu_{10}$ & $-\frac{3}{2}\varpi_{1}$ & $\mu_{20}$ & $3\varpi_{1}-\frac{5}{2}\varpi_{2}$\\[0.1em]
\hline
\end{tabular}\\[0.2em]
\caption{The weights $\mu_i$ for $G_2$}
\label{Tab:mu_G2}
\end{center}
\end{table}

\end{prop}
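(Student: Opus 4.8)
The plan is to translate the representation-theoretic problem into an explicit system of polynomial equations and solve it. By Corollary~\ref{zhuclass}, the highest-weights $\mu=a_1\varpi_1+a_2\varpi_2\in\h^*$ of irreducible $A(\widetilde{V}_{-2}(G_2))$-modules in category $\mathcal{O}^{G_2}$ are precisely the common zeros of $\mathscr{P}_{v_{\sing}}$, and by the previous lemma $\{p_1,\dots,p_7\}$ is a basis of $\mathscr{P}_{v_{\sing}}$. So the task reduces to determining the affine variety
\[
V(p_1,\dots,p_7)\subset\h^*\cong\C^2,
\]
using coordinates $(h_1,h_2)$ where $h=\mu(h_1)\varpi_1^\vee+\cdots$; concretely $h_i=\langle\mu,\alpha_i^\vee\rangle$. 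First I would substitute the generic weight into each $p_i(h)$ and compute a Gröbner basis (or simply successive resultants in one variable) for the ideal $(p_1,\dots,p_7)\subset\C[h_1,h_2]$, thereby reducing to a univariate polynomial whose roots, back-substituted, give all solution pairs $(h_1,h_2)$.

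The key steps, in order, are: (1) write out the seven polynomials $p_i(h_1,h_2)$ explicitly as in the preceding lemma; (2) compute $\gcd$'s and resultants to eliminate one variable — I expect many of the $p_i$ to share visible linear factors such as $h_1$, $2h_1+3h_2+3$, $h_1+2h_2+2$, $h_1+3h_2+3$, so a first pass is to factor each $p_i$ and organize the solution set according to which linear factor vanishes; (3) on each such component, the remaining equations cut down to finitely many points, which one enumerates; (4) collect the full finite list of solutions, translate each $(h_1,h_2)$ back to a weight $\mu=a_1\varpi_1+a_2\varpi_2$ via $a_i$ determined by the $h_j$, and verify there are exactly $20$ of them, matching Table~\ref{Tab:mu_G2}. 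Since the $p_i$ have degree $6$, the naive Bézout count is large, so the content of the computation is that the seven equations are highly non-generic and their common locus collapses to $20$ reduced points; this should be checkable directly (e.g.\ the quotient ring $\C[h_1,h_2]/(p_1,\dots,p_7)$ has dimension $20$ over $\C$).

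The main obstacle is purely computational rather than conceptual: carrying out the elimination cleanly for seven sextics in two variables and being certain that no solution is missed and that each appears with the right multiplicity. In practice I would run this through a computer algebra system (as the authors evidently did), but to make the argument human-checkable I would exhibit, for each of the $20$ weights $\mu_i$ in Table~\ref{Tab:mu_G2}, that $p_j(\mu_i)=0$ for $j=1,\dots,7$ (a finite verification), and separately argue completeness by the dimension count $\dim_\C\C[h_1,h_2]/(p_1,\dots,p_7)=20$, or by tracking the factorizations componentwise so that no branch is overlooked. One small care point: some $\mu_i$ have non-integral coefficients (e.g.\ $\mu_9=-\tfrac12\varpi_1$), so these are genuinely weights of $\g$ but the corresponding $L_{G_2}(\mu_i)$ are infinite-dimensional; this is fine for category $\mathcal{O}^{G_2}$ and is exactly why the classification in category $\mathcal{O}$ is larger than the list of ordinary modules singled out in Theorem~\ref{Th:mainB}.
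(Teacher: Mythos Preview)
Your proposal is correct and follows exactly the approach taken in the paper: invoke Corollary~\ref{zhuclass} to reduce the classification to the common zero locus of $p_1,\dots,p_7$ in $\h^*\cong\C^2$, then solve this polynomial system to obtain the $20$ weights. The paper in fact gives no further detail beyond stating that the $\mu_i$ are the solutions of $\lambda(p_i(h))=0$ for $i=1,\dots,7$; your outline of how to actually carry out the elimination (factoring out visible linear factors, resultants/Gr\"obner basis, dimension count, and back-verification) is more explicit than what the paper records.
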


From Zhu's correspondence, we deduce the following result. 

\begin{thm} \label{theorem1.5}
The set $\{L_{G_{2}}(-2,\mu_{i})\colon i=1,\ldots,20\}$ 
provides the complete list of irreducible $\widetilde{V}_{-2}(G_{2})$-modules 
from the category $\mathcal{O}$, 
and 
the set
$\{L_{G_{2}}(-2,0), \, L_{G_{2}}(-2,\varpi_{1}), \, L_{G_{2}}(-2,\varpi_{2})\}$
provides the complete list of irreducible ordinary modules for $\widetilde{V}_{-2}(G_{2})$.
\end{thm}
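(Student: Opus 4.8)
The plan is to derive Theorem~\ref{theorem1.5} from the Zhu-algebra computations above via Zhu's correspondence \cite{Z}, which gives a bijection $M \mapsto M_{\mathrm{top}}$ between isomorphism classes of simple positive-energy $\widetilde{V}_{-2}(G_{2})$-modules and simple $A(\widetilde{V}_{-2}(G_{2}))$-modules, with $A(\widetilde{V}_{-2}(G_{2})) \cong U(G_2)/\langle v'_{\sing}\rangle$. First I would record that a simple object $L_{\extg}(\lambda)$ of category $\mc{O}$ at level $-2$ is a $\widetilde{V}_{-2}(G_{2})$-module exactly when its top space $L_{G_2}(\bar\lambda)$, with $\bar\lambda$ the restriction of $\lambda$ to $\h$, is an $A(\widetilde{V}_{-2}(G_{2}))$-module, i.e.\ $R\,L_{G_2}(\bar\lambda)=0$ (Proposition~\ref{Class-affine-quotient}). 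Identifying a level-$(-2)$ weight with its $G_2$-part through the conformal shift used in the proof of Theorem~\ref{Th:singular_G2}, Corollary~\ref{zhuclass} together with Proposition~\ref{Pro:mu_i-G2} tells us this occurs precisely for $\bar\lambda\in\{\mu_1,\dots,\mu_{20}\}$. Since every highest-weight $\extg$-module belongs to category $\mc{O}$ and the $\mu_i$ are pairwise distinct, the first assertion follows: $\{L_{G_{2}}(-2,\mu_{i})\colon i=1,\dots,20\}$ is the complete list of simple $\widetilde{V}_{-2}(G_{2})$-modules in $\mc{O}$.

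For the ordinary part, I would show that $L_{G_{2}}(-2,\mu)$ is ordinary if and only if $L_{G_2}(\mu)$ is finite-dimensional, equivalently $\mu$ is a dominant integral weight of $G_2$. On a highest-weight module $L_0=-D$ always acts semisimply with grading bounded below, and its conformal weight-$d$ subspace is a quotient of the span of the vectors $x_1(-n_1)\cdots x_r(-n_r)\otimes u$ in the generalized Verma module, with $n_i\geqslant 1$, $\sum_i n_i$ fixed by $d$, and $u\in L_{G_2}(\mu)$; since there are finitely many such monomials in each degree, all graded pieces are finite-dimensional iff $L_{G_2}(\mu)$ is, and when $\mu$ is not dominant integral the top space of $L_{G_{2}}(-2,\mu)$, which equals $L_{G_2}(\mu)$, is already infinite-dimensional. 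Inspecting Table~\ref{Tab:mu_G2}, among the $\mu_i$ exactly $\mu_1=0$, $\mu_2=\varpi_1$, $\mu_3=\varpi_2$ are dominant integral. Conversely, a simple ordinary $\widetilde{V}_{-2}(G_{2})$-module is positive-energy with finite-dimensional top space, so under Zhu's bijection it corresponds to a finite-dimensional simple $U(G_2)/\langle v'_{\sing}\rangle$-module, necessarily $L_{G_2}(\mu)$ for a dominant integral $\mu$ with $p_i(\mu)=0$ for $i=1,\dots,7$; hence $\mu\in\{0,\varpi_1,\varpi_2\}$ and the module is $L_{G_{2}}(-2,\mu)$. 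This gives the second assertion.

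I do not expect any serious obstacle here: the substantive content, namely the singular vector of Theorem~\ref{Th:singular_G2}, the basis of $\mathscr{P}_{v_{\sing}}$, and the solution of the polynomial system yielding Table~\ref{Tab:mu_G2}, is already in place, and the theorem is essentially a translation of Proposition~\ref{Pro:mu_i-G2} through Zhu's correspondence. The one point to treat with care is the chain of equivalences \emph{ordinary $\Leftrightarrow$ finite-dimensional top space $\Leftrightarrow$ dominant integral highest weight}, and in particular using Zhu's bijection on the converse side to rule out ordinary modules not lying in category $\mc{O}$; after that the classification is a matter of reading off Table~\ref{Tab:mu_G2}.
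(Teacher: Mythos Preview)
Your proposal is correct and follows essentially the same approach as the paper: the first assertion is derived from Proposition~\ref{Pro:mu_i-G2} via Zhu's correspondence, and the second by observing that an irreducible ordinary module has finite-dimensional top space, hence corresponds to a dominant integral $\mu_i$ in Table~\ref{Tab:mu_G2}. Your treatment is in fact more careful than the paper's, since you explicitly use Zhu's bijection on the converse side to exclude ordinary modules not a priori in category~$\mc{O}$, a point the paper passes over.
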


\begin{proof}
	The first part of the Theorem follows directly from the Proposition \ref{Pro:mu_i-G2} and Zhu's correspondence. 
	We look for the irreducible ordinary $\widetilde{V}_{-2}(G_{2})$-modules among the list of modules in the category $\mathcal{O}$. An ordinary $\widetilde{V}_{-2}(G_{2})$-module $M$ has finite-dimensional graded spaces.  
In particular, the space corresponding to the 
graded space 
with the minimal conformal weight 
is a finite-dimensional $G_2$-module. 
Hence, the irreducible ordinary $\widetilde{V}_{-2}(G_{2})$-modules 
correspond exactly to the dominant integral weights in the Table~\ref{Tab:mu_G2}.
\end{proof}

\begin{thm}\label{theorem1.6}
The vertex algebra $\widetilde{V}_{-2}(G_{2})$ is simple, and hence $\widetilde{V}_{-2}(G_{2})=L_{-2}(G_2)$.
\end{thm}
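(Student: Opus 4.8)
The plan is to derive the simplicity of $\widetilde{V}_{-2}(G_2)$ from the classification already obtained, by ruling out singular vectors of positive conformal weight. Since $\widetilde{V}_{-2}(G_2)$ is a graded quotient of $V^{-2}(G_2)$, it is a highest-weight module for $\extg$ lying in the category $\mathcal{O}$, and its degree-zero subspace is one-dimensional. Hence any nonzero proper ideal is a nonzero $\extg$-submodule whose weights are bounded above by $-2\Lambda_0$, and therefore contains a singular vector not proportional to $\mathbf{1}$, necessarily of positive conformal weight; conversely such a singular vector generates a proper ideal. So it suffices to show that $\widetilde{V}_{-2}(G_2)$ has no singular vector other than scalar multiples of $\mathbf{1}$.

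Assume for contradiction that $\bar{w}\in\widetilde{V}_{-2}(G_2)$ is a singular vector with $\bar{w}\notin\C\mathbf{1}$; then it has positive conformal weight $d$ and some $\h$-weight $\mu\in\h^*$. The submodule $U(\extg)\bar{w}$ is a highest-weight module whose irreducible quotient is, up to a shift by $\delta$ (which does not change the $\affg$-isomorphism class), the module $L_{G_2}(-2,\mu)$. Being an irreducible subquotient of the $\widetilde{V}_{-2}(G_2)$-module $\widetilde{V}_{-2}(G_2)$ and lying in the category $\mathcal{O}$, it must appear in the list of Theorem~\ref{theorem1.5}, which forces $\mu\in\{\mu_1,\dots,\mu_{20}\}$ with the $\mu_i$ as in Table~\ref{Tab:mu_G2}. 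Moreover, since $\bar{w}$ is annihilated by $\affn_+$, the Sugawara operator $L_0$ acts on it through the quadratic Casimir of $G_2$, whence $d=h_{L(\mu)}$, the conformal dimension given by \eqref{eq:conf_dim} with $k=-2$ and $h_{G_2}^\vee=4$. In particular $h_{L(\mu)}$ must be a positive integer.

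It remains to see that this is impossible. Using the explicit formula for $h_{L(\lambda)}$ recalled in the proof of Theorem~\ref{Th:singular_G2}, one checks that among $\mu_1,\dots,\mu_{20}$ the value $h_{L(\mu_i)}$ is a non-negative integer only for $\mu_1=0$, $\mu_2=\varpi_1$ and $\mu_3=\varpi_2$, with respective values $0$, $1$ and $2$; all other $\mu_i$ yield negative or non-integral conformal dimensions (the remaining integral cases $\mu_4,\dots,\mu_8$ all giving $-1$). The value $0$ is excluded because $d>0$. For $\mu_2$ and $\mu_3$ we have $d\in\{1,2\}$, in particular $d<6$; but $v_{\sing}$ has conformal weight $6$, so $\langle v_{\sing}\rangle$ is concentrated in conformal weights $\geqslant 6$ and $\widetilde{V}_{-2}(G_2)$ coincides with $V^{-2}(G_2)$ in conformal weights $\leqslant 5$. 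Thus $\bar{w}$ would lift to an honest singular vector of $V^{-2}(G_2)$ of conformal weight $1$ or $2$, contradicting the last assertion of Theorem~\ref{Th:singular_G2}. Hence $\widetilde{V}_{-2}(G_2)$ admits no singular vector outside $\C\mathbf{1}$, so it is simple and $\widetilde{V}_{-2}(G_2)=L_{-2}(G_2)$.

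No serious obstacle remains: the genuine inputs are the explicit singular vector $v_{\sing}$, the fact that $6$ is the smallest conformal weight of a singular vector of $V^{-2}(G_2)$ (Theorem~\ref{Th:singular_G2}), and the classification of irreducible modules of $\widetilde{V}_{-2}(G_2)$ in the category $\mathcal{O}$ (Theorem~\ref{theorem1.5}), all already in hand. The only mildly delicate points are the identification $d=h_{L(\mu)}$ of the conformal weight of a singular vector with its Casimir eigenvalue, and the observation that $\langle v_{\sing}\rangle$ vanishes below conformal weight $6$; both are routine. If I had to flag a weak spot it would be simply verifying that every $\mu_i$ with $i\geqslant 4$ indeed has $h_{L(\mu_i)}$ negative or non-integral — a short but necessary computation using \eqref{eq:conf_dim}.
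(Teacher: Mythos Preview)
Your proof is correct and follows essentially the same line as the paper's. The paper phrases the restriction to $\mu\in\{0,\varpi_1,\varpi_2\}$ slightly differently: since $\widetilde{V}_{-2}(G_2)$ has finite-dimensional $L_0$-eigenspaces bounded below, any simple highest-weight subquotient is \emph{ordinary}, and Theorem~\ref{theorem1.5} already lists the ordinary simples as exactly $L_{G_2}(-2,0)$, $L_{G_2}(-2,\varpi_1)$, $L_{G_2}(-2,\varpi_2)$; you instead pass through the full list of twenty and eliminate the rest by the integrality/positivity of $h_{L(\mu)}$. Both routes are equivalent, and the final contradiction with Theorem~\ref{Th:singular_G2} (no singular vector of conformal weight $\leqslant 5$, hence $\widetilde{V}_{-2}(G_2)_d=V^{-2}(G_2)_d$ for $d\leqslant 5$) is identical.
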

\begin{proof}
According to Theorem \ref{theorem1.5}, the only possible 
$G_2$-weights for a subsingular 
vector (see Definition \ref{Def:subsingular}) in $V^{-2}(G_2)$ 
{with respect to $\langle v_\sing\rangle$} 
besides 
$0$ 
are $\varpi_1$ and $\varpi_2$. 
The conformal weights of those potential subsingular vectors are respectively:  
\begin{equation}
\dfrac{(\varpi_1,\varpi_1+2\rho)}{2(k+h_{G_2}^\vee)}=1, 
\quad 
\dfrac{(\varpi_2,\varpi_2+2\rho)}{2(k+h_{G_2}^\vee)}=2,
\end{equation}
with $h_{G_2}^\vee=4$ and $k=-2$.  
On the other hand,
by Theorem \ref{Th:singular_G2},
it is clear that there are no subsingular vectors in 
the subspace $V^{-2}(G_{2})_d$
with $d\leqslant 5$.
%
\end{proof}

Combining Theorem \ref{theorem1.5} and Theorem~\ref{theorem1.6}, 
we obtain 
the classification of the irreducible modules of 
$L_{-2}(G_2)$ in the category $\mathcal O$. 
This completes the proof of Theorem \ref{Th:mainB}. 
%

{In addition, an algorithm is provided in \cite{KR22} to construct the relaxed modules with finite-dimensional weight spaces of an affine vertex algebra based on the classification of its simple highest-weight modules.
More precisely, the classification of all simple weight modules (with finite-dimensional weight spaces) over the Zhu algebra is returned. 
Using Zhu's correspondence we then obtain the simple $\Z_{\geqslant 0}$-graded relaxed modules. 
The other relaxed modules are constructed applying spectral flow twists.
In the following, we outline the construction of the relaxed modules following the algorithm of \cite{KR22}.

The construction of the $A(L_{-2}(G_{2}))$-modules uses the classification of parabolic subalgebras of $G_2$ whose Levi is of type $A$ or $C$. Since $G_2$ is not of type $AC$, there are three distinct ones up to twisting by a element of the Weyl group $\weyl_{G_2}$:
\begin{itemize}
	\item the Borel subalgebra $\mathfrak{b}=\Span_\C\{h_1,h_2,e_{\alpha_i} (i=1,\dots,6)\}$ with Levi subalgebra $\h$,
	\item the subalgebra $\mathfrak{p}_1=\mathfrak{b}\oplus\C e_{-\alpha_1}$ with Levi $\mathfrak{l}_1=\Span_\C\{e_{\pm \alpha_1},h_1,h_2\}\simeq\sll_2\oplus\gl_1$,
	\item  the subalgebra $\mathfrak{p}_2=\mathfrak{b}\oplus\C e_{-\alpha_2}$ with Levi $\mathfrak{l}_2=\Span_\C\{e_{\pm \alpha_2},h_1,h_2\}\simeq\sll_2\oplus\gl_1$.
\end{itemize}

The choice of parabolic subalgebra $\mathfrak{b}$ returns the irreducible highest-weights modules $L_{G_{2}}(\mu_i)$ listed in Proposition~\ref{Pro:mu_i-G2} as well as their twists by the elements in the Weyl group $\weyl_{G_2}$ that are naturally extended into automorphisms of the Lie algebra $G_2$. The latter are highest-weight $\weyl_{G_2}$-modules with respect to a different choice of Borel. The simple highest-weight $A(L_{-2}(G_{2}))$-modules consist in:
\begin{itemize}
	\item the three finite-dimensional highest-weight modules $L_{G_{2}}(\mu_i)$, $i=1,2,3$,
	\item the $11\times 6$ infinite-dimensional highest-weight modules $w(L_{G_{2}}(\mu_i))$, $w\in\weyl_{G_2}/\la s_1\ra$, $i\in\{6,7,8,11,12\}\cup\{15,\dots,20\}$,
	\item the $4\times6$ infinite-dimensional highest-weight modules $w(L_{G_{2}}(\mu_i))$, $w\in\weyl_{G_2}/\la s_2\ra$, $i\in\{4,5,9,10\}$,
	\item the $2\times12$ infinite-dimensional highest-weight modules $w(L_{G_{2}}(\mu_i))$, $w\in\weyl_{G_2}$, $i=13,14$.
\end{itemize}

The choice of parabolic subalgebra $\mathfrak{p}_j$ ($j=1,2$) leads to the construction of certain irreducible semisimple \emph{coherent families} of $\mathfrak{l}_j$-modules
$$\mathcal{C}\simeq\bigoplus_{[\lambda]}\mathcal{C}^{[\lambda]}$$
where the sum runs over the cosets of the weight support of $\mathcal{C}$ modulo $\Z\alpha_j$.
A coherent family $\mathcal{C}$ of $\g$-modules is a weight $\g$-module whose the dimension of the weight spaces $\mathcal{C}_{\nu}$ is independent of $\nu\in\h^*$ and the trace of the action on $\mathcal{C}_{\nu}$ of every $u$ in the centralizer $U(\g)^{\h}$ is a polynomial in $\nu$. It is said irreducible if some $\mathcal{C}^{[\lambda]}$ is irreducible and semisimple if every $\mathcal{C}^{[\lambda]}$ is semisimple.

For $\g=\mathfrak{l}_j\simeq\sll_2\oplus\gl_1$, the coherent families are obtained via the localization of infinite-dimensional highest-weight $\sll_2$-modules. We refer to \cite{Mat} for the detailed construction. Hence irreducible semisimple coherent families classified by the simple infinite-dimensional highest-weight $\sll_2$-modules up to the action of the Weyl group $\Z_2=\la s_j\ra$ of $\mathfrak{l}_j$. Denote $\mathcal{C}(\mu)$ the irreducible semisimple coherent family corresponding to the highest-weight $\mu$.
The direct summands $\mathcal{C}{[\lambda]}(\mu)$ is irreducible unless $[\lambda]=[\mu]$ or $[\lambda]=[s\cdot\mu]$ where $s_j\cdot\mu=s_j(\mu+\tfrac{1}{2}\alpha_j)-\tfrac{1}{2}\alpha_j$.
The irreducible quotient $\mathcal{S}^{[\lambda]}_{G_2}(\mu)$ of the $G_2$-module induced from $\mathcal{C}{[\lambda]}(\mu)$ is called a \emph{semidense} $G_2$-module as its weight support fulfills a half-plane of $\h^*$, that is
$$\mu+\Z\alpha_1-\Z_{\geqslant0}\alpha_2\quad\text{or}\quad\mu-\Z_{\geqslant 0}\alpha_1+\Z\alpha_2$$
depending whether $\mathcal{C}^{[\lambda]}(\mu)$ comes from $\mathfrak{p}_1$ or $\mathfrak{p}_2$ respectively.

To determine which coherent families induced simple semidense $A(L_{-2}(G_{2}))$-modules with finite-dimensional weight spaces, one check for each $i=1,\dots,20$ and $j=1,2$ whether the highest-weight module $L_{G_{2}}(\mu_i)$ will result in an infinite-dimensional highest-weight $\sll_2$-modules. This can be done by projecting the weight $\mu_i$ onto the weight-space of the simple ideal $\sll_2$ of the Levi $\mathfrak{l}_j$. We denote $\pi_j(\mu_i)$ these projections.

There are four one-parameter families of semidense modules resulting from the choice of the parabolic $\mathfrak{p}_1$:
\begin{equation*}
	\mathcal{S}^{[\lambda]}_{G_2}({\pi_1(\mu_4)}),\,
	\mathcal{S}^{[\lambda]}_{G_2}({\pi_1(\mu_5)}),\,
	\mathcal{S}^{[\lambda]}_{G_2}({\pi_1(\mu_9)})=\mathcal{S}^{[\lambda']}_{G_2}({\pi_1(\mu_{13})}),\,
	\mathcal{S}^{[\lambda]}_{G_2}({\pi_1(\mu_{10})})=\mathcal{S}^{[\lambda']}_{G_2}({\pi_1(\mu_{14})}),
\end{equation*}
together with their twists by the elements in $\weyl_{G_2}/\la s_1\ra$,
and six resulting from the choice of the parabolic $\mathfrak{p}_2$:
\begin{equation*}
	\begin{gathered}
		\mathcal{S}^{[\lambda]}_{G_2}({\pi_2(\mu_6)}),\,
		\mathcal{S}^{[\lambda]}_{G_2}({\pi_2(\mu_7)}),\,
		\mathcal{S}^{[\lambda]}_{G_2}({\pi_2(\mu_{11})})\simeq\mathcal{S}^{[\lambda']}_{G_2}({\pi_2(\mu_{18})}),\,
		\mathcal{S}^{[\lambda]}_{G_2}({\pi_2(\mu_{12})})\simeq\mathcal{S}^{[\lambda']}_{G_2}({\pi_2(\mu_{16})}),\,\\
		\mathcal{S}^{[\lambda]}_{G_2}({\pi_2(\mu_{13})})\simeq\mathcal{S}^{[\lambda']}_{G_2}({\pi_2(\mu_{20})}),\,
		\mathcal{S}^{[\lambda]}_{G_2}({\pi_2(\mu_{14})})\simeq\mathcal{S}^{[\lambda']}_{G_2}({\pi_2(\mu_{15})})
	\end{gathered}
\end{equation*}
with their twists by the elements in $\weyl_{G_2}/\la s_2\ra$.
This complete the classification of simple weight $A(L_{-2}(G_{2}))$-modules with finite-dimensional weight spaces.
The classification of the simple relaxed $L_{-2}(G_2)$-modules follows by applying Zhu's functor and spectral flow twist.

\begin{cor}\label{cor:relaxedG2}
	The irreducible 
	relaxed $L_{-2}(G_2)$-modules with finite-dimensional weight spaces are obtained as spectral flow twists and $W_{G_2}$-twists of the following modules:
	\begin{itemize}
		\item the irreducible highest-weight modules $L_{G_{2}}(-2,\mu_i)$, $i=1,\dots,20$,
		\item the irreducible semirelaxed $\mathcal{S}_{G_2}^{[\lambda]}(-2,\pi_1(\mu_i))$, $i=4,5,9,10$, $\lambda\in\mu_i+\C\alpha_1$, $[\lambda]\neq[\mu],[s_1\cdot\mu_i]$,
		\item the irreducible semirelaxed $\mathcal{S}_{G_2}^{[\lambda]}(-2,\pi_2(\mu_i))$, $i=6,7,11,\dots,14$, $\lambda\in\mu_i+\C\alpha_2$, $[\lambda]\neq[\mu],[s_2\cdot\mu_i]$.
	\end{itemize}
\end{cor}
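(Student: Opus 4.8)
The plan is to derive the corollary from the algorithm of \cite{KR22}, taking as input the classification of simple highest-weight $L_{-2}(G_2)$-modules supplied by Proposition~\ref{Pro:mu_i-G2} and Theorem~\ref{Th:mainB} together with the structural facts on weight $G_2$-modules recalled above. First I would invoke the Fernando--Mathieu classification (see \cite{Mat,KR22}): every simple weight module with finite-dimensional weight spaces over a reductive Lie algebra is a subquotient of a module parabolically induced from a cuspidal module over the Levi factor of a parabolic subalgebra, and such cuspidal simple modules exist only over Levi factors all of whose simple ideals are of type $A$ or $C$. Since, up to $\weyl_{G_2}$-conjugacy, $G_2$ admits exactly the three parabolic subalgebras $\mf{b}$, $\mf{p}_1$, $\mf{p}_2$ with such Levi factors, this gives the complete list of simple weight $U(G_2)$-modules with finite-dimensional weight spaces: the $\weyl_{G_2}$-twists of the simple highest-weight modules and the $\weyl_{G_2}$-twists of the semidense modules $\mathcal{S}_{G_2}^{[\lambda]}(\mu)$ induced from $\mf{p}_1$ or $\mf{p}_2$ out of a torsion-free $\sll_2$-module.

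Next I would single out, within this list, the modules annihilated by the ideal $J_{-2}$ for which $A(L_{-2}(G_2))=U(G_2)/J_{-2}$. Since $J_{-2}$ is the two-sided ideal generated by the single element $v'_{\sing}$ and is stable under $\mathrm{Aut}(G_2)$, this amounts to deciding whether $v'_{\sing}$ acts as $0$, and the answer is unaffected by $\weyl_{G_2}$-twisting. For highest-weight modules this is precisely Proposition~\ref{Pro:mu_i-G2}: they are the $L_{G_2}(\mu_i)$, $i=1,\dots,20$. For a semidense module $\mathcal{S}_{G_2}^{[\lambda]}(\mu)$, I would use that it sits in an irreducible semisimple coherent family of $G_2$-modules with Zariski-dense weight support, along which the action of $v'_{\sing}$ depends polynomially on the weight; hence $v'_{\sing}$ annihilates $\mathcal{S}_{G_2}^{[\lambda]}(\mu)$ if and only if it annihilates the highest-weight modules at the degenerate cosets of the family, and these are determined by the central character of the family, i.e. by the projection $\pi_j(\mu)$ of the seed weight onto the $\sll_2$-summand of $\mf{l}_j$. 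This turns the question into a finite check over the twenty $\mu_i$ and $j=1,2$.

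The third step is to run this check: for each $\mu_i$ one asks whether localizing $L_{G_2}(\mu_i)$ in the $\mf{l}_j$-direction yields a genuinely torsion-free, hence semidense, module---equivalently whether $\pi_j(\mu_i)$ is not dominant integral---and whether the degenerate highest-weight members of the resulting coherent family still belong to $\{\mu_1,\dots,\mu_{20}\}$. Collapsing the answers modulo the dot-action identifications of $\weyl_{\mf{l}_j}=\langle s_j\rangle$, such as $\mathcal{S}_{G_2}^{[\lambda]}(\pi_1(\mu_9))\cong\mathcal{S}_{G_2}^{[\lambda']}(\pi_1(\mu_{13}))$, $\mathcal{S}_{G_2}^{[\lambda]}(\pi_1(\mu_{10}))\cong\mathcal{S}_{G_2}^{[\lambda']}(\pi_1(\mu_{14}))$ and the analogues for $\mf{p}_2$, and modulo $\weyl_{G_2}$-twists, leaves exactly the four families from $\mf{p}_1$ (seeds $\mu_4,\mu_5,\mu_9,\mu_{10}$) and the six from $\mf{p}_2$ (seeds $\mu_6,\mu_7,\mu_{11},\dots,\mu_{14}$) of the statement, the exclusions $[\lambda]\neq[\mu],[s_j\cdot\mu]$ recording the reducibility loci of the families. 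I expect this step to be the main obstacle: both the justification of the polynomial/Zariski-density reduction to the highest-weight specializations and the bookkeeping of projections, dot-action coincidences and $\weyl_{G_2}$-twists are delicate, and it is here that several $\mu_i$ with non-dominant-integral $\pi_j$-projection must be shown not to contribute new families.

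Finally I would lift everything to $L_{-2}(G_2)$. Zhu's correspondence identifies the simple $\Z_{\geqslant 0}$-graded $L_{-2}(G_2)$-modules with the simple $A(L_{-2}(G_2))$-modules, hence with the modules just classified, so that the semidense $G_2$-modules give rise to the semirelaxed $L_{-2}(G_2)$-modules $\mathcal{S}_{G_2}^{[\lambda]}(-2,\pi_j(\mu_i))$; applying spectral flow twists, which carry an arbitrary $\Z$-graded relaxed module to a $\Z_{\geqslant 0}$-graded one, together with the lifts of the $\weyl_{G_2}$-twists, yields all simple relaxed $L_{-2}(G_2)$-modules with finite-dimensional weight spaces, which is the assertion of the corollary.
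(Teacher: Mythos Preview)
Your proposal is correct and follows essentially the same approach as the paper: both apply the algorithm of \cite{KR22}, enumerating the three $AC$-type parabolics of $G_2$, using the highest-weight classification of Proposition~\ref{Pro:mu_i-G2} as input, projecting each $\mu_i$ onto the $\sll_2$-summand of the Levi to detect the admissible semidense seeds, collapsing modulo the $\langle s_j\rangle$-dot-action identifications, and then lifting via Zhu's correspondence and spectral flow. The paper presents this as a direct computation (the discussion preceding the corollary is the proof), whereas you spell out somewhat more of the underlying Fernando--Mathieu and coherent-family machinery; the content is the same.
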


Note that $L_{-2}(G_2)$ does not admit fully relaxed modules with finite-dimensional weight spaces. It admits (fully) relaxed modules with infinite-dimensional weight spaces though but they cannot be classify using the algorithm in \cite{KR22}.}

\section{Associated variety of $L_{-2}(G_{2})$}
\label{Sec:G2_assoc}

We continue to write $\g$ for the simple Lie algebra $G_{2}$, and 
we keep the related notations of the previous sections, in particular 
about the numbering of positive roots in $G_2$. 
First, using the computations of the previous section, we establish 
the following result. 

\begin{prop}
\label{Pro:quasi-lisse}
The associated variety of $L_{-2}(G_2)$ 
is contained in the nilpotent cone of $G_2$. 
In other words, the simple vertex algebra $L_{-2}(G_2)$ is quasi-lisse. 
\end{prop}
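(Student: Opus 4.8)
The plan is to derive the statement from the explicit polynomials $p_{1},\dots,p_{8}$ computed in the previous section, by means of the standard criterion: a closed, conic, $G$-invariant subvariety $X$ of $\g\cong\g^{*}$ is contained in the nilpotent cone of $\g$ if and only if $X\cap\h=\{0\}$. (Indeed, if $x\in X$ is not nilpotent, then its nonzero semisimple part $x_{s}$ lies in $\overline{G.x}\subseteq X$ and is $G$-conjugate to a nonzero element of $\h$.) So it is enough to show that $X_{L_{-2}(G_{2})}\cap\h=\{0\}$.

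First I would pass to $\widetilde{V}_{-2}(G_{2})=V^{-2}(G_{2})/\langle v_{\sing}\rangle$. Since $L_{-2}(G_{2})$ is a quotient of $\widetilde{V}_{-2}(G_{2})$, we have $X_{L_{-2}(G_{2})}\subseteq X_{\widetilde{V}_{-2}(G_{2})}=V(I_{M})$, the zero locus in $\g^{*}\cong\g$ of the ideal $I_{M}$ of $S(\g)$, where $M\subseteq S(\g)$ is the $\g$-submodule generated by $v''_{\sing}$ under the adjoint action and $I_{M}$ the ideal it generates. Because $I_{M}$ is $\h$-stable and the nonzero-weight part of $S(\g)$ is contained in $\n_{-}S(\g)+S(\g)\n_{+}$, all of whose elements vanish on $\h$, one obtains
\[
V(I_{M})\cap\h=\{\,h\in\h\colon \Psi(q)(h)=0\ \text{for all}\ q\in I_{M}\cap S(\g)^{\h}\,\}\subseteq V\bigl(\Psi(M\cap S(\g)^{\h})\bigr),
\]
where $\Psi$ is the Chevalley projection \eqref{eq:Chevalley}.

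Next I would identify $\Psi(M\cap S(\g)^{\h})$ with the leading terms of the $p_{i}$. The weight-zero subspace $M\cap S(\g)^{\h}$ is spanned by the vectors $\widetilde{v}_{1},\dots,\widetilde{v}_{8}$ obtained from $v''_{\sing}$ (of weight $4\varpi_{1}$) by the same iterated lowering operations used in Lemma~\ref{lm:weightzero_G2}. Since $v''_{\sing}$ is the principal symbol of $v'_{\sing}$ and $v'_{\sing}$ has degree $6$ in $U(G_{2})$ (the conformal weight of $v_{\sing}$, together with $v''_{\sing}\neq0$), the $\widetilde{v}_{i}$ are the principal symbols of the vectors $v_{i}$ of Lemma~\ref{lm:weightzero_G2}; and since $\deg p_{i}=6$ for all $i$, there is no degree drop under the Harish--Chandra projection, so $\Psi(\widetilde{v}_{i})$ equals the leading (degree-six) homogeneous component $q_{i}$ of $p_{i}$. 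Hence $X_{L_{-2}(G_{2})}\cap\h$ lies in the common zero locus in $\h$ of the homogeneous polynomials $q_{1},\dots,q_{8}$ in $h_{1},h_{2}$.

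Finally I would verify that this common zero locus is $\{0\}$, which is elementary given the formulas. A homogeneous polynomial in $(h_{1},h_{2})$ is a product of linear forms, so a common nonzero zero would be a common line through the origin. From
\begin{align*}
q_{4}&=6h_{1}h_{2}(h_{1}+h_{2})(h_{1}+2h_{2})(h_{1}+3h_{2})(2h_{1}+3h_{2}),\\
q_{7}&=-4h_{1}^{2}(h_{1}+2h_{2})^{2}(16h_{1}^{2}+63h_{1}h_{2}+63h_{2}^{2}),
\end{align*}
and the fact that the quadratic factor of $q_{7}$ has negative discriminant and so shares no line with $q_{4}$, one gets $V(q_{4},q_{7})=\{h_{1}=0\}\cup\{h_{1}+2h_{2}=0\}$; and $q_{3}$ restricts on each of these two lines to a nonzero scalar multiple of $h_{2}^{6}$, so it cuts them down to the origin. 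Thus $V(q_{3},q_{4},q_{7})=\{0\}$, whence $X_{L_{-2}(G_{2})}\cap\h=\{0\}$ and $X_{L_{-2}(G_{2})}\subseteq\cN$. The one point requiring care is the third step, i.e.\ making precise that $\Psi(\widetilde{v}_{i})$ is the top homogeneous part of $p_{i}$ (compatibility of the PBW filtration on $U(G_{2})$ with the grading on $S(G_{2})$, together with the fact that the conformal weight of $v_{\sing}$ fixes the degree); the concluding factorization check is then routine.
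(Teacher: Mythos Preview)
Your proof is correct and follows essentially the same strategy as the paper: show $X_{L_{-2}(G_2)}\cap\h=\{0\}$ by producing enough weight-zero elements of the ideal $I_M\subset S(G_2)$ whose Chevalley projections have only the origin as common zero. The only difference is one of bookkeeping: the paper recomputes the needed homogeneous polynomials directly in $S(G_2)$ via brackets $[e_{-\alpha},[\ldots,v''_{\sing}]]$, whereas you recycle the Harish--Chandra polynomials $p_i$ from Section~\ref{Sec:G2_rep} and take their degree-six parts $q_i$. Your justification of $\Psi(\widetilde v_i)=q_i$ is sound (since $\deg p_i=6$ forces $\deg_{\mathrm{PBW}}v_i=6$, hence $\widetilde v_i=\mathrm{gr}_6 v_i$ and the compatibility of $\Upsilon$ and $\Psi$ with the PBW filtration applies), and your choice of $q_3,q_4,q_7$ to cut down to the origin is a clean check equivalent to the paper's.
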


\begin{proof}
We follow the strategy adopted in \cite{AraMor2}. 
Let $M$ be the $G_{2}$-module generated by 
$v''_{\sing}$ under the adjoint action, 
and $I_{M}$ the ideal of $S(G_{2})$ generated by $M$. 
Then $R_{L_{-2}(G_2)}\cong S(G_{2})/I_{M}$. 
Set 
$$I_{M}^{\h} :=\Psi(I_{M}\cap S(G_{2})^{\h}),$$
where $\Psi$ is the Chevalley projection map \eqref{eq:Chevalley}.  
We extract seven linearly independent polynomials of 
$\h$ in $I_{M}^{\h}$: 
{\footnotesize 
\begin{align*}
&[e_{-\theta},[e_{-\alpha_5},[e_{-\alpha_4},v''_{\sing}]]] 
= -3 h_1 h_2 \left(h_1+h_2\right) \left(h_1+2 h_2\right) \left(h_1+3
h_2\right) \left(2 h_1+3 h_2\right), & \\ 
&[e_{-\alpha_4},[e_{-\alpha_4},[e_{-\alpha_4},[e_{-\alpha_4},v''_{\sing}]]]] 
=  -24 \left(h_1+h_2\right) \left(h_1+2 h_2\right) \left(2 h_1+3
h_2\right)^4, &\\ 
&[e_{-\alpha_5},[e_{-\alpha_4},[e_{-\alpha_4},[e_{-\alpha_3},v''_{\sing}]]]] 
= 4 \left(h_1+h_2\right) \left(h_1+2 h_2\right) \left(h_1+3 h_2\right)
\left(2 h_1+3 h_2\right)^2 \left(4 h_1+3 h_2\right),&\\
&[e_{-\alpha_5},[e_{-\alpha_5},[e_{-\alpha_3},[e_{-\alpha_3},v''_{\sing}]]]] 
= -4 \left(h_1+h_2\right){}^2 \left(h_1+3 h_2\right)^2 \left(16 h_1^2+33
h_2 h_1+18 h_2^2\right), &\\
&[e_{-\theta},[e_{-\alpha_4},[e_{-\alpha_4},[e_{-\alpha},v''_{\sing}]]]]
 = 4 h_1 \left(h_1+h_2\right) \left(h_1+2 h_2\right) \left(2 h_1+3
				h_2\right)^2 \left(4 h_1+9 h_2\right), & \\ 
&[e_{-\theta},[e_{-\alpha_5},[e_{-\alpha_3},[e_{-\alpha_1},v''_{\sing}]]]] 
= -2 h_1 \left(h_1+h_2\right) \left(h_1+2 h_2\right) \left(h_1+3
				h_2\right) \left(16 h_1^2+51 h_2 h_1+45 h_2^2\right), &\\ 
&[e_{-\theta},[e_{-\theta},[e_{-\alpha_1},[e_{-\alpha_1},v''_{\sing}]]]]
= -4 h_1^2 \left(h_1+2 h_2\right)^2 \left(16 h_1^2+63 h_2 h_1+63
				h_2^2\right), & 
\end{align*}
\noindent }
\hspace{-0.11cm}where the equalities are modulo $\n_{-}S(G_{2})+S(G_{2})\n_{+}$.  
The only semisimple element on which 
these seven polynomials vanish 
is $0$. 
Because the associated 
variety is invariant under the adjoint group, 
this implies that the associated variety of $L_{-2}(G_2)$ 
has no (non-zero) semisimple element, 
and so is contained in the nilpotent cone of $G_2$.  
The proposition follows. 
\end{proof}

Set 
\begin{align*}
f =f_\text{sreg}= e_{-\alpha_2}+e_{-\alpha_4},
\quad x =\frac{h}{2}=h_{1}+2h_{2}
\end{align*}
so that $(e,h,f)$ is a subregular $\mf{sl}_2$-triple in $G_2$. 
It defines a grading with respect to $f_{\text{sreg}}$.
The nilpotent element $f_{\text{sreg}}$ is even and we have
\begin{align*}
\g=\g_{-2}\oplus\g_{-1}\oplus\g_{0}\oplus\g_{1}\oplus\g_{2},
\end{align*}
where
\begin{align*}
&\g_{2}=\mathbb{C}e_{\theta} , &&
\g_{-2}=\mathbb{C}e_{-\theta} ,\\
&\g_{1}=\mathbb{C}e_{\alpha_2}\oplus \mathbb{C} e_{\alpha_4} 
\oplus \mathbb{C}e_{\alpha_3} \oplus \mathbb{C} e_{\alpha_5}, &&
\g_{-1}=\mathbb{C}e_{-\alpha_2}\oplus \mathbb{C} e_{-\alpha_4} 
\oplus \mathbb{C}e_{-\alpha_3} \oplus \mathbb{C} e_{-\alpha_5},\\
&\g_{0}=\mathbb{C}h_{1}\oplus \mathbb{C}h_{2} \oplus \mathbb{C}e_{\alpha_1} \oplus 
		\mathbb{C}e_{-\alpha_1} . &&
\end{align*}
The centralizer of $f$ in $G_{2}$ is a four-dimensional vector space 
$\g^{f}=\g^{f}_{-2}\oplus \g^{f}_{-1}$, with: 
\begin{align*}
\g^{f}_{-1}
=\mathbb{C}(e_{-\alpha_2}+e_{-\alpha_4})
\oplus \mathbb{C}e_{-\alpha_2}\oplus \mathbb{C}(e_{-\alpha_3}
-3e_{-\alpha_5}), \quad 
\g^{f}_{-2}=\mathbb{C} e_{-\theta}. 
\end{align*}
Consider the $\W$-algebra 
$\W^{k}(G_{2},f_{\text{sreg}})$  
associated with $G_2$ and $f_{\text{sreg}}$ (see Section \ref{Sub:W-algebras}). 
Since $\dim \g^{f}=4$, 
we know that the $\W$-algebra 
$\W^{k}(G_{2},f_{\text{sreg}})$ 
is strongly generated by the fields 
$J^{\{f_{\text{sreg}}\}},J^{\{e_{-\alpha_2}\}}$,  
$J^{\{e_{-\alpha_3}-3e_{-\alpha_5}\}}$ 
and $J^{\{e_{-\theta}\}}$ defined bellow. 
The OPEs between 
the generators have been 
computed in \cite{Fasquel-OPE}: 
{\footnotesize 
\begin{align*}
J^{\{f_{\text{sreg}}\}}
&=J^{f_{\text{sreg}}}-\frac{1}{3}:J^{h_{1}}J^{h_{1}}:-:J^{h_{1}}J^{h_{2}}:-:J^{h_{2}}J^{h_{2}}:-\frac{1}{3}:J^{e_{-\alpha_1}}J^{e_{-\alpha_1}}:-\left(\frac{7}{3}+k\right)\partial 
J^{h_{1}}-\left(5+2k\right)\partial J^{h_{2}}\\
J^{\{e_{-\alpha_2}\}}&=J^{e_{-\alpha_2}}-\frac{1}{4}:J^{h_{2}}J^{h_{2}}:-
\frac{1}{12}:J^{e_{\alpha_1}}J^{e_{\alpha_1}}:-\frac{1}{4}\left(5+2k\right)\partial J^{h_{2}}\\
J^{\{e_{-\alpha_3}-3e_{-\alpha_5}\}}
& =J^{e_{-\alpha_3}-3e_{-\alpha_5}}
-\frac{2}{3}:J^{h_{1}}J^{e_{\alpha_{1}}}:
+:J^{h_{1}}J^{e_{-\alpha_{1}}}:-:J^{h_{2}}J^{e_{\alpha_1}}:\\
&\qquad +:J^{h_{2}}J^{e_{-\alpha_1}}:-\left(\frac{7}{3}+k\right)
\partial J^{e_{\alpha_1}}+\left(3+k\right)\partial J^{e_{-\alpha_{1}}}\\
6J^{\{e_{-\theta}\}} &=6J^{e_{-\theta}}+\partial J^{e_{-\alpha_3}}
+3\partial J^{e_{-\alpha_5}}+:J^{e_{-\alpha_2}}J^{e_{\alpha_1}}:
-2:J^{e_{-\alpha_2}}J^{e_{-\alpha_2}}:+2:J^{e_{-\alpha_2}}J^{h_{1}}:\\
&\qquad+3:J^{e_{-\alpha_3}}J^{h_{2}}:-:J^{e_{-\alpha_4}}J^{e_{\alpha_1}}:
+3:J^{e_{-\alpha_5}}J^{h_{2}}:-\frac{1}{3}\partial^{2}J^{e_{\alpha_1}}-\frac{1}{3}:J^{h_{1}}\partial J^{e_{\alpha_1}}:\\
&\qquad
-:J^{h_{2}}\partial J^{e_{\alpha_1}}:+\frac{1}{3}:\partial J^{h_{1}}J^{e_{\alpha_1}}:
-:\partial J^{e_{-\alpha_1}}J^{h_{1}}:-:\partial J^{e_{-\alpha_1}}J^{h_{2}}:
-\frac{1}{3}:J^{h_{1}}J^{h_{1}}J^{e_{\alpha_1}}:\\
&\qquad-:J^{h_{1}}J^{h_{2}}J^{e_{\alpha_1}}:-:J^{h_{2}}J^{h_{2}}J^{e_{-\alpha_1}}:-\frac{1}{9}:J^{e_{\alpha_1}}J^{e_{\alpha_1}}J^{e_{\alpha_1}}:
-:J^{e_{-\alpha_1}}J^{h_{1}}J^{h_{2}}:\\
&\qquad-:J^{e_{-\alpha_1}}J^{h_{2}}J^{h_{2}}:
+\frac{1}{3}:J^{e_{-\alpha_1}}J^{e_{\alpha_1}}J^{e_{\alpha_1}}:
\end{align*}
}
For $k\neq -4$, we can redefine the generators as follows:
\begin{align*}
&\text{conformal weight 2}: L=-\frac{J^{\{f_{\text{sreg}}\}}}{4+k}\\
&\text{conformal weight 2}: G^{+}=-J^{\{f_{\text{sreg}}\}}+4J^{\{e_{-\alpha_2}\}}\\
&\text{conformal weight 2}:
		G^{-}=-J^{\{e_{-\alpha_3}-3e_{-\alpha_5}\}}\\
&\text{conformal weight 3}:
		F=6J^{\{e_{-\theta}\}}.
\end{align*}
Let $(-|-)$ be an invariant inner product of $G_{2}$. 
Define $\chi\in \g_{>0}^{*}$ 
by $\chi(x)=-(f_{\text{sreg}}|x)$ for $x\in \g_{>0}$. 
 Set 
\begin{align*}
\mathfrak m:= \g _1 \oplus  \g _2, 
\quad J_{\chi} :=\sum_{x\in \mathfrak m} \mathbb C[\g ^*] (x-\chi (x)).
\end{align*}
We obtain 
$$(f_{\text{sreg}}|e_{\alpha_2})=1,  
\quad  (f_{\text{sreg}}|e_{\alpha_3})=0, 
\quad  (f_{\text{sreg}}|e_{\alpha_4})=3,  
\quad (f_{\text{sreg}}|e_{\alpha_5})=0, 
\quad (f_{\text{sreg}}|e_{\theta})=0,$$
and
\begin{align*}
& v''= 
		9(e_{-\alpha_2}+e_{-\alpha_4})-12e_{-\alpha_2} 
		+e_{\alpha_1}^{2}-3e_{\alpha_1}e_{-\alpha_1} 
		-3h_{1}^{2}-9h_{1}h_{2}-6h_{2}^{2} \quad \text{mod}\, J_{\chi}
\end{align*}
Moreover,
\begin{align*}
& 9J^{\{f_{\text{sreg}}\}}
		-12J^{\{e_{-\alpha_2}\}} =9J^{e_{-\alpha_2}+e_{-\alpha_4}}-12J^{e_{-\alpha_2}}
		+:J^{e_{\alpha_1}}J^{e_{\alpha_1}}:
		-3:J^{e_{\alpha_1}}J^{e_{-\alpha_1}}: & \\
& \qquad \quad -3:J^{h_{1}}J^{h_{1}}:-9:J^{h_{1}}J^{h_{2}}:-6:J^{h_{2}}J^{h_{2}}:-(21+9k)\partial J^{h_{1}}-6(5+2k)\partial J^{h_{2}}.&\end{align*}
Next, consider the term $e_{-\alpha_1}(0)v_\sing$ 
which preserves the conformal weight. 
It is easy to select the possible nonzero terms inside 
$v_{\sing}$ that contribute for the evaluation 
of $e_{-\alpha_1}(0)v_{\sing}$, namely
{\footnotesize 
\begin{align*}
v^{\text{nonzero}}
&= e_{\alpha_4}(-1)^{3}e_{\alpha_2}(-1)e_{\alpha_1}(-1)^{2}\mathbf{1}
-e_{\alpha_4}(-1)^{3}e_{\alpha_3}(-1)e_{\alpha_1}(-1)h_{2}(-1)\mathbf{1} -e_{\alpha_4}(-1)^{4}e_{\alpha_1}(-1)e_{-\alpha_1}(-1)\mathbf{1}
&\\
&\qquad +e_{\alpha_5}(-1)^{4}e_{\alpha_2}(-1)e_{-\alpha_2}(-1)\mathbf{1}
-e_{\alpha_5}(-1)^{4}e_{\alpha_3}(-1)e_{-\alpha_3}(-1)\mathbf{1}
-e_{\alpha_4}(-1)^{5}e_{-\alpha_4}(-1)\mathbf{1}&\\
&\qquad-e_{\alpha_4}(-1)^{4}h_{1}(-1)^{2}\mathbf{1}
-3e_{\alpha_4}(-1)^{4}h_{1}(-1)h_{2}(-1)\mathbf{1}-2 e_{\alpha_4}(-1)^{4}h_{2}(-1)^{2}\mathbf{1}&\\
&\qquad
+10e_{\alpha_5}(-1)e_{\alpha_4}(-1)^{2}e_{\alpha_2}(-1)e_{\alpha_1}(-1)h_{1}(-1)\mathbf{1}
+15e_{\alpha_5}(-1)e_{\alpha_4}(-1)^{2}e_{\alpha_2}(-1)e_{\alpha_1}(-1)h_{2}(-1)\mathbf{1}&\\
&\qquad+8e_{\alpha_5}(-1)e_{\alpha_4}(-1)^{3}e_{\alpha_2}(-1)e_{-\alpha_3}(-1)\mathbf{1}
-7e_{\alpha_5}(-1)e_{\alpha_4}(-1)^{4}e_{-\alpha_5}(-1)\mathbf{1}&\\
&\qquad-4e_{\alpha_5}(-1)e_{\alpha_4}(-1)^{3}h_{1}(-1)e_{-\alpha_1}(-1)\mathbf{1}
-2e_{\alpha_5}(-1)e_{\alpha_4}(-1)^{3}h_{2}(-1)e_{-\alpha_1}(-1)\mathbf{1}.&
\end{align*}}
By calculation, we have
{\footnotesize 
\begin{align*}
\chi(e_{-\alpha_1}(0)v^{\text{nonzero}})
			&=-3^{3}h_{1}e_{\alpha_1}-3^{3}e_{\alpha_1}h_{1}-3^{4}e_{\alpha_1}h_{2}
			+3^{4}h_{1}e_{-\alpha_1}-3^{4}e_{-\alpha_3}&\\
			&\quad -3^{5}e_{-\alpha_3}+3^{6}e_{-\alpha_5}
			-2\cdot3^{4}\left(e_{-\alpha_1}h_{1}-h_{1}e_{-\alpha_1}\right)
			-2\cdot3^{5}e_{-\alpha_1}h_{2}+3^{5}h_{1}e_{-\alpha_1}
			&\\
			&\quad+2\cdot3^{4}\left(e_{-\alpha_1}h_{2}
			+h_{2}e_{-\alpha_1}\right)+10\cdot3^{3} e_{\alpha_1}h_{1}
			+15\cdot3^{3}e_{\alpha_1}h_{2}+8\cdot3^{4}e_{-\alpha_3} &\\
			&\quad-7\cdot3^{5}e_{-\alpha_5}-4\cdot3^{4}h_{1}e_{-\alpha_1}-2\cdot3^{4}h_{2}e_{-\alpha_1}.
	\end{align*}}
Hence 
{\footnotesize 
\begin{align*}
(e_{-\alpha_1}(0)v_{\sing})'' & =  
3^{3} \cdot12\left( 
-(e_{-\alpha_3}-3e_{-\alpha_5}) 
+\frac{2}{3}h_{1}e_{\alpha_{1}}-h_{1}e_{-\alpha_1}
+h_{2}e_{\alpha_1}-h_{2}e_{-\alpha_1}\right) \text{mod}\, J_{\chi}.&
\end{align*}
}

The following result is known, see for instance 
\cite{Ara15,DeSole-Kac}. 
\begin{lem} 
\label{Lem:RV}
Denoting by $M$ the connected nilpotent subgroup 
with Lie algebra $\mf{m}$ of the 
adjoint group of $G_2$, we have: 
\[
R_{\W^k(\g,f)} \cong (S(\g ) /J_{\chi})^M.
\]
\end{lem}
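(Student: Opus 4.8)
The plan is to deduce the statement from two facts already recorded in Section~\ref{Sub:W-algebras}: the Poisson isomorphism $R_{\W^k(\g,f)}\cong\C[\mathscr{S}_f]$ of \cite{DeSole-Kac,Ara15}, and the realization of the Slodowy slice as a Hamiltonian reduction. Thus I would first reduce Lemma~\ref{Lem:RV} to the purely geometric assertion that
\[
\C[\mathscr{S}_f]\;\cong\;(S(\g)/J_\chi)^M
\]
as Poisson algebras, the right-hand side being equipped with the reduced bracket, and then prove this.

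For the geometric assertion I would argue as follows. Identify $\g^*$ with $\g$ through $(-|-)$. Then $J_\chi$ is, by its very definition, the defining ideal in $\C[\g^*]=S(\g)$ of the affine subvariety $\{\xi\in\g^*\colon\xi|_{\mathfrak{m}}=\chi\}$, that is, of the fibre over $\chi$ of the moment map $\g^*\to\mathfrak{m}^*$ for the coadjoint $M$-action; here one uses that $f$ is even, so that $\mathfrak{m}=\g_{\geqslant 1}$ and, since $f\in\g_{-1}$ is orthogonal to $[\mathfrak{m},\mathfrak{m}]\subseteq\g_{2}$, the character $\chi$ vanishes on $[\mathfrak{m},\mathfrak{m}]$ and $J_\chi$ is an $M$-stable ideal. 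The crucial input is Kostant's transversality theorem (in its good-even-grading form): the action map
\[
M\times\mathscr{S}_f\longrightarrow\{\xi\in\g^*\colon\xi|_{\mathfrak{m}}=\chi\},\qquad(m,s)\longmapsto m\cdot s,
\]
is an isomorphism of affine varieties, so that this moment-map fibre is a trivial $M$-torsor over $\mathscr{S}_f$; the dimension bookkeeping $\dim\g-\dim\mathfrak{m}=\dim M+\dim\g^e=\dim M+\dim\mathscr{S}_f$ is consistent with this, reading $9=5+4$ for $G_2$ and $f_{\text{sreg}}$. Passing to global functions gives an $M$-equivariant isomorphism $S(\g)/J_\chi\cong\C[M]\otimes\C[\mathscr{S}_f]$, with $M$ acting by left translation on the first factor; taking $M$-invariants and using $\C[M]^M=\C$ then yields $(S(\g)/J_\chi)^M\cong\C[\mathscr{S}_f]$. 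It remains to check that this identification intertwines the reduced Poisson bracket on the left with the Poisson structure of $\C[\mathscr{S}_f]$, which is immediate from the functoriality of Poisson reduction. Combined with $R_{\W^k(\g,f)}\cong\C[\mathscr{S}_f]$, this proves the lemma.

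I do not expect a serious difficulty here: the only non-formal ingredient is Kostant's transversality theorem, which is classical for even gradings, and the rest is bookkeeping. It is worth noting that this same transversality is precisely what forces the Zhu $C_2$-reduction of the Drinfeld--Sokolov BRST complex to collapse; if instead one wished to avoid the black box $R_{\W^k(\g,f)}\cong\C[\mathscr{S}_f]$, the route would be to apply the right-exact functor $V\mapsto R_V$ directly to the BRST complex $C^\bullet$ defining $\W^k(\g,f)=H^0_{DS,f}(V^k(\g))=H^0(C^\bullet,d)$: after replacing $C^\bullet$ by its associated graded for Li's canonical filtration, $R_{C^\bullet}$ becomes the classical BRST complex of the Hamiltonian reduction of $S(\g)$ by $\mathfrak{m}$ at the value $\chi$, whose cohomology vanishes outside degree zero and equals $(S(\g)/J_\chi)^M$ there. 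In that approach the hard step would be the degeneration of the spectral sequence comparing $R_{H^0(C^\bullet)}$ with $H^0(R_{C^\bullet})$, together with the vanishing $H^{\neq 0}(C^\bullet,d)=0$, which is the content of \cite{Ara15}.
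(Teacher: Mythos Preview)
Your proposal is correct. The paper does not give its own proof of this lemma; it simply records it as known with the citation ``see for instance \cite{Ara15,DeSole-Kac}'', and your argument is precisely the standard one underlying those references: combine the identification $R_{\W^k(\g,f)}\cong\C[\mathscr{S}_f]$ already stated in Section~\ref{Sub:W-algebras} with Kostant's transversality $M\times\mathscr{S}_f\xrightarrow{\sim}\chi+\mathfrak{m}^\perp$ for the even grading, then take $M$-invariants. Your alternative sketch via the $C_2$-functor applied to the BRST complex is also the route of \cite{Ara15}, so there is nothing genuinely different here---you have supplied the details that the paper omits.
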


\begin{thm} 
\label{th:WG2}
We have 
\begin{align*}
H^0_{DS,f_{\text{\em sreg}}}(L_{-2}(G_{2}))\cong \W_{-2}(G_{2},f_{\text{\em sreg}})
\end{align*}
\end{thm}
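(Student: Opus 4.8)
The plan is to show that the canonical surjection $\W^{-2}(G_2,f_{\text{sreg}}) \twoheadrightarrow H^0_{DS,f_{\text{sreg}}}(L_{-2}(G_2))$ has image equal to the \emph{simple} quotient $\W_{-2}(G_2,f_{\text{sreg}})$, or equivalently that the image of the singular vector $v_{\sing}$ in the $\W$-algebra generates the whole maximal ideal. By exactness of the Drinfeld--Sokolov functor (more precisely, by the fact that $H^0_{DS,f}$ is right exact on category $\mc{O}_{-2}$ and kills nothing below the nilpotent orbit $\overline{G.f}$, which is contained in $X_{L_{-2}(G_2)}$ by Proposition~\ref{Pro:quasi-lisse} together with \eqref{eq:D4_G2}), we have $H^0_{DS,f_{\text{sreg}}}(L_{-2}(G_2)) = \W^{-2}(G_2,f_{\text{sreg}})/I$, where $I$ is the image of $\langle v_{\sing}\rangle$. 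Since $L_{-2}(G_2)$ is simple (Theorem~\ref{theorem1.6}), $\langle v_{\sing}\rangle = N_{-2}$ is the full maximal ideal, so it suffices to identify $I$ with the maximal ideal of $\W^{-2}(G_2,f_{\text{sreg}})$.

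The key computational step is to determine the image of $v_{\sing}$ under the reduction. Using the $C_2$-algebra / associated-graded picture provided by Lemma~\ref{Lem:RV} and the isomorphism $R_{\W^k(\g,f)} \cong (S(\g)/J_\chi)^M$, I would compute $v''_{\sing} \bmod J_\chi$, which the excerpt has already carried out:
\begin{align*}
v'' \equiv 9(e_{-\alpha_2}+e_{-\alpha_4}) - 12 e_{-\alpha_2} + e_{\alpha_1}^2 - 3 e_{\alpha_1}e_{-\alpha_1} - 3h_1^2 - 9 h_1 h_2 - 6 h_2^2 \pmod{J_\chi},
\end{align*}
and this is precisely $9 J^{\{f_{\text{sreg}}\}} - 12 J^{\{e_{-\alpha_2}\}}$ read off in $R_{\W^{-2}(G_2,f_{\text{sreg}})}$, i.e. (up to the rescaling defining $L$ and $G^+$) a nonzero combination of the conformal-weight-$2$ generators. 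Likewise, the displayed computation of $(e_{-\alpha_1}(0)v_{\sing})''$ shows that reducing the descendant $e_{-\alpha_1}(0)v_{\sing}$ produces $-27\cdot 12$ times the $G^-$-generator $J^{\{e_{-\alpha_3}-3e_{-\alpha_5}\}}$ modulo $J_\chi$. Thus the image ideal $I$ already contains, at the level of the Zhu $C_2$-algebra, two of the four strong generators of $\W^{-2}(G_2,f_{\text{sreg}})$ (in the normalization of the excerpt, suitable combinations of $L$, $G^+$ and $G^-$). The remaining step is to show that, using the explicit OPEs between the generators recalled from \cite{Fasquel-OPE}, the $\W$-algebra ideal generated by these two images also captures $F$ (conformal weight $3$): this follows because in $\W_{-2}(G_2,f_{\text{sreg}})$ the relation $\W_{-2}(G_2,f_{\text{sreg}}) \cong \C$ forces \emph{all} generators to vanish, so the maximal ideal of $\W^{-2}(G_2,f_{\text{sreg}})$ is generated by $L - (\text{const}), G^+, G^-, F$; the OPE $G^+ \times G^-$ (or $G^- \times G^-$) will express $F$ and the remaining relations in terms of normally ordered products of $L, G^+, G^-$, so no genuinely new generator is needed. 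One then invokes $\W_{-2}(G_2,f_{\text{sreg}}) \cong \C$ from \cite[Corollary 4.2]{Fasquel-OPE}, which identifies $\W^{-2}(G_2,f_{\text{sreg}})/I$ with $\C = \W_{-2}(G_2,f_{\text{sreg}})$, and we are done; in particular \eqref{eq:H0} follows.

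The main obstacle is the penultimate step: controlling the \emph{full} $\W$-algebra ideal generated by the images of $v_{\sing}$ and its descendants, rather than just its $C_2$-reduction. A priori, knowing the leading symbols lie in the maximal ideal does not immediately give that the ideal they generate \emph{is} the maximal ideal, since one must rule out that $I$ is a proper subideal whose quotient is a nonzero (necessarily non-simple) quotient of $\W^{-2}(G_2,f_{\text{sreg}})$. The cleanest way around this is to argue at the level of associated varieties: by \cite{Ara15}, $X_{H^0_{DS,f_{\text{sreg}}}(L_{-2}(G_2))} \cong X_{L_{-2}(G_2)} \cap \mathscr{S}_{f_{\text{sreg}}}$, and once Theorem~\ref{Th:mainA} is in hand (or at least the containment $\overline{\OO}_{\text{sreg}} \subseteq X_{L_{-2}(G_2)} \subseteq \cN$ from Proposition~\ref{Pro:quasi-lisse}), this intersection is a single point, so $H^0_{DS,f_{\text{sreg}}}(L_{-2}(G_2))$ is lisse with one-dimensional $C_2$-algebra, hence one-dimensional, hence equal to $\W_{-2}(G_2,f_{\text{sreg}}) \cong \C$. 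This sidesteps the need to manipulate the full ideal and reduces everything to the already-established facts about the associated variety together with the OPE computation certifying $\W_{-2}(G_2,f_{\text{sreg}}) \cong \C$.
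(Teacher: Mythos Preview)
Your main line of argument is essentially the paper's: exactness of $H^0_{DS,f}$ gives $H^0_{DS,f}(L_{-2}(G_2))=\W^{-2}(G_2,f)/H^0_{DS,f}(\langle v_{\sing}\rangle)$, and the computations preceding the theorem identify the images of $v_{\sing}$ and $e_{-\alpha_1}(0)v_{\sing}$ as $-12L-3G^+$ and $324\,G^-$. Your worry about lifting from the $C_2$-level is unnecessary here: since the image of $v_{\sing}$ has conformal weight~$2$ and $L,G^\pm$ span conformal weight~$2$ in $\W^{-2}(G_2,f)$, the symbol already determines the element. The paper then uses the OPEs from \cite{Fasquel-OPE} to conclude directly that $G^-$ \emph{generates the maximal ideal} of $\W^{-2}(G_2,f)$; this single fact finishes the proof, with no separate need to produce $F$ by hand or to invoke $\W_{-2}(G_2,f)\cong\C$ as an external input (that isomorphism is in fact recovered as a corollary).

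Your proposed ``cleanest'' alternative via associated varieties, however, is circular. Proposition~\ref{Pro:quasi-lisse} yields only the upper bound $X_{L_{-2}(G_2)}\subseteq\cN$; it does \emph{not} give $\overline{\OO}_{\text{sreg}}\subseteq X_{L_{-2}(G_2)}$, and neither does \eqref{eq:D4_G2}, which merely identifies $\overline{\OO}_{\text{sreg}}$ with $\pi_2(\overline{\OO}_{\text{min}})$ and says nothing about $X_{L_{-2}(G_2)}$. In the paper's logic that lower bound is deduced \emph{from} Theorem~\ref{th:WG2}: the nonvanishing $H^0_{DS,f}(L_{-2}(G_2))\cong\C$ is what forces $f\in X_{L_{-2}(G_2)}$, and only then is Theorem~\ref{Th:mainA} established. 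Even granting both containments $\overline{\OO}_{\text{sreg}}\subseteq X_{L_{-2}(G_2)}\subseteq\cN$, the intersection $X_{L_{-2}(G_2)}\cap\mathscr{S}_f$ could still be two-dimensional (take $X_{L_{-2}(G_2)}=\cN$), so the shortcut would not conclude without already knowing the answer.
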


\begin{proof}
Set for simplicity $f:=f_{\text{sreg}}$. 
Writing $I_{G_2}=\< v_{\sing}\>$, 
we get the short exact sequence 
\begin{equation}
0 \longrightarrow I_{G_2} \longrightarrow V^{-2}(G_2) \longrightarrow L_{-2}(G_2) \longrightarrow 0.
\end{equation}
Applying the quantum Drinfeld--Sokolov reduction 
$H^0_{DS, f}(-)$ to the above sequence, 
we obtain the short exact sequence
\begin{align*}
0\longrightarrow H^0_{DS,f}(I_{G_2}) \longrightarrow 
\W^{-2}(G_2,f) \longrightarrow H^0_{DS,f}(L_{-2}(G_2)) \longrightarrow 0
\end{align*}
due to the exactness of the quantum Drinfeld--Sokolov reduction functor.

Suppose that $v_{\sing}$ maps to $\tilde v$ in $\W^{-2}(G_2,f)$.
One can easily verify that the conformal weight of $\tilde v$ equals $2$. 
By Lemma \ref{Lem:RV}, its image in $R_{\W^{-2}(G_2,f)}$ is the image of the vector 
$(e_{-\alpha_1}(0)v_{\sing})''$ in $(S(\g )/J_{\chi})^M$. 
It is clear that $v_{\sing}$ maps to $-12L-3G^{+}$ in $\W^{-2}(G_2,f)$, 
and similarly $e_{-\alpha_1}(0)v_{\sing}$ maps to $324 G^{-}$ in $\W^{-2}(G_2,f)$.
From the OPEs of the four strong generators $L,G^{\pm}, F$ 
(see \cite{Fasquel-OPE}), we see  
that $-12L-3G^+$ does not generate the maximal ideal in $\W^{-2}(G_2, f)$, 
but the element $G^{-}$ does.
Thus, $H^0_{DS,f}(I_{G_2})$ is the maximal ideal in $\W^{-2}(G_2,f)$, 
and hence \hbox{$H^0_{DS,f}(L_{-2}(G_2))\cong \C$}. 
\end{proof}

\begin{rem}
From the above proof, we recover that 
$\W_{-2}(G_2,f) \cong\C$ from \cite[Corollary~4.2]{Fasquel-OPE}. 
\end{rem}

We are now in a position to prove Theorem \ref{Th:mainA}. 

\begin{proof}[Proof of Theorem \ref{Th:mainA}] 
As in the previous proof, 
set for simplicity $f:=f_{\text{sreg}}$. 
Write $\g$ for the simple exceptional 
Lie algebra $G_2$, and $G$ for its adjoint group. 
Let $\mathbb{O}_f=G.f$ be the adjoint orbit of $f$. 
We have to show that $X_{L_{-2}(G_2)}=\overline{\mathbb{O}_f}$. 

On the one hand, by \cite{Ara15}, 
the associated variety of 
$H^0_{DS,f}(L_{-2}(G_2))$ 
is the intersection of $X_{L_{-2}(G_2)}$ 
with the Slodowy slice $\mathscr{S}_f:=f+\g^{e}$,  
whence
$$\{f\}=X_{H^0_{DS,f}(L_{-2}(G_2))} 
= X_{L_{-2}(G_2)} \cap \mathscr{S}_f$$ 
using $H^0_{DS,f}(L_{-2}(G_2))\cong \W_{-2}(G_2,f) \cong \C$ 
from Theorem~\ref{th:WG2}. 
As a consequence, the associated variety 
$X_{L_{-2}(G_2)}$ contains $f$ and so  $\overline{\mathbb{O}_f}$, 
because $X_{L_{-2}(G_2)}$ is closed and $G$-invariant. 

On the other hand, the associated variety $X_{L_{-2}(G_2)}$
is included in the nilpotent cone $\mc{N}$ of $G_2$ 
by Proposition \ref{Pro:quasi-lisse}. 
We conclude  that 
$$\overline{\mathbb{O}_f} \subset X_{L_{-2}(G_2)} \subset \mc{N},$$ 
and so $X_{L_{-2}(G_2)} $ 
is the closure of the regular or the subregular nilpotent orbit of $G_2$. 
The intersection between the nilpotent cone 
and the Slodowy slice $\mathscr{S}_f$ 
is two-dimensional whereas  $X_{L_{-2}(G_2)} \cap \mathscr{S}_f=\{f\}$, 
the only possibility is 
$X_{L_{-2}(G_2)} =\overline{\mathbb{O}_f}.$

Theorem \ref{th:WG2} also allows to 
prove that $X_{L_{-2}(G_2)} \subset \mc{N}$ 
without having recourse to Proposition~\ref{Pro:quasi-lisse}. 
The argument first appears in \cite[Proposition 6.3.1]{Fasquel-thesis}.
We reproduce it for completeness of the paper. 
Suppose that there exists a non nilpotent element 
$x \in X_{L_{-2}(G_2)}$.  
Denote by $x = x_n + x_s$ 
its Jordan
decomposition with $x_n$ nilpotent and 
$x_s$ a nonzero semisimple element. 
The 
$G$-invariant closed cone $C(x):=G.\C^* x$ generated by $x$ 
is included in the associated variety.  
But according to  \cite[Theorem 2.9]{CharbMor10}, 
$C(x)$ contains the induced nilpotent orbit 
${\rm Ind}_{\g^{x_s}}^\g (\mathbb{O}_{x_n})$  
from the
adjoint orbit of $x_n$ in $\g^{x_s}$. 
The only induced nilpotent orbits in $G_2$ 
are the regular and the subregular
orbits, 
so $C(x)$ strictly contains the subregular nilpotent orbit. 
The variety $C(x)$ is
$G$-invariant, reduced and irreducible. 
Thus by \cite[Corollary 1.3.8]{Ginz},  
\begin{align*}
0 =\dim(X_{L_{-2}(G_2)} \cap \mathscr{S}_f) \geqslant \dim(C(x) \cap \mathscr{S}_f) 
= \dim C(x) - \dim \mathbb{O}_f > 0, 
\end{align*}
whence a contradiction. 
\end{proof}

\section{The representation theory of $L_{-2}(B_3)$}
\label{Sec:B3_rep}
In this section, we study the representations of the simple affine 
vertex algebra $L_{-2}(B_3)$. 
Let us consider the simple exceptional Lie algebra of type $B_3$ 
with simple roots $\beta_1,\beta_2,\beta_3$ 
and Dynkin diagram 
$$\begin{Dynkin}
\Dbloc{\Dcirc\Deast\Dtext{t}{\beta_1}}
\Dbloc{\Dcirc\Dwest\Ddoubleeast\Dtext{t}{\beta_2}}
\Drightarrow\Dbloc{\Dcirc\Ddoublewest\Dtext{t}{\beta_3}}
\end{Dynkin}$$
and 
the simple Lie algebra $D_4$ 
with simple roots $\gamma_1,\gamma_2,\gamma_3,\gamma_4$ 
and Dynkin diagram 
$$
\begin{Dynkin}
	\Dbloc{\Dcirc\Deast\Dtext{t}{\gamma_1}}
	\Dbloc{\Dcirc\Dwest\Dsouth\Deast\Dtext{t}{\gamma_2}}
	\Dbloc{\Dcirc\Dwest\Dwest\Dtext{t}{\gamma_3}}\\
	\Dspace\Dbloc{\Dcirc\put(10,12){\line(0,1){12}}\Dtext{r}{\gamma_4}}
\end{Dynkin}
$$

We describe below the explicit embeddings 
$\iota_2 \colon G_{2} \hookrightarrow  B_3$  and $\iota_3 \colon B_{3} \hookrightarrow  D_4$ 
induced by the automorphisms of the Dynkin diagrams. 
First, one can express a Chevalley basis of $G_{2}$ 
in terms of that for $B_{3}$, 
which gives the embedding $\iota_2 \colon G_{2} \hookrightarrow  B_3$:  
\begin{align*} 
&e_{\alpha_1}  =e_{\beta_{1}}+e_{\beta_{3}},   
&&e_{-\alpha_1}=e_{-\beta_{1}}+e_{-\beta_{3}}, & \\
&e_{\alpha_2}=e_{\beta_{2}}, 
&& e_{-\alpha_2}=e_{-\beta_{2}} &\\
&e_{\alpha_3}=e_{\beta_{1}+\beta_{2}}-e_{\beta_{2}+\beta_{3}}, 
&&e_{-\alpha_3}=e_{-(\beta_{1}+\beta_{2})}-e_{-(\beta_{2}+\beta_{3})},&\\
&e_{\alpha_4}=-e_{\beta_{2}+2\beta_{3}}-e_{\beta_{1}+\beta_{2}+\beta_{3}}, 
&&e_{-\alpha_4}=-e_{-(\beta_{2}+2\beta_{3})}-e_{-(\beta_{1}+\beta_{2}+\beta_{3})},&\\
&e_{\alpha_5}=-e_{\beta_{1}+\beta_{2}+2\beta_{3}}, 
&& e_{-\alpha_5}=-e_{-(\beta_{1}+\beta_{2}+2\beta_{3})};&\\
&e_{\alpha_6}=-e_{\beta_{1}+2\beta_{2}+2\beta_{3}},
&&e_{-\alpha_6}=-e_{-(\beta_{1}+2\beta_{2}+2\beta_{3})},&\\
&h_1=h_{\alpha_1}=h_{\beta_{1}}+h_{\beta_{3}},
&&h_2=h_{\alpha_2}=h_{\beta_{2}}.&
\end{align*}

\noindent
Similarly, let us describe the Chevalley basis of $B_{3}$ in terms of that for $D_{4}$ in order to get 
the embedding $\iota_3\colon B_3 \hookrightarrow D_4$:  
\begin{align*}
&e_{\beta_{1}}=e_{\gamma_{1}}, 
&& e_{-\beta_{1}}=e_{-\gamma_{1}},&\\
&e_{\beta_{2}} =e_{\gamma_{2}}, 
&& e_{-\beta_{2}}=e_{-\gamma_{2}},&\\
& e_{\beta_{3}} =e_{\gamma_{3}}+e_{\gamma_{4}},
&& e_{-\beta_{3}}=e_{-\gamma_{3}}+e_{-\gamma_{4}}&\\
&e_{\beta_{1}+\beta_{2}}=e_{\gamma_{1}+\gamma_{2}}, 
&&e_{-(\beta_{1}+\beta_{2})}=e_{-(\gamma_{1}+\gamma_{2})},& \\
& e_{\beta_{2}+\beta_{3}}=e_{\gamma_{2}+\gamma_{3}}+e_{\gamma_{2}+\gamma_{4}},  
&& e_{-(\beta_{2}+\beta_{3})}=e_{-(\gamma_{2}+\gamma_{3})}+e_{-(\gamma_{2}+\gamma_{4})},&\\
& e_{\beta_{1}+\beta_{2}+\beta_{3}}
=e_{\gamma_{1}+\gamma_{2}+\gamma_{3}}+e_{\gamma_{1}+\gamma_{2}+\gamma_{4}},
&& e_{-(\beta_{1}+\beta_{2}+\beta_{3})}=e_{-(\gamma_{1}+\gamma_{2}+\gamma_{3})}+e_{-(\gamma_{1}+\gamma_{2}+\gamma_{4})},&\\
&e_{\beta_{2}+2\beta_{3}}=e_{\gamma_{2}+\gamma_{3}+\gamma_{4}},  
&&e_{-(\beta_{2}+2\beta_{3})}=e_{-(\gamma_{2}+\gamma_{3}+\gamma_{4})},\\ 
&e_{\beta_{1}+\beta_{2}+2\beta_{3}}=e_{\gamma_{1}+\gamma_{2}+\gamma_{3}+\gamma_{4}},
&&e_{-(\beta_{1}+\beta_{2}+2\beta_{3})}=e_{-(\gamma_{1}+\gamma_{2}+\gamma_{3}+\gamma_{4})},&\\
&e_{\beta_{1}+2\beta_{2}+2\beta_{3}}=e_{\gamma_{1}+2\gamma_{2}+\gamma_{3}+\gamma_{4}},
&& e_{-(\beta_{1}+2\beta_{2}+2\beta_{3})}=e_{-(\gamma_{1}+2\gamma_{2}+\gamma_{3}+\gamma_{4})}.&
\end{align*}
\begin{align*}
h_{\beta_{1}} = h_{\gamma_1}, \quad 
h_{\beta_{2}} = h_{\gamma_2}, \quad 
h_{\beta_{3}} = h_{\gamma_3}+h_{\gamma_4}. 
\end{align*}

We can compose these linear maps so that 
\begin{equation}
G_2  \stackrel{\iota_2}{\longhookrightarrow} B_3  
\stackrel{\iota_3}{\longhookrightarrow}  D_4 
\longhookrightarrow V^{-2}(D_4) 
\longtwoheadrightarrow 
L_{-2}(D_4).
 \end{equation} 
In \cite{AP}, Adamovi\'{c} and Per\v{s}e proved that 
the vertex subalgebra generated by $G_2$ (resp.~$B_3$) 
in $L_{-2}(D_4)$ is isomorphic to the irreducible affine vertex algebra 
$L_{-2}(G_2)$ (resp.~$L_{-2}(B_3)$). 
Consider the vertex algebra homomorphism 
$\hat{\iota}_2 \colon V^{-2}(G_2) \rightarrow V^{-2}(B_3)$ induced from $\iota_2$.
A direct consequence of \cite{AP} 
is that the vertex algebra homomorphism 
$\bar{\iota}_2 \colon L_{-2}(G_2) \rightarrow L_{-2}(B_3)$ 
is well defined and satisfies the following commutative diagram
\[
\begin{tikzcd}
    V^{-2}(G_2) \arrow{r}{\hat{\iota}_2} \arrow[swap]{d}{\pi_{G_2}} 
    & V^{-2}(B_3) \arrow{d}{\pi_{B_3}} \\
    L_{-2}(G_2) \arrow[swap]{r}{\bar{\iota}_2} & L_{-2}(B_3)
\end{tikzcd}
\]
where $\pi_{G_2}$ and $\pi_{B_3}$ are the natural projection maps.

Let $N^{B_3}_{-2}$ be the maximal ideal in $V^{-2}(B_3)$ and  
let 
$$v^{G_2}_\sing := v_\sing$$ 
be the singular vector of $V^{-2}(G_2)$ 
as in Theorem \ref{Th:singular_G2}. 
It is clear from the commutative diagram that 
$\hat{\iota}_2(\< v^{G_2}_{\sing} \> )\subset  N^{B_3}_{-2}$, because 
$ \bar{\iota}_2\circ \pi_{G_2} (\< v^{G_2}_{\sing} \> )=0$.
Hence the vector 
$$w : =\hat{\iota}_2 (v^{G_2}_{\sing})$$ 
is contained in $N^{B_3}_{-2}$ with conformal weight $6$. 
Fix $\h_{B_3}={\rm span}_\C\{h_{\beta_1},h_{\beta_2},h_{\beta_3}\}$ 
a Cartan subalgebra of $B_3$. 
Since the embedding $\hat{\iota}_2$ does not preserve the 
$\mathfrak h_{B_3}$-weight, 
we decompose $w$ into a sum of $\mathfrak h_{B_3}$-weight vectors
\begin{align}
\label{eq:w_mu}
w=\sum_{\mu \in \mathfrak h^*_{B_3}}w_{\mu}
\end{align}
where $h.w_\mu=\mu(h)w_\mu$ for all $h\in\mathfrak h_{B_3}$. 
{In particular, identifying $\h_{B_3}$ with its dual 
	using $(-|-)$, 
	the fundamental weights of $B_3$ are 
	$\varpi_1=\beta_1+\beta_2+\beta_3$, 
	$\varpi_2=\beta_1+2\beta_2+2\beta_3$, 
	$\varpi_3=\frac{1}{2}\beta_1+\beta_2+\frac{3}{2} \beta_3$.  }

It is known by \cite{AraMor2} that there is a singular vector 
$v_\sing^{B_3}$ of conformal weight two in $V^{-2}(B_{3})$ given by: 
\begin{align*}
& v_\sing^{B_3}= &\\
& \quad e_{\beta_{1}+2\beta_{2}+2\beta_{3}}(-1)e_{\beta_{3}}(-1) {\bf 1} 
-e_{\beta_{1}+\beta_{2}+2\beta_{3}}(-1)  e_{\beta_{2}}(-1) {\bf 1}  
+e_{\beta_{1}+\beta_{2}+\beta_{3}}(-1)e_{\beta_{2}+2\beta_{3}}(-1){\bf 1} .
\end{align*}
We denote by $I^{B_3}$ the left-ideal generated by $v_\sing^{B_3}$ in $V^{-2}(B_3)$, 
and consider the quotient vertex algebra 
$$\mathcal{V}_{-2}(B_{3})=V^{-2}(B_{3})/I^{B_3}.$$ 
The $U(B_{3})$-submodule $R^{B_{3}}$ 
generated by the vector ${v'}_{\sing}^{B_3}$ 
under the adjoint
action is isomorphic to $L_{B_3}(2\varpi_{3})$. 
By using the same method as for $G_2$, 
we can determine a basis of the space of polynomials 
$\mathscr{P}_{v_\sing^{B_3}}$, defined as in~\eqref{eq:P_0} 
with respect to $R^{B_{3}}$.  
\begin{lem}  
\label{lem2.1}
We have $\mathscr{P}_{v_\sing^{B_3}}
= {\rm span}_{\mathbb{C}}\{p_{1}^{B_{3}},p_{2}^{B_{3}},p^{B_{3}}_{3}\}$, 
where
\begin{align*}
&{p^{B_{3}}_{1}(h)}=(2h_{2}+h_{3})(h_{1}+h_{2}+h_{3})+2(h_{2}+h_{3}),& \\
&p^{B_{3}}_{2}(h)=(h_{2}+h_{3})(2h_{1}+2h_{2}+h_{3}+2),&\\
&p^{B_{3}}_{3}(h)=h_{3}(h_{1}+2h_2+h_{3}+2). 
\end{align*}
\end{lem}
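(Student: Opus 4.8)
The plan is to repeat, with $B_3$ in place of $G_2$, the argument behind Lemma~\ref{lm:weightzero_G2} and the lemma immediately after it. First I would pin down $R^{B_3}$ concretely. The element ${v'}_{\sing}^{B_3}=F([v_{\sing}^{B_3}])$, read off through~\eqref{eq:Zhu}, is a $\h_{B_3}$-weight vector (for the adjoint action) of weight $2\varpi_3=\beta_1+2\beta_2+3\beta_3$, and a short check shows $(e_{\beta_i})_L\,{v'}_{\sing}^{B_3}=0$ for $i=1,2,3$. Since $U(B_3)$ is a locally finite, semisimple module for the adjoint action of $B_3$ --- its PBW filtration consists of finite-dimensional $B_3$-submodules --- the cyclic submodule generated by ${v'}_{\sing}^{B_3}$ is a finite-dimensional highest-weight module, hence irreducible; this is the identification $R^{B_3}\cong L_{B_3}(2\varpi_3)$ recalled just above.

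Next, $L_{B_3}(2\varpi_3)$ is the $35$-dimensional fundamental representation $\Lambda^3\C^7$ of $\mathfrak{so}_7$, whose underlying vector representation $\C^7$ has weights $0,\pm\epsilon_1,\pm\epsilon_2,\pm\epsilon_3$; hence the zero-weight space of $\Lambda^3\C^7$ is spanned by the three vectors $e_0\wedge e_{\epsilon_i}\wedge e_{-\epsilon_i}$ ($i=1,2,3$). Consequently the zero-weight subspace $(R^{B_3})^{\h}:=R^{B_3}\cap U(B_3)^{\h}$ is $3$-dimensional, and since the assignment $r\mapsto p_r=\Upsilon(r)$ (cf.~\eqref{eq:Harish-Chandra} and the discussion around~\eqref{eq:P_0}) is linear on $(R^{B_3})^{\h}$, we get $\dim\mathscr{P}_{v_{\sing}^{B_3}}\leqslant 3$.

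It therefore suffices to produce three linearly independent polynomials in $\mathscr{P}_{v_{\sing}^{B_3}}$. For this I would lower ${v'}_{\sing}^{B_3}$ to weight zero along the three ways of writing $2\varpi_3$ as a sum of two positive roots of $B_3$, namely $2\varpi_3=(\beta_1+\beta_2+\beta_3)+(\beta_2+2\beta_3)=(\beta_1+\beta_2+2\beta_3)+(\beta_2+\beta_3)=(\beta_1+2\beta_2+2\beta_3)+\beta_3$, e.g. $r_1=(e_{-(\beta_2+2\beta_3)}\,e_{-(\beta_1+\beta_2+\beta_3)})_L\,{v'}_{\sing}^{B_3}$ and $r_2,r_3$ analogously. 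One checks $r_1,r_2,r_3$ are linearly independent in $L_{B_3}(2\varpi_3)$, hence form a basis of $(R^{B_3})^{\h}$. Then computing the Harish--Chandra projections $\Upsilon(r_i)$ by reducing each $r_i$ modulo $\n_-U(B_3)+U(B_3)\n_+$ using the commutation relations of $B_3$, writing the results in the coordinates $h=(h_{\beta_1},h_{\beta_2},h_{\beta_3})$ and taking a convenient linear recombination, produces exactly $p_1^{B_3},p_2^{B_3},p_3^{B_3}$. These are linearly independent --- in a vanishing combination $a\,p_1^{B_3}+b\,p_2^{B_3}+c\,p_3^{B_3}=0$, comparing the coefficients of $h_1h_2$, $h_1h_3$ and $h_3^2$ forces $a=b=c=0$ --- so together with the bound of the previous step we conclude $\mathscr{P}_{v_{\sing}^{B_3}}={\rm span}_{\C}\{p_1^{B_3},p_2^{B_3},p_3^{B_3}\}$.

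The main obstacle is the final computational step: because ${v'}_{\sing}^{B_3}$ involves root vectors attached to positive roots of large height, each adjoint lowering and each PBW normal-ordering step spawns many terms, so the three Harish--Chandra projections are best obtained with a computer algebra system, exactly as was done for $G_2$. A minor secondary point is to make sure the lowered vectors $r_1,r_2,r_3$ are genuinely linearly independent in $L_{B_3}(2\varpi_3)$ rather than accidentally dependent; this is guaranteed \emph{a posteriori} by the linear independence of the resulting polynomials, and \emph{a priori} by the zero-weight-space count above.
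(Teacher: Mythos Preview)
Your proposal is correct and follows essentially the same approach the paper indicates (``by using the same method as for $G_2$''): find a basis of the zero-weight space of $R^{B_3}\cong L_{B_3}(2\varpi_3)$ by lowering ${v'}_{\sing}^{B_3}$, compute the Harish--Chandra projections, and verify linear independence. Your identification $L_{B_3}(2\varpi_3)\cong\Lambda^3\C^7$ is a nice shortcut for the dimension count $\dim (R^{B_3})^{\h}=3$, which the paper leaves implicit.
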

One gets the complete list of irreducible 
$A(\mathcal{V}_{-2}(B_{3}))$-modules in the category 
$\mathcal{O}$ by solving the polynomial equations
\begin{align*}
p^{B_{3}}_{1}(h)=p^{B_{3}}_{2}(h)=p^{B_{3}}_{3}(h)=0. 
\end{align*}

Using Zhu's correspondence, we obtain the following results.
	
\begin{thm}
The complete list of irreducible 
$\mathcal{V}_{-2}(B_{3})$-modules in the category $\mathcal{O}$ 
is given by the following set: 
\begin{align*} 
\{L_{B_{3}}(-2,\mu_{i}(t)) \colon i=1,2,3,\; t\in \mathbb{C}\}
\end{align*}
where,
\begin{align*}
\mu_{1}(t)=t\varpi_{1}, \quad 
\mu_{2}(t)=(-1-t)\varpi_{1}+t\varpi_{2}, \quad 
\mu_{3}(t)=t\varpi_{2}-2(1+t)\varpi_{3}. 
\end{align*}
\end{thm}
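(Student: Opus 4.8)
The plan is to reduce the classification to an explicit polynomial system, exactly parallel to the treatment of $G_2$ in Section~\ref{Sec:G2_rep}. Since $\mathcal{V}_{-2}(B_3) = V^{-2}(B_3)/I^{B_3}$ with $I^{B_3} = \langle v_\sing^{B_3}\rangle$ the ideal generated by the singular vector $v_\sing^{B_3}$, this is the vertex algebra $\widetilde{V}_{-2}(B_3)$ in the notation of Section~\ref{Sec:Pre}, so Corollary~\ref{zhuclass} applies: the irreducible $A(\mathcal{V}_{-2}(B_3))$-modules in the category $\mathcal{O}$ are in bijection with the weights $\mu \in \h_{B_3}^*$ such that $p(\mu) = 0$ for every $p \in \mathscr{P}_{v_\sing^{B_3}}$, where $h_j$ evaluated at $\mu$ means $\mu(h_{\beta_j})$. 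By Lemma~\ref{lem2.1}, $\mathscr{P}_{v_\sing^{B_3}}$ is spanned by $p_1^{B_3}, p_2^{B_3}, p_3^{B_3}$, so the whole problem becomes: describe the common zero locus $Z$ of these three polynomials.

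The key step is to solve the system directly, exploiting that $p_2^{B_3}$ and $p_3^{B_3}$ are already displayed as products of two linear forms, $p_2^{B_3} = (h_2+h_3)(2h_1+2h_2+h_3+2)$ and $p_3^{B_3} = h_3(h_1+2h_2+h_3+2)$. Choosing which factor of each vanishes splits $Z$ into four pieces, each the intersection of two hyperplanes, hence a line in $\h_{B_3}^*$. On each such line I would then substitute into the remaining equation $p_1^{B_3}=0$. The outcome is that on three of the four lines $p_1^{B_3}$ vanishes identically, producing three one-parameter families, while on the fourth line $p_1^{B_3}$ has exactly two zeros. Rewriting the three families in the fundamental-weight basis of $B_3$ gives precisely $\mu_1(t) = t\varpi_1$, $\mu_2(t) = (-1-t)\varpi_1 + t\varpi_2$, and $\mu_3(t) = t\varpi_2 - 2(1+t)\varpi_3$; and the two isolated solutions from the degenerate line turn out to be $\mu_1(-2)$ and $\mu_3(-2)$, already contained in the families. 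Therefore $Z = \{\mu_i(t)\colon i=1,2,3,\ t\in\C\}$.

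Finally I would transport this back through Zhu's correspondence: the bijection between simple positive-energy $\mathcal{V}_{-2}(B_3)$-modules and simple $A(\mathcal{V}_{-2}(B_3))$-modules, together with the identification of the top space of $L_{B_3}(-2,\mu)$ with $L_{B_3}(\mu)$, turns the description of $Z$ into the asserted list $\{L_{B_3}(-2,\mu_i(t))\colon i=1,2,3,\ t\in\C\}$, which settles both exhaustiveness and the fact that each of these is an honest module.

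I do not anticipate a serious obstacle here; the work is essentially elementary case analysis. The only points needing care are bookkeeping ones: passing correctly between the coordinates $(h_1,h_2,h_3)$ and the fundamental-weight coordinates on $\h_{B_3}^*$ when reading off the $\mu_i(t)$, and making sure the two isolated solutions on the degenerate line are recognised inside the three families rather than reported as spurious extra modules. It is also worth recording explicitly that Lemma~\ref{lem2.1} gives \emph{all} of $\mathscr{P}_{v_\sing^{B_3}}$, so no further equations cut $Z$ down.
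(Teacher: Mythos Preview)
Your proposal is correct and follows essentially the same approach as the paper: the paper simply states that one solves the polynomial system $p_1^{B_3}=p_2^{B_3}=p_3^{B_3}=0$ from Lemma~\ref{lem2.1} and then invokes Zhu's correspondence, without spelling out the case analysis. Your explicit four-case split according to which linear factor of $p_2^{B_3}$ and $p_3^{B_3}$ vanishes, together with the check that the two isolated points on the degenerate line are $\mu_1(-2)$ and $\mu_3(-2)$, is exactly the computation the paper leaves implicit.
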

	
\begin{cor}
The complete list of irreducible ordinary modules for 
$\mathcal{V}_{-2}(B_{3})$ is given by the following set:
\begin{align*}
\{L_{B_{3}}(-2,k\varpi_{1}) \colon k\in\mathbb{Z}_{\geqslant 0}\}. 
\end{align*}
\end{cor}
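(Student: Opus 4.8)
The plan is to read off the ordinary modules directly from the list of category $\mathcal{O}$ modules just obtained, using the standard fact that a simple ordinary module over a quotient of $V^{-2}(B_3)$ is a highest-weight module with dominant integral highest weight. First I would argue that if $M$ is a simple ordinary $\mathcal{V}_{-2}(B_3)$-module, then $M$ is of positive energy, so by Zhu's theorem $M$ is determined by its top space $M_{\mathrm{top}}$, which is a simple module over $A(\mathcal{V}_{-2}(B_3))$, a quotient of $U(B_3)$. Ordinariness forces $M_{\mathrm{top}}$ to be finite-dimensional, hence a finite-dimensional simple $B_3$-module $L_{B_3}(\lambda)$ with $\lambda$ dominant integral, so $M\cong L_{B_3}(-2,\lambda)$; in particular $M$ lies in the category $\mathcal{O}$ and therefore occurs among the $L_{B_3}(-2,\mu_i(t))$ of the preceding theorem. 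Conversely, whenever $\mu_i(t)$ is dominant integral the module $L_{B_3}(-2,\mu_i(t))$ is ordinary: its top space is the finite-dimensional module $L_{B_3}(\mu_i(t))$, each $L_0$-eigenspace is then a quotient of a finite-dimensional space and hence finite-dimensional, and the $L_0$-grading is bounded below with $L_0$ acting semisimply. (One could also just cite the verbatim argument used for $\widetilde{V}_{-2}(G_2)$ in the proof of Theorem \ref{theorem1.5}.)

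It then remains to determine the pairs $(i,t)$ for which $\mu_i(t)$ is dominant integral, i.e.\ for which $\mu_i(t)$ has non-negative integer coordinates in the basis $\varpi_1,\varpi_2,\varpi_3$. From the formulas $\mu_1(t)=t\varpi_1$, $\mu_2(t)=(-1-t)\varpi_1+t\varpi_2$ and $\mu_3(t)=t\varpi_2-2(1+t)\varpi_3$ these coordinates are $(t,0,0)$, $(-1-t,t,0)$ and $(0,t,-2-2t)$ respectively. For $i=2$ dominance requires $-1-t\in\Z_{\geqslant 0}$ and $t\in\Z_{\geqslant 0}$ simultaneously, i.e.\ $t\leqslant -1$ and $t\geqslant 0$, which is impossible; for $i=3$ it requires $t\in\Z_{\geqslant 0}$ and $-2-2t\in\Z_{\geqslant 0}$, again impossible. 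Only the family $i=1$ survives, and $\mu_1(t)=t\varpi_1$ is dominant integral exactly when $t=k\in\Z_{\geqslant 0}$. Combining the two directions, the irreducible ordinary $\mathcal{V}_{-2}(B_3)$-modules are precisely the $L_{B_3}(-2,k\varpi_1)$ with $k\in\Z_{\geqslant 0}$, which is the assertion.

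Since each step is either an appeal to the preceding theorem or an elementary inequality, I do not expect any real obstacle; the only point needing a little care is the first paragraph — checking that a simple ordinary module is automatically a highest-weight module in $\mathcal{O}$ with dominant integral highest weight, which is exactly what lets the category $\mathcal{O}$ classification be applied here.
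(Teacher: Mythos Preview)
Your proposal is correct and follows essentially the same approach as the paper: the corollary is stated without proof there, being an immediate consequence of the preceding category $\mathcal{O}$ classification together with the observation (made explicit in the proof of Theorem~\ref{theorem1.5} for the $G_2$ case, which you rightly cite) that irreducible ordinary modules correspond exactly to the dominant integral weights in the list. Your explicit check that only the family $\mu_1(t)=t\varpi_1$ with $t\in\Z_{\geqslant 0}$ is dominant integral is the routine computation the paper leaves to the reader.
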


The vertex algebra $\mathcal{V}_{-2}(B_{3})$ is not simple 
and the following lemma gives a description 
of the structure of the 
quotient vertex algebra, see \cite[Corollary 7.6]{AKMPP}\footnote{There is a typo in 
\cite[Corollary 7.6]{AKMPP}: 
$L_{B_3}(-2,-4\Lambda_{0}+2\Lambda_{1})$ should be 
$L_{B_3}(-2,-6\Lambda_{0}+4\Lambda_{1})$.}.
\begin{lem} 
\label{lem2.2} 
The vertex algebra $\mathcal{V}_{-2}(B_{3})$ contains a unique ideal 
$I\cong L_{B_3}(-2,-6\Lambda_{0}+4\Lambda_{1})$, 
where $\Lambda_0,\Lambda_1,\Lambda_2,\Lambda_3$ 
are the fundamental weights of $\hat{B}_3$. 
\end{lem}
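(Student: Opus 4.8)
The statement is \cite[Corollary~7.6]{AKMPP} once the typo noted in the footnote is corrected, so the plan is to recall its content and indicate how it is proved. Writing $\mathcal{V}_{-2}(B_3)=V^{-2}(B_3)/I^{B_3}$ with $I^{B_3}=\langle v_\sing^{B_3}\rangle$, set $I:=N^{B_3}_{-2}/I^{B_3}$, so that $\mathcal{V}_{-2}(B_3)/I=L_{-2}(B_3)$ and $I$ is the unique maximal ideal. It then suffices to prove: (i) $I\neq0$, i.e. $v_\sing^{B_3}$ does not by itself generate $N^{B_3}_{-2}$, so $\mathcal{V}_{-2}(B_3)$ is not simple; (ii) $I$ is generated, as an $\widehat{B}_3$-module, by a single singular vector of weight $-6\Lambda_0+4\Lambda_1-6\delta$, and the submodule it generates is already irreducible, giving $I\cong L_{B_3}(-2,-6\Lambda_0+4\Lambda_1)$; (iii) uniqueness, i.e. there is no further nonzero proper ideal. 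Note that $-6\Lambda_0+4\Lambda_1=-2\Lambda_0+4\varpi_1$ since $\Lambda_1=\Lambda_0+\varpi_1$ (the comark of $\beta_1$ is $1$), and that by \eqref{eq:conf_dim} the conformal dimension of $L_{B_3}(-2,4\varpi_1)$ equals $6$.

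The heart of the argument is to produce the generating singular vector from the $G_2$ side. Consider $w=\hat{\iota}_2(v_\sing^{G_2})\in N^{B_3}_{-2}$; it is nonzero because $\hat{\iota}_2$ is injective, and it has conformal weight $6$. Decompose $w=\sum_\mu w_\mu$ as in \eqref{eq:w_mu}: every occurring weight $\mu$ restricts to $4\varpi_1$ along $\mathfrak{h}_{G_2}\hookrightarrow\mathfrak{h}_{B_3}$, hence lies on the line $4\varpi_1+\mathbb{Z}(\beta_1-\beta_3)$; the component of interest is $w_{4\varpi_1}$, which is the dominant weight on this line whose conformal dimension ($=6$) matches the $D$-degree of $w$. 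From the singularity of $v_\sing^{G_2}$ and the explicit embedding — in particular $\iota_2(e_{\alpha_1})=e_{\beta_1}+e_{\beta_3}$, $\iota_2(e_{\alpha_2})=e_{\beta_2}$ and $\iota_2(e_{-\theta_{G_2}})=-e_{-\theta_{B_3}}$ — one gets $e_{\beta_2}(0)w=0$, $(e_{\beta_1}+e_{\beta_3})(0)w=0$ and $e_{-\theta_{B_3}}(1)w=0$; projecting these onto $\mathfrak{h}_{B_3}$-weight components shows that $e_{\beta_1}(0)w_{4\varpi_1}$, $e_{\beta_3}(0)w_{4\varpi_1}$ and $e_{-\theta_{B_3}}(1)w_{4\varpi_1}$ all lie in $I^{B_3}$, which is checked using the explicit polynomials of Appendix~\ref{App:pol_B3}. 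Hence $w_{4\varpi_1}$ is a subsingular vector of $V^{-2}(B_3)$ with respect to $I^{B_3}$ that is not contained in $I^{B_3}$; this yields (i), and its image $\bar v$ in $\mathcal{V}_{-2}(B_3)$ is a nonzero singular vector of weight $-6\Lambda_0+4\Lambda_1-6\delta$.

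Finally, $U(\widehat{B}_3)\bar v\subseteq I$ is a highest-weight module with highest weight $-6\Lambda_0+4\Lambda_1$, and one upgrades it to all of $I$ and checks irreducibility using the classification (established above via $\mathscr{P}_{v_\sing^{B_3}}$) of irreducible $\mathcal{V}_{-2}(B_3)$-modules in category $\mathcal{O}$: once the weight $-6\Lambda_0+4\Lambda_1$ is accounted for, no admissible weight remains for a singular vector of a proper subquotient, so $U(\widehat{B}_3)\bar v=I$, this module has no proper submodule, and there is no other nonzero ideal — giving (ii) and (iii) simultaneously. The delicate point, and the reason for ultimately citing \cite[Corollary~7.6]{AKMPP}, is the explicit verification that $w_{4\varpi_1}$ is genuinely subsingular (equivalently, that the three ``extra'' terms land in $I^{B_3}$ while $w_{4\varpi_1}$ itself does not); granting this, one may alternatively confirm the irreducibility of $I$ by a character comparison with $L_{-2}(B_3)$ using \cite{Per}.
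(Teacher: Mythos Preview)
The paper does not prove this lemma; it simply cites \cite[Corollary~7.6]{AKMPP} (with the typo corrected).  Your proposal attempts an independent argument via the $G_2$-embedding machinery developed \emph{after} Lemma~\ref{lem2.2} in the paper, but this reverses the paper's logic: there, Lemma~\ref{lem2.2} is the input used (in the proof of Lemma~\ref{lem2.3}) to conclude that the image of $w_{4\varpi_1}$ in $\mathcal{V}_{-2}(B_3)$ coincides, up to scalar, with the singular vector provided by \cite{AKMPP}.  Reversing the direction is legitimate in principle, but your sketch does not close the key step.

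The gap is in your subsingularity argument.  From $e_{\beta_2}(0)w=0$ and $e_{-\theta_{B_3}}(1)w=0$ one does obtain $e_{\beta_2}(0)w_{4\varpi_1}=0$ and $e_{-\theta_{B_3}}(1)w_{4\varpi_1}=0$ by weight reasons, since $\beta_2$ and $\theta_{B_3}$ are transverse to the line $4\varpi_1+\mathbb{Z}(\beta_1-\beta_3)$.  But from $(e_{\beta_1}+e_{\beta_3})(0)w=0$ the weight-by-weight projection only gives coupled relations of the form
\[
e_{\beta_1}(0)\,w_{4\varpi_1}\;+\;e_{\beta_3}(0)\,w_{4\varpi_1+(\beta_1-\beta_3)}\;=\;0,
\]
and symmetrically; it does \emph{not} yield $e_{\beta_1}(0)w_{4\varpi_1}\in I^{B_3}$ or $e_{\beta_3}(0)w_{4\varpi_1}\in I^{B_3}$ separately.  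The polynomials of Appendix~\ref{App:pol_B3} are Harish--Chandra projections used to classify highest-weight modules (Lemma~\ref{lem2.4}); they do not test membership of $e_{\beta_i}(0)w_{4\varpi_1}$ in $I^{B_3}$.  You yourself flag this as ``the delicate point'' and fall back on \cite{AKMPP}, which makes the whole argument circular rather than an alternative proof.  Similarly, your step~(iii) --- that ``no admissible weight remains for a singular vector of a proper subquotient'' --- is asserted rather than verified; the paper's route avoids this entirely by taking the irreducibility of $I$ from \cite{AKMPP}.
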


The key observation is that the unique singular vector in 
$\mathcal{V}_{-2}(B_{3})$ comes from a subsingular vector in $V^{-2}(B_3)$.

\begin{lem} \label{lem2.3}
In the decomposition \eqref{eq:w_mu}, 
assume that $w_{4\varpi_1} \neq 0$ 
and $w_{4\varpi_1} \notin I^{B_3}$, 
then the maximal ideal $N^{B_3}_{-2}$ 
is generated by $w_{4 \varpi_1}$ and $v_\sing^{B_3}$.
\end{lem}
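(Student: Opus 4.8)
The plan is to transfer the statement to the quotient $\mathcal V_{-2}(B_3)=V^{-2}(B_3)/I^{B_3}$ and use the structure of its maximal ideal given by Lemma~\ref{lem2.2}. Write $\pi\colon V^{-2}(B_3)\twoheadrightarrow\mathcal V_{-2}(B_3)$ for the projection. Since $v_\sing^{B_3}$ is a nonzero singular vector that is not a multiple of $\mathbf 1$, the ideal $I^{B_3}=\langle v_\sing^{B_3}\rangle$ is proper and nonzero, so $I^{B_3}\subseteq N^{B_3}_{-2}$, and $\bar N:=\pi(N^{B_3}_{-2})=N^{B_3}_{-2}/I^{B_3}$ is the maximal ideal of $\mathcal V_{-2}(B_3)$; it is nonzero because $\mathcal V_{-2}(B_3)$ is not simple.

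First I would identify $\bar N$ as a module. As a nonzero proper ideal of $\mathcal V_{-2}(B_3)$ it must coincide with the unique such ideal of Lemma~\ref{lem2.2}, so $\bar N\cong L_{B_3}(-2,-6\Lambda_0+4\Lambda_1)$; in particular $\bar N$ is irreducible as an $\extg$-module ($\g=B_3$), and its top graded component is the finite-dimensional $B_3$-module $L_{B_3}(4\varpi_1)$, sitting in conformal weight $\frac{(4\varpi_1|4\varpi_1+2\rho)}{2(-2+h_{B_3}^\vee)}=6$ by \eqref{eq:conf_dim}, with $h_{B_3}^\vee=5$. This matches the conformal weight of $w=\hat\iota_2(v_\sing^{G_2})$: indeed $v_\sing^{G_2}$ has conformal weight $6$ (Theorem~\ref{Th:singular_G2}) and $\hat\iota_2$ preserves the conformal grading, so all of $w$, and hence each component $w_\mu$ of \eqref{eq:w_mu}, has conformal weight $6$. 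Since the highest-weight space of $L_{B_3}(4\varpi_1)$ is one-dimensional, the weight-$4\varpi_1$ part of $\bar N$ in conformal weight $6$ is one-dimensional and spanned by a highest-weight vector of $\bar N$.

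Then I would conclude. Recall $w\in N^{B_3}_{-2}$, and $N^{B_3}_{-2}$ is $\h_{B_3}$-stable, so $w_{4\varpi_1}\in N^{B_3}_{-2}$ and $\pi(w_{4\varpi_1})\in\bar N$. The hypotheses $w_{4\varpi_1}\neq 0$ and $w_{4\varpi_1}\notin I^{B_3}$ give $\pi(w_{4\varpi_1})\neq 0$; by the preceding paragraph $\pi(w_{4\varpi_1})$ is then a nonzero multiple of a highest-weight (in particular generating) vector of $\bar N$. Let $J:=\langle w_{4\varpi_1},v_\sing^{B_3}\rangle$ be the ideal of $V^{-2}(B_3)$ they generate. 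Then $I^{B_3}=\langle v_\sing^{B_3}\rangle\subseteq J\subseteq N^{B_3}_{-2}$, while $\pi(J)$ is a nonzero $\extg$-submodule of the irreducible module $\bar N$, hence $\pi(J)=\bar N$. As $J$ contains $\ker\pi=I^{B_3}$, this forces $J=\pi^{-1}(\bar N)=N^{B_3}_{-2}$, which is the assertion.

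Within this lemma the argument is essentially module-theoretic bookkeeping. The substantive inputs lie elsewhere: Lemma~\ref{lem2.2} (which uses \cite{AKMPP} to pin down the isomorphism type of the maximal ideal of $\mathcal V_{-2}(B_3)$), and --- needed to apply the present lemma later on --- the verification that $w_{4\varpi_1}\neq 0$ and $w_{4\varpi_1}\notin I^{B_3}$. The latter requires the explicit form of the $G_2$-singular vector (Appendix~\ref{App:singular_G2}), its image $w=\hat\iota_2(v_\sing^{G_2})$, and the polynomial data of Appendix~\ref{App:pol_B3}, and is the step I expect to be the real obstacle.
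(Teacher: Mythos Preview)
Your proof is correct and follows essentially the same approach as the paper: both pass to the quotient $\mathcal V_{-2}(B_3)$, invoke Lemma~\ref{lem2.2} to identify the maximal ideal there as the irreducible module $L_{B_3}(-2,-6\Lambda_0+4\Lambda_1)$, and then observe that $\pi(w_{4\varpi_1})$ must be a nonzero scalar multiple of its highest-weight vector (since the weight-$4\varpi_1$, conformal-weight-$6$ subspace is one-dimensional), hence generates $\bar N$. The paper phrases this last step slightly differently, introducing an auxiliary subsingular vector $v_{\text{sub}}$ and arguing via weight bounds in $\langle v_{\text{sub}}\rangle_6$, but the substance is the same.
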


\begin{proof}
Since $N^{B_3}_{-2} $ is homogeneous with respect to $\mathfrak h_{B_3}^*$, 
we deduce that $w_{4\varpi_1} \in N^{B_3}_{-2}$. 
 Let $v_\text{sub}$ be a homogeneous subsingular vector in $V^{-2}(B_3)$ 
which maps {through the quotient map $V^{-2}(B_3) \twoheadrightarrow 
\mathcal V_{-2}(B_3)$} 
 to the unique singular vector in $\mathcal V_{-2}(B_3)$. 
So $v_\text{sub}$ has $\h_{B_3}$-weight $4\varpi_1$ 
and conformal weight $6$. 
By Lemma~\ref{lem2.2}, we have $N^{B_3}_{-2}= \< v_\text{sub},  v_\sing^{B_3}\>.$  
For any ideal $J$, we denote by $J_6$ the conformal weight 6 subspace of $J$. 
It is clear that for any 
$u=\sum_{\mu \in \h_{B_3}^{\ast}} u_{\mu}$ in $\< v_\text{sub} \>_6  \backslash I^{B_3} $, 
 $u_{\mu} \neq 0$  implies that 
$\mu \leqslant 4\varpi_1$, where the equality holds if and only if
$u_{\mu}=u_{4\varpi_1}=c v_\text{sub} \mod I^{B_3}$ for some constant $c\neq 0$. 
Since $w_{4\varpi_1} \in N^{B_3}_{-2}\backslash I^{B_3}$, $w_{4\varpi_1}=c v_\text{sub} \mod I^{B_3}$ for $c\neq 0$. 
Therefore $v_\text{sub} \in \<w_{4\varpi_1} ,I^{B_3}\>$.
\end{proof}

Under the adjoint
action of $B_{3}$, the submodule of $U(B_{3})$ 
generated by vector $w_{4\varpi_{1}}$ is isomorphic to $L_{B_{3}}(4\varpi_{1})$, 
the zero-weight space of $L_{B_{3}}(4\varpi_{1})$ has dimension six. 
Let 
$$\tilde{L}_{-2}(B_3)  
:= V^{-2}(B_3)/ \<w_{4\varpi_1} ,I^{B_3}  \>.$$
Hence the irreducible highest-weight modules of $A(\tilde L_{-2}(B_3))$ are determined by polynomials in Lemma \ref{lem2.1} and \ref{lem2.4} below.

\begin{lem} 
\label{lem2.4} 
Let $\mathscr{P}_{w_{4\varpi_1} }$ be the polynomial 
set \eqref{eq:P_0} relatively to $w_{4\varpi_1}$ 
defined by the decomposition \eqref{eq:w_mu}. 
Then  $\mathscr{P}_{w_{4\varpi_1} }={\rm span}_{\C}\{p^{B_{3}}_{4}, p^{B_{3}}_{5},\cdots,p^{B_{3}}_{9}\}$. 
\end{lem}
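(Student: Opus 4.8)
The plan is to determine $\mathscr P_{w_{4\varpi_1}}$ by the same route used for $G_2$ in Lemma~\ref{lm:weightzero_G2} and the lemma following it, now applied to the weight vector $w_{4\varpi_1}$ (regarded as a subsingular vector of $V^{-2}(B_3)$) instead of a genuine singular vector.

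First I would make $w_{4\varpi_1}$ explicit. Recall $w=\hat{\iota}_2(v^{G_2}_\sing)$, where $v^{G_2}_\sing=v_\sing$ is the degree six singular vector of $V^{-2}(G_2)$ from Theorem~\ref{Th:singular_G2}, written out in Appendix~\ref{App:singular_G2}, and $\hat{\iota}_2\colon V^{-2}(G_2)\to V^{-2}(B_3)$ is the vertex algebra homomorphism induced by the embedding $\iota_2\colon G_2\hookrightarrow B_3$ whose values on root vectors and Cartan elements are recorded at the beginning of this section. Substituting these values into each PBW monomial occurring in $v^{G_2}_\sing$, expanding the result in $V^{-2}(B_3)$, and sorting the monomials by $\h_{B_3}$-weight yields the decomposition~\eqref{eq:w_mu}. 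Since the $\h_{G_2}$-weight of $v^{G_2}_\sing$ agrees with the restriction of the $B_3$-weight $4\varpi_1$ to $\h_{G_2}$, and the kernel of the restriction map $\h_{B_3}^*\to\h_{G_2}^*$ is spanned by $\beta_3-\beta_1$, the $\h_{B_3}$-weights occurring in $w$ form the coset $4\varpi_1+\Z(\beta_3-\beta_1)$; hence $w_{4\varpi_1}$ is obtained by keeping precisely those monomials of $\hat{\iota}_2(v^{G_2}_\sing)$ whose signed $B_3$-root content equals $4(\beta_1+\beta_2+\beta_3)$. This computation---mechanical but lengthy---also confirms that $w_{4\varpi_1}\neq 0$ and $w_{4\varpi_1}\notin I^{B_3}$, as required for Lemma~\ref{lem2.3}. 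Applying the Zhu isomorphism~\eqref{eq:Zhu} then produces the element $w'_{4\varpi_1}=F([w_{4\varpi_1}])\in U(B_3)$.

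Next I would invoke the fact recorded just above the lemma: under the left adjoint action~\eqref{eq:left-adj}, the $U(B_3)$-submodule $R$ of $U(B_3)$ generated by $w'_{4\varpi_1}$ is isomorphic to $L_{B_3}(4\varpi_1)$, whose zero-weight space is six-dimensional. Therefore $R^{\h}=R\cap U(B_3)^{\h}$ is exactly this six-dimensional zero-weight space, and, by the definition~\eqref{eq:P_0} of the polynomial set, $\mathscr P_{w_{4\varpi_1}}=\Upsilon(R^{\h})$ is the image of $R^{\h}$ under the Harish--Chandra projection~\eqref{eq:Harish-Chandra}. It thus remains to exhibit a basis of $R^{\h}$ and to compute its Harish--Chandra image. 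Exactly as in Lemma~\ref{lm:weightzero_G2}, I would write down six monomials $u_1,\dots,u_6$ in the negative root vectors of $B_3$, each of total weight $-4(\beta_1+\beta_2+\beta_3)$, for which the vectors $(u_j)_L\,w'_{4\varpi_1}$ are linearly independent and hence form a basis of $R^{\h}$; reducing each $(u_j)_L\,w'_{4\varpi_1}$ modulo $\n_- U(B_3)+U(B_3)\n_+$ yields the polynomials $p^{B_3}_{3+j}(h)$ in the variables $h=(h_{\beta_1},h_{\beta_2},h_{\beta_3})$, $j=1,\dots,6$, collected in Appendix~\ref{App:pol_B3}. A direct verification that $p^{B_3}_4,\dots,p^{B_3}_9$ are linearly independent then completes the proof, since they span $\Upsilon(R^{\h})=\mathscr P_{w_{4\varpi_1}}$.

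The hard part will be the explicit determination of $w_{4\varpi_1}$: the singular vector $v^{G_2}_\sing$ is a long degree six expression and $\hat{\iota}_2$ does not preserve weights, so performing the substitution, isolating the $4\varpi_1$-component, and transporting it to $U(B_3)$ is a sizeable calculation, most comfortably carried out with a computer algebra system. Everything afterwards---the dimension count for $R^{\h}$, the selection of the six monomials, the Harish--Chandra reductions, and the final linear-independence check---is routine linear algebra in $U(B_3)$, of the same nature as the computations already performed for $G_2$.
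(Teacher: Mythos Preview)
Your proposal is correct and follows essentially the same approach as the paper's own proof: write down six monomials in negative $B_3$-root vectors of total weight $-4\varpi_1$, act on (the Zhu image of) $w_{4\varpi_1}$ via the left-adjoint action, project under Harish--Chandra, and verify linear independence of the resulting six polynomials (listed in Appendix~\ref{App:pol_B3}). You give more detail than the paper on how to extract $w_{4\varpi_1}$ from $\hat\iota_2(v^{G_2}_\sing)$, but the method is the same.
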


\begin{proof}
By direct calculation we show that 
the following six polynomials are linearly independent, modulo 
$\mathfrak{n}_{-}U(B_{3})+U(B_{3})\mathfrak{n}_{+}$: 

{\footnotesize 
\begin{align*}
&p^{B_{3}}_{4}=\left(e_{-\beta_{1}-\beta_{2}-\beta_{3}}e_{-\beta_{1}-\beta_{2}
-\beta_{3}}e_{-\beta_{1}-\beta_{2}-\beta_{3}}e_{-\beta_{1}-\beta_{2}-\beta_{3}}\right)_{L}\left(w_{4\varpi_1}\right) ,&\\
&p^{B_{3}}_{5}=\left(e_{-\beta_{1}-\beta_{2}}e_{-\beta_{1}-\beta_{2}
-\beta_{3}}e_{-\beta_{1}-\beta_{2}-\beta_{3}}e_{-\beta_{1}-\beta_{2}-2\beta_{3}}\right)_{L}\left(w_{4\varpi_1}\right),&\\
&p^{B_{3}}_{6}=\left(e_{-\beta_{1}-\beta_{2}}e_{-\beta_{1}-\beta_{2}}e_{-\beta_{1}-\beta_{2}
-2\beta_{3}}e_{-\beta_{1}-\beta_{2}-2\beta_{3}}\right)_{L}\left(w_{4\varpi_1}\right),&\\
&p^{B_{3}}_{7}=\left(e_{-\beta_{1}}e_{-\beta_{1}-\beta_{2}-\beta_{3}}e_{-\beta_{1}-\beta_{2}
-\beta_{3}}e_{-\beta_{1}-2\beta_{2}-2\beta_{3}}\right)_{L}\left(w_{4\varpi_1}\right),&\\
&p^{B_{3}}_{8}=\left(e_{-\beta_{1}}e_{-\beta_{1}-\beta_{2}}e_{-\beta_{1}-\beta_{2}-2\beta_{3}}e_{-\beta_{1}
-2\beta_{2}-2\beta_{3}}\right)_{L}\left(w_{4\varpi_1}\right),&\\
&p^{B_{3}}_{9}=\left(e_{-\beta_{1}}e_{-\beta_{1}}e_{-\beta_{1}-2\beta_{2}-2\beta_{3}}e_{-\beta_{1}
-2\beta_{2}-2\beta_{3}}\right)_{L}\left(w_{4\varpi_1}\right).&
\end{align*}
}

\noindent 
The explicit form of these polynomials can be found in Appendix \ref{App:pol_B3}.
\end{proof}

\begin{prop} 
\label{prop2.5} 
The complete list of irreducible $A(\tilde L_{-2}(B_3))$-modules 
in the category $\mathcal{O}$ is given by the set
$\{L_{B_3}(\mu_{i}) \colon i=1,2,\ldots,13\},$
where the $\mu_i$'s are given by Table \ref{Tab:mu_B3}. 

\medskip

\begin{table}[htb]
\begin{center}
\begin{tabular}{|l|l||l|l|}
\hline
$\mu_{1}$ & $0$ & $\mu_{8}$ & $-\frac{5}{2}\varpi_{2}+3\varpi_{3}$\\[0.2em]
$\mu_{2}$ & $\varpi_{1}$ & $\mu_{9}$ & $-\frac{3}{2}\varpi_{2}+\varpi_{3}$\\[0.2em]
$\mu_{3}$ & $-2\varpi_{1}$ & $\mu_{10}$ & $-\frac{1}{2}\varpi_{1}-\frac{1}{2}\varpi_{2}$\\[0.2em]
$\mu_{4}$ & $-3\varpi_{1}$ & $\mu_{11}$ & $-\frac{3}{2}\varpi_{1}$\\[0.2em]
$\mu_{5}$ & $-\varpi_{2}$ & $\mu_{12}$ & $-\frac{1}{2}\varpi_{1}$\\[0.2em]
$\mu_{6}$ & $-2\varpi_{3}$ & $\mu_{13}$ & $-\frac{3}{2}\varpi_{1}+\frac{1}{2}\varpi_{2}$\\[0.2em]
$\mu_{7}$ & $\varpi_{1}-2\varpi_{2}$ && \\
\hline
\end{tabular}\\[0.2em]
\caption{The weights $\mu_i$ for $B_3$}
\label{Tab:mu_B3}
\end{center}
\end{table}

\end{prop}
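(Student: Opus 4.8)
The plan is to apply Zhu's correspondence to the quotient vertex algebra $\tilde L_{-2}(B_3) = V^{-2}(B_3)/\langle w_{4\varpi_1}, I^{B_3}\rangle$, exactly in the form recalled just before the statement (Corollary~\ref{zhuclass} together with its extension to subsingular vectors): the highest weights of the irreducible $A(\tilde L_{-2}(B_3))$-modules in $\mathcal{O}^{B_3}$ are precisely the $\mu \in \h_{B_3}^*$ annihilating every polynomial in $\mathscr{P}_{v_\sing^{B_3}} \cup \mathscr{P}_{w_{4\varpi_1}}$. By Lemmas~\ref{lem2.1} and~\ref{lem2.4} this polynomial set is spanned by $p_1^{B_3}, \dots, p_9^{B_3}$, so the proof reduces to describing the common zero locus of these nine polynomials inside $\h_{B_3}^*$.

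The first step is to observe that the common zero set of $p_1^{B_3}, p_2^{B_3}, p_3^{B_3}$ alone has already been determined while classifying the irreducible $\mathcal{V}_{-2}(B_3) = V^{-2}(B_3)/I^{B_3}$-modules in $\mathcal{O}$ above: it is the union of the three affine lines $\ell_1 = \{t\varpi_1 \colon t \in \C\}$, $\ell_2 = \{(-1-t)\varpi_1 + t\varpi_2 \colon t \in \C\}$ and $\ell_3 = \{t\varpi_2 - 2(1+t)\varpi_3 \colon t \in \C\}$. It therefore remains to impose the six further conditions $p_4^{B_3} = \dots = p_9^{B_3} = 0$ along each of $\ell_1, \ell_2, \ell_3$. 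Since for a weight $\mu = a_1\varpi_1 + a_2\varpi_2 + a_3\varpi_3$ the variables in the explicit formulas of Appendix~\ref{App:pol_B3} are $h_i = \langle \mu, \beta_i^\vee\rangle = a_i$, substituting the parametrization of a given line turns each $p_j^{B_3}$ into a polynomial in the single variable $t$, and one retains the finitely many $t$ that are common roots of $p_4^{B_3}, \dots, p_9^{B_3}$ there.

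Carrying this out, $\ell_1$ survives exactly at $t \in \{0, 1, -\tfrac12, -\tfrac32, -2, -3\}$, producing the weights $\mu_1, \mu_2, \mu_{12}, \mu_{11}, \mu_3, \mu_4$; $\ell_2$ survives at $t \in \{-2, -1, -\tfrac12, \tfrac12\}$, producing $\mu_7, \mu_5, \mu_{10}, \mu_{13}$; and $\ell_3$ survives at $t \in \{-\tfrac52, -\tfrac32, -1, 0\}$, producing $\mu_8, \mu_9, \mu_5, \mu_6$. The only coincidence among these is $\mu_5 = -\varpi_2$, which lies on $\ell_2 \cap \ell_3$, so the union of the three solution sets is exactly the thirteen weights of Table~\ref{Tab:mu_B3}. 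Combined with Zhu's correspondence, this yields the proposition. The main obstacle is this final computation: the polynomials $p_4^{B_3}, \dots, p_9^{B_3}$ are bulky, so one has to be careful that, on each of the three lines, the common root set is neither enlarged nor reduced --- equivalently, that no $A(\tilde L_{-2}(B_3))$-module is spuriously introduced or missed --- while everything else is formal.
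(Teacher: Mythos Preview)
Your proposal is correct and follows essentially the same approach as the paper, which simply states that the result is a straightforward computation with the polynomials of Lemmas~\ref{lem2.1} and~\ref{lem2.4}. You have usefully spelled out how that computation is organized --- first using $p_1^{B_3},p_2^{B_3},p_3^{B_3}$ to restrict to the three lines $\ell_1,\ell_2,\ell_3$, then cutting down by $p_4^{B_3},\dots,p_9^{B_3}$ --- and your listed solutions on each line check out and yield exactly the thirteen weights of Table~\ref{Tab:mu_B3}.
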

\begin{proof}
The assertion can be established through a straightforward 
computation involving the polynomials in Lemma~\ref{lem2.1} and~\ref{lem2.4}.
\end{proof}

\begin{thm}
We have $\tilde L_{-2}(B_3) \cong L_{-2}(B_3)$.
\end{thm}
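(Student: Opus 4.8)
The plan is to deduce everything from Lemma~\ref{lem2.3}. That lemma asserts that the maximal ideal $N^{B_3}_{-2}$ of $V^{-2}(B_3)$ coincides with $\langle w_{4\varpi_1}, v_\sing^{B_3}\rangle$ provided $w_{4\varpi_1}\neq 0$ and $w_{4\varpi_1}\notin I^{B_3}$; since $I^{B_3}=\langle v_\sing^{B_3}\rangle$ this yields $\langle w_{4\varpi_1}, I^{B_3}\rangle=N^{B_3}_{-2}$, and therefore $\tilde L_{-2}(B_3)=V^{-2}(B_3)/N^{B_3}_{-2}=L_{-2}(B_3)$. The inclusion $\langle w_{4\varpi_1}, I^{B_3}\rangle\subseteq N^{B_3}_{-2}$ is automatic: $w\in N^{B_3}_{-2}$ and $N^{B_3}_{-2}$ is homogeneous for the $\h_{B_3}$-grading, so $w_{4\varpi_1}\in N^{B_3}_{-2}$, while $v_\sing^{B_3}\in N^{B_3}_{-2}$ as a nonzero singular vector of positive conformal weight. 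So the whole proof reduces to checking the two hypotheses of Lemma~\ref{lem2.3}.

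The nonvanishing $w_{4\varpi_1}\neq 0$ is already built into Lemma~\ref{lem2.4}: the six polynomials $p^{B_3}_4,\dots,p^{B_3}_9$ there are obtained from $(w_{4\varpi_1})'\in U(B_3)$ by the left-adjoint action of lowering operators and are shown to be linearly independent, which is impossible if $w_{4\varpi_1}=0$. If one prefers a direct check, one expands $w=\hat\iota_2(v^{G_2}_\sing)$ from the explicit singular vector of Appendix~\ref{App:singular_G2} and the formulas for the embedding $\iota_2$, isolates the part of $\h_{B_3}$-weight $4\varpi_1$, and exhibits a surviving PBW monomial. This step is routine and independent of everything else.

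The main obstacle is $w_{4\varpi_1}\notin I^{B_3}$, and I would argue it by contradiction. Suppose $w_{4\varpi_1}\in I^{B_3}=\langle v_\sing^{B_3}\rangle$. By right exactness of the Zhu functor and the identification $A(\mathcal V_{-2}(B_3))\cong U(B_3)/\langle (v_\sing^{B_3})'\rangle$, the image of $\langle v_\sing^{B_3}\rangle$ in $A(V^{-2}(B_3))=U(B_3)$ is exactly the two-sided ideal $\langle (v_\sing^{B_3})'\rangle$, so $(w_{4\varpi_1})'\in\langle (v_\sing^{B_3})'\rangle$. Hence the adjoint submodule $R^{B_3}_w\subseteq U(B_3)$ generated by $(w_{4\varpi_1})'$ annihilates every simple $U(B_3)/\langle (v_\sing^{B_3})'\rangle$-module, in particular $L_{B_3}(\mu_i(t))$ for all $i\in\{1,2,3\}$ and $t\in\C$; equivalently, every polynomial of $\mathscr P_{w_{4\varpi_1}}$, hence each of $p^{B_3}_4,\dots,p^{B_3}_9$, vanishes on the infinite family $\{\mu_i(t)\}$, which is precisely the common zero locus of $\mathscr P_{v_\sing^{B_3}}={\rm span}_\C\{p^{B_3}_1,p^{B_3}_2,p^{B_3}_3\}$. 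But then the common zero locus of $\{p^{B_3}_1,\dots,p^{B_3}_9\}$ is again this infinite set, contradicting Proposition~\ref{prop2.5}, which identifies it with the thirteen points $\mu_1,\dots,\mu_{13}$. Therefore $w_{4\varpi_1}\notin I^{B_3}$. An alternative, more computational route is to write $(w_{4\varpi_1})'$ explicitly from Appendix~\ref{App:singular_G2} and check directly that it is not contained in $\langle (v_\sing^{B_3})'\rangle$, for instance by evaluating on one of the modules $L_{B_3}(\mu_i(t))$.

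With both hypotheses of Lemma~\ref{lem2.3} in hand, we conclude $N^{B_3}_{-2}=\langle w_{4\varpi_1}, v_\sing^{B_3}\rangle=\langle w_{4\varpi_1}, I^{B_3}\rangle$, and hence $\tilde L_{-2}(B_3)=L_{-2}(B_3)$. The crux is the non-triviality of $w_{4\varpi_1}$ modulo $I^{B_3}$: this is exactly what distinguishes the genuine maximal ideal $N^{B_3}_{-2}$ from the strictly smaller ideal $I^{B_3}$, and it is the precise mechanism by which the infinite one-parameter module families of the intermediate quotient $\mathcal V_{-2}(B_3)$ collapse to the finite list of $L_{-2}(B_3)$-modules in category $\mathcal O$.
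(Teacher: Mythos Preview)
Your proposal is correct and follows essentially the same approach as the paper: both verify the hypotheses of Lemma~\ref{lem2.3} by observing that the common zero locus of $\mathscr{P}_{v_\sing^{B_3}}\cup\mathscr{P}_{w_{4\varpi_1}}$ (the thirteen points of Proposition~\ref{prop2.5}) is strictly smaller than that of $\mathscr{P}_{v_\sing^{B_3}}$ alone, which forces $w_{4\varpi_1}\neq 0$ and $w_{4\varpi_1}\notin I^{B_3}$. Your write-up simply spells out in detail the contrapositive that the paper leaves implicit.
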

\begin{proof}
According to Proposition \ref{prop2.5}, the 
set of the solutions of the equations in 
$\mathscr{P}_{v_\sing^{B_3}} \cup \mathscr{P}_{w_{4\varpi_1} }$ 
is distinct from the solution set of $\mathscr{P}_{v_\sing^{B_3}}$. 
Hence $w_{4\varpi_1}$ is nonzero and not contained in $I^{B_3}$. 
We complete the proof due to Lemma~\ref{lem2.3}.
\end{proof}

Using Zhu's correspondence, 
we have achieved the proof of Theorem \ref{Th:mainC}. 


{
Moreover, as for $L_{-2}(G_2)$, one can construct the relaxed modules of $L_{-2}(B_3)$ based on the classification of highest-weight modules (Theorem \ref{Th:mainC}) and the parabolic subalgebras of $B_3$ whose Levi is of $AC$ type. 
There are six of them:
\begin{itemize}
	\item the Borel subalgebra $\mathfrak{b}$ with Levi subalgebra $\h_{B_3}$,
	\item the subalgebra $\mathfrak{p}_1=\mathfrak{b}\oplus\C e_{-\beta_1}$ with Levi $\mathfrak{l}_1\simeq\sll_2\oplus\gl_1^{\oplus2}$,
	\item  the subalgebra $\mathfrak{p}_2=\mathfrak{b}\oplus\C e_{-\beta_2}$ with Levi $\mathfrak{l}_2\simeq\sll_2\oplus\gl_1^{\oplus2}$,
	\item  the subalgebra $\mathfrak{p}_3=\mathfrak{b}\oplus\C e_{-\beta_3}$ with Levi $\mathfrak{l}_3\simeq\sll_2\oplus\gl_1^{\oplus2}$,
	\item the subalgebra $\mathfrak{p}_{12}=\mathfrak{b}\oplus\Span_\C\{e_{-\beta_1},e_{-\beta_2}, e_{-\beta_1-\beta_2}\}$ with Levi $\mathfrak{l}_{12}\simeq\sll_3\oplus\gl_1$,
	\item  the subalgebra $\mathfrak{p}_{23}=\mathfrak{b}\oplus\oplus\Span_\C\{e_{-\beta_2},e_{-\beta_3}, e_{-\beta_2-\beta_3}, e_{-\beta_2-2\beta_3}\}$ with Levi $\mathfrak{l}_{23}\simeq\spp_4\oplus\gl_1$.
\end{itemize}

The choice of the Borel corresponds to the highest-weight $A(L_{-2}(B_3))$-modules appearing in Proposition \ref{prop2.5} and their twists under the action of the elements of the Weyl group $W_{B_3}\simeq \mathfrak{S}_3\ltimes\Z_2^3$.
The parabolic subalgebra $\mathfrak{p}_i$ ($i=1,2,3$) gives one-parameters families of semidense modules obtained by the localization of one of the simple negative root vector $e_{-\beta_i}$. We have four families corresponding to $\mathfrak{p}_1$:
\begin{equation}\label{eq:semirelaxedB3_1}
	\mathcal{S}^{[\lambda]}_{B_3}({\pi_1(\mu_3)}),\,
	\mathcal{S}^{[\lambda]}_{B_3}({\pi_1(\mu_4)}),\,
	\mathcal{S}^{[\lambda]}_{B_3}({\pi_1(\mu_{11})})\simeq\mathcal{S}^{[\lambda']}_{B_3}({\pi_1(\mu_{10})}),\,
	\mathcal{S}^{[\lambda]}_{B_3}({\pi_1(\mu_{12})})\simeq\mathcal{S}^{[\lambda']}_{B_3}({\pi_1(\mu_{13})}),
\end{equation}
four corresponding to $\mathfrak{p}_2$:
\begin{equation}
	\mathcal{S}^{[\lambda]}_{B_3}({\pi_2(\mu_5)}),\,
	\mathcal{S}^{[\lambda]}_{B_3}({\pi_2(\mu_7)}),\,
	\mathcal{S}^{[\lambda]}_{B_3}({\pi_2(\mu_8)})\simeq\mathcal{S}^{[\lambda']}_{B_3}({\pi_2(\mu_{13})}),\,
	\mathcal{S}^{[\lambda]}_{B_3}({\pi_2(\mu_{9})})\simeq\mathcal{S}^{[\lambda']}_{B_3}({\pi_2(\mu_{10})}),
\end{equation}
and one coming from the choice of $\mathfrak{p}_3$:
\begin{equation}
	\mathcal{S}^{[\lambda]}_{B_3}({\pi_3(\mu_6)}).
\end{equation}

Finally, the choice of parabolics $\mathfrak{p}_{12}$ and $\mathfrak{p}_{23}$ provides two-parameters families of semidense $A(L_{-2}(B_3))$-modules. 
There are four corresponding to $\mathfrak{p}_{12}$:
\begin{equation}
	\begin{gathered}
		\mathcal{S}^{[\lambda]}_{B_3}({\pi_{12}(\mu_3)})\simeq\mathcal{S}^{[\lambda']}_{B_3}({\pi_{12}(\mu_{5})}),\,
		\mathcal{S}^{[\lambda]}_{B_3}({\pi_{12}(\mu_4)})\simeq\mathcal{S}^{[\lambda']}_{B_3}({\pi_{12}(\mu_{7})}),\,\\
		\mathcal{S}^{[\lambda]}_{B_3}({\pi_{12}(\mu_8)})\simeq\mathcal{S}^{[\lambda']}_{B_3}({\pi_{12}(\mu_{12})})\simeq\mathcal{S}^{[\lambda'']}_{B_3}({\pi_{12}(\mu_{13})}),\,\\
		\mathcal{S}^{[\lambda]}_{B_3}({\pi_{12}(\mu_{9})})\simeq\mathcal{S}^{[\lambda']}_{B_3}({\pi_{12}(\mu_{10})})\simeq\mathcal{S}^{[\lambda'']}_{B_3}({\pi_{12}(\mu_{11})}),
	\end{gathered}
\end{equation}
and two for the choice of $\mathfrak{p}_{23}$:
\begin{equation}\label{eq:semirelaxedB3_last}
	\mathcal{S}^{[\lambda]}_{B_3}({\pi_{23}(\mu_8)})\simeq\mathcal{S}^{[\lambda']}_{B_3}({\pi_{23}(\mu_{13})}),\,
	\mathcal{S}^{[\lambda]}_{B_3}({\pi_{23}(\mu_{9})})\simeq\mathcal{S}^{[\lambda']}_{B_3}({\pi_{23}(\mu_{10})}).
\end{equation}

The semidense modules defined previously are generically irreducible.
Using Zhu's correspondence, we deduce the following classification of relaxed $L_{-2}(B_3)$-modules.
\begin{cor}\label{cor:relaxedB3}
	The irreducible 
	relaxed $L_{-2}(B_3)$-modules with finite-dimensional weight spaces are obtained as spectral flow twists and $W_{B_3}$-twists of the following modules:
	\begin{itemize}
		\item the irreducible highest-weight modules $L_{B_3}(-2,\mu_i)$, $i=1,\dots,13$,
		\item the irreducible semirelaxed $\mathcal{S}_{G_2}^{[\lambda]}(-2,\pi(\mu_i))$ corresponding to the image of the irreducible semidense $A(L_{-2}(B_3))$-modules in \eqref{eq:semirelaxedB3_1}--\eqref{eq:semirelaxedB3_last}.
	\end{itemize}
\end{cor}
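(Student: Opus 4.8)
The plan is to deduce the statement from Theorem~\ref{Th:mainC} by running the algorithm of \cite{KR22}, exactly as was done for $L_{-2}(G_2)$ leading to Corollary~\ref{cor:relaxedG2}. That algorithm takes as input the list of simple highest-weight $A(L_{-2}(B_3))$-modules and returns all simple weight $A(L_{-2}(B_3))$-modules with finite-dimensional weight spaces; Zhu's correspondence $M\mapsto M_{\mathrm{top}}$ then transports the latter to the simple $\Z_{\geqslant 0}$-graded relaxed $L_{-2}(B_3)$-modules, and spectral flow twists together with $\weyl_{B_3}$-twists account for the remaining relaxed modules.

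First I would enumerate, up to $\weyl_{B_3}$-conjugacy, the parabolic subalgebras of $B_3$ whose Levi factor is of type $AC$; these are the six subalgebras $\mathfrak{b}$, $\mathfrak{p}_1$, $\mathfrak{p}_2$, $\mathfrak{p}_3$, $\mathfrak{p}_{12}$, $\mathfrak{p}_{23}$ listed above, with respective semisimple Levi parts $0$, $\sll_2$, $\sll_2$, $\sll_2$, $\sll_3$, $\spp_4$. The choice of the Borel $\mathfrak{b}$ produces exactly the thirteen highest-weight modules $L_{B_3}(-2,\mu_i)$ of Proposition~\ref{prop2.5} together with their $\weyl_{B_3}$-twists, which accounts for the first bullet.

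Next, for each non-Borel parabolic $\mathfrak{p}$ with Levi $\mathfrak{l}=\mathfrak{l}'\oplus\mathfrak{z}$ ($\mathfrak{l}'$ the simple part, of type $A$ or $C$) and for each $i=1,\dots,13$, I would project $\mu_i$ onto the weight space of $\mathfrak{l}'$ and test whether the corresponding simple highest-weight $\mathfrak{l}'$-module is infinite-dimensional; only then can it be localised along the roots of $\mathfrak{l}'$ to produce an irreducible semisimple coherent family of $\mathfrak{l}$-modules, following \cite{Mat}, whose direct summands $\mathcal{C}^{[\lambda]}$ are irreducible for $[\lambda]$ outside the two distinguished cosets. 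Inducing such a family up to $B_3$ and passing to the irreducible quotient yields a semidense $A(L_{-2}(B_3))$-module, and since $\mathfrak{l}'$ is of type $AC$ its weight spaces stay finite-dimensional. Running this through the cases should reproduce precisely the families in \eqref{eq:semirelaxedB3_1}--\eqref{eq:semirelaxedB3_last}; the identifications among them (such as $\mathcal{S}^{[\lambda]}_{B_3}(\pi_1(\mu_{11}))\simeq\mathcal{S}^{[\lambda']}_{B_3}(\pi_1(\mu_{10}))$) then follow by matching projected highest weights up to the Weyl group of $\mathfrak{l}'$.

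Finally, applying Zhu's functor to these $A(L_{-2}(B_3))$-modules gives the simple $\Z_{\geqslant 0}$-graded relaxed $L_{-2}(B_3)$-modules, every simple relaxed module being obtained from one of these by a spectral flow twist; incorporating $\weyl_{B_3}$-twists then yields the full list, giving the second bullet. The main obstacle is the bookkeeping in the third step: one must correctly compute all the projections $\pi_1,\pi_2,\pi_3,\pi_{12},\pi_{23}$ of the thirteen weights $\mu_i$, decide in each instance whether the induced $\mathfrak{l}'$-module is infinite-dimensional (so that localisation applies and a genuinely new semidense module appears) or finite-dimensional (so that the construction collapses onto a highest-weight module already listed), and sort the surviving semidense modules up to isomorphism --- a purely combinatorial but delicate verification, entirely parallel to the one carried out for $G_2$.
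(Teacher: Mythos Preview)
Your proposal is correct and follows essentially the same approach as the paper: the corollary is deduced from the classification of highest-weight modules (Theorem~\ref{Th:mainC}, equivalently Proposition~\ref{prop2.5}) by running the algorithm of \cite{KR22} through the six $AC$-type parabolics of $B_3$, producing exactly the semidense families \eqref{eq:semirelaxedB3_1}--\eqref{eq:semirelaxedB3_last}, and then applying Zhu's correspondence together with spectral flow and $\weyl_{B_3}$-twists. The paper carries out the bookkeeping you describe in your third paragraph explicitly rather than leaving it as a verification, but the method is identical.
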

}

\subsection{Decomposition of non-ordinary modules}
In this paragraph,  
we obtain a nontrivial decomposition theorem 
of non-ordinary modules 
coming from the spectral flows 
of the ordinary modules.
These modules are still graded by $L_0$, 
however, they are not necessarily bounded below.

According to \cite{AP}, we have 
\begin{equation} 
\label{12}
L_{D_4}(-2,0) =L_{B_3}(-2,0) \oplus L_{B_3}(-2, \varpi_1 ).
\end{equation}
Consider the standard 
representation 
$L:=L_{B_3}(\varpi_1) \cong\C^7$ 
of $B_3$ with canonical basis $\epsilon_i$, $i \in\{1,\ldots,7\}$. 
We schematize the representation by the following graph. 
\begin{align*}
L_{\beta_1+\beta_2+\beta_3} \ni \epsilon_2 \stackrel{f_1}{\longrightarrow} 
\epsilon_3 \stackrel{f_2}{\longrightarrow} \epsilon_4 
\stackrel{f_3}{\longrightarrow} \epsilon_1 \stackrel{f_3}{\longrightarrow} 
\epsilon_7 \stackrel{f_2}{\longrightarrow} \epsilon_6 
\stackrel{f_1}{\longrightarrow} \epsilon_5 \in L_{-\beta_1-\beta_2-\beta_3},
\end{align*}
where $f_j:=e_{-\beta_j}$ is the negative $\beta_j$-root vector. 

Assume for a while that $\g$ is an arbitrary simple Lie algebra  
as in Section~\ref{Sec:Pre}. 
For an arbitrary $\extg$-module $M$, 
one obtains a new $\extg$-module structure on $M$ by twisting the action by a certain 
automorphism $\sigma$ of $\extg$ 
as follows:
\[
x(n) \sigma^{\ast} (v)=\sigma^{\ast} (\sigma^{-1} (x(n)) v), 
\quad\text{for any}\; x\in \g, n \in \Z \text{ and } v\in M. 
\]
To distinguish the two module structures, we will denote the new module by 
$\sigma^{\ast}(M)$. 
Among the automorphisms of $\extg$, the spectral flows are of particular interest. 
We refer \cite{Li} (or to \cite[Appendix A]{Ridout} and references therein) for 
precise definitions and motivations. 

In the following, we consider spectral flow automorphisms which correspond to translations of the extended Weyl group of $\extg$.
More concretely, each simple coroots $\alpha_i^\vee$ 
of $\g$ defines a transformation  
$\tau_i$ that acts on the generators of $\extg$ as follows:
\begin{align*}
&\tau_i(e_\alpha(n))=  e_\alpha(n-\langle \alpha, \alpha_i^\vee\rangle), 
\qquad
\tau_i(h_j(n))= h_j(n) - (\alpha_j^\vee|\alpha_i^\vee) \delta_{n,0}K,\\
&\tau_i(K) = K, 
\qquad \tau_i(L_0) = L_0 - h_i(0)+\frac{(\alpha_i^\vee|\alpha_i^\vee)}{2} K,
\end{align*}
with $n\in\Z$, $\alpha\in\Delta$ and $L_0=-D$. 
The powers of $\tau_i$ acts as follows: 
\begin{align*}
&\tau_i^{s}(e_\alpha(n))=  e_\alpha(n-s\langle \alpha,\alpha_i^\vee\rangle),
\qquad 
\tau_i^{s}(h_j(n))= h_j(n) -  s (\alpha_j^\vee | \alpha_i^\vee)\delta_{n,0} K,\\
&\tau_i^{s}(K) = K, 
\qquad 
\tau_i^{s}(L_0) = L_0 - s\, h_i (0)+ \frac{s^2}{2}(\alpha_i^\vee |\alpha_i^\vee) K.
\end{align*}

Return to the case of $\g=D_4$ 
and 
consider the spectral flow automorphism along the direction 
$\Lambda_1$, which is defined by: 
$\sigma^{-1}:=\tau_1^{1} \tau_2^{1} \tau_3^{1/2} \tau_4^{1/2}$. 
It is direct to check that $\sigma$ is determined by the following maps
\begin{align*}
& e_{\gamma_1}(n)  \longmapsto e_{\gamma_1}(n+1), \quad    
 e_{-\gamma_1}(n) \longmapsto e_{-\gamma_1}(n-1),  
 \quad  e_{\pm \gamma_i}(n) \longmapsto  e_{\pm \gamma_i}(n), &\\
& K  \longmapsto  K, \quad 
h^{D_4}_1({0})  \longmapsto  h^{D_4}_1({0})+K, 
\quad  h^{D_4}_i(0) \longmapsto  h^{D_4}_i(0), \text{ for } i=2,3,4.& 
\end{align*}
In particular $\sigma^{-1}(e_{-\theta}(n))=e_{-\theta}(n-1)$. 

Applying the spectral flow to both sides of (\ref{12}), 
one obtain the decomposition of Theorem~\ref{Th:mainD}.
\begin{proof}[Proof of Theorem~\ref{Th:mainD}]
It is clear that the spectral flow $\sigma$ preserves $\hat{B}_3$, 
and hence the highest-weight module structures for $L_{-2}(D_4)$ and 
for $L_{-2}(B_3)$.
It suffices to show that $\sigma^{\ast} L_{B_3}(-2,\varpi_1)=L_{B_3}(-2,-3\varpi_1)$, 
which follows from the following lemma.

\begin{lem}
\label{Lem:flows}
Let ${\bf 1}_{\varpi_1}$ be the highest-weight vector of $L_{B_3}(-2,\varpi_1)$. 
Then $\sigma^{\ast}(e_{-\beta_1}(0) e_{-\theta}(0) {\bf 1}_{\varpi_1})$ 
is the highest-weight vector of 
$\sigma^{\ast} L_{B_3}(-2,\varpi_1)=L_{B_3}(-2,-3\varpi_1)$.
\end{lem}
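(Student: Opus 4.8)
The plan is to produce the highest‑weight vector of the twisted module $\sigma^{\ast}L_{B_3}(-2,\varpi_1)$ explicitly as $\sigma^{\ast}(v_0)$, with $v_0:=e_{-\beta_1}(0)e_{-\theta}(0)\mathbf{1}_{\varpi_1}$, to compute its $\affh$‑weight, and then to identify the module by irreducibility. Since $\sigma$ is an automorphism of $\affg$ (for $\g=B_3$) fixing $K$, the twist $\sigma^{\ast}L_{B_3}(-2,\varpi_1)$ is again an irreducible smooth level $-2$ module; once we know it contains a nonzero $\affh$‑weight vector annihilated by $\affn_{+}$ of weight $-3\varpi_1-2\Lambda_0-\delta$, it must be $L_{B_3}(-2,-3\varpi_1)$, which is exactly what the proof of Theorem~\ref{Th:mainD} needs (after applying $\sigma^{\ast}$ to \eqref{12}).

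First I would pin down $v_0$. Since $e_{-\beta_1}(0)$ and $e_{-\theta}(0)$ are zero modes they preserve the $L_0$‑grading, so $v_0$ lies in the top graded piece of $L_{B_3}(-2,\varpi_1)$, namely the vector representation $L_{B_3}(\varpi_1)\cong\C^{7}$. Tracing through the $\mathfrak{so}_7$‑action graph displayed before the lemma, $e_{-\theta}(0)$ carries $\mathbf{1}_{\varpi_1}=\epsilon_2$ to a nonzero multiple of $\epsilon_6$ and then $e_{-\beta_1}(0)$ carries $\epsilon_6$ to a nonzero multiple of the lowest‑weight vector $\epsilon_5$ (equivalently, $e_{-\beta_1}+e_{-\theta}$ together lower the $\h_{B_3}$‑weight by $\beta_1+\theta=2\varpi_1$, taking the highest weight $\varpi_1$ of $\C^7$ to its lowest weight $-\varpi_1$). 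Hence $v_0$ is a nonzero multiple of $\epsilon_5$, of $\h_{B_3}$‑weight $-\varpi_1$ and conformal weight $1$.

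Next I would check the highest‑weight conditions. The vector $\sigma^{\ast}(v_0)$ is annihilated by $\affn_{+}$ in $\sigma^{\ast}L_{B_3}(-2,\varpi_1)$ precisely when $v_0$ is killed in $L_{B_3}(-2,\varpi_1)$ by the spectral‑flow images of the affine simple raising operators $e_{\beta_1}(0),e_{\beta_2}(0),e_{\beta_3}(0),e_{-\theta}(1)$ of $\affg$; by the explicit action of $\sigma$ on $\affg$ for $B_3$ these images are $e_{\beta_1}(1),\,e_{\beta_2}(0),\,e_{\beta_3}(0),\,e_{-\theta}(0)$. For $w=v_0$ all four vanish at once: $e_{\beta_1}(1)v_0=0$ because it lands in the graded piece of conformal weight $0$, which vanishes since $L_{B_3}(-2,\varpi_1)$ has conformal dimension $1$; and $e_{\beta_2}(0)v_0=e_{\beta_3}(0)v_0=e_{-\theta}(0)v_0=0$ because $v_0$ is a lowest‑weight vector of the top $\mathfrak{so}_7$‑module $\C^7$ and each of these three moves exits the weight set of $\C^7$. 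As these elements generate the image of $\affn_{+}$ as a Lie algebra, $\sigma^{\ast}(v_0)$ is killed by all of $\affn_{+}$. A short computation with the spectral‑flow formulas for the Cartan generators (which fix $K$ and alter only $h_{\beta_1}(0)$ among them) and for $L_0$, applied to $v_0$, then yields that $\sigma^{\ast}(v_0)$ has $\affh$‑weight $-3\varpi_1-2\Lambda_0-\delta$, the $\delta$‑coefficient being consistent with \eqref{eq:conf_dim}, which assigns $L_{B_3}(-2,-3\varpi_1)$ conformal dimension $-1$.

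Finally, $\sigma^{\ast}L_{B_3}(-2,\varpi_1)$ lies in category $\mathcal{O}$ — the operators $e_{\beta_1}(1),e_{\beta_2}(0),e_{\beta_3}(0),e_{-\theta}(0)$ act locally nilpotently on the ordinary module $L_{B_3}(-2,\varpi_1)$ (the first is a strictly positive mode, the other three are nilpotent elements of $B_3$ acting on the finite‑dimensional $L_0$‑eigenspaces) — so, being irreducible, it is generated by the nonzero vector $\sigma^{\ast}(v_0)$; by PBW it then equals $U(\affn_{-})\sigma^{\ast}(v_0)$, hence is the irreducible highest‑weight module of highest weight $-3\varpi_1-2\Lambda_0-\delta$, i.e.\ $L_{B_3}(-2,-3\varpi_1)$. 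The same argument with the vacuum vector in place of $v_0$ gives $\sigma^{\ast}L_{B_3}(-2,0)=L_{B_3}(-2,-2\varpi_1)$ and $\sigma^{\ast}L_{D_4}(-2,0)=L_{D_4}(-2,-2\varpi_1)$, so applying $\sigma^{\ast}$ to \eqref{12} proves Theorem~\ref{Th:mainD}. The main obstacle is not the algebra in these checks, which is light, but the bookkeeping: getting the spectral‑flow action on $\affh$ and on $L_0$ right (signs included) and confirming that the twisted module genuinely remains in category $\mathcal{O}$.
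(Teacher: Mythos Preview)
Your proof is correct and follows essentially the same approach as the paper: identify $v_0$ as the lowest-weight vector of the top $\mathfrak{so}_7$-module $\C^7$, then verify annihilation by the $\sigma^{-1}$-images $e_{\beta_1}(1),e_{\beta_2}(0),e_{\beta_3}(0),e_{-\theta}(0)$ of the affine simple raising operators, using conformal-weight vanishing for the first and the weight-support argument for the remaining three. You are somewhat more explicit than the paper in justifying that the twisted irreducible module is genuinely highest weight (via local nilpotence of the twisted raising operators and PBW), which the paper leaves implicit.
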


\begin{proof}
From the realization of the standard representation $L_{B_3}(\varpi_1)$, 
we deduce that $v=e_{-\beta_1}(0) e_{-\theta}(0) {\bf 1}_{\varpi_1}\neq 0$ 
is the lowest weight vector in $L_{B_3}(\varpi_1)$ of weight $-\varpi_1$. 
By calculating the conformal weight, we have 
\[
\sigma^{-1}(e_{\beta_1}(0)) v
=e_{\beta_1}(1) e_{-\beta_1}(0) e_{-\theta}(0) {\bf 1}_{\varpi_1}=0. 
\]
For $i=2,3$, we have
\[
\sigma^{-1}(e_{\beta_i}(0)) v
=e_{\beta_i}(0) e_{-\beta_1}(0) e_{-\theta}(0) {\bf 1}_{\varpi_1}=0,
\]
where the last equality is due to the fact that 
$-\varpi_1+\beta_i =\beta_1-\beta_i$ 
is not a weight in $L_{B_3}(\varpi_1)$.
Similarly, we have 
\[
\sigma^{-1}(e_{-\theta}(1)) v=e_{-\theta}(0) v=0,
\]
as $-\varpi_1-\theta$ is not a weight in $L_{B_3}(\varpi_1)$.
Therefore $v$ is a singular vector in 
$\sigma^{\ast}(e_{-\beta_1}(0) e_{-\theta}(0) {\bf 1}_{\varpi_1})$ 
of highest-weight $-2\Lambda_0-3\varpi_1$.
\end{proof}
Lemma \ref{Lem:flows} concludes the proof of Theorem~\ref{Th:mainD}.
 \end{proof}

\appendix
\section{Singular vector for $V^{-2}(G_{2})$}
\label{App:singular_G2}

We give in this appendix an explicit description 
of a singular vector $v_{\sing}$ in the
affine vertex algebra $V^{-2}(G_{2})$ 
with weight $-2\Lambda_{0}+4\varpi_{1}-6\delta$ in $V^{-2}(G_{2})$ (385 terms) 
as obtained in Theorem \ref{Th:singular_G2}. The singular vector can be detected and verified in Mathematica using the OPE package by Kris Thielemans.
We also obtain from this the 
image $v'_{\sing}:=F([v_{\sing}])$ of 
$v_{\sing}$ is the Zhu algebra, where $F$ is the isomorphism \eqref{eq:Zhu}.

\smallskip

{\tiny 
\begin{align*}
&v_{\sing}=-60e_{\theta}(-3)e_{\alpha_5}(-2)e_{\alpha_4}(-1)\mathbf{1}
+12e_{\theta}(-3)e_{\alpha_5}(-1)e_{\alpha_4}(-2)\mathbf{1} 
+60e_{\theta}(-2)e_{\alpha_5}(-3)e_{\alpha_4}(-1)\mathbf{1}
-120e_{\theta}(-2)e_{\alpha_5}(-2)e_{\alpha_4}(-2)\mathbf{1}\\
&+36e_{\theta}(-2)e_{\alpha_5}(-1)e_{\alpha_4}(-3)\mathbf{1}
+84e_{\theta}(-3)e_{\alpha_5}(-1)e_{\alpha_4}(-2)\mathbf{1}
-84e_{\theta}(-1)e_{\alpha_5}(-2)e_{\alpha_4}(-3)\mathbf{1}
+8e_{\alpha_4}(-3)e_{\alpha_4}(-1)^{3}\mathbf{1}\\
&-12e_{\alpha_4}(-2)^{2}e_{\alpha_4}(-1)^{2}\mathbf{1}
-12e_{\alpha_5}(-3)e_{\alpha_4}(-1)^{2}e_{\alpha_3}(-1)\mathbf{1}
+48e_{\alpha_5}(-3)e_{\alpha_5}(-1)e_{\alpha_3}(-1)^{2}\mathbf{1}
-36e_{\alpha_5}(-3)e_{\alpha_5}(-1)e_{\alpha_4}(-1)e_{\alpha_2}(-1)\mathbf{1}\\
&+60e_{\alpha_5}(-2)e_{\alpha_4}(-2)e_{\alpha_4}(-1)e_{\alpha_3}(-1)\mathbf{1}
-24e_{\alpha_5}(-2)e_{\alpha_4}(-1)^{2}e_{\alpha_3}(-1)\mathbf{1}
-60e_{\alpha_5}(-2)^{2}e_{\alpha_3}(-1)^{2}\mathbf{1}
+60e_{\alpha_5}(-2)^{2}e_{\alpha_4}(-1)e_{\alpha_2}(-1)\mathbf{1}\\
&-24e_{\alpha_5}(-2)e_{\alpha_5}(-1)e_{\alpha_3}(-2)e_{\alpha_3}(-1)\mathbf{1}
-60e_{\alpha_5}(-2)e_{\alpha_5}(-1)e_{\alpha_4}(-2)e_{\alpha_2}(-1)\mathbf{1}
+48e_{\alpha_5}(-2)e_{\alpha_5}(-1)e_{\alpha_4}(-1)e_{\alpha_2}(-2)\mathbf{1}\\
&-28e_{\alpha_5}(-1)e_{\alpha_4}(-3)e_{\alpha_4}(-1)e_{\alpha_3}(-1)\mathbf{1} 
-12e_{\alpha_5}(-1)e_{\alpha_4}(-2)e_{\alpha_4}(-2)e_{\alpha_3}(-1)\mathbf{1} 
+48e_{\alpha_5}(-1)e_{\alpha_4}(-2)e_{\alpha_4}(-1)e_{\alpha_3}(-2)\mathbf{1}\\
&-8 e_{\alpha_5}(-1)e_{\alpha_4}(-1)e_{\alpha_4}(-1)e_{\alpha_3}(-3)\mathbf{1}
+24 e_{\alpha_5}(-1)e_{\alpha_5}(-1)e_{\alpha_3}(-3)e_{\alpha_3}(-1)\mathbf{1}
-24 e_{\alpha_5}(-1)e_{\alpha_5}(-1)e_{\alpha_3}(-2)e_{\alpha_3}(-2)\mathbf{1}\\
&+36 e_{\alpha_5}(-1)e_{\alpha_5}(-1)e_{\alpha_4}(-3)e_{\alpha_2}(-1)\mathbf{1}
-48 e_{\alpha_5}(-1)e_{\alpha_5}(-1)e_{\alpha_4}(-2)e_{\alpha_2}(-2)\mathbf{1}
-12 e_{\theta}(-3)e_{\alpha_4}(-1)e_{\alpha_4}(-1)e_{\alpha_1}(-1)\mathbf{1}\\
&+48 e_{\theta}(-3)e_{\alpha_5}(-1)e_{\alpha_3}(-1)e_{\alpha_1}(-1)\mathbf{1}
+48 e_{\theta}(-3)e_{\alpha_5}(-1)e_{\alpha_4}(-1)h_{1}(-1)\mathbf{1}
+48 e_{\theta}(-3)e_{\alpha_5}(-1)e_{\alpha_4}(-1)h_2(-1)\mathbf{1}\\
&-36 e_{\theta}(-3)e_{\alpha_5}(-1)^{2}e_{-\alpha_1}(-1)\mathbf{1}
+48 e_{\theta}(-3)e_{\theta}(-1)e_{\alpha_1}(-1)^{2}\mathbf{1}
+24 e_{\theta}(-3)e_{\theta}(-1)e_{\alpha_4}(-1)e_{-\alpha_2}(-1)\mathbf{1}\\
&+48 e_{\theta}(-3)e_{\theta}(-1)e_{\alpha_5}(-1)e_{-\alpha_3}(-1)\mathbf{1}
+60 e_{\theta}(-2)e_{\alpha_4}(-2)e_{\alpha_4}(-1)e_{\alpha_1}(-1)\mathbf{1}
-12 e_{\theta}(-2)e_{\alpha_4}(-1)^{2}e_{\alpha_1}(-2)\mathbf{1}\\
&-120 e_{\theta}(-2)e_{\alpha_5}(-2)e_{\alpha_3}(-1)e_{\alpha_1}(-1)\mathbf{1}
+60 e_{\theta}(-2)e_{\alpha_5}(-2)e_{\alpha_4}(-1)h_2(-1)\mathbf{1}
+72 e_{\theta}(-2)e_{\alpha_5}(-2)e_{\alpha_5}(-1)e_{-\alpha_1}(-1)\mathbf{1}\\
&-36 e_{\theta}(-2)e_{\alpha_5}(-1)e_{\alpha_3}(-2)e_{\alpha_1}(-1)\mathbf{1}
+48 e_{\theta}(-2)e_{\alpha_5}(-1)e_{\alpha_3}(-1)e_{\alpha_1}(-2)\mathbf{1}
+72 e_{\theta}(-2)e_{\alpha_5}(-1)e_{\alpha_4}(-2)h_1(-1)\mathbf{1}\\
&+36 e_{\theta}(-2)e_{\alpha_5}(-1)e_{\alpha_4}(-2)h_2(-1)\mathbf{1}
-16 e_{\theta}(-2)e_{\alpha_5}(-1)e_{\alpha_4}(-1)h_1(-2)\mathbf{1}
-36 e_{\theta}(-2)e_{\alpha_5}(-1)e_{\alpha_4}(-1)h_2(-2)\mathbf{1}\\
&-48 e_{\theta}(-2)e_{\alpha_5}(-1)^{2}e_{-\alpha_1}(-2)\mathbf{1}
-60 e_{\theta}(-2)^{2}e_{\alpha_1}(-1)^{2}\mathbf{1}
-60 e_{\theta}(-2)^{2}e_{\alpha_4}(-1)e_{-\alpha_2}(-1)\mathbf{1}
-72 e_{\theta}(-2)^{2}e_{\alpha_5}(-1)e_{-\alpha_3}(-1)\mathbf{1}\\
&-72 e_{\theta}(-2)e_{\theta}(-1)e_{\alpha_1}(-2)e_{\alpha_1}(-1)\mathbf{1}
+156 e_{\theta}(-2)e_{\theta}(-1)e_{\alpha_4}(-2)e_{-\alpha_2}(-1)\mathbf{1}
-48 e_{\theta}(-2)e_{\theta}(-1)e_{\alpha_4}(-1)e_{-\alpha_2}(-2)\mathbf{1}\\
&+72 e_{\theta}(-2)e_{\theta}(-1)e_{\alpha_5}(-2)e_{-\alpha_3}(-1)\mathbf{1}
+48 e_{\theta}(-2)e_{\theta}(-1)e_{\alpha_5}(-1)e_{-\alpha_3}(-2)\mathbf{1}
-36 e_{\theta}(-1)e_{\alpha_4}(-3)e_{\alpha_4}(-1)e_{\alpha_1}(-1)\mathbf{1}\\
&-12 e_{\theta}(-1)e_{\alpha_4}(-2)^{2}e_{\alpha_1}(-1)\mathbf{1}
+60 e_{\theta}(-1)e_{\alpha_4}(-2)e_{\alpha_4}(-1)e_{\alpha_1}(-2)\mathbf{1}
-12 e_{\theta}(-1)e_{\alpha_4}(-1)^{2}e_{\alpha_1}(-3)\mathbf{1}\\
&+48 e_{\theta}(-1)e_{\alpha_5}(-3)e_{\alpha_3}(-1)e_{\alpha_1}(-1)\mathbf{1}
-48 e_{\theta}(-1)e_{\alpha_5}(-3)e_{\alpha_4}(-1)h_1(-1)\mathbf{1}
-84 e_{\theta}(-1)e_{\alpha_5}(-3)e_{\alpha_4}(-1)h_2(-1)\mathbf{1}\\
&+36 e_{\theta}(-1)e_{\alpha_5}(-3)e_{\alpha_5}(-1)e_{-\alpha_1}(-1)\mathbf{1}
+60 e_{\theta}(-1)e_{\alpha_5}(-2)e_{\alpha_3}(-2)e_{\alpha_1}(-1)\mathbf{1}
-120 e_{\theta}(-1)e_{\alpha_5}(-2)e_{\alpha_3}(-1)e_{\alpha_1}(-2)\mathbf{1}\\
&+24 e_{\theta}(-1)e_{\alpha_5}(-2)e_{\alpha_4}(-2)h_1(-1)\mathbf{1}
+16 e_{\theta}(-1)e_{\alpha_5}(-2)e_{\alpha_4}(-1)h_1(-2)\mathbf{1}
+84 e_{\theta}(-1)e_{\alpha_5}(-2)e_{\alpha_4}(-1)h_2(-2)\mathbf{1}\\
&-72 e_{\theta}(-1)e_{\alpha_5}(-2)^{2}e_{-\alpha_1}(-1)\mathbf{1}
+48 e_{\theta}(-1)e_{\alpha_5}(-2)e_{\alpha_5}(-1)e_{-\alpha_1}(-2)\mathbf{1}
+48 e_{\theta}(-1)e_{\alpha_5}(-1)e_{\alpha_3}(-3)e_{\alpha_1}(-1)\mathbf{1}\\
&-96 e_{\theta}(-1)e_{\alpha_5}(-1)e_{\alpha_3}(-2)e_{\alpha_1}(-2)\mathbf{1}
+48 e_{\theta}(-1)e_{\alpha_5}(-1)e_{\alpha_3}(-1)e_{\alpha_1}(-3)\mathbf{1}
+36 e_{\theta}(-1)e_{\alpha_5}(-1)e_{\alpha_4}(-3)h_1(-1)\mathbf{1}\\
&+84 e_{\theta}(-1)e_{\alpha_5}(-1)e_{\alpha_4}(-3)h_2(-1)\mathbf{1}
-32 e_{\theta}(-1)e_{\alpha_5}(-1)e_{\alpha_4}(-2)h_1(-2)\mathbf{1}
-84 e_{\theta}(-1)e_{\alpha_5}(-1)e_{\alpha_4}(-2)h_2(-2)\mathbf{1}\\
&+48 e_{\theta}(-1)^{2}e_{\alpha_1}(-3)e_{\alpha_1}(-1)\mathbf{1}
-60 e_{\theta}(-1)^{2}e_{\alpha_1}(-2)^{2}\mathbf{1}
-42 e_{\theta}(-1)^{2}e_{\alpha_4}(-3)e_{-\alpha_2}(-1)\mathbf{1}\\
&+42 e_{\theta}(-1)^{2}e_{\alpha_4}(-2)e_{-\alpha_2}(-2)\mathbf{1}
-42 e_{\theta}(-1)^{2}e_{\alpha_5}(-3)e_{-\alpha_3}(-1)\mathbf{1}
-48 e_{\theta}(-1)^{2}e_{\alpha_5}(-2)e_{-\alpha_3}(-2)\mathbf{1}\\
&+6 e_{\alpha_4}(-2)e_{\alpha_4}(-1)^{3}h_1(-1)\mathbf{1}
+8 e_{\alpha_4}(-2)e_{\alpha_4}(-1)^{3}h_2(-1)\mathbf{1}
+2 e_{\alpha_4}(-1)^{3}e_{\alpha_3}(-2)e_{\alpha_1}(-1)\mathbf{1}
-e_{\alpha_4}(-1)^{4}h_1(-2)\mathbf{1}\\
&-2 e_{\alpha_4}(-1)^{4}h_2(-2)\mathbf{1}
-12 e_{\alpha_5}(-2)e_{\alpha_4}(-1)^{2}e_{\alpha_2}(-1)e_{\alpha_1}(-1)\mathbf{1}
-12 e_{\alpha_5}(-2)e_{\alpha_4}(-1)^{2}e_{\alpha_3}(-1)h_1(-1)\mathbf{1}\\
&-12 e_{\alpha_5}(-2)e_{\alpha_4}(-1)^{2}e_{\alpha_3}(-1)h_2(-1)\mathbf{1}
+6 e_{\alpha_5}(-2)e_{\alpha_4}(-1)^{3}e_{-\alpha_1}(-1)\mathbf{1}
+48 e_{\alpha_5}(-2)e_{\alpha_5}(-1)e_{\alpha_3}(-1)e_{\alpha_2}(-1)e_{\alpha_1}(-1)\mathbf{1}\\
&+48 e_{\alpha_5}(-2)e_{\alpha_5}(-1)e_{\alpha_3}(-1)^{2}h_1(-1)\mathbf{1}
+48 e_{\alpha_5}(-2)e_{\alpha_5}(-1)e_{\alpha_3}(-1)^{2}h_2(-1)\mathbf{1}
-24 e_{\alpha_5}(-2)e_{\alpha_5}(-1)e_{\alpha_4}(-1)e_{\alpha_2}(-1)h_1(-1)\mathbf{1}\\
&-36 e_{\alpha_5}(-2)e_{\alpha_5}(-1)e_{\alpha_4}(-1)e_{\alpha_2}(-1)h_2(-1)\mathbf{1}
-24 e_{\alpha_5}(-2)e_{\alpha_5}(-1)e_{\alpha_4}(-1)e_{\alpha_3}(-1)e_{-\alpha_1}(-1)\mathbf{1}\\
&+18 e_{\alpha_5}(-2)e_{\alpha_5}(-1)^{2}e_{\alpha_2}(-1)e_{-\alpha_1}(-1)\mathbf{1}
-12 e_{\alpha_5}(-1)e_{\alpha_4}(-2)e_{\alpha_4}(-1)e_{\alpha_2}(-1)e_{\alpha_1}(-1)\mathbf{1} \\
&-24 e_{\alpha_5}(-1)e_{\alpha_4}(-2)e_{\alpha_4}(-1)e_{\alpha_3}(-1)h_1(-1)\mathbf{1}
-28 e_{\alpha_5}(-1)e_{\alpha_4}(-2)e_{\alpha_4}(-1)e_{\alpha_3}(-1)h_2(-1)\mathbf{1}\\
&-2 e_{\alpha_5}(-1)e_{\alpha_4}(-2)e_{\alpha_4}(-1)^{2}e_{-\alpha_1}(-1)\mathbf{1}
-4 e_{\alpha_5}(-1)e_{\alpha_4}(-1)e_{\alpha_3}(-2)e_{\alpha_3}(-1)e_{\alpha_1}(-1)\mathbf{1}
-6 e_{\alpha_5}(-1)e_{\alpha_4}(-1)^{2}e_{\alpha_2}(-2)e_{\alpha_1}(-1)\mathbf{1}\\
&+10 e_{\alpha_5}(-1)e_{\alpha_4}(-1)^{2}e_{\alpha_2}(-1)e_{\alpha_1}(-2)\mathbf{1}
-2 e_{\alpha_5}(-1)e_{\alpha_4}(-1)^{2}e_{\alpha_3}(-2)h_1(-1)\mathbf{1}
-8 e_{\alpha_5}(-1)e_{\alpha_4}(-1)^{2}e_{\alpha_3}(-2)h_2(-1)\mathbf{1}\\
&+8 e_{\alpha_5}(-1)e_{\alpha_4}(-1)^{2}e_{\alpha_3}(-1)h_1(-2)\mathbf{1}
+12 e_{\alpha_5}(-1)e_{\alpha_4}(-1)^{2}e_{\alpha_3}(-1)h_2(-2)\mathbf{1}
-4 e_{\alpha_5}(-1)e_{\alpha_4}(-1)^{3}e_{-\alpha_1}(-2)\mathbf{1}\\
&+18 e_{\alpha_5}(-1)^{2}e_{\alpha_3}(-2)e_{\alpha_2}(-1)e_{\alpha_1}(-1)\mathbf{1}
+18 e_{\alpha_5}(-1)^{2}e_{\alpha_3}(-2)e_{\alpha_3}(-1)h_1(-1)\mathbf{1}
+24 e_{\alpha_5}(-1)^{2}e_{\alpha_3}(-2)e_{\alpha_3}(-1)h_2(-1)\mathbf{1}\\
&+6 e_{\alpha_5}(-1)^{2}e_{\alpha_3}(-1)e_{\alpha_2}(-2)e_{\alpha_1}(-1)\mathbf{1}
-30 e_{\alpha_5}(-1)^{2}e_{\alpha_3}(-1)e_{\alpha_2}(-1)e_{\alpha_1}(-2)\mathbf{1}
-16 e_{\alpha_5}(-1)^{2}e_{\alpha_3}(-1)^{2}h_1(-2)\mathbf{1}\\
&-18 e_{\alpha_5}(-1)^{2}e_{\alpha_3}(-1)^{2}h_2(-2)\mathbf{1}
+18 e_{\alpha_5}(-1)^{2}e_{\alpha_4}(-2)e_{\alpha_2}(-1)h_1(-1)\mathbf{1}
+36 e_{\alpha_5}(-1)^{2}e_{\alpha_4}(-2)e_{\alpha_2}(-1)h_2(-1)\mathbf{1}\\
&+18 e_{\alpha_5}(-1)^{2}e_{\alpha_4}(-2)e_{\alpha_3}(-1)e_{-\alpha_1}(-1)\mathbf{1}
-12 e_{\alpha_5}(-1)^{2}e_{\alpha_4}(-1)e_{\alpha_2}(-2)h_1(-1)\mathbf{1}
+3 e_{\alpha_5}(-1)^{2}e_{\alpha_4}(-1)e_{\alpha_2}(-1)h_1(-2)\mathbf{1}\\
&-12 e_{\alpha_5}(-1)^{2}e_{\alpha_4}(-1)e_{\alpha_3}(-2)e_{-\alpha_1}(-1)\mathbf{1}
+13 e_{\alpha_5}(-1)^{2}e_{\alpha_4}(-1)e_{\alpha_3}(-1)e_{-\alpha_1}(-2)\mathbf{1}
+36 e_{\alpha_5}(-1)^{3}e_{\alpha_2}(-2)e_{-\alpha_1}(-1)\mathbf{1}\\
&-9 e_{\alpha_5}(-1)^{3}e_{\alpha_2}(-1)e_{-\alpha_1}(-2)\mathbf{1}
-12 e_{\theta}(-2)e_{\alpha_4}(-1)^{2}e_{\alpha_1}(-1)h_1(-1)\mathbf{1} 
-24 e_{\theta}(-2)e_{\alpha_4}(-1)^{2}e_{\alpha_1}(-1)h_2(-1)\mathbf{1}\\
&-12 e_{\theta}(-2)e_{\alpha_4}(-1)^{2}e_{\alpha_3}(-1)e_{-\alpha_2}(-1)\mathbf{1}
-6 e_{\theta}(-2)e_{\alpha_4}(-1)^{3}e_{-\alpha_3}(-1)\mathbf{1}
+42 e_{\theta}(-2)e_{\alpha_5}(-1)e_{\alpha_2}(-1)e_{\alpha_1}(-1)^{2}\mathbf{1}\\
%
&+48 e_{\theta}(-2)e_{\alpha_5}(-1)e_{\alpha_3}(-1)e_{\alpha_1}(-1)h_1(-1)\mathbf{1}
+54 e_{\theta}(-2)e_{\alpha_5}(-1)e_{\alpha_3}(-1)e_{\alpha_1}(-1)h_2(-1)\mathbf{1}\\
&+6 e_{\theta}(-2)e_{\alpha_5}(-1)e_{\alpha_3}(-1)^{2}e_{-\alpha_2}(-1)\mathbf{1}
-28 e_{\theta}(-2)e_{\alpha_5}(-1)e_{\alpha_4}(-1)e_{\alpha_1}(-1)e_{-\alpha_1}(-1)\mathbf{1}
+72 e_{\theta}(-2)e_{\alpha_5}(-1)e_{\alpha_4}(-1)e_{\alpha_2}(-1)e_{-\alpha_2}(-1)\mathbf{1}\\
&-4 e_{\theta}(-2)e_{\alpha_5}(-1)e_{\alpha_4}(-1)e_{\alpha_3}(-1)e_{-\alpha_3}(-1)\mathbf{1}
-40 e_{\theta}(-2)e_{\alpha_5}(-1)e_{\alpha_4}(-1)^{2}e_{-\alpha_4}(-1)\mathbf{1}
-16 e_{\theta}(-2)e_{\alpha_5}(-1)e_{\alpha_4}(-1)h_1(-1)^{2}\mathbf{1}\\
&-60 e_{\theta}(-2)e_{\alpha_5}(-1)e_{\alpha_4}(-1)h_1(-1)h_2(-1)\mathbf{1}
-36 e_{\theta}(-2)e_{\alpha_5}(-1)e_{\alpha_4}(-1)h_2(-1)^{2}\mathbf{1}
+90 e_{\theta}(-2)e_{\alpha_5}(-1)^{2}e_{\alpha_2}(-1)e_{-\alpha_3}(-1)\mathbf{1}&\\
&+48 e_{\theta}(-2)e_{\alpha_5}(-1)^{2}e_{\alpha_3}(-1)e_{-\alpha_4}(-1)\mathbf{1}
-126 e_{\theta}(-2)e_{\alpha_5}(-1)^{2}e_{\alpha_4}(-1)e_{-\alpha_5}(-1)\mathbf{1}
-48 e_{\theta}(-2)e_{\alpha_5}(-1)^{2}h_1(-1)e_{-\alpha_1}(-1)\mathbf{1}\\
&-18 e_{\theta}(-2)e_{\alpha_5}(-1)^{2}h_2(-1)e_{-\alpha_1}(-1)\mathbf{1}
+48 e_{\theta}(-2)e_{\theta}(-1)e_{\alpha_1}(-1)^{2}h_1(-1)\mathbf{1}
+96 e_{\theta}(-2)e_{\theta}(-1)e_{\alpha_1}(-1)^{2}.h_2(-1)\mathbf{1}\\
&+48 e_{\theta}(-2)e_{\theta}(-1)e_{\alpha_3}(-1)e_{\alpha_1}(-1)e_{-\alpha_2}(-1)\mathbf{1}
+24 e_{\theta}(-2)e_{\theta}(-1)e_{\alpha_4}(-1)e_{\alpha_1}(-1)e_{-\alpha_3}(-1)\mathbf{1}\\
&+24 e_{\theta}(-2)e_{\theta}(-1)e_{\alpha_4}(-1)h_1(-1)e_{-\alpha_2}(-1)\mathbf{1}		
+36 e_{\theta}(-2)e_{\theta}(-1)e_{\alpha_4}(-1)h_2(-1)e_{-\alpha_2}(-1)\mathbf{1}\\
&+48 e_{\theta}(-2)e_{\theta}(-1)e_{\alpha_5}(-1)e_{\alpha_1}(-1)e_{-\alpha_4}(-1)\mathbf{1}
-108 e_{\theta}(-2)e_{\theta}(-1)e_{\alpha_5}(-1)e_{-\alpha_2}(-1)e_{-\alpha_1}(-1)\mathbf{1}\\
&-126 e_{\theta}(-2)e_{\theta}(-1)e_{\alpha_5}(-1)e_{\alpha_4}(-1)e_{-\theta}(-1)\mathbf{1}
+48 e_{\theta}(-2)e_{\theta}(-1)e_{\alpha_5}(-1)h_1(-1)e_{-\alpha_3}(-1)\mathbf{1}\\
&+108 e_{\theta}(-2)e_{\theta}(-1)e_{\alpha_5}(-1)h_2(-1)e_{-\alpha_3}(-1)\mathbf{1}
+18 e_{\theta}(-2)e_{\theta}(-1)^{2}e_{-\alpha_3}(-1)e_{-\alpha_2}(-1)\mathbf{1}
-24 e_{\theta}(-1)e_{\alpha_4}(-2)e_{\alpha_4}(-1)e_{\alpha_1}(-1)h_1(-1)\mathbf{1}\\
&-36 e_{\theta}(-1)e_{\alpha_4}(-2)e_{\alpha_4}(-1)e_{\alpha_1}(-1)h_2(-1)\mathbf{1}
-4 e_{\theta}(-1)e_{\alpha_4}(-2)e_{\alpha_4}(-1)e_{\alpha_3}(-1)e_{-\alpha_2}(-1)\mathbf{1}
+8 e_{\theta}(-1)e_{\alpha_4}(-2)e_{\alpha_4}(-1)^{2}e_{-\alpha_3}(-1)\mathbf{1}\\
&-6 e_{\theta}(-1)e_{\alpha_4}(-1)e_{\alpha_3}(-2)e_{\alpha_1}(-1)^{2}\mathbf{1}
-12 e_{\theta}(-1)e_{\alpha_4}(-1)^{2}e_{\alpha_1}(-2)h_1(-1)\mathbf{1}
-13 e_{\theta}(-1)e_{\alpha_4}(-1)^{2}e_{\alpha_1}(-2)h_2(-1)\mathbf{1}\\
&+8 e_{\theta}(-1)e_{\alpha_4}(-1)^{2}e_{\alpha_1}(-1)h_1(-2)\mathbf{1}
+6 e_{\theta}(-1)e_{\alpha_4}(-1)^{2}e_{\alpha_1}(-1)h_2(-2)\mathbf{1}
+16 e_{\theta}(-1)e_{\alpha_4}(-1)^{2}e_{\alpha_3}(-2)e_{-\alpha_2}(-1)\mathbf{1}\\
&-6 e_{\theta}(-1)e_{\alpha_4}(-1)^{2}e_{\alpha_3}(-1)e_{-\alpha_2}(-2)\mathbf{1}
+4 e_{\theta}(-1)e_{\alpha_4}(-1)^{3}e_{-\alpha_3}(-2)\mathbf{1}
+6 e_{\theta}(-1)e_{\alpha_5}(-2)e_{\alpha_2}(-1)e_{\alpha_1}(-1)^{2}\mathbf{1}\\
&+48 e_{\theta}(-1)e_{\alpha_5}(-2)e_{\alpha_3}(-1)e_{\alpha_1}(-1)h_1(-1)\mathbf{1}
+90 e_{\theta}(-1)e_{\alpha_5}(-2)e_{\alpha_3}(-1)e_{\alpha_1}(-1)h_2(-1)\mathbf{1}
+42 e_{\theta}(-1)e_{\alpha_5}(-2)e_{\alpha_3}(-1)^{2}e_{-\alpha_2}(-1)\mathbf{1}\\
&+4 e_{\theta}(-1)e_{\alpha_5}(-2)e_{\alpha_4}(-1)e_{\alpha_1}(-1)e_{-\alpha_1}(-1)\mathbf{1}
-72 e_{\theta}(-1)e_{\alpha_5}(-2)e_{\alpha_4}(-1)e_{\alpha_2}(-1)e_{-\alpha_2}(-1)\mathbf{1}\\
&+28 e_{\theta}(-1)e_{\alpha_5}(-2)e_{\alpha_4}(-1)e_{\alpha_3}(-1)e_{-\alpha_3}(-1)\mathbf{1}
+40 e_{\theta}(-1)e_{\alpha_5}(-2)e_{\alpha_4}(-1)^{2}e_{-\alpha_4}(-1)\mathbf{1}
+16 e_{\theta}(-1)e_{\alpha_5}(-2)e_{\alpha_4}(-1)h_1(-1)^{2}\mathbf{1} \\
&+36 e_{\theta}(-1)e_{\alpha_5}(-2)e_{\alpha_4}(-1)h_1(-1)h_2(-1)\mathbf{1}
-108 e_{\theta}(-1)e_{\alpha_5}(-2)e_{\alpha_5}(-1)e_{\alpha_2}(-1)e_{-\alpha_3}(-1)\mathbf{1}\\
&-48 e_{\theta}(-1)e_{\alpha_5}(-2)e_{\alpha_5}(-1)e_{\alpha_3}(-1)e_{-\alpha_4}(-1)\mathbf{1}
+126 e_{\theta}(-1)e_{\alpha_5}(-2)e_{\alpha_5}(-1)e_{\alpha_4}(-1)e_{-\alpha_5}(-1)\mathbf{1}\\
&+48 e_{\theta}(-1)e_{\alpha_5}(-2)e_{\theta}(-1)h_1(-1)e_{-\alpha_1}(-1)\mathbf{1}
+36 e_{\theta}(-1)e_{\alpha_5}(-2)e_{\theta}(-1)h_2(-1)e_{-\alpha_1}(-1)\mathbf{1}
-12 e_{\theta}(-1)e_{\alpha_5}(-1)e_{\alpha_2}(-2)e_{\alpha_1}(-1)^{2}\mathbf{1}\\
&+24 e_{\theta}(-1)e_{\alpha_5}(-1)e_{\alpha_2}(-1)e_{\alpha_1}(-2)e_{\alpha_1}(-1)\mathbf{1}
-12 e_{\theta}(-1)e_{\alpha_5}(-1)e_{\alpha_3}(-2)e_{\alpha_1}(-1)h_1(-1)\mathbf{1}\\
&+6 e_{\theta}(-1)e_{\alpha_5}(-1)e_{\alpha_3}(-2)e_{\alpha_1}(-1)h_2(-1)\mathbf{1}
-12 e_{\theta}(-1)e_{\alpha_5}(-1)e_{\alpha_3}(-2)e_{\alpha_3}(-1)e_{-\alpha_2}(-1)\mathbf{1}\\
&+48 e_{\theta}(-1)e_{\alpha_5}(-1)e_{\alpha_3}(-1)e_{\alpha_1}(-2)h_1(-1)\mathbf{1}
+30 e_{\theta}(-1)e_{\alpha_5}(-1)e_{\alpha_3}(-1)e_{\alpha_1}(-2)h_2(-1)\mathbf{1}\\
&-32 e_{\theta}(-1)e_{\alpha_5}(-1)e_{\alpha_3}(-1)e_{\alpha_1}(-1)h_1(-2)\mathbf{1}
-30 e_{\theta}(-1)e_{\alpha_5}(-1)e_{\alpha_3}(-1)e_{\alpha_1}(-1)h_2(-2)\mathbf{1}
+6 e_{\theta}(-1)e_{\alpha_5}(-1)e_{\alpha_3}(-1)^{2}e_{-\alpha_2}(-2)\mathbf{1}\\
&+16 e_{\theta}(-1)e_{\alpha_5}(-1)e_{\alpha_4}(-2)e_{\alpha_1}(-1)e_{-\alpha_1}(-1)\mathbf{1}
-36 e_{\theta}(-1)e_{\alpha_5}(-1)e_{\alpha_4}(-2)e_{\alpha_2}(-1)e_{-\alpha_2}(-1)\mathbf{1}\\
&-24 e_{\theta}(-1)e_{\alpha_5}(-1)e_{\alpha_4}(-2)e_{\alpha_3}(-1)e_{-\alpha_3}(-1)\mathbf{1} 
-6 e_{\theta}(-1)e_{\alpha_5}(-1)e_{\alpha_4}(-2)e_{\alpha_4}(-1)e_{-\alpha_4}(-1)\mathbf{1}
-32 e_{\theta}(-1)e_{\alpha_5}(-1)e_{\alpha_4}(-2)h_1(-1)^{2}\mathbf{1}\\
&-48 e_{\theta}(-1)e_{\alpha_5}(-1)e_{\alpha_4}(-2)h_1(-1)h_2(-1)\mathbf{1}
-15 e_{\theta}(-1)e_{\alpha_5}(-1)e_{\alpha_4}(-1)e_{\alpha_1}(-2)e_{-\alpha_1}(-1)\mathbf{1}\\
&+13 e_{\theta}(-1)e_{\alpha_5}(-1)e_{\alpha_4}(-1)e_{\alpha_1}(-1)e_{-\alpha_1}(-2)\mathbf{1}
-90 e_{\theta}(-1)e_{\alpha_5}(-1)e_{\alpha_4}(-1)e_{\alpha_2}(-2)e_{-\alpha_2}(-1)\mathbf{1}\\
&+36 e_{\theta}(-1)e_{\alpha_5}(-1)e_{\alpha_4}(-1)e_{\alpha_2}(-1)e_{-\alpha_2}(-2)\mathbf{1}
-6 e_{\theta}(-1)e_{\alpha_5}(-1)e_{\alpha_4}(-1)e_{\alpha_3}(-2)e_{-\alpha_3}(-1)\mathbf{1}\\
&-13 e_{\theta}(-1)e_{\alpha_5}(-1)e_{\alpha_4}(-1)e_{\alpha_3}(-1)e_{-\alpha_3}(-2)\mathbf{1}
+10 e_{\theta}(-1)e_{\alpha_5}(-1)e_{\alpha_4}(-1)^{2}e_{-\alpha_4}(-2)\mathbf{1}\\
&+3 e_{\theta}(-1)e_{\alpha_5}(-1)e_{\alpha_4}(-1)h_1(-2)h_2(-1)\mathbf{1} 
-12 e_{\theta}(-1)e_{\alpha_5}(-1)e_{\alpha_4}(-1)h_1(-1)h_2(-2)\mathbf{1}
-90 e_{\theta}(-1)e_{\alpha_5}(-1)^{2}e_{\alpha_2}(-2)e_{-\alpha_3}(-1)\mathbf{1}\\
&+9 e_{\theta}(-1)e_{\alpha_5}(-1)^{2}e_{\alpha_2}(-1)e_{-\alpha_3}(-2)\mathbf{1}
-54 e_{\theta}(-1)e_{\alpha_5}(-1)^{2}e_{\alpha_3}(-2)e_{-\alpha_4}(-1)\mathbf{1}
-30 e_{\theta}(-1)e_{\alpha_5}(-1)^{2}e_{\alpha_3}(-1)e_{-\alpha_4}(-2)\mathbf{1}\\
&-126 e_{\theta}(-1)e_{\alpha_5}(-1)^{2}e_{\alpha_4}(-2)e_{-\alpha_5}(-1)\mathbf{1}
+36 e_{\theta}(-1)e_{\alpha_5}(-1)^{2}h_2(-2)e_{-\alpha_1}(-1)\mathbf{1}
-9 e_{\theta}(-1)e_{\alpha_5}(-1)^{2}h_2(-1)e_{-\alpha_1}(-2)\mathbf{1}\\
&+48 e_{\theta}(-1)^{2}e_{\alpha_1}(-2)e_{\alpha_1}(-1)h_1(-1)\mathbf{1}
+84 e_{\theta}(-1)^{2}e_{\alpha_1}(-2)e_{\alpha_1}(-1)h_2(-1)\mathbf{1}
-16 e_{\theta}(-1)^{2}e_{\alpha_1}(-1)^{2}h_1(-2)\mathbf{1}\\
&-21 e_{\theta}(-1)^{2}e_{\alpha_1}(-1)^{2}h_2(-2)\mathbf{1}
-48 e_{\theta}(-1)^{2}e_{\alpha_3}(-2)e_{\alpha_1}(-1)e_{-\alpha_2}(-1)\mathbf{1}
+54 e_{\theta}(-1)^{2}e_{\alpha_3}(-1)e_{\alpha_1}(-2)e_{-\alpha_2}(-1)\mathbf{1}\\
&-12 e_{\theta}(-1)^{2}e_{\alpha_3}(-1)e_{\alpha_1}(-1)e_{-\alpha_2}(-2)\mathbf{1}
-22 e_{\theta}(-1)^{2}e_{\alpha_4}(-2)e_{\alpha_1}(-1)e_{-\alpha_3}(-1)\mathbf{1}
-78 e_{\theta}(-1)^{2}e_{\alpha_4}(-2)h_1(-1)e_{-\alpha_2}(-1)\mathbf{1}\\
&-126 e_{\theta}(-1)^{2}e_{\alpha_4}(-2)h_2(-1)e_{-\alpha_2}(-1)\mathbf{1}
+15 e_{\theta}(-1)^{2}e_{\alpha_4}(-1)e_{\alpha_1}(-2)e_{-\alpha_3}(-1)\mathbf{1}
-13 e_{\theta}(-1)^{2}e_{\alpha_4}(-1)e_{\alpha_1}(-1)e_{-\alpha_3}(-2)\mathbf{1}\\
&-3 e_{\theta}(-1)^{2}e_{\alpha_4}(-1)h_1(-2)e_{-\alpha_2}(-1)\mathbf{1}
+12 e_{\theta}(-1)^{2}e_{\alpha_4}(-1)h_1(-1)e_{-\alpha_2}(-2)\mathbf{1}
-63 e_{\theta}(-1)^{2}e_{\alpha_4}(-1)h_2(-2)e_{-\alpha_2}(-1)\mathbf{1}\\
&+63 e_{\theta}(-1)^{2}e_{\alpha_4}(-1)h_2(-1)e_{-\alpha_2}(-2)\mathbf{1}
-48 e_{\theta}(-1)^{2}e_{\alpha_5}(-2)e_{\alpha_1}(-1)e_{-\alpha_4}(-1)\mathbf{1}
+90 e_{\theta}(-1)^{2}e_{\alpha_5}(-2)e_{-\alpha_2}(-1)e_{-\alpha_1}(-1)\mathbf{1}\\
&+126 e_{\theta}(-1)^{2}e_{\alpha_5}(-2)e_{\alpha_4}(-1)e_{-\theta}(-1)\mathbf{1}
-48 e_{\theta}(-1)^{2}e_{\theta}(-2)h_1(-1)e_{-\alpha_3}(-1)\mathbf{1}
-126 e_{\theta}(-1)^{2}e_{\theta}(-2)h_2(-1)e_{-\alpha_3}(-1)\mathbf{1}\\
&-30 e_{\theta}(-1)^{2}e_{\alpha_5}(-1)e_{\alpha_1}(-1)e_{-\alpha_4}(-2)\mathbf{1} 
-9 e_{\theta}(-1)^{2}e_{\alpha_5}(-1)e_{-\alpha_2}(-2)e_{-\alpha_1}(-1)\mathbf{1}
+9 e_{\theta}(-1)^{2}e_{\alpha_5}(-1)e_{-\alpha_2}(-1)e_{-\alpha_1}(-2)\mathbf{1}\\
&-126 e_{\theta}(-1)^{2}e_{\alpha_5}(-1)e_{\alpha_4}(-2)e_{-\theta}(-1)\mathbf{1}
-63 e_{\theta}(-1)^{2}e_{\theta}(-1)h_2(-2)e_{-\alpha_3}(-1)\mathbf{1}
+9 e_{\theta}(-1)e_{\theta}(-1)e_{\theta}(-1)h_2(-1)e_{-\alpha_3}(-2)\mathbf{1}\\
&-9 e_{\theta}(-1)^{3}e_{-\alpha_3}(-2)e_{-\alpha_2}(-1)\mathbf{1}
+63 e_{\theta}(-1)^{3}e_{-\alpha_3}(-1)e_{-\alpha_2}(-2)\mathbf{1}
+e_{\alpha_4}(-1)^{3}e_{\alpha_2}(-1)e_{\alpha_1}(-1)^{2}\mathbf{1}
-e_{\alpha_4}(-1)^{3}e_{\alpha_3}(-1)e_{\alpha_1}(-1)h_2(-1)\mathbf{1}\\
&-e_{\alpha_4}(-1)^{3}e_{\alpha_3}(-1)^{2}e_{-\alpha_2}(-1)\mathbf{1}
-e_{\alpha_4}(-1)^{4}e_{\alpha_1}(-1)e_{-\alpha_1}(-1)\mathbf{1}
+e_{\alpha_4}(-1)^{4}e_{\alpha_2}(-1)e_{-\alpha_2}(-1)\mathbf{1}\\
&-e_{\alpha_4}(-1)^{4}e_{\alpha_3}(-1)e_{-\alpha_3}(-1)\mathbf{1}
-e_{\alpha_4}(-1)^{5}e_{-\alpha_4}(-1)\mathbf{1}
-e_{\alpha_4}(-1)^{4}h_1(-1)^{2}\mathbf{1}-3 e_{\alpha_4}(-1)^{4}h_1(-1)h_2(-1)\mathbf{1}
-2 e_{\alpha_4}(-1)^{4}h_2(-1)^{2}\mathbf{1}\\
&-2 e_{\alpha_5}(-1)e_{\alpha_4}(-1)e_{\alpha_3}(-1)e_{\alpha_2}(-1)e_{\alpha_1}(-1)^{2}\mathbf{1}
+2 e_{\alpha_5}(-1)e_{\alpha_4}(-1)e_{\alpha_3}(-1)^{2}e_{\alpha_1}(-1)h_2(-1)\mathbf{1}
+2 e_{\alpha_5}(-1)e_{\alpha_4}(-1)e_{\alpha_3}(-1)^{3}e_{-\alpha_2}(-1)\mathbf{1}\\
&+10 e_{\alpha_5}(-1)e_{\alpha_4}(-1)^{2}e_{\alpha_2}(-1)e_{\alpha_1}(-1)h_1(-1)\mathbf{1}
+15 e_{\alpha_5}(-1)e_{\alpha_4}(-1)^{2}e_{\alpha_2}(-1)e_{\alpha_1}(-1)h_2(-1)\mathbf{1}\\
&+3 e_{\alpha_5}(-1)e_{\alpha_4}(-1)^{2}e_{\alpha_3}(-1)e_{\alpha_1}(-1)e_{-\alpha_1}(-1)\mathbf{1}
+6 e_{\alpha_5}(-1)e_{\alpha_4}(-1)^{2}e_{\alpha_3}(-1)e_{\alpha_2}(-1)e_{-\alpha_2}(-1)\mathbf{1}\\
&+3 e_{\alpha_5}(-1)e_{\alpha_4}(-1)^{2}e_{\alpha_3}(-1)^{2}e_{-\alpha_3}(-1)\mathbf{1}
+8 e_{\alpha_5}(-1)e_{\alpha_4}(-1)^{2}e_{\alpha_3}(-1)h_1(-1)^{2}\mathbf{1}\\
&+19 e_{\alpha_5}(-1)e_{\alpha_4}(-1)^{2}e_{\alpha_3}(-1)h_1(-1)h_2(-1)\mathbf{1}
+12 e_{\alpha_5}(-1)e_{\alpha_4}(-1)^{2}e_{\alpha_3}(-1)h_2(-1)^{2}\mathbf{1}\\
&+8 e_{\alpha_5}(-1)e_{\alpha_4}(-1)^{3}e_{\alpha_2}(-1)e_{-\alpha_3}(-1)\mathbf{1}
+4 e_{\alpha_5}(-1)e_{\alpha_4}(-1)^{3}e_{\alpha_3}(-1)e_{-\alpha_4}(-1)\mathbf{1} 
-7 e_{\alpha_5}(-1)e_{\alpha_4}(-1)^{4}e_{-\alpha_5}(-1)\mathbf{1}\\
&-4 e_{\alpha_5}(-1)e_{\alpha_4}(-1)^{3}h_1(-1)e_{-\alpha_1}(-1)\mathbf{1}
-2 e_{\alpha_5}(-1)e_{\alpha_4}(-1)^{3}h_2(-1)e_{-\alpha_1}(-1)\mathbf{1}
-9 e_{\alpha_5}(-1)^{2}e_{\alpha_2}(-1)^{2}e_{\alpha_1}(-1)^{2}\mathbf{1}\\
&-30 e_{\alpha_5}(-1)^{2}e_{\alpha_3}(-1)e_{\alpha_2}(-1)e_{\alpha_1}(-1)h_1(-1)\mathbf{1}
-36 e_{\alpha_5}(-1)^{2}e_{\alpha_3}(-1)e_{\alpha_2}(-1)e_{\alpha_1}(-1)h_2(-1)\mathbf{1}\\
&-e_{\alpha_5}(-1)^{2}e_{\alpha_3}(-1)^{2}e_{\alpha_1}(-1)e_{-\alpha_1}(-1)\mathbf{1}
-9 e_{\alpha_5}(-1)^{2}e_{\alpha_3}(-1)^{2}e_{\alpha_2}(-1)e_{-\alpha_2}(-1)\mathbf{1}
-e_{\alpha_5}(-1)^{2}e_{\alpha_3}(-1)^{3}e_{-\alpha_3}(-1)\mathbf{1}\\
&-16 e_{\alpha_5}(-1)^{2}e_{\alpha_3}(-1)^{2}h_1(-1)^{2}\mathbf{1}
-33 e_{\alpha_5}(-1)^{2}e_{\alpha_3}(-1)^{2}h_1(-1)h_2(-1)\mathbf{1}
-18 e_{\alpha_5}(-1)^{2}e_{\alpha_3}(-1)^{2}h_2(-1)^{2}\mathbf{1}\\
&+3 e_{\alpha_5}(-1)^{2}e_{\alpha_4}(-1)e_{\alpha_2}(-1)e_{\alpha_1}(-1)e_{-\alpha_1}(-1)\mathbf{1}
-27 e_{\alpha_5}(-1)^{2}e_{\alpha_4}(-1)e_{\alpha_2}(-1)^{2}e_{-\alpha_2}(-1)\mathbf{1}\\
&+3 e_{\alpha_5}(-1)^{2}e_{\alpha_4}(-1)e_{\alpha_2}(-1)h_1(-1)^{2}\mathbf{1}
+9 e_{\alpha_5}(-1)^{2}e_{\alpha_4}(-1)e_{\alpha_2}(-1)h_1(-1)h_2(-1)\mathbf{1}\\
&-18 e_{\alpha_5}(-1)^{2}e_{\alpha_4}(-1)e_{\alpha_3}(-1)e_{\alpha_2}(-1)e_{-\alpha_3}(-1)\mathbf{1}
-3 e_{\alpha_5}(-1)^{2}e_{\alpha_4}(-1)e_{\alpha_3}(-1)^{2}e_{-\alpha_4}(-1)\mathbf{1}\\
&+13 e_{\alpha_5}(-1)^{2}e_{\alpha_4}(-1)e_{\alpha_3}(-1)h_1(-1)e_{-\alpha_1}(-1)\mathbf{1}
+9 e_{\alpha_5}(-1)^{2}e_{\alpha_4}(-1)e_{\alpha_3}(-1)h_2(-1)e_{-\alpha_1}(-1)\mathbf{1}
+5 e_{\alpha_5}(-1)^{2}e_{\alpha_4}(-1)^{2}e_{-\alpha_1}(-1)^{2}\mathbf{1}\\
&+18 e_{\alpha_5}(-1)^{2}e_{\alpha_4}(-1)^{2}e_{\alpha_2}(-1)e_{-\alpha_4}(-1)\mathbf{1}
+42 e_{\alpha_5}(-1)^{2}e_{\alpha_4}(-1)^{2}e_{\alpha_3}(-1)e_{-\alpha_5}(-1)\mathbf{1}
-27 e_{\alpha_5}(-1)^{3}e_{\alpha_2}(-1)^{2}e_{-\alpha_3}(-1)\mathbf{1}\\
&-9 e_{\alpha_5}(-1)^{3}e_{\alpha_2}(-1)h_1(-1)e_{-\alpha_1}(-1)\mathbf{1}
-27 e_{\alpha_5}(-1)^{3}e_{\alpha_2}(-1)h_2(-1)e_{-\alpha_1}(-1)\mathbf{1}
-15 e_{\alpha_5}(-1)^{3}e_{\alpha_3}(-1)e_{-\alpha_1}(-1)^{2}\mathbf{1}\\
&-54 e_{\alpha_5}(-1)^{3}e_{\alpha_3}(-1)e_{\alpha_2}(-1)e_{-\alpha_4}(-1)\mathbf{1}
-63 e_{\alpha_5}(-1)^{3}e_{\alpha_3}(-1)^{2}e_{-\alpha_5}(-1)\mathbf{1}
-2 e_{\theta}(-1)e_{\alpha_4}(-1)e_{\alpha_2}(-1)e_{\alpha_1}(-1)^{3}\mathbf{1}\\
&+2 e_{\theta}(-1)e_{\alpha_4}(-1)e_{\alpha_3}(-1)e_{\alpha_1}(-1)^{2}h_2(-1)\mathbf{1}
+2 e_{\theta}(-1)e_{\alpha_4}(-1)e_{\alpha_3}(-1)^{2}e_{\alpha_1}(-1)e_{-\alpha_2}(-1)\mathbf{1}
+3 e_{\theta}(-1)e_{\alpha_4}(-1)^{2}e_{\alpha_1}(-1)^{2}e_{-\alpha_1}(-1)\mathbf{1}\\
&+8 e_{\theta}(-1)e_{\alpha_4}(-1)^{2}e_{\alpha_1}(-1)h_1(-1)^{2}\mathbf{1}
+29 e_{\theta}(-1)e_{\alpha_4}(-1)^{2}e_{\alpha_1}(-1)h_1(-1)h_2(-1)\mathbf{1}
+27 e_{\theta}(-1)e_{\alpha_4}(-1)^{2}e_{\alpha_1}(-1)h_2(-1)^{2}\mathbf{1}\\
&+6 e_{\theta}(-1)e_{\alpha_4}(-1)^{2}e_{\alpha_2}(-1)e_{\alpha_1}(-1)e_{-\alpha_2}(-1)\mathbf{1}
+3 e_{\theta}(-1)e_{\alpha_4}(-1)^{2}e_{\alpha_3}(-1)e_{\alpha_1}(-1)e_{-\alpha_3}(-1)\mathbf{1}\\
&+10 e_{\theta}(-1)e_{\alpha_4}(-1)^{2}e_{\alpha_3}(-1)h_1(-1)e_{-\alpha_2}(-1)\mathbf{1}
+15 e_{\theta}(-1)e_{\alpha_4}(-1)^{2}e_{\alpha_3}(-1)h_2(-1)e_{-\alpha_2}(-1)\mathbf{1}
+4 e_{\theta}(-1)e_{\alpha_4}(-1)^{3}e_{\alpha_1}(-1)e_{-\alpha_4}(-1)\mathbf{1}\\
&-8 e_{\theta}(-1)e_{\alpha_4}(-1)^{3}e_{-\alpha_2}(-1)e_{-\alpha_1}(-1)\mathbf{1}
-7 e_{\theta}(-1)e_{\alpha_4}(-1)^{4}e_{-\theta}(-1)\mathbf{1}
+4 e_{\theta}(-1)e_{\alpha_4}(-1)^{3}h_1(-1)e_{-\alpha_3}(-1)\mathbf{1}\\
&+10 e_{\theta}(-1)e_{\alpha_4}(-1)^{3}h_2(-1)e_{-\alpha_3}(-1)\mathbf{1}
-30 e_{\theta}(-1)e_{\alpha_5}(-1)e_{\alpha_2}(-1)e_{\alpha_1}(-1)^{2}h_1(-1)\mathbf{1}
-54 e_{\theta}(-1)e_{\alpha_5}(-1)e_{\alpha_2}(-1)e_{\alpha_1}(-1)^{2}h_2(-1)\mathbf{1}\\
&-2 e_{\theta}(-1)e_{\alpha_5}(-1)e_{\alpha_3}(-1)e_{\alpha_1}(-1)^{2}e_{-\alpha_1}(-1)\mathbf{1}
-32 e_{\theta}(-1)e_{\alpha_5}(-1)e_{\alpha_3}(-1)e_{\alpha_1}(-1)h_1(-1)^{2}\mathbf{1}\\
&-96 e_{\theta}(-1)e_{\alpha_5}(-1)e_{\alpha_3}(-1)e_{\alpha_1}(-1)h_1(-1)h_2(-1)\mathbf{1}
-72 e_{\theta}(-1)e_{\alpha_5}(-1)e_{\alpha_3}(-1)e_{\alpha_1}(-1)h_2(-1)^{2}\mathbf{1}\\
&-36 e_{\theta}(-1)e_{\alpha_5}(-1)e_{\alpha_3}(-1)e_{\alpha_2}(-1)e_{\alpha_1}(-1)e_{-\alpha_2}(-1)\mathbf{1}
-2 e_{\theta}(-1)e_{\alpha_5}(-1)e_{\alpha_3}(-1)^{2}e_{\alpha_1}(-1)e_{-\alpha_3}(-1)\mathbf{1}\\
&-30 e_{\theta}(-1)e_{\alpha_5}(-1)e_{\alpha_3}(-1)^{2}h_1(-1)e_{-\alpha_2}(-1)\mathbf{1}
-36 e_{\theta}(-1)e_{\alpha_5}(-1)e_{\alpha_3}(-1)^{2}h_2(-1)e_{-\alpha_2}(-1)\mathbf{1}\\
&+13 e_{\theta}(-1)e_{\alpha_5}(-1)e_{\alpha_4}(-1)e_{\alpha_1}(-1)h_1(-1)e_{-\alpha_1}(-1)\mathbf{1}
+12 e_{\theta}(-1)e_{\alpha_5}(-1)e_{\alpha_4}(-1)e_{\alpha_1}(-1)h_2(-1)e_{-\alpha_1}(-1)\mathbf{1}\\
&-21 e_{\theta}(-1)e_{\alpha_5}(-1)e_{\alpha_4}(-1)e_{\alpha_2}(-1)e_{\alpha_1}(-1)e_{-\alpha_3}(-1)\mathbf{1}
-27 e_{\theta}(-1)e_{\alpha_5}(-1)e_{\alpha_4}(-1)e_{\alpha_2}(-1)h_2(-1)e_{-\alpha_2}(-1)\mathbf{1}\\
&-6 e_{\theta}(-1)e_{\alpha_5}(-1)e_{\alpha_4}(-1)e_{\alpha_3}(-1)e_{\alpha_1}(-1)e_{-\alpha_4}(-1)\mathbf{1} 
+21 e_{\theta}(-1)e_{\alpha_5}(-1)e_{\alpha_4}(-1)e_{\alpha_3}(-1)e_{-\alpha_2}(-1)e_{-\alpha_1}(-1)\mathbf{1}\\
&-13 e_{\theta}(-1)e_{\alpha_5}(-1)e_{\alpha_4}(-1)e_{\alpha_3}(-1)h_1(-1)e_{-\alpha_3}(-1)\mathbf{1}
-27 e_{\theta}(-1)e_{\alpha_5}(-1)e_{\alpha_4}(-1)e_{\alpha_3}(-1)h_2(-1)e_{-\alpha_3}(-1)\mathbf{1}\\
&+42 e_{\theta}(-1)e_{\alpha_5}(-1)e_{\alpha_4}(-1)^{2}e_{\alpha_1}(-1)e_{-\alpha_5}(-1)\mathbf{1}
-10 e_{\theta}(-1)e_{\alpha_5}(-1)e_{\alpha_4}(-1)^{2}e_{-\alpha_3}(-1)e_{-\alpha_1}(-1)\mathbf{1}\\
&+42 e_{\theta}(-1)e_{\alpha_5}(-1)e_{\alpha_4}(-1)^{2}e_{\alpha_3}(-1)e_{-\theta}(-1)\mathbf{1}
+18 e_{\theta}(-1)e_{\alpha_5}(-1)e_{\alpha_4}(-1)^{2}h_2(-1)e_{-\alpha_4}(-1)\mathbf{1}\\
&+3 e_{\theta}(-1)e_{\alpha_5}(-1)e_{\alpha_4}(-1)h_1(-1)^{2}h_2(-1)\mathbf{1}
+9 e_{\theta}(-1)e_{\alpha_5}(-1)e_{\alpha_4}(-1)h_1(-1)h_2(-1)^{2}\mathbf{1}
-15 e_{\theta}(-1)e_{\alpha_5}(-1)^{2}e_{\alpha_1}(-1)e_{-\alpha_1}(-1)^{2}\mathbf{1}\\
&-54 e_{\theta}(-1)e_{\alpha_5}(-1)^{2}e_{\alpha_2}(-1)e_{\alpha_1}(-1)e_{-\alpha_4}(-1)\mathbf{1}
+27 e_{\theta}(-1)e_{\alpha_5}(-1)^{2}e_{\alpha_2}(-1)e_{-\alpha_2}(-1)e_{-\alpha_1}(-1)\mathbf{1}\\
&+9 e_{\theta}(-1)e_{\alpha_5}(-1)^{2}e_{\alpha_2}(-1)h_1(-1)e_{-\alpha_3}(-1)\mathbf{1}
-27 e_{\theta}(-1)e_{\alpha_5}(-1)^{2}e_{\alpha_2}(-1)h_2(-1)e_{-\alpha_3}(-1)\mathbf{1}\\
&-126 e_{\theta}(-1)e_{\alpha_5}(-1)^{2}e_{\alpha_3}(-1)e_{\alpha_1}(-1)e_{-\alpha_5}(-1)\mathbf{1}
+30 e_{\theta}(-1)e_{\alpha_5}(-1)^{2}e_{\alpha_3}(-1)e_{-\alpha_3}(-1)e_{-\alpha_1}(-1)\mathbf{1}\\
&-63 e_{\theta}(-1)e_{\alpha_5}(-1)^{2}e_{\alpha_3}(-1)^{2}e_{-\theta}(-1)\mathbf{1}
-54 e_{\theta}(-1)e_{\alpha_5}(-1)^{2}e_{\alpha_3}(-1)h_2(-1)e_{-\alpha_4}(-1)\mathbf{1}\\
&-9 e_{\theta}(-1)e_{\alpha_5}(-1)^{2}h_1(-1)h_2(-1)e_{-\alpha_1}(-1)\mathbf{1}
-27 e_{\theta}(-1)e_{\alpha_5}(-1)^{2}h_2(-1)^{2}e_{-\alpha_1}(-1)\mathbf{1}\\ 
&-e_{\theta}(-1)^{2}e_{\alpha_1}(-1)^{3}e_{-\alpha_1}(-1)\mathbf{1}
-16 e_{\theta}(-1)^{2}e_{\alpha_1}(-1)^{2}h_1(-1)^{2}\mathbf{1}
-63 e_{\theta}(-1)^{2}e_{\alpha_1}(-1)^{2}h_1(-1)h_2(-1)\mathbf{1}\\
&-63 e_{\theta}(-1)^{2}e_{\alpha_1}(-1)^{2}h_2(-1)^{2}\mathbf{1}
-9 e_{\theta}(-1)^{2}e_{\alpha_2}(-1)e_{\alpha_1}(-1)^{2}e_{-\alpha_2}(-1)\mathbf{1}
-e_{\theta}(-1)^{2}e_{\alpha_3}(-1)e_{\alpha_1}(-1)^{2}e_{-\alpha_3}(-1)\mathbf{1}\\
&-30 e_{\theta}(-1)^{2}e_{\alpha_3}(-1)e_{\alpha_1}(-1)h_1(-1)e_{-\alpha_2}(-1)\mathbf{1}
-54 e_{\theta}(-1)^{2}e_{\alpha_3}(-1)e_{\alpha_1}(-1)h_2(-1)e_{-\alpha_2}(-1)\mathbf{1}
-9 e_{\theta}(-1)^{2}e_{\alpha_3}(-1)^{2}e_{-\alpha_2}(-1)^{2}\mathbf{1}\\
&-3 e_{\theta}(-1)^{2}e_{\alpha_4}(-1)e_{\alpha_1}(-1)^{2}e_{-\alpha_4}(-1)\mathbf{1}
+18 e_{\theta}(-1)^{2}e_{\alpha_4}(-1)e_{\alpha_1}(-1)e_{-\alpha_2}(-1)e_{-\alpha_1}(-1)\mathbf{1}\\
&-13 e_{\theta}(-1)^{2}e_{\alpha_4}(-1)e_{\alpha_1}(-1)h_1(-1)e_{-\alpha_3}(-1)\mathbf{1}
-30 e_{\theta}(-1)^{2}e_{\alpha_4}(-1)e_{\alpha_1}(-1)h_2(-1)e_{-\alpha_3}(-1)\mathbf{1}\\
&+27 e_{\theta}(-1)^{2}e_{\alpha_4}(-1)e_{\alpha_2}(-1)e_{-\alpha_2}(-1)^{2}\mathbf{1}
-3 e_{\theta}(-1)^{2}e_{\alpha_4}(-1)e_{\alpha_3}(-1)e_{-\alpha_3}(-1)e_{-\alpha_2}(-1)\mathbf{1}
+42 e_{\theta}(-1)^{2}e_{\alpha_4}(-1)^{2}e_{\alpha_1}(-1)e_{-\theta}(-1)\mathbf{1}\\
&-18 e_{\theta}(-1)^{2}e_{\alpha_4}(-1)^{2}e_{-\alpha_4}(-1)e_{-\alpha_2}(-1)\mathbf{1}
+5 e_{\theta}(-1)^{2}e_{\alpha_4}(-1)^{2}e_{-\alpha_3}(-1)^{2}\mathbf{1}
-3 e_{\theta}(-1)^{2}e_{\alpha_4}(-1)h_1(-1)^{2}e_{-\alpha_2}(-1)\mathbf{1}\\
&-9 e_{\theta}(-1)^{2}e_{\alpha_4}(-1)h_1(-1)h_2(-1)e_{-\alpha_2}(-1)\mathbf{1} 
-63 e_{\theta}(-1)^{2}e_{\alpha_5}(-1)e_{\alpha_1}(-1)^{2}e_{-\alpha_5}(-1)\\
&+30 e_{\theta}(-1)^{2}e_{\alpha_5}(-1)e_{\alpha_1}(-1)e_{-\alpha_3}(-1)e_{-\alpha_1}(-1)\mathbf{1}
-54 e_{\theta}(-1)^{2}e_{\alpha_5}(-1)e_{\alpha_1}(-1)h_2(-1)e_{-\alpha_4}(-1)\mathbf{1}\\
&+27 e_{\theta}(-1)^{2}e_{\alpha_5}(-1)e_{\alpha_2}(-1)e_{-\alpha_3}(-1)e_{-\alpha_2}(-1)\mathbf{1}
-126 e_{\theta}(-1)^{2}e_{\alpha_5}(-1)e_{\alpha_3}(-1)e_{\alpha_1}(-1)e_{-\theta}(-1)\mathbf{1}\\
&+54 e_{\theta}(-1)^{2}e_{\alpha_5}(-1)e_{\alpha_3}(-1)e_{-\alpha_4}(-1)e_{-\alpha_2}(-1)\mathbf{1}
-15 e_{\theta}(-1)^{2}e_{\alpha_5}(-1)e_{\alpha_3}(-1)e_{-\alpha_3}(-1)^{2}\mathbf{1}\\
&+9 e_{\theta}(-1)^{2}e_{\theta}(-1)h_1(-1)e_{-\alpha_2}(-1)e_{-\alpha_1}(-1)\mathbf{1}
+9 e_{\theta}(-1)^{2}e_{\theta}(-1)h_1(-1)h_2(-1)e_{-\alpha_3}(-1)\mathbf{1}\\
&+54 e_{\theta}(-1)^{2}e_{\theta}(-1)h_2(-1)e_{-\alpha_2}(-1)e_{-\alpha_1}(-1)\mathbf{1} 
-63 e_{\theta}(-1)^{3}e_{\alpha_1}(-1)^{2}e_{-\theta}(-1)\mathbf{1}+54 e_{\theta}(-1)^{3}
e_{\alpha_1}(-1)e_{-\alpha_4}(-1)e_{-\alpha_2}(-1)\mathbf{1}\\
&-15 e_{\theta}(-1)^{3}e_{\alpha_1}(-1)e_{-\alpha_3}(-1)^{2}\mathbf{1}
-27 e_{\theta}(-1)^{3}e_{-\alpha_2}(-1)^{2}e_{-\alpha_1}(-1)\mathbf{1}
-9 e_{\theta}(-1)^{3}h_1(-1)e_{-\alpha_3}(-1)e_{-\alpha_2}(-1)\mathbf{1} 
\end{align*}}

\medskip

{\tiny 
\begin{align*}
&v'_{\sing}
=72e_{\alpha_4}e_{\alpha_5}e_{\theta}-96 e_{\alpha_1}e_{\alpha_1}e_{\theta}e_{\theta}
-72 e_{\alpha_1}e_{\alpha_3}e_{\alpha_5}e_{\theta}
+36 e_{\alpha_1}e_{\alpha_4}e_{\alpha_4}e_{\theta}
+6 e_{-\alpha_3}e_{\alpha_5}e_{\theta}e_{\theta}
+72 e_{-\alpha_2}e_{\alpha_4}e_{\theta}e_{\theta}
-36 e_{\alpha_3}e_{\alpha_3}e_{\alpha_5}e_{\alpha_5}\\
&+24 e_{\alpha_3}e_{\alpha_4}e_{\alpha_4}e_{\alpha_5}
-4 e_{\alpha_4}e_{\alpha_4}e_{\alpha_4}e_{\alpha_4}
+100 h_1e_{\alpha_4}e_{\alpha_5}e_{\theta}
+10 e_{-\alpha_1}e_{\alpha_1}e_{\alpha_4}e_{\alpha_5}e_{\theta}
+18 e_{\alpha_1}e_{-\alpha_2}e_{\alpha_5}e_{\theta}e_{\theta}
-45 e_{-\alpha_1}e_{\alpha_2}e_{\alpha_5}e_{\alpha_5}e_{\alpha_5}\\
&+5 e_{-\alpha_1}e_{\alpha_3}e_{\alpha_4}e_{\alpha_5}e_{\alpha_5}
-60 e_{\alpha_1}e_{\alpha_1}e_{\alpha_2}e_{\alpha_5}e_{\theta}
+6 e_{\alpha_1}e_{\alpha_1}e_{\alpha_3}e_{\alpha_4}e_{\theta}
-42e_{\alpha_1}e_{\alpha_2}e_{\alpha_3}e_{\alpha_5}e_{\alpha_5}
+20 e_{\alpha_1}e_{\alpha_2}e_{\alpha_4}e_{\alpha_4}e_{\alpha_5}\\
&+108 h_2e_{\alpha_4}e_{\alpha_5}e_{\theta}
-45 e_{-\alpha_1}h_2e_{\alpha_5}e_{\alpha_5}e_{\theta}
+4 e_{\alpha_1}e_{\alpha_3}e_{\alpha_3}e_{\alpha_4}e_{\alpha_5}
-2 e_{\alpha_1}e_{\alpha_3}e_{\alpha_4}e_{\alpha_4}e_{\alpha_4}
+126 e_{-\theta}e_{\alpha_4}e_{\alpha_5}e_{\theta}e_{\theta}\\
&+126 e_{-\alpha_5}e_{\alpha_4}e_{\alpha_5}e_{\alpha_5}e_{\theta}
+30 e_{-\alpha_4}e_{\alpha_1}e_{\alpha_5}e_{\theta}e_{\theta}
+84 e_{-\alpha_4}e_{\alpha_3}e_{\alpha_5}e_{\alpha_5}e_{\theta}
-4 e_{-\alpha_4}e_{\alpha_4}e_{\alpha_4}e_{\alpha_5}e_{\theta}
-4 e_{-\alpha_3}e_{\alpha_1}e_{\alpha_4}e_{\theta}e_{\theta}
+99 e_{-\alpha_3}e_{\alpha_2}e_{\alpha_5}e_{\alpha_5}e_{\theta}\\
&+19 e_{-\alpha_3}e_{\alpha_3}e_{\alpha_4}e_{\alpha_5}e_{\theta}
-6 e_{-\alpha_3}e_{\alpha_4}e_{\alpha_4}e_{\alpha_4}e_{\theta}
-42 e_{-\alpha_2}e_{\alpha_1}e_{\alpha_3}e_{\theta}e_{\theta}
-72 e_{-\alpha_2}.e_{-\alpha_3}e_{\theta}e_{\theta}e_{\theta}
+90 e_{-\alpha_2}e_{\alpha_2}e_{\alpha_4}e_{\alpha_5}e_{\theta}
-42 e_{-\alpha_2}e_{\alpha_3}e_{\alpha_3}e_{\alpha_5}e_{\theta}\\
&+6e_{-\alpha_2}e_{\alpha_3}e_{\alpha_4}e_{\alpha_4}e_{\theta}
+72 e_{-\alpha_3}h_2e_{\alpha_5}e_{\theta}e_{\theta}
+45 e_{-\alpha_2}h_1e_{\alpha_4}e_{\theta}e_{\theta}
+90 e_{-\alpha_2}h_2e_{\alpha_4}e_{\theta}e_{\theta}
-80 h_1e_{\alpha_1}e_{\alpha_1}e_{\theta}e_{\theta}
-15 e_{-\alpha_1}e_{-\alpha_1}e_{\alpha_1}e_{\alpha_5}e_{\alpha_5}e_{\theta}\\
&-15e_{-\alpha_1}e_{-\alpha_1}e_{\alpha_3}e_{\alpha_5}e_{\alpha_5}e_{\alpha_5}
+5 e_{-\alpha_1}e_{-\alpha_1}e_{\alpha_4}e_{\alpha_4}e_{\alpha_5}e_{\alpha_5}
-e_{-\alpha_1}e_{\alpha_1}e_{\alpha_1}e_{\alpha_1}e_{\theta}e_{\theta}
-2 e_{\alpha_1}e_{\alpha_1}e_{\alpha_1}e_{\alpha_3}e_{\alpha_5}e_{\theta}
-100 h_1e_{\alpha_1}e_{\alpha_3}e_{\alpha_5}e_{\theta}\\
&+40 h_1e_{\alpha_1}e_{\alpha_4}e_{\alpha_4}e_{\theta}
+15 h_1e_{\alpha_2}e_{\alpha_4}e_{\alpha_5}e_{\alpha_5}
-50 h_1e_{\alpha_3}e_{\alpha_3}e_{\alpha_5}e_{\alpha_5}
+30 h_1e_{\alpha_3}e_{\alpha_4}e_{\alpha_4}e_{\alpha_5}
-5 h_1e_{\alpha_4}e_{\alpha_4}e_{\alpha_4}e_{\alpha_4}
+32 h_1h_1e_{\alpha_4}e_{\alpha_5}e_{\theta}\\
&-159 h_2e_{\alpha_1}e_{\alpha_1}e_{\theta}e_{\theta}
-150 h_2e_{\alpha_1}e_{\alpha_3}e_{\alpha_5}e_{\theta}
+67 h_2e_{\alpha_1}e_{\alpha_4}e_{\alpha_4}e_{\theta}
-54 h_2e_{\alpha_3}e_{\alpha_3}e_{\alpha_5}e_{\alpha_5}
+36 h_2e_{\alpha_3}e_{\alpha_4}e_{\alpha_4}e_{\alpha_5}
-6h_2e_{\alpha_4}e_{\alpha_4}e_{\alpha_4}e_{\alpha_4}\\
&+81 h_2h_1e_{\alpha_4}e_{\alpha_5}e_{\theta}
+36 h_2h_2e_{\alpha_4}e_{\alpha_5}e_{\theta}
+3 e_{-\alpha_1}.e_{\alpha_1}e_{\alpha_1}e_{\alpha_4}e_{\alpha_4}e_{\theta}
+3 e_{\alpha_1}e_{\alpha_1}e_{\alpha_2}e_{\alpha_4}e_{\alpha_5}e_{\alpha_5}
-e_{-\alpha_1}e_{\alpha_1}e_{\alpha_3}e_{\alpha_3}e_{\alpha_5}e_{\alpha_5}\\
&+3 e_{-\alpha_1}e_{\alpha_1}e_{\alpha_3}e_{\alpha_4}e_{\alpha_4}e_{\alpha_5}
-e_{-\alpha_1}e_{\alpha_1}e_{\alpha_4}e_{\alpha_4}e_{\alpha_4}e_{\alpha_4}
+30 e_{-\alpha_1}e_{-\alpha_3}e_{\alpha_1}e_{\alpha_5}e_{\theta}e_{\theta}
+30 e_{-\alpha_1}e_{-\alpha_3}e_{\alpha_3}e_{\alpha_5}e_{\alpha_5}e_{\theta}
-10 e_{\alpha_1}e_{-\alpha_3}e_{\alpha_4}e_{\alpha_4}e_{\alpha_5}e_{\theta}\\
&+18e_{-\alpha_1}e_{-\alpha_2}e_{\alpha_1}e_{\alpha_4}e_{\theta}e_{\theta}
-27e_{-\alpha_1}e_{-\alpha_2}e_{-\alpha_2}e_{\theta}e_{\theta}e_{\theta}
+27e_{-\alpha_1}e_{-\alpha_2}e_{\alpha_2}e_{\alpha_5}e_{\alpha_5}e_{\theta}
+21e_{-\alpha_1}e_{-\alpha_2}e_{\alpha_3}e_{\alpha_4}e_{\alpha_5}e_{\theta}
-8e_{-\alpha_1}e_{-\alpha_2}e_{\alpha_4}e_{\alpha_4}e_{\alpha_4}e_{\theta}\\
&+9e_{-\alpha_1}e_{-\alpha_2}h_1e_{\alpha_5}e_{\theta}e_{\theta}
+54 e_{\alpha_1}e_{-\alpha_2}h_2e_{\alpha_5}e_{\theta}e_{\theta}
+13 e_{\alpha_1}h_1e_{\alpha_1}e_{\alpha_4}e_{\alpha_5}e_{\theta}
-9 e_{-\alpha_1}h_1e_{\alpha_2}e_{\alpha_5}e_{\alpha_5}e_{\alpha_5}
+13 e_{-\alpha_1}h_1e_{\alpha_3}e_{\alpha_4}e_{\alpha_5}e_{\alpha_5}\\
&-4 e_{-\alpha_1}h_1e_{\alpha_4}e_{\alpha_4}e_{\alpha_4}e_{\alpha_5}
+12 e_{-\alpha_1}h_2e_{\alpha_1}e_{\alpha_4}e_{\alpha_5}e_{\theta}
-27 e_{-\alpha_1}h_2.e_{\alpha_2}e_{\alpha_5}e_{\alpha_5}e_{\alpha_5}
+9 e_{\alpha_1}h_2e_{\alpha_3}e_{\alpha_4}e_{\alpha_5}e_{\alpha_5}
-2 e_{-\alpha_1}h_2e_{\alpha_4}e_{\alpha_4}e_{\alpha_4}e_{\alpha_5}\\
&-9 e_{-\alpha_1}h_2h_1e_{\alpha_5}e_{\alpha_5}e_{\theta}
-27 e_{-\alpha_1}h_2h_2e_{\alpha_5}e_{\alpha_5}e_{\theta}
-2 e_{\alpha_1}e_{\alpha_1}e_{\alpha_1}e_{\alpha_2}e_{\alpha_4}e_{\theta}
-9 e_{\alpha_1}e_{\alpha_1}e_{\alpha_2}e_{\alpha_2}e_{\alpha_5}e_{\alpha_5}
-2 e_{\alpha_1}e_{\alpha_1}e_{\alpha_2}e_{\alpha_3}e_{\alpha_4}e_{\alpha_5}\\
&+e_{\alpha_1}e_{\alpha_1}e_{\alpha_2}e_{\alpha_4}e_{\alpha_4}e_{\alpha_4}
-63 e_{-\theta}e_{\alpha_1}e_{\alpha_1}e_{\theta}e_{\theta}e_{\theta}
-126 e_{-\theta}e_{\alpha_1}e_{\alpha_3}e_{\alpha_5}e_{\theta}e_{\theta}
+42 e_{-\theta}e_{\alpha_1}e_{\alpha_4}e_{\alpha_4}e_{\theta}e_{\theta}
-63 e_{-\theta}e_{\alpha_3}.{\alpha_3}e_{\alpha_5}e_{\alpha_5}e_{\theta}\\
&+42 e_{-\theta}e_{\alpha_3}e_{\alpha_4}e_{\alpha_4}e_{\alpha_5}e_{\theta}
-7 e_{-\theta}e_{\alpha_4}e_{\alpha_4}e_{\alpha_4}e_{\alpha_4}e_{\theta}
-63 e_{-\alpha_5}e_{\alpha_1}e_{\alpha_1}e_{\alpha_5}e_{\theta}e_{\theta}
-126 e_{-\alpha_5}e_{\alpha_1}e_{\alpha_3}e_{\alpha_5}e_{\alpha_5}e_{\theta}
+42 e_{-\alpha_5}e_{\alpha_1}e_{\alpha_4}e_{\alpha_4}e_{\alpha_5}e_{\theta}\\
&-63 e_{-\alpha_5}e_{\alpha_3}e_{\alpha_3}e_{\alpha_5}e_{\alpha_5}e_{\alpha_5}
+42 e_{-\alpha_5}e_{\alpha_3}e_{\alpha_4}e_{\alpha_4}e_{\alpha_5}e_{\alpha_5}
-7 e_{-\alpha_5}e_{\alpha_4}e_{\alpha_4}e_{\alpha_4}e_{\alpha_4}e_{\alpha_5}
-3 e_{-\alpha_4}e_{\alpha_1}e_{\alpha_1}e_{\alpha_4}e_{\theta}e_{\theta}
-54 e_{-\alpha_4}e_{\alpha_1}e_{\alpha_2}e_{\alpha_5}e_{\alpha_5}e_{\theta}\\
&-6 e_{-\alpha_4}e_{\alpha_1}e_{\alpha_3}e_{\alpha_4}e_{\alpha_5}e_{\theta}
+4 e_{-\alpha_4}e_{\alpha_1}e_{\alpha_4}e_{\alpha_4}e_{\alpha_4}e_{\theta}
-54 e_{-\alpha_4}e_{\alpha_2}e_{\alpha_3}e_{\alpha_5}e_{\alpha_5}e_{\alpha_5}
+18 e_{-\alpha_4}e_{\alpha_2}e_{\alpha_4}e_{\alpha_4}e_{\alpha_5}e_{\alpha_5}
-3 e_{-\alpha_4}e_{\alpha_3}e_{\alpha_3}e_{\alpha_4}e_{\alpha_5}e_{\alpha_5}\\
&+4 e_{-\alpha_4}e_{\alpha_3}e_{\alpha_4}e_{\alpha_4}e_{\alpha_4}e_{\alpha_5}
-e_{-\alpha_4}e_{\alpha_4}e_{\alpha_4}e_{\alpha_4}e_{\alpha_4}e_{\alpha_4}
-54 e_{-\alpha_4}h_2.e_{\alpha_1}e_{\alpha_5}e_{\theta}e_{\theta}
-54 e_{-\alpha_4}h_2e_{\alpha_3}e_{\alpha_5}e_{\alpha_5}e_{\theta}
+18 e_{-\alpha_4}h_2e_{\alpha_4}e_{\alpha_4}e_{\alpha_5}e_{\theta}\\
&-e_{-\alpha_3}e_{\alpha_1}e_{\alpha_1}e_{\alpha_3}e_{\theta}e_{\theta}
-21 e_{-\alpha_3}e_{\alpha_1}e_{\alpha_2}e_{\alpha_4}e_{\alpha_5}e_{\theta}
-2 e_{-\alpha_3}e_{\alpha_1}e_{\alpha_3}e_{\alpha_3}e_{\alpha_5}e_{\theta}
+3e_{-\alpha_3}e_{\alpha_1}e_{\alpha_3}e_{\alpha_4}e_{\alpha_4}e_{\theta}
-15 e_{-\alpha_3}e_{-\alpha_3}e_{\alpha_1}e_{\theta}e_{\theta}e_{\theta}\\
&-15 e_{-\alpha_3}e_{-\alpha_3}e_{\alpha_3}e_{\alpha_5}e_{\theta}e_{\theta}
+5 e_{-\alpha_3}e_{-\alpha_3}e_{\alpha_4}e_{\alpha_4}e_{\theta}e_{\theta}
-27 e_{-\alpha_3}e_{\alpha_2}e_{\alpha_2}e_{\alpha_5}e_{\alpha_5}e_{\alpha_5}
-18e_{-\alpha_3}e_{\alpha_2}e_{\alpha_3}e_{\alpha_4}e_{\alpha_5}e_{\alpha_5}
+8 e_{-\alpha_3}e_{\alpha_2}e_{\alpha_4}e_{\alpha_4}e_{\alpha_4}e_{\alpha_5}\\
&-e_{-\alpha_3}e_{\alpha_3}e_{\alpha_3}e_{\alpha_3}e_{\alpha_5}e_{\alpha_5}
+3 e_{-\alpha_3}e_{\alpha_3}e_{\alpha_3}e_{\alpha_4}e_{\alpha_4}e_{\alpha_5}
-e_{-\alpha_3}e_{\alpha_3}e_{\alpha_4}e_{\alpha_4}e_{\alpha_4}e_{\alpha_4}
-13 e_{-\alpha_3}h_1 e_{\alpha_1}e_{\alpha_4}e_{\theta}e_{\theta}
+9 e_{-\alpha_3}h_1e_{\alpha_2}e_{\alpha_5}e_{\alpha_5}e_{\theta}\\
&-13 e_{-\alpha_3}h_1.e_{\alpha_3}e_{\alpha_4}e_{\alpha_5}e_{\theta}
+4 e_{-\alpha_3}h_1e_{\alpha_4}e_{\alpha_4}e_{\alpha_4}e_{\theta}
-30 e_{-\alpha_3}h_2e_{\alpha_1}e_{\alpha_4}e_{\theta}e_{\theta}
-27 e_{-\alpha_3}h_2e_{\alpha_2}e_{\alpha_5}e_{\alpha_5}e_{\theta}
-27 e_{-\alpha_3}h_2e_{\alpha_3}e_{\alpha_4}e_{\alpha_5}e_{\theta}\\
&+10 e_{-\alpha_3}h_2e_{\alpha_4}e_{\alpha_4}e_{\alpha_4}e_{\theta}
+9 e_{-\alpha_3}h_2h_1e_{\alpha_5}e_{\theta}e_{\theta}
-9 e_{-\alpha_2}e_{\alpha_1}e_{\alpha_1}e_{\alpha_2}e_{\theta}e_{\theta}
-36 e_{-\alpha_2}e_{\alpha_1}e_{\alpha_2}e_{\alpha_3}e_{\alpha_5}e_{\theta}
+6 e_{-\alpha_2}e_{\alpha_1}e_{\alpha_2}e_{\alpha_4}e_{\alpha_4}e_{\theta}\\
&+2 e_{-\alpha_2}e_{\alpha_1}e_{\alpha_3}e_{\alpha_3}e_{\alpha_4}e_{\theta}
+54 e_{-\alpha_2}e_{-\alpha_4}e_{\alpha_1}e_{\theta}e_{\theta}e_{\theta}
+54 e_{-\alpha_2}e_{-\alpha_4}e_{\alpha_3}e_{\alpha_5}e_{\theta}e_{\theta}
-18 e_{-\alpha_2}e_{-\alpha_4}e_{\alpha_4}e_{\alpha_4}e_{\theta}e_{\theta}
+27 e_{-\alpha_2}e_{-\alpha_3}e_{\alpha_2}e_{\alpha_5}e_{\theta}e_{\theta}\\
&-3 e_{-\alpha_2}e_{-\alpha_3}e_{\alpha_3}e_{\alpha_4}e_{\theta}e_{\theta}
-9 e_{-\alpha_2}e_{-\alpha_3}h_1e_{\theta}e_{\theta}e_{\theta}
+27 e_{-\alpha_2}e_{-\alpha_2}e_{\alpha_2}e_{\alpha_4}e_{\theta}e_{\theta}
-9 e_{-\alpha_2}e_{-\alpha_2}e_{\alpha_3}e_{\alpha_3}e_{\theta}e_{\theta}
-27 e_{-\alpha_2}e_{\alpha_2}e_{\alpha_2}e_{\alpha_4}e_{\alpha_5}e_{\alpha_5}\\
&-9 e_{-\alpha_2}e_{\alpha_2}e_{\alpha_3}e_{\alpha_3}e_{\alpha_5}e_{\alpha_5}
+6 e_{-\alpha_2}e_{\alpha_2}e_{\alpha_3}e_{\alpha_4}e_{\alpha_4}e_{\alpha_5}
+e_{-\alpha_2}e_{\alpha_2}e_{\alpha_4}e_{\alpha_4}e_{\alpha_4}e_{\alpha_4}
+2e_{-\alpha_2}e_{\alpha_3}e_{\alpha_3}e_{\alpha_3}e_{\alpha_4}e_{\alpha_5}
-e_{-\alpha_2}e_{\alpha_3}e_{\alpha_3}e_{\alpha_4}e_{\alpha_4}e_{\alpha_4}\\
&-30 e_{-\alpha_2}h_1e_{\alpha_1}e_{\alpha_3}e_{\theta}e_{\theta}
-30 e_{-\alpha_2}h_1e_{\alpha_3}e_{\alpha_3}e_{\alpha_5}e_{\theta}
+10 e_{-\alpha_2}h_1e_{\alpha_3}e_{\alpha_4}e_{\alpha_4}e_{\theta}
-3 e_{-\alpha_2}h_1h_1e_{\alpha_4}e_{\theta}e_{\theta}
-54 e_{-\alpha_2}h_2e_{\alpha_1}e_{\alpha_3}e_{\theta}e_{\theta}\\
&-27 e_{-\alpha_2}h_2e_{\alpha_2}e_{\alpha_4}e_{\alpha_5}e_{\theta}
-36 e_{-\alpha_2}h_2e_{\alpha_3}e_{\alpha_3}e_{\alpha_5}e_{\theta}
+15 e_{-\alpha_2}h_2e_{\alpha_3}e_{\alpha_4}e_{\alpha_4}e_{\theta}
-9 e_{-\alpha_2}h_2h_1e_{\alpha_4}e_{\theta}e_{\theta}
-30 h_1e_{\alpha_1}e_{\alpha_1}e_{\alpha_2}e_{\alpha_5}e_{\theta}\\
&-30 h_1e_{\alpha_1}e_{\alpha_2}e_{\alpha_3}e_{\alpha_5}e_{\alpha_5}
+10 h_1e_{\alpha_1}e_{\alpha_2}e_{\alpha_4}e_{\alpha_4}e_{\alpha_5}
-16 h_1h_1e_{\alpha_1}e_{\alpha_1}e_{\theta}e_{\theta}
-32 h_1h_1e_{\alpha_1}e_{\alpha_3}e_{\alpha_5}e_{\theta}
+8 h_1h_1e_{\alpha_1}e_{\alpha_4}e_{\alpha_4}e_{\theta}\\
&+3 h_1 h_1 e_{\alpha_2}e_{\alpha_4}e_{\alpha_5}e_{\alpha_5}
-16 h_1h_1e_{\alpha_3}e_{\alpha_3}e_{\alpha_5}e_{\alpha_5}
+8 h_1h_1e_{\alpha_3}e_{\alpha_4}e_{\alpha_4}e_{\alpha_5}
-h_1h_1e_{\alpha_4}e_{\alpha_4}e_{\alpha_4}e_{\alpha_4}
-54 h_2e_{\alpha_1}e_{\alpha_1}e_{\alpha_2}e_{\alpha_5}e_{\theta}\\
&+2 h_2e_{\alpha_1}e_{\alpha_1}e_{\alpha_3}e_{\alpha_4}e_{\theta}
-36 h_2e_{\alpha_1}e_{\alpha_2}e_{\alpha_3}e_{\alpha_5}e_{\alpha_5}
+15 h_2e_{\alpha_1}e_{\alpha_2}e_{\alpha_4}e_{\alpha_4}e_{\alpha_5}
+2h_2e_{\alpha_1}e_{\alpha_3}e_{\alpha_3}e_{\alpha_4}e_{\alpha_5}
-h_2e_{\alpha_1}e_{\alpha_3}e_{\alpha_4}e_{\alpha_4}e_{\alpha_4}\\
&-63 h_2h_1e_{\alpha_1}e_{\alpha_1}e_{\theta}e_{\theta}
-96 h_2h_1e_{\alpha_1}e_{\alpha_3}e_{\alpha_5}e_{\theta}
+29 h_2h_1e_{\alpha_1}e_{\alpha_4}e_{\alpha_4}e_{\theta}
+9 h_2h_1e_{\alpha_2}e_{\alpha_4}e_{\alpha_5}e_{\alpha_5}
-33 h_2h_1e_{\alpha_3}e_{\alpha_3}e_{\alpha_5}e_{\alpha_5}\\
&+19 h_2h_1e_{\alpha_3}e_{\alpha_4}e_{\alpha_4}e_{\alpha_5}
-3 h_2h_1e_{\alpha_4}e_{\alpha_4}e_{\alpha_4}e_{\alpha_4}
+3h_2h_1h_1e_{\alpha_4}e_{\alpha_5}e_{\theta}
-63 h_2h_2e_{\alpha_1}e_{\alpha_1}e_{\theta}e_{\theta}
-72 h_2h_2e_{\alpha_1}e_{\alpha_3}e_{\alpha_5}e_{\theta}\\
&+27 h_2h_2e_{\alpha_1}e_{\alpha_4}e_{\alpha_4}e_{\theta}
-18 h_2h_2e_{\alpha_3}e_{\alpha_3}e_{\alpha_5}e_{\alpha_5}
+12 h_2h_2e_{\alpha_3}e_{\alpha_4}e_{\alpha_4}e_{\alpha_5}
-2 h_2h_2e_{\alpha_4}e_{\alpha_4}e_{\alpha_4}e_{\alpha_4}
+9 h_2h_2h_1e_{\alpha_4}e_{\alpha_5}e_{\theta}
\end{align*}}

\section{Polynomials for subsingular vector of $V^{-2}(B_{3})$.}
\label{App:pol_B3}
We give in this appendix the explicit form of the polynomials 
in the symmetric algebra of the Cartan of $B_3$ 
appearing in Lemma \ref{lem2.4}. 

\smallskip

{\tiny 
\begin{align*}
		p^{B_{3}}_{4}&=-24 (16 h_1^6+112 h_2 h_1^5+64 h_3 h_1^5+96 h_1^5+320 h_2^2 h_1^4+104 h_3^2 h_1^4+496 h_2 h_1^4+368 h_2
		h_3 h_1^4+264 h_3 h_1^4+156 h_1^4+480 h_2^3 h_1^3\\
		&+88 h_3^3 h_1^3+984 h_2^2 h_1^3+472 h_2 h_3^2 h_1^3+260
		h_3^2 h_1^3+444 h_2 h_1^3+832 h_2^2 h_3 h_1^3+1032 h_2 h_3 h_1^3+156 h_3 h_1^3-16 h_1^3+400 h_2^4
		h_1^2\\
		&+41 h_3^4 h_1^2+912 h_2^3 h_1^2+296 h_2 h_3^3 h_1^2+106 h_3^3 h_1^2+246 h_2^2 h_1^2+792 h_2^2 h_3^2
		h_1^2+696 h_2 h_3^2 h_1^2\\
		&-77 h_3^2 h_1^2-464 h_2 h_1^2+928 h_2^3 h_3 h_1^2+1412 h_2^2 h_3 h_1^2+12 h_2
		h_3 h_1^2-358 h_3 h_1^2-180 h_1^2+176 h_2^5 h_1+10 h_3^5 h_1\\
		&+376 h_2^4 h_1+91 h_2 h_3^4 h_1+13 h_3^4
		h_1-150 h_2^3 h_1+328 h_2^2 h_3^3 h_1+154 h_2 h_3^3 h_1-96 h_3^3 h_1-706 h_2^2 h_1+584 h_2^3 h_3^2
		h_1\\
		&+542 h_2^2 h_3^2 h_1-415 h_2 h_3^2 h_1-269 h_3^2 h_1-476 h_2 h_1+512 h_2^4 h_3 h_1+760 h_2^3 h_3
		h_1-488 h_2^2 h_3 h_1-886 h_2 h_3 h_1\\
		&-306 h_3 h_1-72 h_1+32 h_2^6+h_3^6+48 h_2^5+11 h_2 h_3^5-h_3^5-76
		h_2^4+50 h_2^2 h_3^4+h_2 h_3^4-13 h_3^4-72 h_2^3+120 h_2^3 h_3^3\\
		&+33 h_2^2 h_3^3-111 h_2 h_3^3+13 h_3^3+44
		h_2^2+160 h_2^4 h_3^2+100 h_2^3 h_3^2-265 h_2^2 h_3^2-27 h_2 h_3^2+36 h_3^2+24 h_2+112 h_2^5 h_3\\
		&+116
		h_2^4 h_3-244 h_2^3 h_3-110 h_2^2 h_3+54 h_2 h_3-36 h_3)
\\
		p^{B_{3}}_{5}&=-2(32 h_1^6+204 h_2 h_1^5+128 h_3 h_1^5+192 h_1^5+544 h_2^2 h_1^4+200 h_3^2 h_1^4+986 h_2 h_1^4+676 h_2
		h_3 h_1^4+536 h_3 h_1^4+312 h_1^4\\
		&+776 h_2^3 h_1^3+152 h_3^3 h_1^3+1934 h_2^2 h_1^3+839 h_2 h_3^2
		h_1^3+500 h_3^2 h_1^3+888 h_2 h_1^3+1432 h_2^2 h_3 h_1^3+2077 h_2 h_3 h_1^3+356 h_3 h_1^3\\
		&-32 h_1^3+624
		h_2^4 h_1^2+56 h_3^4 h_1^2+1774 h_2^3 h_1^2+474 h_2 h_3^3 h_1^2+160 h_3^3 h_1^2+592 h_2^2 h_1^2+1322
		h_2^2 h_3^2 h_1^2+1301 h_2 h_3^2 h_1^2\\
		&-156 h_3^2 h_1^2-814 h_2 h_1^2+1520 h_2^3 h_3 h_1^2+2808 h_2^2 h_3
		h_1^2+209 h_2 h_3 h_1^2-636 h_3 h_1^2-360 h_1^2+268 h_2^5 h_1+8 h_3^5 h_1\\
		&+730 h_2^4 h_1+115 h_2 h_3^4
		h_1+4 h_3^4 h_1-120 h_2^3 h_1+493 h_2^2 h_3^3 h_1+196 h_2 h_3^3 h_1-200 h_3^3 h_1-1262 h_2^2 h_1+927
		h_2^3 h_3^2 h_1\\
		&+974 h_2^2 h_3^2 h_1-697 h_2 h_3^2 h_1-580 h_3^2 h_1-824 h_2 h_1+808 h_2^4 h_3 h_1+1497
		h_2^3 h_3 h_1-637 h_2^2 h_3 h_1-1718 h_2 h_3 h_1\\
		&-528 h_3 h_1-144 h_1+48 h_2^6+96 h_2^5+8 h_2 h_3^5-72
		h_2^4+59 h_2^2 h_3^4-14 h_2 h_3^4-144 h_2^3+171 h_2^3 h_3^3+18 h_2^2 h_3^3\\
		&-120 h_2 h_3^3+24 h_2^2+244
		h_2^4 h_3^2+167 h_2^3 h_3^2-319 h_2^2 h_3^2-118 h_2 h_3^2+48 h_2+172 h_2^5 h_3+230 h_2^4 h_3-272 h_2^3
		h_3\\
		&-254 h_2^2 h_3+28 h_2 h_3)
\\
		p^{B_{3}}_{6}&=-4(16 h_1^6+97 h_2 h_1^5+64 h_3 h_1^5+96 h_1^5+246 h_2^2 h_1^4+96 h_3^2 h_1^4+462 h_2 h_1^4+323 h_2 h_3
		h_1^4+272 h_3 h_1^4+156 h_1^4\\
		&+334 h_2^3 h_1^3+64 h_3^3 h_1^3+860 h_2^2 h_1^3+387 h_2 h_3^2 h_1^3+240
		h_3^2 h_1^3+477 h_2 h_1^3+654 h_2^2 h_3 h_1^3+989 h_2 h_3 h_1^3+200 h_3 h_1^3\\
		&-16 h_1^3+256 h_2^4 h_1^2+16
		h_3^4 h_1^2+760 h_2^3 h_1^2+193 h_2 h_3^3 h_1^2+48 h_3^3 h_1^2+402 h_2^2 h_1^2+586 h_2^2 h_3^2 h_1^2+576
		h_2 h_3^2 h_1^2\\
		&-68 h_3^2 h_1^2-312 h_2 h_1^2+664 h_2^3 h_3 h_1^2+1268 h_2^2 h_3 h_1^2+192 h_2 h_3
		h_1^2-284 h_3 h_1^2-180 h_1^2+105 h_2^5 h_1+308 h_2^4 h_1\\
		&+32 h_2 h_3^4 h_1-16 h_3^4 h_1+43 h_2^3 h_1+194
		h_2^2 h_3^3 h_1+33 h_2 h_3^3 h_1-112 h_3^3 h_1-488 h_2^2 h_1+395 h_2^3 h_3^2 h_1+393 h_2^2 h_3^2 h_1\\
		&-335
		h_2 h_3^2 h_1-268 h_3^2 h_1-400 h_2 h_1+338 h_2^4 h_3 h_1+649 h_2^3 h_3 h_1-203 h_2^2 h_3 h_1-734 h_2 h_3
		h_1-252 h_3 h_1-72 h_1\\
		&+18 h_2^6+42 h_2^5-6 h_2^4+16 h_2^2 h_3^4-16 h_2 h_3^4-42 h_2^3+65 h_2^3 h_3^3-15
		h_2^2 h_3^3-50 h_2 h_3^3-12 h_2^2+100 h_2^4 h_3^2+57 h_2^3 h_3^2\\
		&-119 h_2^2 h_3^2-38 h_2 h_3^2+69 h_2^5
		h_3+98 h_2^4 h_3-77 h_2^3 h_3-86 h_2^2 h_3-4 h_2 h_3)
\\
		p^{B_{3}}_{7}&=-2h_1 \left(h_1+2 h_2+h_3+2\right) (32 h_1^4+180 h_2 h_1^3+96 h_3 h_1^3+128 h_1^3+372 h_2^2 h_1^2+104
		h_3^2 h_1^2+422 h_2 h_1^2\\
		&+392 h_2 h_3 h_1^2+216 h_3 h_1^2+56 h_1^2+332 h_2^3 h_1+48 h_3^3 h_1+332 h_2^2
		h_1+273 h_2 h_3^2 h_1+76 h_3^2 h_1-176 h_2 h_1\\
		&+520 h_2^2 h_3 h_1+341 h_2 h_3 h_1-132 h_3 h_1-144 h_1+108
		h_2^4+8 h_3^4-26 h_2^3+61 h_2 h_3^3-12 h_3^3-512 h_2^2\\
		&+175 h_2^2 h_3^2-34 h_2 h_3^2-176 h_3^2-402 h_2+224
		h_2^3 h_3-37 h_2^2 h_3-613 h_2 h_3-228 h_3-72)
		\\ 
		p^{B_{3}}_{8}&=-2h_1 \left(h_1+2 h_2+h_3+2\right)(16 h_1^4+80 h_2 h_1^3+48 h_3 h_1^3+64 h_1^3+148 h_2^2 h_1^2+48
		h_3^2 h_1^2+217 h_2 h_1^2\\
		&+176 h_2 h_3 h_1^2+112 h_3 h_1^2+28 h_1^2+120 h_2^3 h_1+16 h_3^3 h_1+188 h_2^2
		h_1+112 h_2 h_3^2 h_1+32 h_3^2 h_1-66 h_2 h_1\\
		&+212 h_2^2 h_3 h_1+171 h_2 h_3 h_1-52 h_3 h_1-72 h_1+36
		h_2^4+3 h_2^3+16 h_2 h_3^3-16 h_3^3-216 h_2^2+64 h_2^2 h_3^2\\
		&-46 h_2 h_3^2-80 h_3^2-195 h_2+84 h_2^3
		h_3-31 h_2^2 h_3-261 h_2 h_3-108 h_3-36)
\\ 
		p^{B_{3}}_{9}&=-4\left(h_1-1\right) h_1 \left(h_1+2 h_2+h_3+2\right) \left(h_1+2 h_2+h_3+3\right)(16 h_1^2+63 h_2
		h_1+32 h_3 h_1+32 h_1+63 h_2^2\\
		&+16 h_3^2+63 h_2+63 h_2 h_3+32 h_3+12)
\end{align*}}

\end{document}